\newcommand{\ignore}[1]{ }
\numberwithin{equation}{section}
\newtheorem{theorem}{Theorem}[section]
\newtheorem{cor}[theorem]{Corollary}
\newtheorem{lemma}[theorem]{Lemma}
\newtheorem{prop}[theorem]{Proposition}
\newtheorem{remark}[theorem]{Remark}
\def\e{\mathbb{E \,}}
\def\E{\mathbb{E}}
\def\p{\mathbb{P \,}}
\def\R{\mathbb{R}}
\def\Rplus{\mathbb{R}_+}
\def\a{\alpha(n)}
\def\b{\beta(n)}
\def\A{\mathcal{A}}
\def\B{\mathcal{B}}
\def\C{\mathcal{C}}
\def\Triangle{\mathcal{T}}
\def\Pn{\mathcal{P}_n}
\def\P{\mathcal{P}}
\def\D{\mathcal{D}}
\def\DV{\mathcal{D}_0} 
\def\Pr{\mathbb{P}}
\def\N{\mathbb{N}}
\def\PhiLS{\Phi(t; r, B)}
\def\stable{$(r,B,K)-$stable}
\def\MMstable{$(r,B,K)$--{\textsf{MM-stable}}}
\def\MPstable{$(r,B,K)$--{\textsf{MP-stable}}}
\def\MMstablea{$(r,B,a,K)$--{\textsf{MM-stable}}}
\def\MPstablea{$(r,B,a,K)$--{\textsf{MP-stable}}}
\def\varphiv{\varphi_{\bf v}}
\def\bv{{\bf v}}
\def\bfx{{\bf x}}
\def\bfy{{\bf y}}
\def\bolda{{\bf a}}
\def\boldb{{\bf b}}
\def\MPb{{\textsf{MP--bijection }}}
\def\MMb{{\textsf{MM--bijection }}}
\def\MPbns{{\textsf{MP--bijection}}}
\def\MMbns{{\textsf{MM--bijection}}}
\def\Tv{T_{\bf v}}
\def\smooth{unrestrictedly smooth }
\def\rsmooth{restrictedly smooth }
\def\rhohat{\widehat{\rho}}
\def\Dhat{\widehat{D}}
\def\Shat{\widehat{S}}
\def\O{\mathcal{O}}
\def\xofn{x(n)}
\def\th{\thinspace}
\def\.{\hskip.06cm}
\def\ts{\hskip.03cm}
\def\mts{\hspace{-.04cm}}
\newcommand*{\Scale}[2][4]{\scalebox{#1}{$#2$}}%
\def\la{\lambda}
\def\vp{\varphi}
\def\ca{\mathcal A}
\def\cl{\mathcal L}
\def\<{\langle}
\def\>{\rangle}
\def\0{{\mathbf 0}}
\def\implies{\Rightarrow}
\def\.{\hskip.06cm}
\def\ts{\hskip.03cm}
\def\mts{\hspace{-.04cm}}
\def\bv{\textbf{\textit{v}}}
\def\nin{\noindent}
\title{Limit shapes via bijections}
\author[Stephen DeSalvo, \. Igor Pak] {Stephen DeSalvo$^\ast$ \ \, and \, \  Igor Pak$^\ast$}
\thanks{\today}
\thanks{\thinspace ${\hspace{-.45ex}}^\star$Department of Mathematics,
UCLA, Los Angeles, CA~90095.
\hskip.06cm
Email:
\hskip.06cm
\texttt{\{stephendesalvo,\ts{pak}\}@math.ucla.edu}}
\begin{document}

\begin{abstract}
We compute the limit shape for several classes of restricted integer partitions, 
where the restrictions are placed on the part sizes rather than the multiplicities.
Our approach utilizes certain classes of bijections which map limit shapes continuously in the plane.
We start with bijections outlined in~\cite{PakNature}, and 
extend them to include limit shapes with different scaling functions.
\end{abstract}

\maketitle

\setcounter{tocdepth}{1}
\tableofcontents


\section{Introduction}

\subsection{Preliminaries}
The study of random combinatorial objects is an amazing success story,
which grew from ad hoc problems and simple ideas to a flourishing field of
its own, with many astonishing results, advanced tools and important
applications (see e.g.~\cite{AK}).  Since the early work of Erd\H{o}s and
collaborators, it has long been known that the ``typical'' objects
often have an interesting and unexpected structure often worth exploring.
Some such random combinatorial objects such as various classes of random graphs
are understood to a remarkable degree (see e.g.~\cite{JLR}), while others,
such as random finite groups remain largely mysterious (see e.g.~\cite{BNV}).
\emph{Random integer partitions} occupy the middle ground, with a large mix of
known results and open problems (cf.\ Section~\ref{final:remarks}), and are
a subject of this paper.

\smallskip

\emph{Partition Theory} is a classical subject introduced by Euler nearly
three hundred years ago, which has strong connections and applications to
a variety of areas ranging
from Number Theory to Special Functions, from Group Representation Theory
to Probability and Statistical Physics.  A \emph{partition} of~$n$ is
an integer sequence $\ts (\la_1,\ldots,\la_\ell)$, such that
\ts $\la_1\ge \ldots \ge \la_\ell$ \ts and \ts $\la_1+ \ldots + \la_\ell=n$.
Much of this paper is concerned with asymptotic analysis of
various classes of partitions, some known and some new.

Let us begin by noting that already the number $p(n)$ of integer partitions
of~$n$ is a notoriously difficult sequence; it is not even known if it
is equidistributed modulo~$2$.  There are several specialized tools, however,
which allow a detailed information about random partitions.  First,
the analytic tools by Hardy--Ramanujan and Rademacher give a precise
asymptotics of~$p(n)$, see e.g.~\cite{Andrews}.  Second, the \emph{Boltzmann sampling},
in this case invented earlier by Fristedt~\cite{Fristedt},
allows a uniform sampling of random partitions of~$n$ with
a high degree of independence between part sizes.  This paper
introduces a new combinatorial tool, which is best used in
conjunction with these.

Our main emphasis will be not on enumeration of various classes of partitions,
but on the limit shapes of random such partitions.  It is a well known phenomenon
that random combinatorial objects, e.g.\ random graphs, tend to have unusually
interesting parameters, of interest in both theory and applications.
When represented geometrically, these random objects illuminate the
underlying 0/1 laws of their asymptotic behavior; examples included
various \emph{arctic circle} shapes for domino tilings and perfect matchings
(see e.g.~\cite{CEP,KOS}), and for alternating sign matrices (see~\cite{CP}).

Note that there are many different notions of the \emph{limit shape},
which differ depending on the context and the geometric representation.
First, one needs to be careful in the choice of the \emph{scaling}, by
which combinatorial objects of different sizes are compared to each other.
In this paper we use different scaling for different classes of partitions.

Second, there is more than one notion of convergence to the limit shape,
some stronger than others.  Roughly speaking, they describe how close are
the random objects to the limit shape.  Here we work with two different notions,
convergence in expectation, which we refer to as the \emph{weak notion of
the limit shape}, and convergence in probability, which we refer to as the
\emph{strong notion of the limit shape}, which is implied by a stronger large
deviation result as well as a local central limit theorem (see below).

As we mentioned earlier the main result of the paper is the computation of
limit shapes for several well known classes (\emph{ensembles}) of random partitions for which
this was neither known nor even conjectured before.  Most our results are not
obtainable by previously known techniques. We postpone the precise formulation
of our results until the next subsection; for now let us briefly survey the
long history of ideas leading to this work.

\subsection{Brief history}

Arguably, partition theory began with Euler's work establishing equality
for the number of partitions in different classes by means of
algebraic manipulation of generating functions.  The next crucial
step was made by Sylvester and his students which introduced
\emph{partition bijections} and used them to prove many results by Euler
and others.  Roughly speaking, a partition is represented as
a \emph{Young diagram} and the squares are rearranged accordingly.
We refer to~\cite{Pak} for a detailed survey of this approach.

The study of the number $p_r(n)$ of partitions of $n$ with at most
$r$ parts goes back to Cayley, Sylvester and MacMahon.  Hardy and
Ramanujan obtained a remarkable asymptotic formula, further
refined by others.  The first result on limit shapes is the
\emph{Erd\H{o}s--Szekeres Theorem}, which can be formulated as follows:
\[
\e \bigl[ \# i : \lambda_i < \alpha\sqrt{n}\bigr] \. = \. \beta \sqrt{n}\ts \bigl(1+o(1)\bigr),
\quad
\text{where} \ \ e^{-\alpha c} + e^{-\beta c} \.= \.1 \ \ \, \text{and} \ \ c = \frac{\pi}{\sqrt{6}}\..
\]
These results were strengthened and extended in a number of papers,
see e.g.~\cite{LD,Fristedt,Vershik,VershikYakubovich}.

There are two prominent types of partition classes considered in the literature,
which are essentially conjugate to each other.  The classes are defined by restrictions
on parts~$\la_i$, and those with restriction on the number of times $m_i(\la)$ part $i$
appear in the partition, with no restrictions relating parts to each other.
For example, the set of partitions into odd numbers fits into the latter class, which
corresponds to a restriction of the form $m_i(\la) = 0$ for all $i$ even.
Sometimes this is less obvious, for example, for partitions into distinct parts,
the restriction is that each part appears at most once, i.e., $m_i(\la) \leq 1$, rather
than the fact that they are distinct, i.e., $\lambda_1 > \lambda_2 > \ldots > \lambda_\ell$.

There is an extensive body of research related to limit shapes of integer partitions
for which the restrictions are solely placed on the multiplicities $m_i(\la)$.
In this paper we consider restrictions on part sizes, such as the
\emph{convex partitions}, which satisfy
$\ts \la_1 - \la_{2} \ge  \la_{2} - \la_{3} \ge \ldots$ \ts These are
equinumerous with partitions into triangular parts $\binom{k}{2}$,
i.e., $m_i(\la) = 0$ whenever $i$ is not a triangular number,
which have a well understood limit shape.  The bijection between these
two classes of partitions is also well known~\cite{CS,PakGeo}, and
we demonstrate how to obtain one limit shape from the other.

The idea of this paper is to use partition bijections to transfer
the limit shape results from one class of partitions onto another.
Making this formal is rather subtle.  In~\cite{PakNature}, the second
author introduces a necessary formalism essentially allowing such
transfer, with the aim of proving that there is no natural proof of
the classical \emph{Rogers--Ramanujan identities}.  This paper
continues this approach, but can be read independently of~\cite{PakNature} and other earlier work.

\subsection{Limit shapes}
\label{intro:limit:shapes}
\begin{figure}
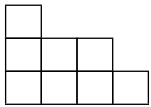

\[
\begin{Young}
   \cr
 &  & \cr
 &  &  &  \cr
 \end{Young}
\]
\caption{The Young diagram for the partition $4, 3, 1$. }
\label{young:diagram}
\end{figure}
To understand the notion of the limit shape, we start with a geometric interpretation of an integer partition.
Each integer partition can be visualized as a curve in the first quadrant of $\mathbb{R}^2$, with a height function determined by the part sizes; see Figure~\ref{young:diagram}.
By simultaneously plotting all of the Young diagrams of every integer partition of a very large fixed size~$n$, for each $n$ one observes empirically a curve which is close to most of the diagram functions, namely,
\[ e^{-c\, \frac{y}{\sqrt{n}}} + e^{-c\, \frac{x}{\sqrt{n}}} = 1, \quad \text{where} \quad x>0, \ y>0, \ \ \text{and} \ \ c=\frac{\pi}{\sqrt{6}}\.. \]
Normalizing the $x$- and $y$-axes by $\sqrt{n}$, a single curve emerges in the limit as $n\to \infty$.
This curve is a law of large numbers for the joint distribution of part sizes in a random integer partition.
Our definition of a limit shape, made precise in \S \ref{diagram functions}, is a limit \emph{in probability} as $n\to \infty$, of appropriately scaled part sizes of a uniformly random integer partition of size~$n$.

The main technique for finding and proving the form of the limit shape is via a probabilistic model.
The joint distribution of part sizes in a random integer partition of size~$n$ is approximated by a certain joint distribution of \emph{independent} random variables, which produces a random integer partition of random size.
A weak notion of a limit shape is the convergence in expectation of the joint distribution of the independent random variables.
It is easy to calculate the limit shape in this setting, as the calculation simplifies to a Riemann sum.
We show in Section~\ref{sect prob} that the weak notion of a limit shape agrees with the usual limit shape in the context of integer partitions.
This is also known as the \emph{equivalence of ensembles}.

We start with two pivotal results in the field of limit shapes of integer partitions: the classical limit shape, and the limit shape of partitions into distinct parts.
There is a long and rather involved history involving limit shapes, which is confounded by the weak and strong notions of limit shapes; see \S \ref{history} for a more detailed account.
We denote the limit shape of unrestricted integer partitions by $\Phi(x)$.

\begin{theorem}[See~\S\ref{history}]\label{unrestricted:limit:shape}
Let $y = \Phi(x)$ denote the limit shape of integer partitions.  Then:
\begin{equation}\label{phi} e^{-c\, x} + e^{-c\,y} = 1, \qquad x>0, \quad \text{where} \quad c = \frac{\pi}{\sqrt{6}}\, . \end{equation}
\end{theorem}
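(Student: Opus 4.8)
The plan is to realize the uniform measure on partitions of $n$ through the Boltzmann (grand canonical) model of Fristedt, and then read off the profile of the Young diagram from a single Riemann-sum computation.

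First I would attach to each part size $i \geq 1$ an independent random variable $X_i$ recording the multiplicity $m_i(\la)$, with the geometric law $\p(X_i = k) = (1-q^i)\ts q^{ik}$ for $k \geq 0$ and a single parameter $q \in (0,1)$. The joint generating function is $\prod_{i\geq 1}(1-q^i)^{-1}$, so conditioning the vector $(X_i)$ on the total weight $\sum_i i X_i = n$ returns exactly the uniform distribution on partitions of $n$; this identification is the equivalence of ensembles treated in Section~\ref{sect prob}. A direct computation gives $\e X_i = q^i/(1-q^i)$ and $\var X_i = q^i/(1-q^i)^2$.

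Second I would calibrate $q = e^{-\ve}$ so that the expected total weight matches $n$. Writing the expectation as a Riemann sum in the variable $u = i\ve$ and letting $\ve \to 0$,
\[ \e\Bigl[\,\sum_{i\geq 1} i\, X_i\Bigr] \. = \. \sum_{i\geq 1} \frac{i\, e^{-i\ve}}{1-e^{-i\ve}} \. \sim \. \frac{1}{\ve^2}\int_0^\infty \frac{u}{e^u-1}\,du \. = \. \frac{\pi^2}{6\ve^2}\.. \]
Setting this equal to $n$ forces $\ve = c/\sqrt n$ with $c = \pi/\sqrt 6$, which is where the constant in~\eqref{phi} originates.

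Third I would compute the scaled height of the diagram at horizontal coordinate $x = t\sqrt n$. That height is the number of parts of size at least $t\sqrt n$, namely $\sum_{i \geq t\sqrt n} X_i$, and by the same Riemann-sum passage,
\[ \frac{1}{\sqrt n}\,\e\Bigl[\,\sum_{i\geq t\sqrt n} X_i\Bigr] \. \longrightarrow \. \int_t^\infty \frac{e^{-cu}}{1-e^{-cu}}\,du \. = \. -\frac1c\,\ln\bigl(1-e^{-ct}\bigr)\.. \]
Setting $y = -c^{-1}\ln(1-e^{-ct})$ and $x = t$ and exponentiating gives $e^{-cy} = 1 - e^{-cx}$, i.e.\ exactly the relation $e^{-cx}+e^{-cy}=1$ of~\eqref{phi}. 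Finally I would upgrade this expectation identity to convergence in probability. Since the $X_i$ are independent, $\var\bigl(\sum_{i\geq t\sqrt n} X_i\bigr) = \sum_{i \geq t\sqrt n} q^i/(1-q^i)^2 = O(\sqrt n)$, so the fluctuations of the height, which is itself of order $\sqrt n$, are only $O(n^{1/4}) = o(\sqrt n)$; after scaling by $\sqrt n$ the profile concentrates on its mean. The genuine obstacle is the passage from this grand canonical computation back to the uniform measure on partitions of exactly $n$: one must show that conditioning on the rare event $\{\sum_i i X_i = n\}$, whose probability is $\asymp n^{-3/4}$ by a local central limit theorem for $\sum_i i X_i$, does not perturb the limit profile. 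This is precisely the equivalence of ensembles established in Section~\ref{sect prob}; granting it, the weak limit shape computed above coincides with the strong one, completing the proof.
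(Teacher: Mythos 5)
Your route is the same as the paper's: Fristedt's grand canonical measure, calibration of the tilt $q = e^{-c/\sqrt n}$ with $c=\pi/\sqrt 6$ by matching the expected weight, a Riemann--sum evaluation of the expected scaled diagram function yielding $\Phi(t) = -c^{-1}\log(1-e^{-c\,t})$, and an appeal to the equivalence of ensembles. This is precisely the computation of \S\ref{proof special distinct} specialized to $r=1$, $B=1$, combined with Theorem~\ref{distribution is sufficient} and the account in \S\ref{history}; all of your formulas check out.

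One caution about your concentration step: the Chebyshev bound you sketch is quantitatively useless for the conditioned measure and cannot substitute for the equivalence of ensembles you then invoke. Chebyshev gives
\[
\p\Bigl( \Bigl|\sum_{i \geq t\sqrt n} X_i \. - \. \e \sum_{i \geq t\sqrt n} X_i \Bigr| > \ve \sqrt n \Bigr) \. = \. O\bigl(n^{-1/2}\bigr),
\]
while, as you yourself note, $\p\bigl(\sum_i i\ts X_i = n\bigr) \asymp n^{-3/4}$; feeding these into the basic inequality~\eqref{LD calculation} produces an upper bound of order $n^{1/4}$ on the conditional deviation probability, which diverges. The mechanism that actually works, and that \S\ref{sect prob} supplies, is the pairing of the \emph{exponential} deviation bound of Theorem~\ref{large deviation} (of order $\exp\bigl(-c_{t,\delta}\, n^{1/4}\bigr)$ when $r=1$) with the polynomial local-limit lower bound of Theorem~\ref{local bound}. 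So your argument is correct as written only because its final sentence delegates the conditioning to \S\ref{sect prob}; the variance paragraph should either be deleted or upgraded to an exponential bound, lest it suggest that $O(\sqrt n)$ variance alone overpowers the conditioning.
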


A similar result is true for the limit shape of integer partitions into distinct parts, which we denote by~$\Psi(x)$.

\begin{theorem}[See~\S\ref{history}]
Let $y = \Psi(x)$ denote the limit shape of integer partitions into distinct parts.  Then:
\begin{equation}\label{psi}  e^{d\, y} - e^{-d\, x} = 1, \qquad x>0, \quad \text{ where } \quad d = \frac{\pi}{\sqrt{12}}\, . \end{equation}
\end{theorem}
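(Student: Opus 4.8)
The plan is to run Fristedt's Boltzmann (grand canonical) construction specialized to the distinct-parts ensemble, and then transfer the resulting moment computation to the uniform model through the equivalence of ensembles established in Section~\ref{sect prob}. A partition into distinct parts is encoded by its multiplicity vector $(m_1,m_2,\ldots)$ with each $m_i\in\{0,1\}$, so I would take the $m_i$ to be independent Bernoulli variables with $\Pr(m_i=1)=q^{i}/(1+q^{i})$, where $q=e^{-\delta}$ and $\delta=\delta(n)>0$ is a temperature parameter to be calibrated. This choice assigns to each fixed distinct-parts partition $\lambda$ a probability proportional to $q^{|\lambda|}$, so conditioning on the total size $N=\sum_i i\,m_i$ being equal to $n$ returns exactly the uniform measure on distinct-parts partitions of $n$.

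First I would fix $\delta$ through the mean constraint $\E[N]=n$. Writing $\E[N]=\sum_{i\ge1} i/(e^{\delta i}+1)$ and reading this as a Riemann sum with mesh $\delta$, one gets $\E[N]\sim \delta^{-2}\int_0^\infty u/(e^{u}+1)\,du=\delta^{-2}\,\pi^2/12$ as $\delta\to0$, since $\int_0^\infty u/(e^{u}+1)\,du=\eta(2)=\pi^2/12$. Equating with $n$ forces $\delta\sim d/\sqrt n$ with $d=\pi/\sqrt{12}$, which is precisely the constant appearing in~\eqref{psi}.

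Next I would extract the weak limit shape from the expected height function. Scaling both axes by $\sqrt n$, the height above a point $x>0$ is the scaled count of parts of size at least $x\sqrt n$, namely $\lambda'_{x\sqrt n}/\sqrt n=\frac{1}{\sqrt n}\sum_{i\ge x\sqrt n} m_i$. Taking expectations and passing to the limiting integral,
\[
\frac{1}{\sqrt n}\sum_{i\ge x\sqrt n}\frac{1}{e^{\delta i}+1}\;\longrightarrow\;\frac{1}{d}\int_{dx}^{\infty}\frac{dv}{e^{v}+1}\;=\;\frac{1}{d}\,\ln\bigl(1+e^{-dx}\bigr),
\]
so the limiting height satisfies $y=d^{-1}\ln(1+e^{-dx})$. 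Rearranging yields $e^{dy}=1+e^{-dx}$, that is, $e^{dy}-e^{-dx}=1$, as claimed.

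The main obstacle is the final transfer: promoting this expectation computation --- the weak notion of the limit shape under the random-size Boltzmann measure --- to convergence in probability for genuinely uniform partitions of the fixed size $n$. I would handle it exactly as the equivalence of ensembles: show that $N$ concentrates, its variance being a convergent Riemann sum of order $n^{3/2}$, and invoke a local central limit theorem giving $\Pr(N=n)\sim C\,n^{-3/4}$, so that conditioning on the event $\{N=n\}$ does not perturb the joint law of the scaled part counts. Because the $m_i$ are independent Bernoulli, every moment and characteristic-function estimate needed for the local limit theorem is explicit, so this analytic step, while the crux of the rigor, is routine in execution.
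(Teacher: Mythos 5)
Your proposal is correct and follows essentially the same route as the paper: your tilted independent Bernoulli multiplicities with success probability $q^i/(1+q^i)$ are exactly the paper's truncated geometric variables $Z_i(x)$ with $a=2$, the calibration $\delta\sim d/\sqrt{n}$ with $d=\pi/\sqrt{12}=d(1,1,2)$ matches equation~\eqref{Uber c}, the Riemann-sum computation is the $r=1$, $B=1$, $a=2$ case of Theorem~\ref{special distinct}, and your concentration-plus-local-limit transfer is precisely inequality~\eqref{LD calculation} combined with Theorems~\ref{large deviation} and~\ref{local bound}. One phrasing caveat: the conditioning on $\{N=n\}$ \emph{does} perturb the full joint law (total variation of the whole multiplicity vector does not vanish), so the last step should be stated as the exponential deviation bound for the scaled height being $o\bigl(\Pr(N=n)\bigr)$, which preserves the finite-dimensional limit-shape statistic, rather than as the conditioning leaving the joint law unchanged.
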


One can also place certain types of restrictions on the part sizes in an integer partition, and ask whether or not a limit shape exists.
We assume the part sizes are restricted to a set $U = \{u_1, u_2, \ldots\}$, where $u_k \sim B\ts k^r$ for some $r \geq 1$ and $B>0$ (subject to other technical assumptions, see~\S\ref{credit}).
Then, analogously as before, most partitions of size~$n$ are close to a certain curve, which satisfies
\[
\frac{y'}{n^{\frac{1}{1+r}}} \. = \.
\frac{\bigl(x \ts n^{\frac{-r}{1+r}}\bigr)^{\frac{1}{r}-1}}{r\ts B^{\frac{1}{r}}}\. \cdot \.
\frac{e^{-c\, \bigl(x \ts n^{\frac{-r}{1+r}}\bigr)}}{1-e^{-c\, \bigl(x \ts n^{\frac{-r}{1+r}}\bigr)}}\, , \quad \text{where} \quad x>0,\ y>0,
\]
and $c$ is a constant which makes the area under the curve equal to one.
In this case, the scaling of the $x$-axis is by $n^{\frac{r}{1+r}}$ and the scaling of the $y$-axis is by $n^{\frac{1}{1+r}}$.

\begin{theorem}[See~\S \ref{history}]\label{rB:theorem}
Let $y = \Phi_{r,B}(x)$ denote the limit shape of integer partitions with part sizes restricted to $u_1, u_2, \ldots$, with $u_k \sim B\ts k^r$ for some $r \geq 1$ and $B>0$.
Denote by $y'$ the derivative of $y$ with respect to $x$.  Then:
\begin{equation}\label{eq:unrestricted}
y' \. = \. \frac{x^{\frac{1}{r}-1}}{r\ts B^{\frac{1}{r}}}\. \cdot \. \frac{e^{-c\, x}}{1-e^{-c\, x}}, \qquad x>0,
\end{equation}
where $c$ is given by equation~\eqref{Uber c}.
\end{theorem}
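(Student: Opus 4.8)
The plan is to realize the uniform measure on these restricted partitions by a Boltzmann (Fristedt-type) model and to compute the \emph{weak} limit shape --- the curve traced by the expected diagram --- which by the equivalence of ensembles established in Section~\ref{sect prob} coincides with the limit shape in probability. Concretely, fix a tilting parameter $\theta = \theta(n) > 0$ and let the multiplicity $X_k$ of the admissible part $u_k$ be independent geometric variables with $\p(X_k = j) = (1 - e^{-\theta u_k})\, e^{-\theta u_k j}$; since the joint weight of a partition is then proportional to $e^{-\theta |\lambda|}$, the resulting random partition conditioned on its size being exactly $n$ is uniform on partitions of $n$ with parts in $U$. Each multiplicity has mean $\e\, X_k = e^{-\theta u_k}/(1 - e^{-\theta u_k})$, so the expected number of parts of size at most $t$, counted with multiplicity, is $\sum_{k\,:\,u_k \le t} e^{-\theta u_k}/(1-e^{-\theta u_k})$.

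First I would calibrate $\theta$ by the size constraint $\e\bigl[\sum_k u_k X_k\bigr] = n$, i.e.
\begin{equation*}
n \. = \. \sum_{k} \frac{u_k\, e^{-\theta u_k}}{1 - e^{-\theta u_k}}\..
\end{equation*}
Using $u_k \sim B k^r$, the number of admissible values not exceeding $u$ is $\sim (u/B)^{1/r}$, whose density in $u$ is $u^{1/r-1}/(r B^{1/r})$; replacing the sum by the corresponding integral and substituting $v = \theta u$ turns the right-hand side into $(rB^{1/r}\theta^{1+1/r})^{-1}\int_0^\infty v^{1/r} e^{-v}(1-e^{-v})^{-1}\,dv = \Gamma(1+\tfrac1r)\,\zeta(1+\tfrac1r)\,/\,(rB^{1/r}\theta^{1+1/r})$. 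Setting this equal to $n$ forces $\theta \sim c\, n^{-r/(1+r)}$, where $c$ is precisely the constant of \eqref{Uber c}; in particular the product $c = \theta\, n^{r/(1+r)}$ is asymptotically constant, which is what pins the part-size scaling to $n^{r/(1+r)}$ and the count scaling to $n^{1/(1+r)}$.

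The shape itself comes from the same density. The expected number of parts of size in $[u, u+du]$, counted with multiplicity, equals $\rho(u)\,du$ with $\rho(u) = \frac{u^{1/r-1}}{rB^{1/r}}\cdot\frac{e^{-\theta u}}{1-e^{-\theta u}}$, the product of the density of admissible values and the mean multiplicity. Rescaling the part-size axis by $L := n^{r/(1+r)}$ and the count axis by $M := n^{1/(1+r)}$, I set $u = L x$, so that $\theta u \to c x$. The rescaled density is then $\frac{L}{M}\,\rho(Lx) = \frac{L^{1/r}}{M}\cdot\frac{x^{1/r-1}}{rB^{1/r}}\cdot\frac{e^{-cx}}{1-e^{-cx}}$, and since $L^{1/r} = M$ the prefactor is $1$. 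This yields exactly the asserted $y'$ in \eqref{eq:unrestricted}, and integrating $x\,y'(x)$ over $(0,\infty)$ returns the size constraint in the form of an area-one normalization, reconfirming that $c$ is the constant of \eqref{Uber c}.

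The main obstacle is analytic control of the passage from the discrete sums to the integrals. Two features need care: first, $u_k$ is only asymptotic to $B k^r$, so I must show that the difference between $\sum_k f(u_k)$ and the integral of $f$ against the value-density is negligible after scaling (Euler--Maclaurin together with the technical hypotheses of \S\ref{credit} on the regularity of $U$ should suffice); and second, since $r \ge 1$ the integrand $\rho(u)$ is non-integrable near $u = 0$, reflecting an abundance of small parts, so the cumulative profile itself diverges in the limit and only the density $y'$ is canonically finite --- this is exactly why the theorem is stated through $y'$. The sum-to-integral comparison must therefore be localized away from the origin, with the low-order terms estimated separately so that they do not disturb the scaled density on any interval $x \ge \delta > 0$. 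Once the weak shape is established uniformly on compact subsets of $(0,\infty)$, the upgrade to convergence in probability is automatic from the equivalence of ensembles of Section~\ref{sect prob}, so no independent concentration estimate is needed here.
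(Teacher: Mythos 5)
Your proposal is correct and essentially reproduces the paper's own argument: the paper establishes this statement (in its integral form, Theorem~\ref{special unrestricted}, proved in \S\ref{proof special distinct}) via the same Fristedt-type tilted model with $x(n)=\exp(-c/\a)$ and $c=d(r,B)$, computing $\E\bigl[\Shat_x(t)\bigr]$ as a Riemann sum converging to the integral of the right-hand side of~\eqref{eq:unrestricted}, and then upgrading to convergence in probability by the equivalence of ensembles (Theorem~\ref{distribution is sufficient}). Your explicit calibration of the tilting parameter, and your caveats about the sum-to-integral passage and the divergence at the origin, are precisely the details the paper absorbs into the definition~\eqref{Uber c} and into stating the limit shape only at continuity points $t>0$.
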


A similar generalization holds for partitions into distinct parts from a set $u_1, u_2, \ldots$\ .
An alternative parameterization is by Andrews~\cite{Andrews}, who defined the restrictions on part sizes of an integer partition by a sequence $(a_1, a_2, \ldots)$, where the multiplicity of the parts of size$~i$ is restricted to be strictly less than $a_i$.
For example, the set of unrestricted partitions corresponds to the sequence $(\infty, \infty, \ldots)$, and partitions into distinct parts corresponds to $(2,2,\ldots)$.
When parts sizes are restricted to $u_1, u_2, \ldots$\. , this corresponds to the sequence $a_{u_i} = \infty$, $i \geq 1$, and $a_j = 1$ for $j \notin \{u_1, u_2, \ldots \}$.

There are, however, certain restrictions on integer partitions for which a limit shape does not exist.
For example, if we consider the sequence $a_{2^i} = 2$, $i \geq 1$, and $a_j = 0$ if $j$ is not a power of 2, then there is exactly one integer partition for each $n$; namely, the base-2 representation of the number $n$.
For another example, let $a_i = \infty$ for $i=1,\ldots,k$, and $a_i = 1$ for $i > k$ for some positive $k$.
This sequence corresponds to partitions of $n$ with largest part at most $k$.
For $k$ fixed, the number of such partitions only grows polynomially with $n$, and there is no limit shape.
In contrast, the case  $k = t\sqrt{n}$ was handled by Romik~\cite{Romik}, as well as Vershik and Yakubovich~\cite{VershikYakubovich}, and the limit shape does indeed exist.

There are other classes of partitions which cannot be described by Andrews's parameterization.
In particular, for the example of convex partitions described above, it is not immediately
clear whether or not a limit shape exists, or if it does, what is the proper scaling.
Nevertheless, we show in~\S\ref{convex} that indeed the limit shape of convex partitions
exists and demonstrate how to compute it.

\subsection{New results}
The goal of this paper is to obtain new limit shapes for integer partitions with restrictions which do not fit into the usual framework described above.
The form of the restrictions given by Andrews is a subset of restrictions called \emph{multiplicative restrictions} by Vershik~\cite{Vershik}, owing to the form of the generating function as a product of terms; it also refers to the fact that under the weak form of the limit shape, part sizes are treated as independent, and so probabilities factor according to part sizes.

Our first example is an application of the various geometric transformations defined in~\cite{PakNature}.

\begin{cor}
Let $\mathcal{L}$ denote the set of partitions $\mu$ with consecutive parts $\mu_i - \mu_{i-1} \geq 2$ for even part sizes $\mu_i$, and $\mu_i - \mu_{i-1} \geq 4$ for all odd part sizes $\mu_i$, $i \geq 1.$
Let $m(x)$ denote the limit shape of~$\mathcal{L}$, and let $w = e^{\frac{\pi}{2} m(x)}$, and $u = e^{\frac{\pi}{4}x}$.  Then the limit shape satisfies
\[ u \. = \, \frac{w+1}{w^2-w}\. . \]
\end{cor}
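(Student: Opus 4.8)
The plan is to realize $\mathcal{L}$ as the image of a \emph{multiplicatively restricted} (free) ensemble $\mathcal{F}$ — one to which the independent Fristedt-type model and the Riemann-sum computation of Section~\ref{sect prob} apply directly — under a composition of the elementary geometric bijections of~\cite{PakNature}, and then to transport the known limit shape through that composition. Concretely, I would (i) write down an explicit bijection $\Theta$ from $\mathcal{F}$ onto $\mathcal{L}$ as a product of the basic moves (conjugation, staircase shift, and a $2$-modular dilation); (ii) record how each basic move acts on a limiting curve in the plane, using that these maps are continuous on limit shapes; and (iii) push the limit shape of $\mathcal{F}$ through $\Theta$ and simplify. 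Since the paper establishes the equivalence of the weak and strong notions of the limit shape, it suffices to compute the shape in expectation in the free ensemble and then apply the (continuous) coordinate changes.

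The essential difficulty is the parity-dependent gap condition. A uniform gap $\mu_i-\mu_{i-1}\ge g$ is removed by subtracting a single staircase $g\,(\dots,2,1,0)$, turning $\mathcal{L}$ into an ordinary partition class; but here the minimal gap alternates between $2$ and $4$ according to the parity of $\mu_i$, and the parities of the parts are themselves part of the unknown, so no fixed staircase linearizes the condition. My approach is to split the Young diagram into its even and odd parts (a $2$-modular decomposition), apply the appropriate staircase to each strand, and then reassemble. The doubling inherent in this mod-$2$ step is, I expect, what produces the factor-of-two asymmetry between the two axes, i.e.\ the constants $\tfrac{\pi}{2}$ (vertical) and $\tfrac{\pi}{4}$ (horizontal) rather than a single constant as in Theorem~\ref{unrestricted:limit:shape}.

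With the bijection in hand, I would compute in $\mathcal{F}$ as usual: the expected scaled profile is a Riemann sum whose limit is governed by a Bose/Fermi-type factor, exactly as the slope $-1/(e^{cx}-1)$ arises for $\Phi$ and $-1/(e^{dx}+1)$ for $\Psi$. Transporting this through $\Theta$ — conjugation replacing the curve by its functional inverse, the staircase shift acting as a linear shear, and the $2$-modular step rescaling one axis — should yield a relation of the form $e^{\pi x/4}=e^{-\pi m/2}\coth(\pi m/4)$. Writing $w=e^{\pi m/2}$ and $u=e^{\pi x/4}$ and using $\coth(\pi m/4)=(w+1)/(w-1)$ then collapses this transcendental relation to the rational identity $u=(w+1)/(w^2-w)$, which is the claim; the normalization constant hidden in the scaling is fixed, exactly as for the constant $c$ in~\eqref{phi}, by demanding that the area under the limiting curve equal one.

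The step I expect to be the main obstacle is the $2$-modular bijection itself: verifying that the split-shift-reassemble map is a genuine bijection onto $\mathcal{L}$ — that the two staircases interact correctly and that the reassembled parts satisfy exactly the gaps $\ge2$ (even) and $\ge4$ (odd) — and that it is continuous at the level of limit shapes, so that convergence in probability transports. A secondary but unavoidable check is the bookkeeping of the normalization: one must confirm that the area constraint selects precisely the constants $\tfrac{\pi}{2}$ and $\tfrac{\pi}{4}$, and that the Riemann-sum computation in $\mathcal{F}$ is valid under the chosen $\sqrt{n}\times\sqrt{n}$ scaling.
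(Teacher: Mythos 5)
Your overall strategy---transport a known multiplicative limit shape through an asymptotically stable chain of geometric moves, using the equivalence of ensembles to justify computing in expectation---is exactly the paper's philosophy, and your predicted endpoint is right: the paper derives $m^{-1}(x)=\frac{4}{\pi}\ln\bigl(\frac{e^{-\pi x/2}+1}{e^{\pi x/2}-1}\bigr)$, which is your relation $e^{\pi x/4}=e^{-\pi m/2}\coth(\pi m/4)$, and it collapses to $u=(w+1)/(w^2-w)$ as you say. But there is a genuine gap at the center: the bijection $\Theta$ is never constructed, and the specific construction you sketch---split the diagram by parity of the parts, subtract a slope-$2$ staircase from the even strand and a slope-$4$ staircase from the odd strand, then reassemble---does not give a bijection onto $\mathcal{L}$. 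The defining conditions of $\mathcal{L}$ couple consecutive parts \emph{across} parities: an odd $\mu_i$ must exceed its neighbor by at least $4$ whatever that neighbor's parity, and an even $\mu_i$ by at least $2$. Once you separate the strands this interleaving information is lost, and independently de-staircased strands re-interleave in ways that need not satisfy (and cannot encode) the mixed-parity gaps; so ``split--shift--reassemble'' is neither well defined nor bijective as stated. This is exactly why the known bijective route is more delicate.

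What the paper actually does (\S\ref{Lebesgue}, proving Theorem~\ref{Lebesgue:theorem} with $k=2$, $\ell=1$) differs from your plan in two essential respects. First, the multiplicative source is identified concretely: $\mathcal{A}$, distinct parts congruent to $0,1,2 \pmod 4$, whose limit shape $s(x)=\frac{3}{\pi}\ln(1+e^{-\pi x/4})$ comes from Theorem~\ref{special distinct}---so the horizontal constant $\pi/4$ is present from the outset rather than produced by a mod-$2$ dilation---and the map is Bressoud's bijection, whose first cut separates the parts divisible by three (asymptotically one third of them), with the second split of the remainder taken at the threshold $2\ell(\gamma)$; the length of one piece enters the definition of the other, which is precisely the coupling your independent staircases discard. (This cut is not among the moves of Theorem~\ref{transformations} and its stability needs the separate probabilistic argument of Remark~\ref{stable:remark}, a point your appeal to continuity glosses over.) Second, the chain contains a sort/union step, $t^{-1}=d^{-1}+e^{-1}$, adding a Bose-type curve $-\frac{2}{\pi}\ln\bigl(1-e^{-\frac{\pi}{4}(x+x_0)}\bigr)$ to a Fermi-type curve $\frac{2}{\pi}\ln\bigl(1+e^{-\frac{\pi}{4}(x+x_0)}\bigr)$; it is exactly this merging of two distinct curves that yields $\frac{2}{\pi}\ln\coth\bigl(\frac{\pi}{8}(x+x_0)\bigr)$ and hence the $\coth$ in your target relation. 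Your toolkit in step (ii)---conjugation, staircase shear, modular dilation---acts on a single curve and contributes only linear terms and single factors $\bigl(1\pm e^{-c\,\cdot}\bigr)^{\pm 1}$ upon exponentiation, so without a union-type move the chain you describe cannot reach $u=(w+1)/(w^2-w)$. To repair the proposal, replace your $\Theta$ by Bressoud's map and add the union operation to your list of tracked transformations.
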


Let $\mathcal{R}$ denote the set of partitions into distinct parts which are congruent to~$0, 1,$ or~$2$ modulo~$4$.
We demonstrate in Section~\ref{Lebesgue} how to apply these transformations to obtain the limit shape of~$\mathcal{L}$.
Moreover, once we have the limit shape, certain statistics immediately follow.
\begin{cor}\label{number:of:parts:cor}
Let $k_n$ denote the number of parts in a random partition of size~$n$ in $\mathcal{L}$.
We have
\begin{equation}\label{number:of:parts}
\qquad \qquad   \frac{k_n}{\sqrt{n}} \, \longrightarrow_P \, \ts \frac{2 \log\left(1+\sqrt{2}\right)}{\pi} \, = \. 0.561099852 \ldots \. ,
\end{equation}
where $\ts\rightarrow_P\ts$ denotes convergence in probability.
\end{cor}

The Durfee square of a partition $\lambda$ is the number of $\lambda_j$ such that $\lambda_j \geq j$.
It is a well-known statistic which plays a fundamental role in some of the bijections exploited in this paper, and is also described as the largest square which can fit entirely inside the Young diagram of an integer partition.
\begin{cor}\label{durfee:square:cor}
Let $\delta_n$ denote the size of the largest Durfee square in a random partition of size~$n$ in $\mathcal{L}$.
Let \. $y_\circ=4.171195932\ldots$ \ts denote the real--valued solution to
\[ -1+2y-9y^2-7y^3-2y^4+y^5 \ts = \ts 0\ts .
\]
Then we have:
\begin{equation}\label{durfee:square}
\qquad  \frac{\delta_n}{\sqrt{n}}\. \ts \longrightarrow_P \.\ts
\frac{4}{\pi } \. \log \ts\frac{1}{14} \bigl(5-30 y_\circ-24 y_\circ^2-9 y_\circ^3+4 y_\circ^4\bigr)
\, = \. 0.454611067 \ldots \.,
\end{equation}
where $\ts\rightarrow_P\ts$ denotes convergence in probability.
\end{cor}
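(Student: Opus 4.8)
The plan is to identify the rescaled Durfee square size with the point where the limit shape $m$ of $\mathcal{L}$ crosses the main diagonal, and then to solve for that point. Recall that the Durfee square of a partition $\mu$ is the largest $d$ with $\mu_d \geq d$, so that its corner sits exactly where the boundary of the Young diagram of $\mu$ meets the line $y=x$. Since $y=m(x)$ describes the boundary of a typical diagram in $\mathcal{L}$ after scaling both axes by $\sqrt n$, I expect $\delta_n/\sqrt n\longrightarrow_P x_\ast$, where $x_\ast>0$ is the unique solution of $m(x_\ast)=x_\ast$. Two checks support this recipe: applied to the classical shape $\Phi$ of Theorem~\ref{unrestricted:limit:shape} it returns the known Durfee value $\frac{\sqrt 6\,\log 2}{\pi}$, and in the degenerate limit $x\to 0$ it returns $m(0)=\frac{2}{\pi}\log(1+\sqrt 2)$, matching the number of parts in Corollary~\ref{number:of:parts:cor}.

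First I would turn this geometric identification into a statement about convergence in probability. Writing $N_j$ for the number of parts of $\mu$ of size at least $j$, the Durfee square satisfies the clean identity $\{\delta_n\ge d\}=\{N_d\ge d\}$, and $j\mapsto N_j$ is nonincreasing. The limit shape result of Section~\ref{Lebesgue}, together with the equivalence of ensembles of Section~\ref{sect prob}, gives concentration of $N_{\lfloor t\sqrt n\rfloor}/\sqrt n$ about its deterministic limit for each fixed $t>0$. Because $m$ is continuous and strictly decreasing, the curve crosses the diagonal transversally at the single point $x_\ast$, so that for $t<x_\ast$ one has $\p(N_{\lfloor t\sqrt n\rfloor}\ge t\sqrt n)\to 1$, while for $t>x_\ast$ the complementary probability tends to $1$; these two statements are precisely $\delta_n/\sqrt n\longrightarrow_P x_\ast$. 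This upgrading of the limit shape to control of the single crossing statistic is the only genuinely delicate step, and the transversality afforded by the strict monotonicity of $m$ is what makes it go through.

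It then remains to compute $x_\ast$. On the diagonal $m(x)=x$ we have $w=e^{\frac{\pi}{2}x}=u^2$, where $u=e^{\frac{\pi}{4}x}$, so substituting $w=u^2$ into the limit-shape relation $u=\frac{w+1}{w^2-w}$ for $\mathcal{L}$ established above gives
\[
u^5-u^3-u^2-1=0 .
\]
On $(1,\infty)$ the left-hand side increases strictly from $-2$ at $u=1$ to $+\infty$, so there is a unique root $u_\ast>1$, corresponding to the unique $x_\ast>0$. Putting $y=u^4=e^{\pi x_\ast}$ converts this into the quintic $-1+2y-9y^2-7y^3-2y^4+y^5=0$ of the statement, whose relevant root is $y_\circ=4.171\ldots$; since $\mathbb{Q}(y_\circ)=\mathbb{Q}(u_\ast)$ has degree $5$, one may write $u_\ast=y_\circ^{1/4}$ as a polynomial in $y_\circ$, namely $u_\ast=\frac{1}{14}\bigl(5-30y_\circ-24y_\circ^2-9y_\circ^3+4y_\circ^4\bigr)$, whence $\delta_n/\sqrt n\longrightarrow_P \frac{4}{\pi}\log u_\ast$, the claimed limit. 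The algebra of this last paragraph, though it passes through a quintic, is routine and can be handled by a computer algebra system; the substance of the argument is the concentration step above.
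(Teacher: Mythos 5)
Your proposal is correct and follows essentially the same route as the paper, which obtains the corollary by solving for the fixed point of the limit shape (the paper solves $m^{-1}(x)=x$ using equation~\eqref{minv}, which after the substitution $u=e^{\pi x/4}$ yields exactly your quintic $u^5-u^3-u^2-1=0$, equivalent under $y=u^4$ to the stated quintic in $y_\circ$). Your intermediate concentration argument via $\{\delta_n\ge d\}=\{N_d\ge d\}$ and the transversal crossing of the diagonal is a correct justification of a step the paper leaves implicit.
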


There are, however, certain bijections which are not covered by Pak's geometric transformation approach.
Recall the example of convex partitions, which are partitions which satisfy $\lambda_1 - \lambda_2 \geq \lambda_2 - \lambda_3 \geq \ldots$ \ts
Let $\mathcal{C}_n$ denote the set of convex partitions of size~$n$.
Let $\mathcal{T}_n$ denote the set of partitions of size~$n$ with parts in the set $u_k = \binom{k+1}{2}$, $k \ge 1$, with unrestricted multiplicities.
\begin{theorem}[Andrews~\cite{AndrewsII}]\label{Andrews:convex}
We have $|\mathcal{T}_n| = |\mathcal{C}_n|$ for all $n \geq 1$.
\end{theorem}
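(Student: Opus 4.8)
The plan is to prove the equality $|\mathcal{T}_n| = |\mathcal{C}_n|$ via generating functions, or else by constructing an explicit bijection; I would pursue the generating-function route first as the cleaner path, then sketch the bijective interpretation. First I would write down the generating function for $\mathcal{T}_n$. Since a partition in $\mathcal{T}_n$ uses parts $u_k = \binom{k+1}{2} = \frac{k(k+1)}{2}$ with unrestricted multiplicities, its generating function is
\[
\sum_{n \geq 0} |\mathcal{T}_n|\ts q^n \. = \. \prod_{k \geq 1} \frac{1}{1 - q^{\binom{k+1}{2}}}\, .
\]
The harder side is $\mathcal{C}_n$. A convex partition $\lambda_1 \geq \lambda_2 \geq \ldots$ with $\lambda_1 - \lambda_2 \geq \lambda_2 - \lambda_3 \geq \ldots \geq 0$ is determined by its sequence of \emph{second differences}. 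If I set $d_i = \lambda_i - \lambda_{i+1}$ (with $d_\ell = \lambda_\ell$ for the last part), then convexity says $d_1 \geq d_2 \geq \ldots \geq 0$, so the differences themselves form a partition. The key step is to reparametrize: writing each $\lambda_i$ as a telescoping sum of the $d_j$ and summing $n = \sum_i \lambda_i$, I would express $n$ as a nonnegative integer combination of the quantities $\binom{k+1}{2}$, since $\sum_{i=1}^{k} i = \binom{k+1}{2}$ is exactly the coefficient with which a difference $d_k$ that persists for $k$ steps contributes.

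More precisely, the main computation is to show that if a convex partition has differences $d_1 \geq d_2 \geq \ldots$, and I record the multiplicities $c_k = \#\{j : \text{the } k\text{-th largest distinct difference pattern}\}$ appropriately, then the total size $n$ decomposes as $\sum_k c_k \binom{k+1}{2}$ with $c_k \geq 0$ arbitrary. This would match the generating function above term by term and establish the identity. The cleanest way to see this is to observe that a convex partition of $n$ is equivalent to a multiset of ``staircase'' building blocks $1, 1+2, 1+2+3, \ldots$, i.e.\ triangular-number blocks $\binom{k+1}{2}$, stacked so that convexity is automatically preserved; each choice of how many blocks of each type gives a distinct convex partition and conversely.

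The main obstacle I anticipate is making the bijection between ``how the second differences repeat'' and ``the multiset of triangular parts'' fully rigorous, especially bookkeeping the boundary term $d_\ell = \lambda_\ell$ and ensuring the correspondence is genuinely a bijection rather than merely a surjection or a count-matching. Concretely, I must verify that summing a difference $d$ that equals the $j$-th through $\ell$-th increments contributes a weight that is a sum of consecutive integers, hence a single triangular number $\binom{k+1}{2}$ scaled by the magnitude of that difference, and that distinct convex partitions map to distinct multisets. Once the size-preserving correspondence $\lambda \leftrightarrow (c_1, c_2, \ldots)$ with $n = \sum_k c_k \binom{k+1}{2}$ is pinned down, both the generating-function identity and the bijective statement follow immediately, giving $|\mathcal{T}_n| = |\mathcal{C}_n|$ for all $n \geq 1$.
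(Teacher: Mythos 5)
Your proposal is correct and takes essentially the same route as the paper: the staircase-block decomposition you describe is precisely Andrews's bijection $\varphi_{\bf v}$ displayed in equation~\eqref{convex bijection}, sending $b_k$ parts of size $\binom{k+1}{2}$ to the convex partition with $\mu_i = \sum_{j \geq i} (j-i+1)\, b_j$, whose second differences recover the multiplicities $b_j$ uniquely. Your boundary convention $d_\ell = \lambda_\ell$ is exactly the right bookkeeping, since the paper's convexity chain ends with $\lambda_{\ell-1} - \lambda_\ell \geq \lambda_\ell > 0$, and the generating-function framing is just a repackaging of this same correspondence.
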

The limit shape of $\mathcal{T}$ is a special case of Theorem~\ref{rB:theorem}, using $r=2$ and $B = \frac{1}{2}$.
It is thus natural to ask if it is possible to obtain the limit shape of $\mathcal{C}$ from the limit shape of $\mathcal{T}$.
The answer is affirmative, as witnessed in Section~\ref{sect r}, \emph{due to the form of the bijection}.
In fact, Corollary~\ref{convex:corollary} below is a special case of a much larger class of partition bijections which map limit shapes continuously, which we describe in \S\ref{transfer:theorems}.

\begin{cor}\label{convex:corollary}
Let $T(x)$ denote the limit shape of~$\mathcal{T}$, and let $C(x)$ denote the the limit shape of~$\mathcal{C}$.  Then we have
\begin{equation}\label{convex:relation}
e^{2\, C^{(-1)}(x)}\,\left(e^{\frac{1}{2}x^2} - 1\right) =\, e^{-2\ts x\, T\left(\frac{x^2}{2}\right)}, \qquad x>0,
\end{equation}
where $C^{(-1)}$ is the compositional inverse of $C$.
\end{cor}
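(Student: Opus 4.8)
The plan is to realize Andrews's equivalence (Theorem~\ref{Andrews:convex}) as an explicit bijection and then transport the known limit shape of $\mathcal{T}$ (Theorem~\ref{rB:theorem} with $r=2$, $B=\tfrac{1}{2}$) through it. First I would make the bijection concrete by passing to differences: for a convex partition $\lambda$ set $d_i=\lambda_i-\lambda_{i+1}$ and $e_i=d_i-d_{i+1}=\lambda_i-2\lambda_{i+1}+\lambda_{i+2}$. Convexity is exactly the statement that all $e_i\ge 0$, and since $\sum_i \lambda_i=\sum_k \binom{k+1}{2}\,e_k$, the sequence $(e_k)$ is precisely the multiplicity vector of a partition in $\mathcal{T}$ with parts $u_k=\binom{k+1}{2}$. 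This produces the dictionary $d_k=\sum_{j\ge k}e_j$, which is the number of $\mathcal{T}$-parts of size $\ge u_k$, together with $\lambda_i=\sum_{j\ge i}d_j$; these are the two relations I will carry to the limit.

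Next I would invoke the equivalence of ensembles (\S\ref{sect prob}) to replace the uniform measure on $\mathcal{T}_n$ by the Boltzmann model, under which the multiplicities $e_k$ are independent geometric variables with $\mathbb{E}\,e_k=q^{u_k}/(1-q^{u_k})$, $q=e^{-\epsilon}$ and $\epsilon\asymp n^{-2/3}$. In these coordinates the $\mathcal{T}$-diagram has part sizes scaled by $n^{2/3}$ and heights scaled by $n^{1/3}$, and its boundary converges to $\eta=T(\xi)$ with $T=\Phi_{2,1/2}$ as in \eqref{eq:unrestricted}. Writing $x=k/n^{1/3}$ for the shared index, the first dictionary relation becomes $d_k/n^{1/3}\to T(x^2/2)$, since $u_k/n^{2/3}\to x^2/2$; meanwhile the reciprocal mean $1/\mathbb{E}\,e_k=e^{\epsilon u_k}-1$ is exactly what produces the factor $e^{x^2/2}-1$ once the constant $c$ is normalized correctly.

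Transporting the second relation $\lambda_i=\sum_{j\ge i}d_j$ then yields the limit shape of $\mathcal{C}$: summation against the grading $u_k$ converts the height profile $T$ into the convex profile, and this is the content of the transfer theorem of \S\ref{transfer:theorems} specialized to this $r$-type bijection. Concretely, I would show that the scaled convex profile is the image of $\eta=T(\xi)$ under the planar map dictated by $d_k=\sum_{j\ge k}e_j$ and $u_k=\binom{k+1}{2}$, and then exponentiate this additive partial-sum relation in the paper's coordinates. Doing so turns the additive statement into the multiplicative identity \eqref{convex:relation}, in which $e^{-2x\,T(x^2/2)}$ records the transported $\mathcal{T}$-height, $e^{x^2/2}-1$ records the geometric weight, and $e^{2C^{(-1)}(x)}$ records the convex profile through its inverse parametrization along the common parameter $x$.

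The main obstacle will be the simultaneous bookkeeping of the two different scalings and the precise value of the constant $c$ from \eqref{eq:unrestricted}: the convex profile is genuinely unbounded, since the largest part grows like $n^{2/3}\log n$ and hence $C(0^{+})=\infty$, so I must verify that the blow-up as $x\to 0^{+}$ is integrable and that the transfer map is a homeomorphism, which is what makes $C^{(-1)}$ well defined. The analytic core is a concentration estimate showing that the random convex profile is uniformly close to its Boltzmann mean on compact $x$-intervals bounded away from $0$, together with a separate argument controlling the contribution of the few largest parts near $x=0$; granting these, \eqref{convex:relation} follows from the change of variables described above.
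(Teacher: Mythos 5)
Your proposal is correct and takes essentially the same route as the paper: the paper likewise realizes Andrews's bijection as the explicit linear (\textsf{MP}) map $\mu_i=\sum_{j\geq i}(j-i+1)\,b_j$ on multiplicities of triangular parts, computes the expectation under the tilted independent-geometric model with the two scalings $\alpha=n^{2/3}$, $\beta=n^{1/3}$ to obtain $C^{(-1)}(x)=\int_x^\infty (y-x)\,\frac{e^{-c\,y^2/2}}{1-e^{-c\,y^2/2}}\,dy$ as a Riemann-sum limit (equation~\eqref{convex:limit:shape} in \S\ref{convex}, the $r=2$ case of Theorem~\ref{nonneg} and Lemma~\ref{rth lemma}), and upgrades weak to strong convergence by transferring concentration through the Markov-operator/contraction-principle machinery of \S\ref{concentration}, exactly as you propose. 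Your intermediate step $d_k/n^{1/3}\to T(x^2/2)$, i.e.\ $\bigl(C^{(-1)}\bigr)'(x)=-T(x^2/2)$, is just an integration by parts of the paper's kernel $(y-x)$, and the passage to the multiplicative identity \eqref{convex:relation} is then the closed-form evaluation $\int_x^\infty \frac{y\,dy}{e^{c\,y^2/2}-1}=-\frac{1}{c}\log\bigl(1-e^{-c\,x^2/2}\bigr)$ together with the boundary term $-x\,T(x^2/2)$ --- a computation your ``exponentiate the additive relation'' step compresses but correctly anticipates.
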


Romik's unpublished manuscript~\cite{RomikUnpublished} contains an example which uses a single geometric transformation to obtain the limit shape of the set of partitions whose parts differ by at least 2.
It is a specialization of Pak's geometric transformations, and requires a further refinement, in that it maps partitions of size~$n$ with exactly $k$ parts.

\begin{cor}[cf.~\S \ref{Romik:example}]
Let $\mathcal{B}$ denote the set of partitions whose parts differ by at least 2, and denote its limit shape by $B(x)$.  For $x>0$, let $w = e^{c\ts B(x)}$, $u = e^{-c\ts x}$, and $v = e^{c\ts\gamma}$,  where $\gamma = \frac{1+\sqrt{5}}{2}$, and $c$ is a normalizing constant.  Then, the limit shape satisfies
\[ u \. = \,
\frac{v^2 - v}{w^2-w}\..
\]
\end{cor}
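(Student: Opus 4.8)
The plan is to realize $\B$ through the classical Rogers--Ramanujan staircase bijection and then transport a known limit shape across it using the transfer machinery of \S\ref{transfer:theorems}, the essential twist being that the number of parts must be carried along as a separate parameter. First I would fix the bijection: a partition $\la\in\B$ with exactly $k$ parts is written uniquely as
\[
\la_i \ = \ \mu_i \ + \ \bigl(2(k-i)+1\bigr), \qquad 1\le i\le k,
\]
where $\mu$ is an ordinary partition with at most $k$ parts, so that $|\la| = |\mu| + k^2$. Geometrically this adds the odd staircase to the diagram of $\mu$, a shear of exactly the kind handled by the transfer theorems; the one new feature, as noted in \S\ref{Romik:example}, is that the shift depends on $k$, so the statement must be proved for each value of $k$ and then combined.

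Second I would locate the typical number of parts. Writing $\sum_{\la\in\B}q^{|\la|} = \sum_{k\ge 0} q^{k^2}/(q;q)_k$ and setting $q = e^{-r}$ with $r\sim C/\sqrt n$, the term indexed by $k$ equals $\exp\bigl(-rk^2 - \sum_{j\le k}\log(1-q^j)\bigr)$, whose saddle-point condition in $k$ reads $\log(1-q^k) = -2rk$, i.e.\ $1 - e^{-rk} = e^{-2rk}$. Putting $z = e^{-rk}$ this becomes $z^2 + z - 1 = 0$, whose relevant root is $z = \tfrac{\sqrt5-1}{2} = 1/\gamma$. Hence $e^{-rk}\to 1/\gamma$, the scaled number of parts $\beta := k/\sqrt n$ concentrates at $\beta_\ast = (\log\gamma)/C$, and this is precisely the point at which $\gamma = \tfrac{1+\sqrt5}{2}$ enters. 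As a consistency check, the stated relation at $x=0$ gives $w^2-w = v^2-v$, forcing $w=v$, i.e.\ $B(0)=\gamma$, in agreement with this concentration once $c$ is the normalizing constant. A local central limit theorem and the equivalence of ensembles (Section~\ref{sect prob}) then upgrade this to convergence in probability.

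Third I would transport the shape. With $k$ pinned, $\mu$ is a uniformly random ordinary partition of size $\approx (1-\beta_\ast^2)n$ with at most $k$ parts, whose limit shape is the bounded--number--of--parts shape of Romik~\cite{Romik} and Vershik--Yakubovich~\cite{VershikYakubovich}. Applying the continuous shear (the scaled staircase, a shift proportional to twice the rank) to that shape via the transfer theorem yields $B$. In the exponential coordinates $u = e^{-c\,x}$, $w = e^{c\,B(x)}$ the additive shift by twice the rank becomes a quadratic relation in $w$, and clearing exponentials collapses everything to
\[
u \ = \ \frac{v^2 - v}{w^2 - w}\,,
\]
the quadratic in $w$ being the fingerprint of the difference-$2$, hence shift-by-$2$, condition, and $v = e^{c\,\gamma}$ recording the boundary value at $x=0$.

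The hard part will be the refinement by exact number of parts. Conditioning a random partition on having exactly $k$ parts is a codimension-one event, and one must show that the conditioned ensemble still converges to the sheared bounded-parts shape, uniformly over the window of $k$ around $\beta_\ast\sqrt n$, so that summing over $k$ reproduces a single deterministic limit shape rather than a mixture. Controlling the shear near the edge $x=0$, where the staircase contribution $2k$ is largest and the golden-ratio boundary value is attained, is the delicate point; once this uniformity and the local limit theorem are in hand, the passage to the rational relation above is a routine computation.
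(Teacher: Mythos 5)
Your proposal is correct and follows essentially the same route as the paper's own treatment in \S\ref{diff d}: the staircase bijection $\la_i \mapsto \la_i + d(\ell-i)$ with $d=2$ reducing to partitions with exactly $k$ parts (conjugately, largest part $\leq k$), Romik's bounded-parts shape with the dilogarithm constant, a re-tilt to account for the removed triangle of area $\sim \gamma^2 n/2$ (the paper's $w=\sqrt{1-\tfrac d2\gamma^2}$, your $(1-\beta_\ast^2)n$), the addition of the slope-$d$ triangle to the conjugate shape (Theorem~\ref{theorem diff d k}, equation~\eqref{diff d inv1}), and finally concentration of the number of parts at $\gamma\sqrt n$ to pin $z=\gamma$. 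The only cosmetic difference is that you rederive this concentration, and the appearance of the golden ratio via $e^{c\gamma}=\tfrac{1+\sqrt5}{2}$, by a saddle point on $\sum_k q^{k^2}/(q;q)_k$, where the paper simply cites Romik's proposition and formula~\eqref{gamma}; your uniformity-over-$k$ worry is handled there the same way, through the tilted exactly-$k$ ensemble and the equivalence of ensembles of Section~\ref{sect prob}.
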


\subsection{Contents of the paper}

The rest of the paper is organized as follows.
We start with three motivating examples: partitions with no consecutive parts, self-conjugate partitions, and convex partitions.
Then we make precise all of the necessary notions of limit shapes, bijections, and asymptotic stability in Section~\ref{sect notation}, and state the new limit shape theorems in Section~\ref{sect special cases}.

We then present several examples demonstrating the application of the theorems.
In Section~\ref{sect r}, we present a generalization of convex partitions.
Section~\ref{sect odd} presents various applications of the theorems to Euler's classical odd-distinct bijection, including a generalization due to Stanton.
Sections~\ref{Lebesgue}~and~\ref{diff d} demonstrate step-by-step how to apply Pak's geometric transformations in two important cases.
Section~\ref{sect further} contains examples where the number of summands is restricted.

Section~\ref{sect prob} contains the formal probabilistic setting and proofs of the main lemmas, including a large deviation principle.
Section~\ref{sect main} contains the proof of the main results.
Finally, Section~\ref{sect remarks} contains historical remarks, and Section~\ref{final:remarks} is a collection of final remarks.

\bigskip\section{Three motivating examples}
\label{sect examples}

\subsection{Partitions with no consecutive parts}
\label{Romik:example}

We first consider Romik's example, which demonstrates the utility of considering bijections between classes of partitions.

\begin{theorem}[\cite{RomikUnpublished}]\label{Romik:theorem}
Let $\mathcal{A}$ denote the set of partitions such that no parts differ by exactly 1, and there are no parts of size 1.
Let $a = \frac{\pi}{3}$, and let $A(x)$ denote the limit shape of $\mathcal{A}$.
Then we have
\[ A(x) = \frac{1}{2a}\log\left(\frac{1+e^{-a\,x}+\sqrt{1+2e^{-a\,x}-3e^{-2a\,x}}}{2(1-e^{-a\,x)}}\right), \quad \text{where} \quad x>0. \]
\end{theorem}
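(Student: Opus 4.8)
The plan is to use the \emph{conjugation} bijection to replace the non-local restriction defining $\mathcal{A}$ by a purely multiplicative one, and then read off the limit shape from an independent (Boltzmann) model, exactly in the spirit of transferring shapes through bijections between classes.

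First I would reformulate the constraints in terms of multiplicities $m_j(\lambda)$. The two conditions ``no part equals $1$'' and ``no two parts differ by exactly $1$'' say precisely that $m_1(\lambda)=0$ and that the support $\{j : m_j(\lambda)>0\}$ contains no two consecutive integers. Passing to the conjugate $\rho=\lambda'$, a value $j$ occurs in $\lambda$ iff $\rho$ strictly decreases at $j$, so these conditions translate into the single statement that every maximal run of equal values of $\rho$ has length at least $2$; equivalently, $\mathcal{A}$ is in size-preserving bijection with
\[
\mathcal{A}' \ = \ \bigl\{\rho \ : \ m_j(\rho)\neq 1 \ \text{ for all } j\geq 1\bigr\},
\]
the partitions in which every part that occurs, occurs at least twice. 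The point of this step is that $\mathcal{A}'$ carries a \emph{multiplicative} restriction, so its limit shape is accessible by the independent-variable model of Section~\ref{sect prob}.

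In that model, with $q=e^{-r/\sqrt{n}}$, the multiplicity of a part of size $s$ is an independent random variable on $\{0,2,3,\dots\}$ with $\Pr(M_s=j)\propto q^{sj}$. Writing $t=e^{-rx}$ for $s=x\sqrt{n}$, I would compute
\[
\e M_s \ = \ \frac{t^2(2-t)}{(1-t)(1-t+t^2)},
\]
and integrate to obtain the weak limit shape $y=G(x)$ of $\mathcal{A}'$, namely
\[
G(x)\ =\ \int_x^\infty \e M_{x'}\,dx'\ =\ \frac{1}{r}\,\log\frac{1-e^{-rx}+e^{-2rx}}{1-e^{-rx}},
\]
the integral being elementary after the partial-fraction identity $\tfrac{t(2-t)}{(1-t)(1-t+t^2)}=\tfrac{1}{1-t}+\tfrac{2t-1}{1-t+t^2}$. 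The constant $r$ is pinned down by the normalization that the area under $G$ equals $1$, which reduces to the dilogarithm evaluation
\[
\int_0^1 \frac{1}{t}\,\log\frac{1-t+t^2}{1-t}\,dt\ =\ \frac{\pi^2}{9},
\]
obtained from $\log(1-t+t^2)=\log(1+t^3)-\log(1+t)$ and the values $\mathrm{Li}_2(\pm1)$; this gives $r=\pi/3=a$.

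Finally, since conjugation reflects Young diagrams across the diagonal, the limit shape of $\mathcal{A}$ is the reflection of that of $\mathcal{A}'$, so $A=G^{-1}$. Inverting $y=G(x)$ amounts to solving a quadratic in $e^{-aA(x)}$, and a short simplification, using $\bigl(\tfrac{(1-u)+\sqrt{D}}{2(1-u)}\bigr)^2=\tfrac{(1+u)+\sqrt{D}}{2(1-u)}$ with $u=e^{-ax}$ and $D=1+2u-3u^2$, brings the answer into the stated closed form. I expect the main obstacle to lie not in any of these calculations but in the justification that the independent model genuinely computes the limit shape of $\mathcal{A}'$: one must invoke the equivalence of ensembles (that the weak shape coincides with the strong, in-probability shape) and the concentration needed to transport it back through conjugation. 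This is the content of the general lemmas of Section~\ref{sect prob}, and verifying their hypotheses for the multiplicative class $\mathcal{A}'$, in particular the required local central limit and large deviation estimates, is the one genuinely technical ingredient.
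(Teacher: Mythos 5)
Your route is conceptually the same as the paper's: conjugate $\mathcal{A}$ to the multiplicative class of partitions with no part of multiplicity exactly $1$ (your $\mathcal{A}'$ is the paper's $\mathcal{B}$, and your translation via $m_i(\lambda')=\lambda_i-\lambda_{i+1}$ is correct), compute the weak shape from the independent model, and invert. Where you genuinely go beyond the paper is the last step: the paper records only $B(x)=\frac{1}{a}\log\bigl(\frac{1-e^{-ax}+e^{-2ax}}{1-e^{-ax}}\bigr)$ and cites Romik's manuscript for the explicit $A=B^{-1}$, remarking that the inversion ``is not straightforward \ldots even with a computer algebra system''; you actually carry it out, and your computation checks. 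With $u=e^{-ax}$, $z=e^{-aA(x)}$, the relation $B(A(x))=x$ is the quadratic $uz^2+(1-u)z+(u-1)=0$, whose admissible root gives $A(x)=\frac{1}{a}\log\frac{(1-u)+\sqrt{D}}{2(1-u)}$ with $D=1+2u-3u^2=(1-u)(1+3u)$, and your squaring identity $\bigl(\frac{(1-u)+\sqrt{D}}{2(1-u)}\bigr)^2=\frac{(1+u)+\sqrt{D}}{2(1-u)}$ is valid (expand: the cross terms produce $2(1-u)\bigl[(1+u)+\sqrt{D}\bigr]$ over $4(1-u)^2$), converting your answer into the stated $\frac{1}{2a}$ form. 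Your normalization is also right: $\int_0^1 t^{-1}\log\frac{1-t+t^2}{1-t}\,dt=\frac{\pi^2}{36}-\frac{\pi^2}{12}+\frac{\pi^2}{6}=\frac{\pi^2}{9}$ via $\log(1-t+t^2)=\log(1+t^3)-\log(1+t)$, giving $a=\pi/3$, and your expected-multiplicity formula and partial fractions are correct. Your closing caveat is well placed and mirrors the paper's own posture: multiplicities restricted to $\{0,2,3,\ldots\}$ fall under the Canfield--Wilf generalization discussed in \S\ref{natural:generalization} rather than the smooth/restrictedly-smooth hypotheses of Theorems~\ref{special unrestricted} and~\ref{special distinct}, which is precisely why the paper states Lemma~2.3 with a ``cf.'' and attributes the theorem itself to \cite{RomikUnpublished}; so, like the paper, your argument is complete modulo the equivalence-of-ensembles and concentration estimates for this multiplicity class.
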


The \emph{existence} of the limit shape of $\mathcal{A}$ in Theorem~\ref{Romik:theorem} follows by considering the set of partitions which are conjugate to those in $\mathcal{A}$.
\begin{lemma}
Let $\mathcal{A}_n$ denote the set of partitions of size~$n$ such that no parts differ by exactly 1, and there are no parts of size 1.
Let $\mathcal{B}_n$ denote the set of partitions of size~$n$ such that no part has multiplicity 1.
The conjugation map gives a bijection between partitions $\lambda \in \mathcal{A}_n$ and $\mu \in \mathcal{B}_n$ for each $n \geq 1$.
\end{lemma}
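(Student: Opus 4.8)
The plan is to make explicit the classical duality between the \emph{gaps between consecutive parts} of a partition and the \emph{multiplicities of parts} in its conjugate, and then to check that the defining conditions of $\mathcal{A}_n$ and $\mathcal{B}_n$ correspond exactly under this duality. Write $\lambda = (\lambda_1 \geq \lambda_2 \geq \cdots \geq \lambda_\ell > 0)$ and let $\lambda'$ denote its conjugate, so that $\lambda'_j = \#\{i : \lambda_i \geq j\}$. The first step is to establish the identity
\[
m_k(\lambda') \. = \. \lambda_k - \lambda_{k+1}, \qquad k = 1, \ldots, \ell,
\]
with the convention $\lambda_{\ell+1} = 0$. This is immediate from the observation that $\lambda'_j \geq k \Leftrightarrow j \leq \lambda_k$, so $\lambda'_j = k$ precisely when $\lambda_{k+1} < j \leq \lambda_k$; hence the number of columns of height exactly $k$ is $\lambda_k - \lambda_{k+1}$.

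With this identity in hand, the second step is to rewrite the membership condition for $\mathcal{B}_n$. A partition $\mu$ lies in $\mathcal{B}_n$ iff $m_k(\mu) \neq 1$ for every $k \geq 1$. Taking $\mu = \lambda'$, this reads $\lambda_k - \lambda_{k+1} \neq 1$ for all $k = 1, \ldots, \ell$ (for $k > \ell$ both sides vanish, so nothing is lost). For $k < \ell$ this is the statement that no two \emph{consecutive} parts of $\lambda$ differ by exactly one, and for $k = \ell$ the boundary term $\lambda_\ell - \lambda_{\ell+1} = \lambda_\ell \neq 1$ says that $\lambda$ has no part equal to one.

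The third step records the elementary fact that, for a partition, no two consecutive parts differ by exactly one if and only if no two parts at all differ by exactly one: if some $\lambda_i - \lambda_j = 1$ with $i < j$, then because the parts weakly decrease the total unit drop must be realized in a single consecutive step, forcing $\lambda_t - \lambda_{t+1} = 1$ for some $i \leq t < j$. Combining steps two and three shows that $\lambda' \in \mathcal{B}_n$ exactly when $\lambda$ has no two parts differing by one and no part of size one, i.e. exactly when $\lambda \in \mathcal{A}_n$.

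Finally, since conjugation is an involution on the set of all partitions of $n$, the equivalence $\lambda \in \mathcal{A}_n \Leftrightarrow \lambda' \in \mathcal{B}_n$ upgrades automatically to a bijection $\mathcal{A}_n \to \mathcal{B}_n$: the map is injective because conjugation is, and it is surjective because any $\mu \in \mathcal{B}_n$ is the conjugate of $\mu' \in \mathcal{A}_n$. I expect the only point requiring care to be the boundary term $k = \ell$, where one must remember that the smallest part contributes the gap $\lambda_\ell - 0$; this is exactly what ties the auxiliary hypothesis ``no part of size one'' to the multiplicity condition, and what makes the two part-size conditions defining $\mathcal{A}_n$ collapse onto the single multiplicity condition defining $\mathcal{B}_n$.
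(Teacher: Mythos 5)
Your proof is correct: the identity $m_k(\lambda') = \lambda_k - \lambda_{k+1}$ (with the convention $\lambda_{\ell+1}=0$), the reduction of arbitrary unit differences to consecutive ones via the telescoping sum, and the boundary case $k=\ell$ tying $\lambda_\ell \neq 1$ to $m_\ell(\lambda') \neq 1$ are all handled properly, and the involution property of conjugation correctly upgrades the equivalence to a bijection. The paper states this lemma without proof, treating it as classical, and your argument is precisely the standard gap--multiplicity duality under conjugation that the authors implicitly rely on.
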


The limit shape of $\mathcal{B}$, the set of partitions such that no part has multiplicity~1,
has a restriction which is multiplicative, and its form follows in a straightforward manner
from Theorem~\ref{special unrestricted}.

\begin{lemma}[cf.~\S\ref{history},\.\S\ref{natural:generalization}]
Let $\mathcal{B}$ denote the set of partitions such that no part has multiplicity 1.
Let $a = \frac{\pi}{3}$.
Then:
\[ B(x) = \frac{1}{a}\log\left(\frac{1-e^{-a\,x}+e^{-2a\,x}}{1-e^{-a\,x}}\right), \quad \text{where} \quad x>0. \]
\end{lemma}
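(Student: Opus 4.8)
The plan is to treat $\mathcal{B}$ as a \emph{multiplicative} ensemble and compute its limit shape in the independent (grand canonical) model, then invoke the equivalence of ensembles established in Section~\ref{sect prob} to upgrade the resulting weak limit shape to the strong one; this is exactly the situation covered by Theorem~\ref{special unrestricted}. Since the sole constraint defining $\mathcal{B}$ is that each part size occur with a multiplicity lying in $S = \{0,2,3,4,\ldots\}$, the multiplicities $m_1, m_2, \ldots$ become independent in this model, with
\[
\mathbb{P}(m_j = k) \. = \. \frac{q^{jk}}{Z_j}\,, \quad k \in S, \qquad Z_j \. = \. \sum_{k\in S} q^{jk} \. = \. \frac{1-q^j+q^{2j}}{1-q^j}\,,
\]
where $q = e^{-s}$ and $s = s(n)\to 0$ is chosen so that $\E\,|\lambda| = n$. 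In this setting the limit shape is the scaled limit of the expected number of parts exceeding a given value; since $\#\{i : \lambda_i > u\} = \sum_{j>u} m_j$, it suffices to compute $\E\, m_j$ and sum.

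First I would compute the expected multiplicity: writing $Q = q^j$ and differentiating $\log Z_j$,
\[
\E\, m_j \. = \. Q\,\frac{d}{dQ}\log\!\left(\frac{1-Q+Q^2}{1-Q}\right) \. = \. \frac{Q^2(2-Q)}{(1-Q)(1-Q+Q^2)}\,.
\]
With $s = a/\sqrt n$ for a constant $a>0$ to be fixed below, and $Q = e^{-sj}$, the sum $\sum_{j>x\sqrt n}\E\,m_j$ is a Riemann sum converging to $\tfrac{1}{s}\int_{ax}^\infty g(v)\,dv$, where $g(v) = \frac{e^{-2v}(2-e^{-v})}{(1-e^{-v})(1-e^{-v}+e^{-2v})}$; dividing by $\sqrt n$ gives $B(x) = \tfrac1a\int_{ax}^\infty g(v)\,dv$. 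A direct verification that $\frac{d}{du}\log\frac{1-e^{-u}+e^{-2u}}{1-e^{-u}} = -g(u)$, together with the vanishing of this logarithm as $u\to\infty$, then produces precisely the claimed closed form for $B(x)$.

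The remaining point, which I expect to require the most care, is pinning down the constant $a$. By the integration-by-parts identity $\int_0^\infty v\,g(v)\,dv = \int_0^\infty G(u)\,du$ with $G(u) = \log\frac{1-e^{-u}+e^{-2u}}{1-e^{-u}}$, the condition $\E\,|\lambda| = n$ is equivalent to the normalization that the scaled area under the limit shape equal one (as in Theorem~\ref{rB:theorem}), namely $\int_0^\infty B(x)\,dx = 1$, i.e.\ $a^2 = \int_0^\infty G(u)\,du$. The crucial simplification is the factorization $1-e^{-u}+e^{-2u} = \frac{1+e^{-3u}}{1+e^{-u}}$, which turns $G$ into $\log(1+e^{-3u}) - \log(1-e^{-2u})$. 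Applying the standard evaluations $\int_0^\infty \log(1+e^{-bu})\,du = \frac{\pi^2}{12b}$ and $\int_0^\infty\log(1-e^{-bu})\,du = -\frac{\pi^2}{6b}$, obtained by expanding the integrands in series, I get $\int_0^\infty G(u)\,du = \frac{\pi^2}{36} + \frac{\pi^2}{12} = \frac{\pi^2}{9}$, whence $a = \frac{\pi}{3}$.

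Finally, the genuinely delicate ingredient — that the convergence holds \emph{in probability}, and not merely in expectation — would not be reproved here: it is supplied by the general concentration and local limit results of Section~\ref{sect prob}, whose hypotheses $\mathcal{B}$ satisfies because it is multiplicative and shares the $\sqrt n$ scaling of unrestricted partitions. Thus the lemma reduces to the elementary expectation computation sketched above.
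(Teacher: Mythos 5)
Your proposal is correct and follows essentially the paper's own route: the paper treats $\mathcal{B}$ as a multiplicative class and asserts the formula follows from the multiplicative-ensemble framework (Theorem~\ref{special unrestricted}, i.e.\ Yakubovich's theorem applied with $f(x)=(1-x+x^2)/(1-x)$) together with the equivalence of ensembles from Section~\ref{sect prob}, which is exactly the grand-canonical computation you carry out. Your explicit steps --- the expected multiplicity $\frac{Q^2(2-Q)}{(1-Q)(1-Q+Q^2)}$, the Riemann-sum limit, the factorization $1-x+x^2=(1+x^3)/(1+x)$, and the normalization $a^2=\frac{\pi^2}{36}+\frac{\pi^2}{12}=\frac{\pi^2}{9}$ --- correctly supply the calculation the paper leaves implicit.
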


As was noted by Romik~\cite{RomikUnpublished}, even though the limit shapes satisfy $(A \circ B)(x) = x$, $x >0$, it is not straightforward to compute the explicit form of $A(x)$ using algebraic techniques, even with a computer algebra system.

\subsection{Self-conjugate partitions}

A general approach of finding limit shapes via a class of continuous transformations was developed in~\cite{PakNature}, where many well-known partition bijections are presented as certain geometric transformations acting on Young diagrams, which also act continuously on the corresponding limit shapes.

An example where the limit shape is apparent in a weak sense, but lacks a rigorous argument, is the set of \emph{self-conjugate partitions} $\mathcal{S}$, consisting of all partitions $\lambda$ such that $\lambda \in \mathcal{S}$ implies also $\lambda' \in \mathcal{S}$.

\begin{theorem}[cf.~\S \ref{history}]\label{Sylvester}
Let $\mathcal{S}$ denote the set of partitions which are self-conjugate.  The limit shape of $\mathcal{S}$ satisfies equation~\eqref{phi}.
\end{theorem}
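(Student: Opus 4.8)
The plan is to realize self-conjugate partitions through the classical correspondence with partitions into distinct odd parts and then run the grand canonical machinery of Section~\ref{sect prob} on the resulting \emph{independent} ensemble. Recall that a self-conjugate partition $\lambda$ is determined by its principal hook lengths, which are distinct odd numbers $2\alpha_1+1 > 2\alpha_2+1 > \cdots > 2\alpha_d+1$, where $d$ is the side of the Durfee square and $\alpha_i = \lambda_i - i$ is the common arm and leg length of the $i$-th diagonal hook, so that $\lambda_i = i + \alpha_i$ for $1 \le i \le d$. Thus $\lambda \mapsto \{2\alpha_i+1\}$ is a bijection onto partitions into distinct odd parts, and the size satisfies $n = \sum_i (2\alpha_i+1)$.

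First I would set up the grand canonical ensemble in which, for each $j \ge 1$, the hook length $2j-1$ is present independently with probability $\frac{t^{2j-1}}{1+t^{2j-1}}$, where $t = e^{-s}$; this is the natural Boltzmann model for the generating function $\prod_{j\ge1}(1+t^{2j-1})$. Calibrating $s$ so that the expected size equals $n$ reduces, via a Riemann sum, to solving $\sum_{j\ge1}(2j-1)\frac{t^{2j-1}}{1+t^{2j-1}} = n$, which forces $s$ to be of order $n^{-1/2}$ and makes the constant $c=\frac{\pi}{\sqrt6}$ of \eqref{phi} reappear. Next I would compute the expected profile: a second Riemann sum gives the expected counting function of present hooks of length at least $u$, whose inversion expresses the $i$-th largest arm $\alpha_i$ in terms of its rank $i$. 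Substituting into $\lambda_i = i + \alpha_i$ and tracing the scaled boundary through the points $(\lambda_i/\sqrt n,\, i/\sqrt n)$, one obtains a parametric curve that, after eliminating the rank parameter, collapses to
\[ e^{-c\,x} + e^{-c\,y} = 1, \]
that is, exactly \eqref{phi}.

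The self-conjugacy symmetry is what makes the two halves of the curve fit together: the arm data $\{\alpha_i\}_{i\le d}$ produces the portion of the boundary on the side $x \ge y$ of the diagonal, while the legs below the Durfee square furnish the reflected portion $y \ge x$, so the invariance of \eqref{phi} under $x \leftrightarrow y$ is automatic. This establishes the \emph{weak} limit shape (convergence in expectation). To upgrade to the \emph{strong} limit shape asserted in the theorem, I would invoke the general results of Section~\ref{sect prob}: concentration of the counting functions about their means together with a local central limit theorem for the size $\sum_j (2j-1)X_j$ give the equivalence of ensembles, transferring convergence in probability from the grand canonical model to the uniform measure on self-conjugate partitions of size exactly~$n$. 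Comparison with Theorem~\ref{unrestricted:limit:shape} then shows the self-conjugate limit shape coincides with the unrestricted one.

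I expect the main obstacle to be twofold. On the combinatorial side one must carefully translate the boundary of the \emph{folded}, diagonally symmetric Young diagram into the distinct-odd-hook data, keeping track of both the arm contribution above the diagonal and the conjugate leg contribution below it. On the probabilistic side the delicate point is the local central limit theorem for the lattice variable $\sum_j (2j-1)X_j$: since the summands are supported on the odd integers, one must verify the relevant aperiodicity and variance estimates before the equivalence of ensembles of Section~\ref{sect prob} applies. The curve computation itself, while requiring care, reduces to the two Riemann sums described above.
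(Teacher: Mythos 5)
Your proposal is correct and follows essentially the same route as the paper: the principal-hook bijection $\lambda_i = i+\alpha_i$ onto distinct odd parts (the paper's equation~\eqref{hooks}), the half-curve for $x \ge y$ obtained from the arm data with the reflection across the diagonal supplying the rest, and the equivalence-of-ensembles machinery of Section~\ref{sect prob} to upgrade from convergence in expectation to convergence in probability. The only difference is presentational: you re-derive the distinct-odd limit shape directly from the Boltzmann model on odd hook lengths, whereas the paper simply cites it (Theorem~\ref{rB:theorem} with $r=1$, $B=2$), which amounts to inlining Theorem~\ref{special distinct}.
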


One can reason heuristically that the limit shape of $\mathcal{S}$ should coincide with the limit shape of unrestricted integer partitions, however, we are unaware of any simple, rigorous methods to do so.
The first difficulty is that the number of self-conjugate partitions of size~$n$ is of an exponentially smaller proportion than the number of unrestricted partitions of size~$n$.
The second difficulty is that we lose the asymptotic independence between part sizes.

There are several different ways to obtain a rigorous proof of Theorem~\ref{Sylvester}, starting with the following bijection.

\begin{theorem}[{\cite[Prop.~7.1]{PakNature}}]
Let $\mathcal{A}_n$ denote the number of partitions of $n$ into odd, distinct parts.  Let $\mathcal{S}_n$ denote the number of self-conjugate partitions of $n$.
We have $|\mathcal{A}_n| = |\mathcal{S}_n|$ for all $n \geq 1$.
\end{theorem}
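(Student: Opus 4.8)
The plan is to exhibit an explicit bijection between self-conjugate partitions of~$n$ and partitions of~$n$ into distinct odd parts, using the \emph{principal hook decomposition} of a Young diagram along its main diagonal.

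First, given a self-conjugate partition $\lambda = \lambda'$ of~$n$, let $d$ denote the size of its Durfee square. For each diagonal cell $(i,i)$ with $1 \le i \le d$, I would define the $i$th principal hook $H_i$ to consist of the cell $(i,i)$ together with all cells strictly to its right in row~$i$ and all cells strictly below it in column~$i$. These hooks are disjoint and together cover every cell of the diagram, so their sizes sum to~$n$. The key observation is that self-conjugacy forces the arm length of $H_i$ (the number of cells to the right of $(i,i)$ in its row) to equal its leg length (the number of cells below $(i,i)$ in its column); calling this common value $a_i$, the size of $H_i$ is
\[
|H_i| \. = \. 2a_i + 1,
\]
which is odd. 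Moreover, since the diagonal cells are nested, the arm lengths strictly decrease, $a_1 > a_2 > \cdots > a_d \ge 0$, so the hook sizes $2a_1+1 > \cdots > 2a_d+1 \ge 1$ are \emph{distinct} odd numbers summing to~$n$. This produces the forward map into $\mathcal{A}_n$.

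Next I would construct the inverse. Starting from distinct odd parts $2b_1 + 1 > 2b_2 + 1 > \cdots > 2b_k + 1 \ge 1$, with $b_1 > b_2 > \cdots > b_k \ge 0$, I would reassemble symmetric L-shaped hooks having equal arm and leg length $b_i$, placing the $i$th hook with its corner at the diagonal cell $(i,i)$. The strict decrease of the $b_i$ guarantees that these nested hooks fit together to form the diagram of an honest partition, and the equality of arm and leg in each hook guarantees that the resulting diagram is symmetric, i.e.\ self-conjugate. Verifying that these two maps are mutually inverse is then a matter of checking that recording the hook sizes of a reassembled diagram returns the original parts, and conversely.

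I expect the only step requiring genuine care to be the verification that the reassembly procedure always yields a legitimate (and self-conjugate) Young diagram: one must confirm that nesting hooks of strictly decreasing arm lengths never creates a row or column longer than the one above or to its left. This follows cleanly from the strict inequalities $b_1 > b_2 > \cdots$, but it is the point where the combinatorial bookkeeping should be made explicit. Everything else — the oddness of each principal hook, the distinctness of the sizes, and the fact that the hooks exhaust the $n$ cells — is an immediate consequence of self-conjugacy and the definition of the Durfee square.
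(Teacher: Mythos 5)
Your proposal is correct, and it is essentially the same argument the paper sketches: the paper also uses the principal hook decomposition along the diagonal, recording each symmetric hook as the odd part $\mu_i = 2(\lambda_i - i) + 1$ for $1 \le i \le \delta_0$, with self-conjugacy forcing equal arm and leg lengths and the nesting of hooks forcing strict decrease. The only difference is presentational: the paper states the explicit part-size formula and defers details to the cited reference, while you spell out the inverse reassembly step explicitly.
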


One construction of the bijection is to break the principle hooks (see~\cite[Figure~13]{PakNature}) and stretch them out.
When the partition $\lambda$ is self-conjugate, each of the principle hooks must necessarily consist of an odd number of squares, and each principle hook consists of at least two squares more than the previous principle hook.
In this instance, we have an explicit formula for the map which sends partitions $\lambda \in \mathcal{S}$ to partitions $\mu$ into distinct, odd parts, namely,
\begin{equation}\label{hooks}\mu_i = 2(\lambda_i -i) + 1, \qquad 1 \leq i \leq \delta_0, \end{equation}
where $\delta_0$ is the size of the largest Durfee square in the partition $\lambda$.

A formal procedure to obtain Theorem~\ref{Sylvester} from this bijection is as follows, see~\cite[Section~7]{PakNature}.
For the remainder of this section, $c = \pi / \sqrt{6}$.
Let $b(t)$ denote the limit shape of partitions into odd, distinct parts, which is given by Theorem~\ref{rB:theorem} using $r = 1$ and $B = 2$.
We shall need the explicit form of the limit shape in the calculations that follow, which is given  by
\[ b(t) = \sqrt{2}\, \Psi\left(\sqrt{2}\, t\right) = \frac{1}{c} \log\left(1+e^{-c\, t}\right), \qquad t>0\, . \]
Recall also that the limit shape of unrestricted integer partitions is given by the explicit formula
\[ \qquad \qquad \qquad \Phi(t) = -\frac{1}{c} \log(1-e^{-c\, t}), \qquad t>0\, . \]

Denote by $a(t)$ the limit shape of self--conjugate partitions, which exists by \cite[Prop.~7.1]{PakNature}.
After an appropriate scaling by $\sqrt{n}$, by Equation~\eqref{hooks}  we have
\[\begin{array}{ccccc}
 \frac{1}{\sqrt{n}}\mu_{t\, \sqrt{n}} & = &  \frac{2}{\sqrt{n}}(\lambda_{t\, \sqrt{n}} - t\, \sqrt{n}) + 1/\sqrt{n}, & & 1 \leq t \sqrt{n} \leq \frac{\log(2)}{\sqrt{2}\, d} \sqrt{n}.   \\
 \downarrow & & \downarrow & & \downarrow\\
 b^{-1}(t) & = & 2(a^{-1}(t)-t), & & 0 \leq t \leq \frac{\log(2)}{c}.
\end{array}
\]
By rearranging and solving for $a^{-1}(t)$, we obtain $a^{-1}(t) = \Phi(t)$ for $0\leq t \leq \log(2)/c$.  The value $\log(2)/c$ is precisely the value for which $\Phi(t) = t$, $t>0$, which cuts the limit shape in half via the line $y=t$.  Thus, for $t > \log(2)/c$, we can invoke the symmetry of self--conjugate partitions about the line $y=t$ and conclude that $a(t) = \Phi(t)$ for all $t>0$.

This solution is uncharacteristically simple, and most bijections are not as obliging as to allow for elementary algebraic manipulations of their part sizes.
Fortunately, this bijection can also be defined by a natural geometric bijection, and we next demonstrate each step in the process of mapping the limit shape from one set of partitions to another.
Figure~\ref{parallel:mapping:Young} demonstrates how a geometric bijection from \cite{PakNature} acts on the Young diagrams, and Figure~\ref{parallel:mapping:limit} demonstrates how a geometric bijection acts on the limit shapes. 

\ignore{
\begin{figure}
\includegraphics[scale=0.1, angle=-90]{transform.jpg}
\caption{A bijection via a sequence of geometric bijections and the corresponding operations on the limit shape.}
\label{parallel mapping}
\end{figure}}

\ignore{
\begin{figure}
\[
\resizebox{15em}{.3em}{
\xymatrix{
& \Phi(t) & \\
&\ar[dl]^{P\Phi(t)} \includegraphics[scale=0.2]{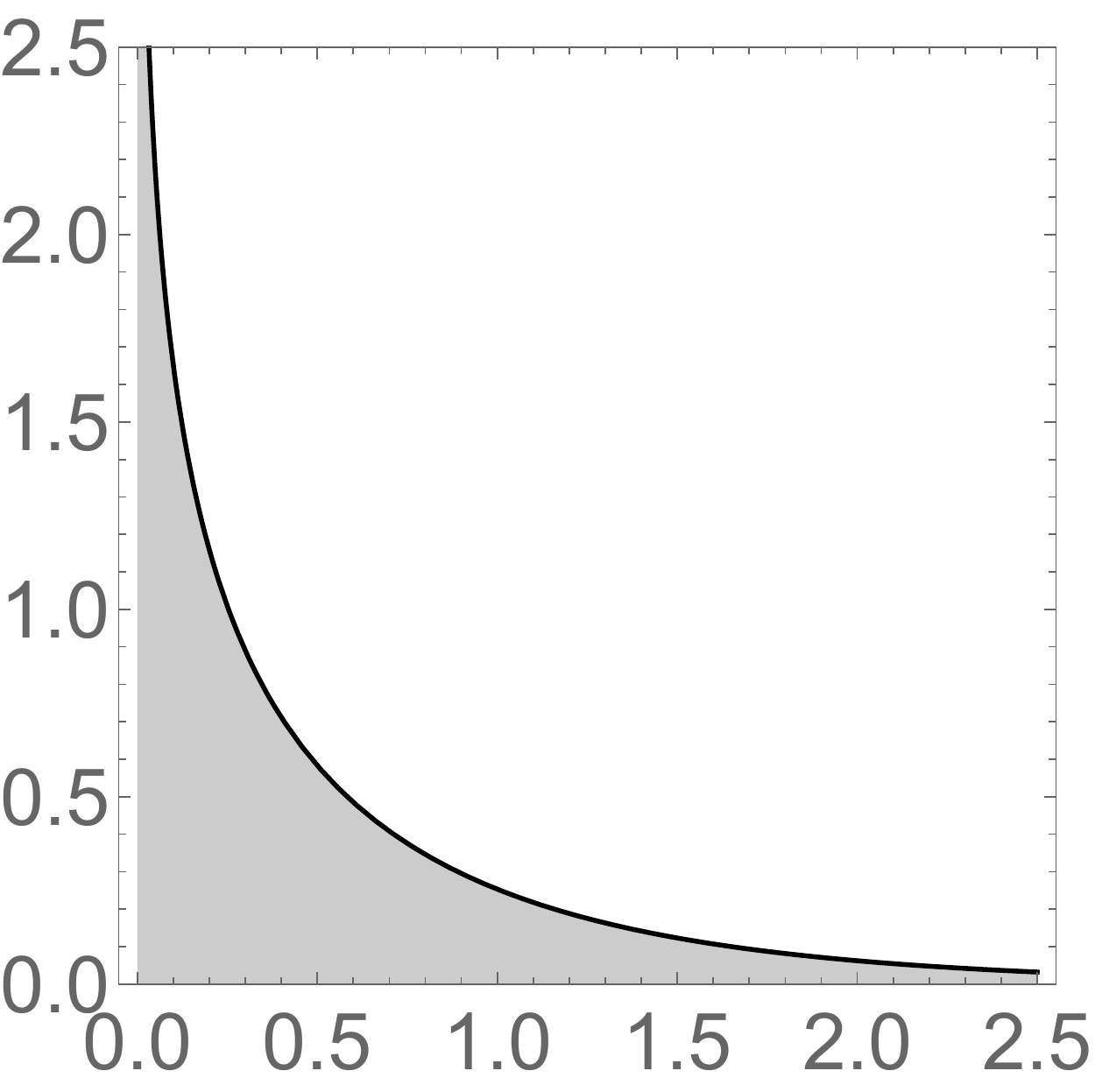} \ar[dr]_{Q \Phi(t)} & \\
\ar[d]^{\Phi_1(t) := P\Phi(t)-t}\includegraphics[scale=0.2]{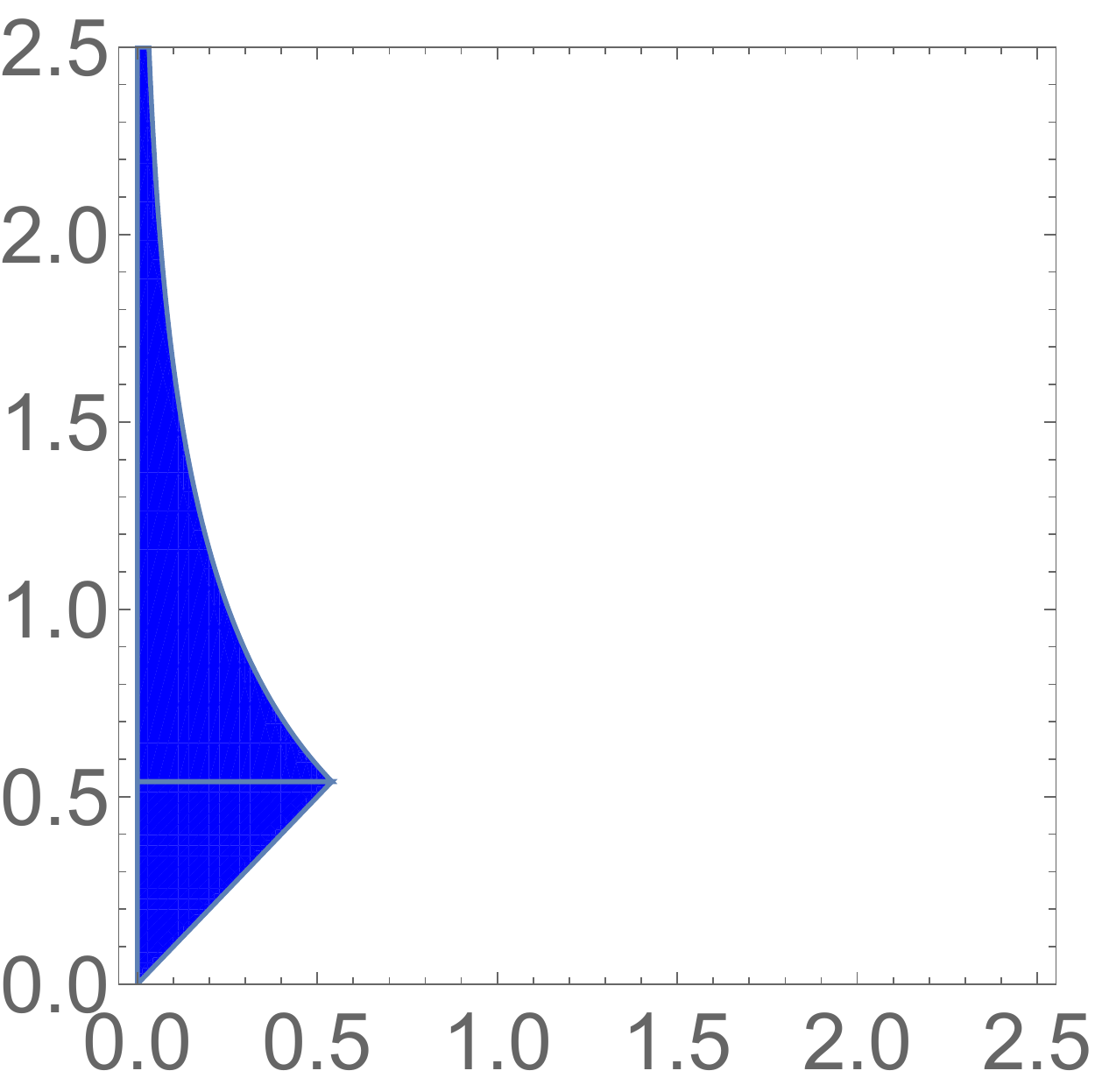} & & \includegraphics[scale=0.2]{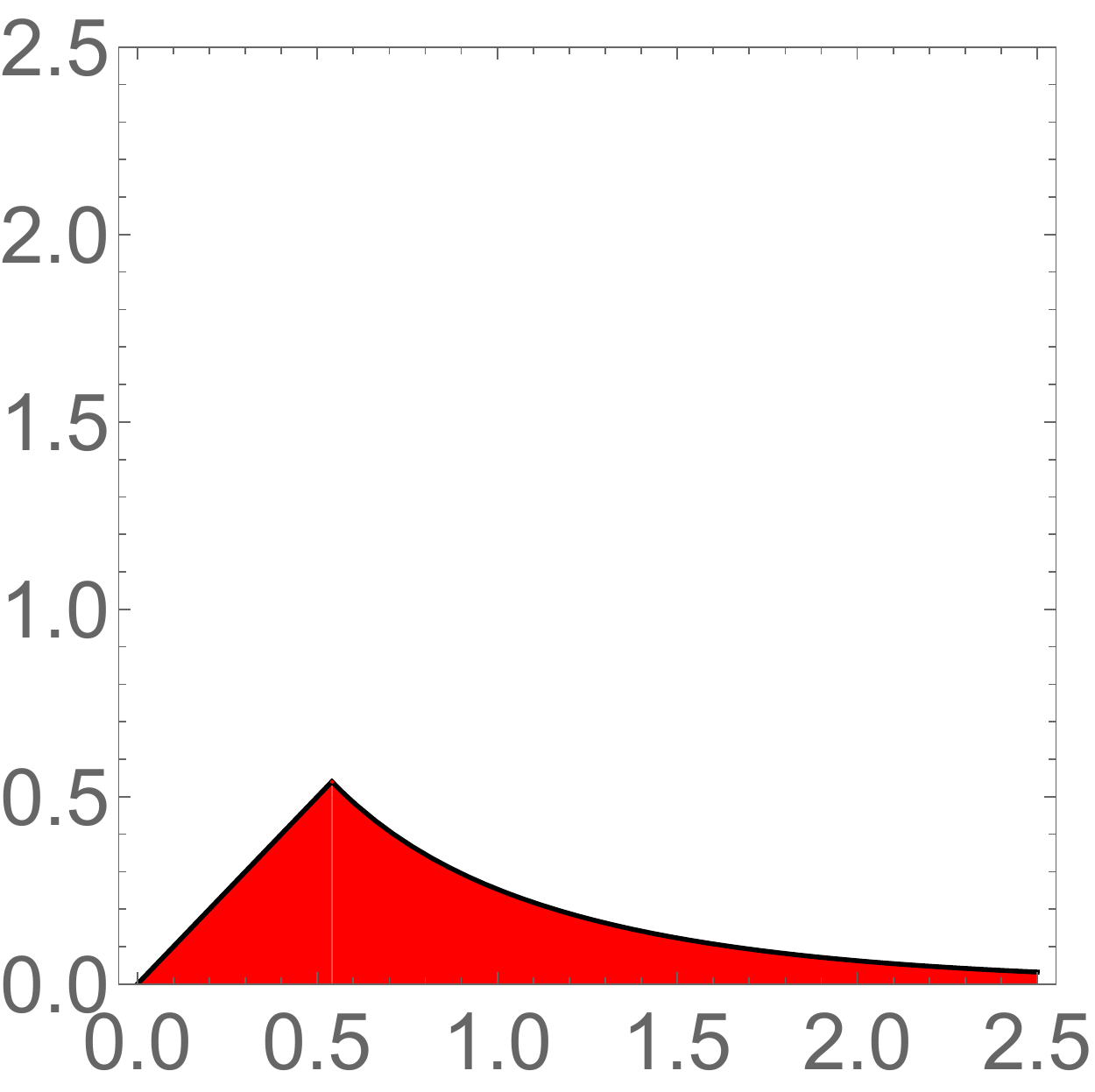}\ar[d]^{\Phi_2(t) := \left((Q\Phi(t))^{\<-1\>} - t\right)^{\<-1\>}} \\
\ar[ddr]\includegraphics[scale=0.2]{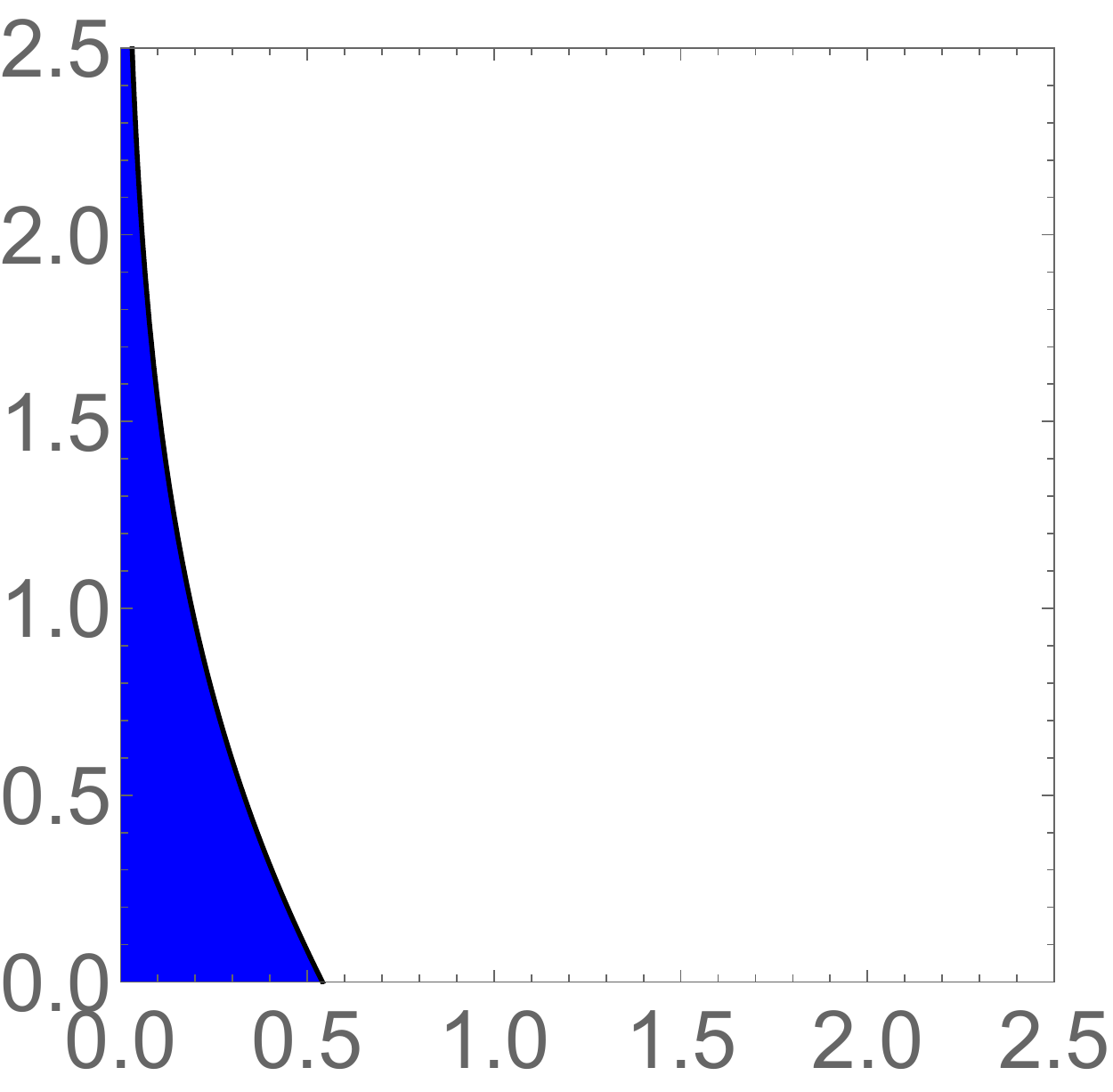} & & \includegraphics[scale=0.2]{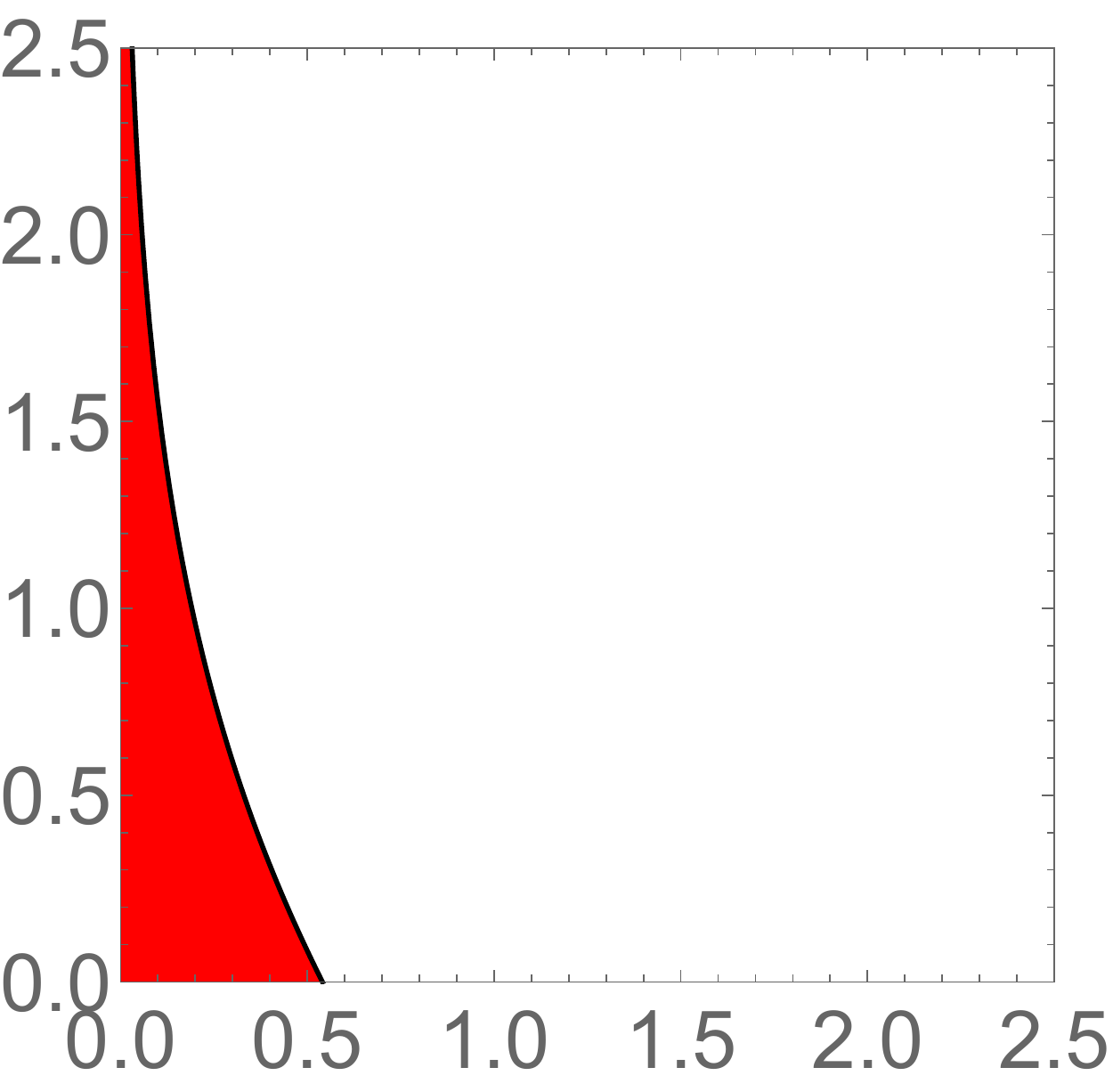}\ar[ddl]  \\
& {\Phi_1(t) + \Phi_2(t)} &  \\
& \mbox{$\begin{array}{c} \includegraphics[scale=0.2]{phi4-2.pdf} \\ \frac{1}{\sqrt{2}}\Psi\left(\frac{t}{\sqrt{2}}\right) \end{array}$ } &
}
}
\]
\caption{The bijection between odd, distinct partitions and self-conjugate partitions as a sequence of geometric transformations acting on the limit shape.}
\label{parallel:mapping:limit}
\end{figure}
}

\begin{figure*}[t]
	\begin{subfigure}[t]{0.3 \textwidth}
\[
\hskip -1in\scalebox{.55}{
\xymatrix{
&\ar[dl] \mbox{\begin{tabular}{c}\includegraphics[scale=0.282]{phi1.pdf} \\  \mbox{\Large $\Phi(t)$} \end{tabular}}  \ar[dr] & \\
\ar[d]\mbox{\begin{tabular}{c}\includegraphics[scale=0.282]{phi2b.pdf}\\\mbox{\Large $P\Phi(t)$} \end{tabular}} & & \mbox{\begin{tabular}{c}\includegraphics[scale=0.282]{phi2a.pdf}\\\mbox{\Large $Q\Phi(t)$}\end{tabular}} \ar[d] \\
\ar[ddr]\mbox{\begin{tabular}{c} \mbox{\Large $P\Phi(t)-t$}  \\ \includegraphics[scale=0.282]{phi3b.pdf}\\  \mbox{\Large $\Phi_1(t)$}\end{tabular}} & &
\mbox{\begin{tabular}{c} \mbox{\Large $\left((Q\Phi(t))^{\langle -1\rangle} - t\right)^{\langle -1\rangle}$} \\ \includegraphics[scale=0.282]{phi3a.pdf} \\  \mbox{\Large $\Phi_2(t)$}  \end{tabular}}   \ar[ddl]  \\
& \mbox{\Large $\Phi_1(t) + \Phi_2(t)$} &  \\
& \mbox{$\begin{array}{c} \includegraphics[scale=0.282]{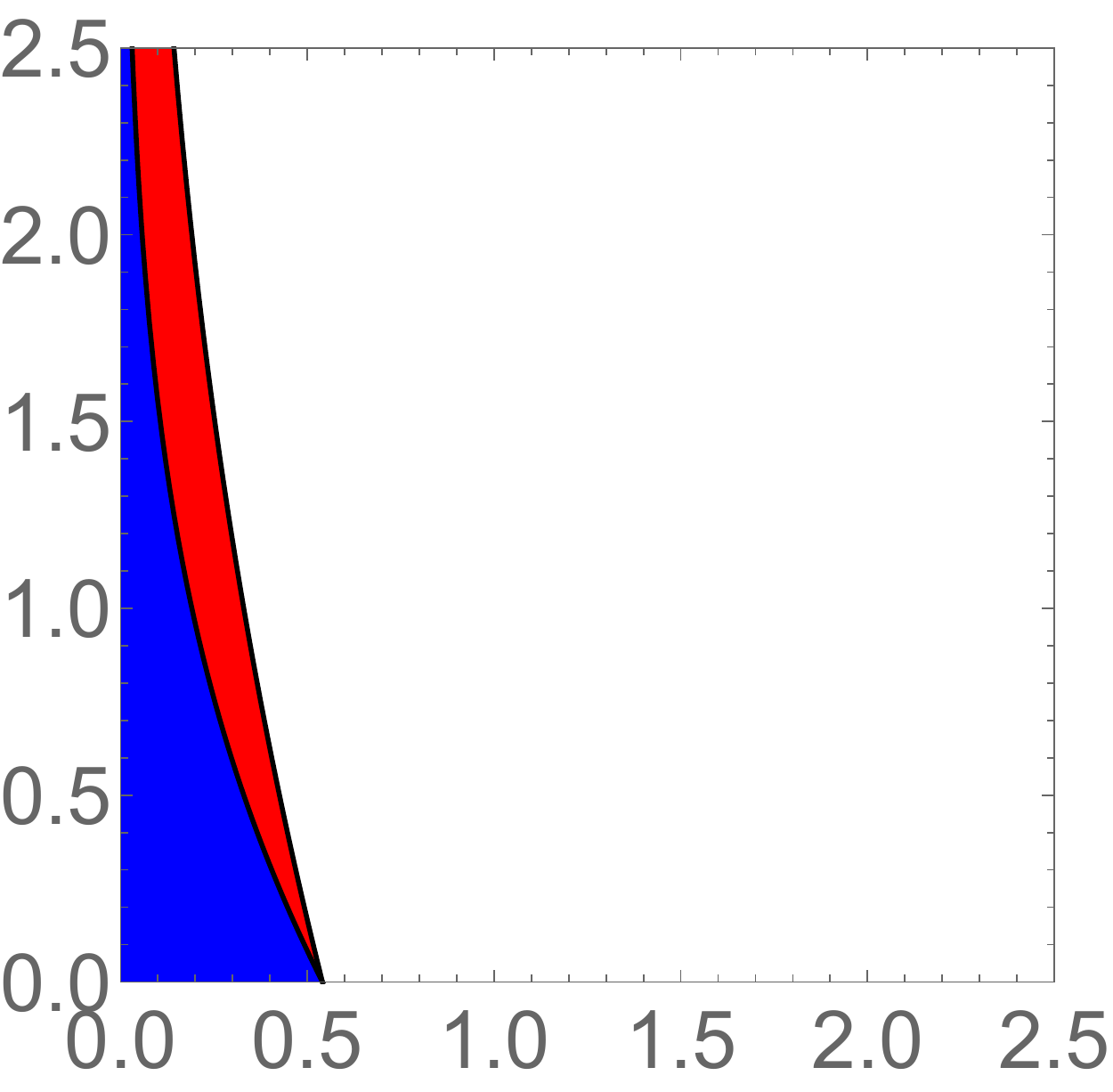} \\ \mbox{\Large $\frac{1}{\sqrt{2}}\Psi\left(\frac{t}{\sqrt{2}}\right)$} \end{array}$ } &
}
}
\]
\caption{Geometric transformations acting on the limit shape.}
\label{parallel:mapping:limit}
\end{subfigure}
\hskip .9in
	\begin{subfigure}[t]{0.3 \textwidth}
\[
\scalebox{0.4}{
\xymatrix@C=1em@R=1em{
&\ar[dl]
\mbox{\begin{tabular}{c}\ydiagram[*(red)]
{1+0,2+0,2+0,2+0,2+1,1+5,0+7}
*[*(blue)]{1,2,2,2,2,2,1} \\ \\  \mbox{\Huge Cut} \end{tabular}} \ar[dr]& \\
\ydiagram[*(blue)]{1,2,2,2,2,1} \ar[ddd]|-{\mbox{\Huge Shift}} &  &  \ydiagram[*(red)]{2+1,1+5,7} \ar[ddd]|-{\mbox{\Huge Shift}} \\ \\ \\
\ydiagram[*(blue)]{1,1,2,2,2,2} \ar[ddddd]&  &\ \ \ \ \ \ \  \ydiagram[*(red)]{1,5,7}\ar[dddd]|-{\mbox{\Huge Transpose}}  \hskip .85in \\ \\ \\ \\
 & & \ydiagram[*(red)]{1,1,2,2,2,2,3} \ar@*{[thicker]}[dll] \\
 \mbox{\begin{tabular}{cc} \ydiagram[*(red)]
{0+1,0+1,0+1,0+1,0+2,0+2,0+2,1+1,1+1,2+0,2+0,2+0,2+1}
*[*(blue)]{0,0,0,0,0,0,0,1,1,2,2,2,2} & \mbox{\Huge Add} \end{tabular}} & &
}}
\]
\caption{Geometric transformations acting on Young diagrams rotated $90^\circ$ counterclockwise.}
\label{parallel:mapping:Young}
\end{subfigure}
\caption{The bijection between self-conjugate partitions and partitions 
into distinct odd parts.}
\end{figure*}

\ignore{
\begin{figure}
\[
\resizebox{20em}{.5em}{
\xymatrix{
&\ar[dl] \ydiagram[*(red)]
{7,1+5,2+1}
*[*(blue)]{7,6,3,2,2,2,1} \ar[dr]& \\
\ydiagram[*(blue)]{1,2,2,2,2,1} \ar[d] &  \mbox{\Huge Cut} &  \ydiagram[*(red)]{7,1+5,2+1} \ar[d] \\
\hskip .85in \ydiagram[*(blue)]{2,2,2,2,1,1} \ar[dd]\ \ \ \  \mbox{\Huge Shift}&  & \mbox{\Huge Shift}\ \ \ \  \ydiagram[*(red)]{7,5,1}\ar[dddl] \hskip .85in \\ \\
\hskip 1in \ydiagram[*(blue)]{6,4} \ \ \  \mbox{\Huge Transpose} \ar[dr] & \mbox{\Huge Add}  & \\
&  \ydiagram[*(red)]
{7+6,5+4,1}
*[*(blue)]{13,9,1} &
}}
\]
\caption{The bijection between odd, distinct partitions and self-conjugate partitions as a sequence of geometric transformations acting on the Young diagram.}
\label{parallel:mapping:Young}
\end{figure}
}

\ignore{
\begin{figure}
\[\tiny
\xymatrix{
&\ar[dl] \ydiagram[*(red)]
{7,1+5,2+1}
*[*(blue)]{7,6,3,2,2,2,1} \ar[dr]&  &  &\ar[dl] \includegraphics[scale=0.2]{phi1.pdf} \ar[dr] & \\
\ydiagram[*(blue)]{1,2,2,2,2,1} \ar[d] & &  \ydiagram[*(red)]{7,1+5,2+1} \ar[d]  & \ar[d]\includegraphics[scale=0.2]{phi2a.pdf} & & \includegraphics[scale=0.2]{phi2b.pdf}\ar[d] \\
\ydiagram[*(blue)]{2,2,2,2,1,1} \ar[dd] & &  \ydiagram[*(red)]{7,5,1}\ar[dddl]  & \ar[ddr]\includegraphics[scale=0.2]{phi3a.pdf} & & \includegraphics[scale=0.2]{phi3b.pdf}\ar[ddl] \\ \\
\ydiagram[*(blue)]{6,4} \ar[dr] & & & & \includegraphics[scale=0.2]{phi4.pdf} & \\
&  \ydiagram[*(red)]
{7+6,5+4,1}
*[*(blue)]{13,9,1} &
}\]
\end{figure}
}

\ignore{
\begin{figure}
\begin{tikzpicture}[scale=0.2]
\draw (0.1,11.9723)--(0.2,9.67549)--(0.3,8.35232)--(0.4,7.42762)--(0.5,6.72114)--(0.6,6.1526)--(0.7,5.67918)--(0.8,5.27532)--(0.9,4.92451)--(1.,4.61552)--(1.1,4.3403)--(1.2,4.09293)--(1.3,3.8689)--(1.4,3.66472)--(1.5,3.47761)--(1.6,3.30534)--(1.7,3.14607)--(1.8,2.99829)--(1.9,2.86074)--(2.,2.73233)--(2.1,2.61215)--(2.2,2.49942)--(2.3,2.39344)--(2.4,2.29363)--(2.5,2.19947)--(2.6,2.11048)--(2.7,2.02626)--(2.8,1.94646)--(2.9,1.87074)--(3.,1.7988)--(3.1,1.7304)--(3.2,1.66528)--(3.3,1.60323)--(3.4,1.54406)--(3.5,1.48758)--(3.6,1.43364)--(3.7,1.38207)--(3.8,1.33275)--(3.9,1.28554)--(4.,1.24032)--(4.1,1.197)--(4.2,1.15545)--(4.3,1.1156)--(4.4,1.07735)--(4.5,1.04063)--(4.6,1.00535)--(4.7,0.971446)--(4.8,0.93885)--(4.9,0.907501)--(5.,0.87734)--(5.1,0.848312)--(5.2,0.820365)--(5.3,0.79345)--(5.4,0.767522)--(5.5,0.742537)--(5.6,0.718454)--(5.7,0.695236)--(5.8,0.672845)--(5.9,0.651246)--(6.,0.630407)--(6.1,0.610296)--(6.2,0.590885)--(6.3,0.572144)--(6.4,0.554048)--(6.5,0.536571)--(6.6,0.519688)--(6.7,0.503377)--(6.8,0.487616)--(6.9,0.472383)--(7.,0.457659)--(7.1,0.443424)--(7.2,0.429661)--(7.3,0.416352)--(7.4,0.40348)--(7.5,0.39103)--(7.6,0.378985)--(7.7,0.367332)--(7.8,0.356057)--(7.9,0.345145)--(8.,0.334585)--(8.1,0.324363)--(8.2,0.314469)--(8.3,0.30489)--(8.4,0.295616)--(8.5,0.286636)--(8.6,0.27794)--(8.7,0.269518)--(8.8,0.261362)--(8.9,0.253462)--(9.,0.245809)--(9.1,0.238396)--(9.2,0.231214)--(9.3,0.224256)--(9.4,0.217514)--(9.5,0.21098)--(9.6,0.20465)--(9.7,0.198514)--(9.8,0.192568)--(9.9,0.186805)--(10.,0.181219);
\end{tikzpicture}
\end{figure}
}
\ignore{\begin{figure}
\begin{Young}
 & & & & & & &  \cr
 & & & & & & \cr
 &  &   \cr
 & \cr
  & \cr
   & \cr
  \cr
 \end{Young}
\end{figure}}

We summarize the steps of Figure~\ref{parallel:mapping:limit} by the following set of mappings:
\[
\begin{array}{cll}
\frac{1}{\sqrt{2}}\Psi\left(\frac{t}{\sqrt{2}}\right)&  \longmapsto   \left( \frac{\Psi(t\, \sqrt{2})}{\sqrt{2}},  \frac{\Psi(t\, \sqrt{2})}{\sqrt{2}} \right) & (t>0, t>0) \\
 & \longmapsto  \left( \left(\frac{\Psi(t\, \sqrt{2})}{\sqrt{2}}\right)^{\<-1\>}+t,  \left(\frac{\Psi(t\, \sqrt{2})}{\sqrt{2}}\right)^{\<-1\>}+t \right) & (0<t\leq \frac{\ln(2)}{c}, 0 < t \leq \frac{\ln(2)}{c})\\
 & \longmapsto  \left( \left(\frac{\Psi(t\, \sqrt{2})}{\sqrt{2}}\right)^{\<-1\>}+t,  \left(\left(\frac{\Psi(t\, \sqrt{2})}{\sqrt{2}}\right)^{\<-1\>}+t\right)^{\<-1\>} \right) & (0<t\leq \frac{\ln(2)}{c}, t \geq \frac{\ln(2)}{c}) \\
 & \longmapsto  \Phi(t) & (0<t<\infty). \\ 
 \end{array}
 \]
The first map splits the curve into two curves of the same shape but scaled to half the original area.  The next map shifts the coordinates up so that the functions evaluated at the point $x_0 = \frac{\ln(2)}{c}$ are equal to $\Phi(x_0)$.  The third map reflects the first coordinate about the line $y=t$, so that the curves now lie in complementary regions in the first quadrant.
Finally, the last map pastes together the functions at $x_0.$

The mappings defined above are presented in a way which emphasizes how the height functions are transformed with respect to the axes, whereas they are actually transforming regions in the plane.  From a technical point of view, the mappings should be considered as transformations on the plane, as they are presented in \cite{PakNature}, but for the purpose of finding explicit formulas for limit shapes, one can very often more simply track how the mappings transform the height functions and their inverses directly.

\subsection{Convex partitions}
\label{convex}
Our last example of this section exemplifies another class of bijections, those which can be realized as a linear transformation of multiplicities of part sizes.

Consider the set $\C$ of all \emph{convex} partitions, i.e., partitions whose parts $\lambda_1 \geq \lambda_2 \geq \ldots \geq \lambda_\ell>0$ also satisfy the convexity condition: $\lambda_1 - \lambda_2 \geq \lambda_2 - \lambda_3 \geq \ldots \geq \lambda_\ell > 0$.  This set of partitions does not exclude any particular set of sizes, nor does there appear to be any simple, a priori transformation, e.g., conjugation, to such a set.
Nevertheless, we find the limit shape of convex partitions.

\begin{theorem}\label{conj convex}
Let $C(x)$ denote the limit shape of convex partitions $\C$.  Then:
\begin{equation}\label{convex:limit:shape}
C^{(-1)}(x) \, = \, \int_{x}^\infty (y-x) \. \frac{e^{-c\, \frac{1}{2} y^2}}{1-e^{-c\, \frac{1}{2} y^2}}\,\. dy, \quad x>0,
\quad \text{where} \ \ c \. = \, \frac{1}{2} \. \pi^{1/3}\th \zeta\left(\frac{3}{2}\right)^{2/3}.
\end{equation}
\end{theorem}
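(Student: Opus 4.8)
The plan is to realize the Andrews bijection of Theorem~\ref{Andrews:convex} explicitly as a linear transformation on multiplicities, push the (known) limit shape of $\mathcal{T}$ through it, and pin down the constant by the size normalization. First I would record the explicit bijection $\mathcal{C}_n \to \mathcal{T}_n$. Writing a convex $\lambda$ in terms of its consecutive differences $d_i = \lambda_i - \lambda_{i+1}$ (with $d_\ell = \lambda_\ell$ and $d_i = 0$ for $i > \ell$), the convexity condition $\lambda_i - \lambda_{i+1} \ge \lambda_{i+1} - \lambda_{i+2}$ is exactly the statement $d_1 \ge d_2 \ge \cdots \ge 0$, so the second differences $m_k := d_k - d_{k+1}$ are nonnegative integers. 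Telescoping gives $d_i = \sum_{k \ge i} m_k$ and hence $\lambda_i = \sum_{j \ge i} d_j = \sum_{k \ge i} m_k\,(k-i+1)$, whence
\[
|\lambda| \, = \, \sum_i \lambda_i \, = \, \sum_k m_k \sum_{i=1}^k (k-i+1) \, = \, \sum_k m_k \binom{k+1}{2}.
\]
Thus $(m_1, m_2, \ldots)$ is precisely the multiplicity vector of a partition in $\mathcal{T}$ (parts $u_k = \binom{k+1}{2}$) of the same size; this simultaneously recovers Theorem~\ref{Andrews:convex} and exhibits the map as the promised linear transformation of multiplicities.

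Next I would transport the limit shape through this linear map using the probabilistic model underlying Theorem~\ref{rB:theorem} for $\mathcal{T}$. There the multiplicities $m_k$ are independent geometric variables with $\E[m_k] = e^{-\beta u_k}/(1 - e^{-\beta u_k})$, and $\beta = \beta(n)$ is tuned so that $\E\,|\lambda| = \sum_k \binom{k+1}{2}\, e^{-\beta u_k}/(1-e^{-\beta u_k}) = n$. Replacing the sum by an integral and substituting $w = \beta u_k \approx \beta k^2/2$, the normalization becomes the standard integral $\int_0^\infty w^{1/2} e^{-w}/(1-e^{-w})\,dw = \Gamma(\tfrac32)\zeta(\tfrac32)$, which yields $\beta \sim c\, n^{-2/3}$ with $c = \tfrac12 \pi^{1/3}\zeta(\tfrac32)^{2/3}$ --- exactly the constant claimed. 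With index scaled by $n^{1/3}$ and part size by $n^{2/3}$ (both fixed by this computation), I set $i = t\,n^{1/3}$ and $k = s\,n^{1/3}$, so that $\beta u_k \to c\,s^2/2$ and $(k-i+1) \to (s-t)\,n^{1/3}$. Taking expectations in $\lambda_i = \sum_{k \ge i} m_k (k-i+1)$ and passing to a Riemann sum then gives
\[
\frac{1}{n^{2/3}}\,\E\bigl[\lambda_{t n^{1/3}}\bigr] \; \longrightarrow \; \int_t^\infty (s-t)\,\frac{e^{-c\,s^2/2}}{1-e^{-c\,s^2/2}}\,ds,
\]
which is precisely $C^{(-1)}(t)$: the computation naturally produces part size as a function of (scaled) index, i.e.\ the inverse of the limit-shape function $C$, which explains why the theorem is stated for $C^{(-1)}$.

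This establishes the weak (in-expectation) limit shape; to obtain the strong (in-probability) statement asserted in the theorem I would invoke the equivalence-of-ensembles and concentration machinery of Section~\ref{sect prob} (in the spirit of~\cite{PakNature}): independence of the $m_k$ supplies variance bounds, and a local central limit theorem controls the passage from the Boltzmann measure to the uniform measure on $\mathcal{C}_n$ obtained by conditioning on $|\lambda| = n$. The main obstacle is exactly this upgrade. Unlike a multiplicative ensemble, where each $\lambda_i$ reads off a single coordinate, here each $\lambda_i = \sum_{k \ge i}(k-i+1)\,m_k$ is a weighted sum of many independent multiplicities, so one must show that the accumulated fluctuations are of order $o(n^{2/3})$ uniformly in $t$ on compact subsets of $(0,\infty)$. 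A secondary difficulty is the behavior near $t = 0$: one checks $\E[\lambda_1] \sim n^{2/3}\log n$ grows faster than the $n^{2/3}$ scaling, matching the fact that the integral formula itself diverges logarithmically as $t \to 0^+$; this forces the convergence to hold only away from the origin, consistent with the restriction $x > 0$ in the statement.
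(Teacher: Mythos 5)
Your proposal is correct and follows essentially the same route as the paper's own sketch: the same linear transformation of multiplicities (the paper's map $\varphi_{\bf v}$ from~\eqref{convex bijection}, which you re-derive via second differences), the same tilted independent-geometric model with scalings $n^{2/3}$ and $n^{1/3}$, the same Riemann-sum passage to the integral with $c = d(2,\tfrac12) = \tfrac12\,\pi^{1/3}\zeta(\tfrac32)^{2/3}$, and the same appeal to the concentration/equivalence-of-ensembles machinery of Section~\ref{sect prob} for the in-probability upgrade (which the paper handles via the Markov-operator contraction principle in \S\ref{concentration}). Your added observations --- the explicit normalization integral $\Gamma(\tfrac32)\zeta(\tfrac32)$ pinning down $c$, and the logarithmic divergence at $t\to 0^+$ matching $\E[\lambda_1]\sim n^{2/3}\log n$ --- are correct refinements of the paper's sketch, not a different argument.
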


\begin{proof}[Sketch of proof]

Computing this limit shape requires several steps.  First, recall Andrews's Theorem~\ref{Andrews:convex}, that there is a bijection between partitions into parts of sizes $u_k = \binom{k+1}{2}$, $k \geq 1$ and convex partitions.
Simply knowing a bijection exists is not sufficient, however, and it is in fact the particular form of this bijection which allows us to find the limit shape.

Recall $\mathcal{T}$ denotes the set of partitions into parts with sizes in $u_1, u_2, \ldots$\ , with $u_k = \binom{k+1}{2}, k \geq 1$.
Let $b_k$ denote the number of parts of size $\binom{k+1}{2}$ in a partition $\lambda\in\mathcal{T}$, $k \geq 1$.
The following map $\varphi$ was defined in~\cite{AndrewsII}.
\begin{align}
\label{convex bijection}
\varphiv: \ \binom{2}{2}^{b_1} \binom{3}{2}^{b_2} \ldots \binom{k+1}{2}^{b_k} \longrightarrow\ \  &  b_1\ts (1,0,\ldots,0)\ +  \
b_2\ts (2,1,0,0,\ldots 0)\\
\nonumber &\qquad  + \ \ldots \ + \ b_k \ts (k,k-1,k-2,\ldots, 2, 1)\ts.
\end{align}
\begin{lemma}[\cite{Andrews}]
Map $\varphiv$ is a bijection between $\Triangle_n$ and $\C_n$ for each $n \geq 1$.
\end{lemma}

Each $\lambda \in \Triangle_n$, with $b_i$ parts of size~$\binom{i+1}{2}$, corresponds to a $\mu\in\C_n$ with part sizes given by
\begin{align*}
\mu_1 &\, = \, b_1 + 2b_2 + 3b_3 + \ldots + k b_k  \\
\mu_2 &\, = \,  \ \ \ \ \ \ \ \ b_2 + 2b_3 + \ldots + (k-1) b_k \\
\mu_3 &\, = \,  \ \ \ \ \ \ \ \ \ \ \ \ \ \ \ \ b_3 + \ldots + (k-2) b_k \\
\ \, \vdots & \ \ \  \,\ \ \ \ \ \ \ \ \ \ \ \ \ \vdots \\
\mu_i & \, = \,  \sum_{j\geq i} \. (j-i+1) \ts b_j\..
\end{align*}

We can immediately write the weak form of the limit shape by appealing to Riemann sums.
In this case, the number of parts of size~$i$, $i \geq 1$, is generated from independent geometric random variables $Z_i \in \{0,1,\ldots\}$ with parameter $1-x^i$, where $x = e^{-c/\a}$.
The constant $c$ is given by equation~\eqref{Uber:c:no:a} with $r = 2$, $B = \frac{1}{2}$, and $\a = n^{2/3}$.
In this case, however, we set $Z_j = 0$ if $j \notin \{1, 3, 6, 10, \ldots\}$.
It will also be apparent in the calculation below that we need two different, but complementary, scaling functions.
We have:
\begin{align*}
\E\left[ \frac{\b}{n}\, \mu_{x\,\b}\right] & \, = \, \frac{\b}{n}\, \sum_{j \ts\geq\ts \b \ts x} \. (j-x\,\b+1)\, \E\left[Z_{\binom{j+1}{2}}\right] \\
   & \, = \, \frac{\b}{n}\, \sum_{j \ts\geq\ts \b \ts x} \, \bigl(j-x\,\b+1\bigr) \, \frac{\exp\left(-c\, \frac{\binom{j+1}{2}}{\a}\right)}{1-\exp\left(-c\, \frac{\binom{j+1}{2}}{\a}\right)}\..
\end{align*}
We let $y_j$ satisfy $\ts \frac{1}{2}y_j^2 = \frac{\binom{j+1}{2}}{\a}\ts$. Hence, \ts
$y_j \sim \frac{j}{\sqrt{\a}}$, \ts $\Delta y_j \sim \frac{1}{\sqrt{\a}}$, and
\begin{align*}
 \frac{\b}{n}\, \mu_{x\,\b}  \, = \, &\  \frac{\b^2\sqrt{\a}}{n} \.
 \sum_{\bigl(\sqrt{\a}/\b\bigr)\ts y_j \ts \geq\ts x} \.
 \left(\frac{\sqrt{\a}}{\b}\, y_j - x\right)\. \frac{e^{-c\, \frac{1}{2} y_j^2}}{1-e^{-c\, \frac{1}{2} y_j^2}}\. \Delta y_k \\
 \longrightarrow &\ \int_x^\infty \. (y-x)\.\frac{e^{-c\, \frac{1}{2}y^2}}{1-e^{-c\, \frac{1}{2}y^2}}\, dy, \quad \mbox{ where } \quad x>0,
 \quad c \ts = \ts \frac{1}{2}\. \pi^{1/3}\th \zeta\left(\frac{3}{2}\right)^{2/3}.
 \end{align*}
The final limit is valid if and only if $\a = \b^2$, and $\b^3 = n$, thus we obtain scaling factors $\a = n^{2/3}$ and $\b = n^{1/3}$.

There is one final step, however, since the calculation above only implies the formula given in equation~\eqref{convex:limit:shape} satisfies \emph{the weak notion of a limit shape}.
We need some form of concentration result, i.e., large deviations, to claim that the above formula is indeed the limit shape for the set of partitions.
As we shall see, however, the same concentration results valid for the limit shape of~$\Triangle$ also apply immediately to the image~$\C$,
see~\S\ref{concentration}.
\end{proof}

The bijection $\varphiv$ is a special case of a more general example discussed in Section~\ref{sect r}.
In addition, this example is typical of a general method to find limit shapes described in \S\ref{transfer:theorems}.

\bigskip\section{Notations and basic results}
\label{sect notation}

\subsection{Notation}

We denote the set of positive real numbers by $\R_+ = \{ x > 0\}$.
The set of all probability measures absolutely continuous with respect to Lebesgue measure on the positive real line is denoted by $L_+^1(\R_+)$.
Let $\mathbb{H}$ denote the set of real--valued operators on the Banach space of real--valued, countable sequences.
We think of transformations in $\mathbb{H}$ as infinite--dimensional, real--valued matrices.

The expression $a_k \sim b_k$  means
\[   \lim_{k\to\infty} \frac{a_k}{b_k} = 1.
\]
We denote the positive part of $x$ by $x_+:= \max(0,x).$  The ceiling function, or smallest integer larger or equal to $x$, is denoted by $\lceil x \rceil.$
Let $\Gamma(z)$ denotes the \emph{Gamma function}, and $\zeta(z)$ the \emph{Riemann zeta function}.

Define the family of constants
\begin{equation}
\label{Uber c}
d(r,B,a)  \. := \.
\left[\frac{(1-a^{-1/r})\. \zeta\left(1+\frac{1}{r}\right)\. \Gamma\left(1 + \frac{1}{r}\right)\, }{r\, B^{1/r}}\right]^{r/(1+r)} \quad \text{for all } \ \. a \geq 2, \ r \geq 1, \ B>0\ts.
\end{equation}
We also define
\begin{equation}\label{Uber:c:no:a}
d(r,B) \. := \. \lim_{a\to\infty} d(r,B,a)\,  \quad  \text{for all } \ r \geq 1, \ B>0\ts.
\end{equation}
The letter $c$ is not restricted to a single value, but will be defined locally.\footnote{This is to emphasize the secondary role of the constants, and to make the expressions more readable.}

Given a sequence $y_k$, $k\ge 1$, we denote the forward difference operator by $\Delta y_k = y_{k+1}-y_k.$
We also define the $k^{th}$ difference recursively as
\[ \triangle_i^k(\lambda) =
\begin{cases}
\lambda_i, & \text{if } \ i=\ell \ \text{ or } \ k = 0, \\
\triangle_i^{k-1}(\lambda) - \triangle_{i+1}^{k-1}(\lambda), & \text{otherwise}.
\end{cases}
\]

The row vector  $m = (m_1, m_2, \ldots)$ has a transpose given by a column vector and is denoted by $m^T = (m_1, m_2, \ldots)^T$.

From this point on, we adopt the notation that matrix indices are always integers, i.e.,
\[ v(t,y) \equiv v(\lceil t\rceil, \lceil y \rceil) \quad \text{ for all real } \ \. t>0, \ y>0.\]

For a given limit shape $F$, we denote its compositional inverse by $F^{\<-1\>}$, or simply $F^{-1}$ when the context is clear.  In our setting, the domain and range of a limit shape is always the positive real numbers, and limit shapes are always monotonically decreasing, hence $F^{\<-1\>}$ is also a function.

\subsection{Integer partitions}

	An integer partition of a positive integer $n$ is an unordered list of nonnegative integers whose sum is $n$.  It is standard notation to list the parts in descending order, $\lambda_1 \geq \lambda_2 \geq \ldots \geq \lambda_\ell>0$, and we denote the number of parts by $\ell = \ell(\lambda)$.  We say that $\lambda = (\lambda_1, \lambda_2, \ldots, \lambda_\ell)$ is an integer partition of \emph{size}~$n$ if $|\lambda| := \sum_i \lambda_i = n$.  Each partition $\lambda$ has a unique \emph{conjugate} partition $\lambda'$ that satisfies $\lambda'_i = \#\{j: \lambda_j \geq i\}$, $i\geq 1$.  We denote by $\Pn$ the set of partitions of size~$n$, and $\P = \cup_n \Pn$ denotes the set of partitions of all sizes.   A subset of partitions $\A \subset \P$ is defined similarly, with $\A_n = \A \cap \P_n$.
		
	 Each partition $\lambda\in \P$ has a corresponding Ferrers diagram, which is a collection of points on a two-dimensional lattice corresponding to parts in the partition; see Figure \ref{Ferrers} and see also~\cite{Pak} for a further explanation.  We note, in particular, that the Ferrers diagram for $\lambda'$ is a reflection of the Ferrers diagram for $\lambda$ about the line $y=x$.
	
	Let $m_i(\lambda) = \#\{{\rm parts\ of\ size} \ i\ {\rm in}\ \lambda\}$ denote the multiplicity of parts of size $i$, where $i \geq 1$.  There is a natural one-to-one correspondence between partitions $\lambda\in\P_n$ and sequences $m = (m_1,m_2,\ldots)$ with $\sum i\th m_i = n$.
When there are no restrictions on the multiplicities of parts, we call such a partition \emph{unrestricted}.

	 We define
\[ \P_\bolda = \{ 1^{m_1} 2^{m_2} \ldots \mbox{ s.t. }  m_i < a_i , \ i\geq 1\},
\]
to be the set of partitions where parts of size $i$ can occur at most $a_i-1$ times, where $\bolda = (a_1, a_2, \ldots)$, $a_i \in [1,\infty]$, $i \geq 1$.  The case $a_i = 2$ for all $i \geq 1$ corresponds to distinct part sizes, and the case $a_i = \infty$ for all $i \geq 1$ corresponds to unrestricted integer partitions.
We use the term \emph{Andrews class partitions} to refer to sets $\P_\bolda$.

\begin{figure}
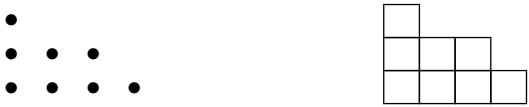

	\begin{subfigure}[l]{0.3 \textwidth}
\[
\begin{array}{cccc}
\bullet   \\
\bullet  & \bullet & \bullet\\
\bullet& \bullet&\bullet & \bullet \\
\end{array}
\]
\end{subfigure}
\begin{subfigure}[bl]{0.3\textwidth}
\[
\begin{Young}
   \cr
 &  & \cr
 &  &  &  \cr
 \end{Young}
\]
\end{subfigure}
\caption{The partition~$\lambda = (4,3,1)$ as a Ferrers Diagram (left), a Young Diagram (middle), and the diagram function $D_\lambda(x)$ (right).}
\label{Ferrers}
\end{figure}

\subsection{Asymptotic enumeration of integer partitions}

We consider sequences \[U=\{u_1,u_2,\ldots\},\] where $u_k \geq 1$ are strictly monotone increasing and are polynomials in $k$; that is,
\begin{equation}\label{U1} u_k = a_r k^r + a_{r-1} k^{r-1} + \ldots + a_0, \qquad a_r > 0, r \in \N.  \end{equation}
We can, in fact, consider more general sequences, however, there are added technical conditions which unnecessarily complicate the statement of our theorems, and since all of our examples take the simpler form in~\eqref{U1}, we forego this added generality.
See~\S\ref{Ingham Form} for a discussion on the consequences of such generalizations.

When {\tt gcd}$(U) = b\geq1$, then in what follows, by $n\to\infty$ we mean taking $n$ to infinity through integer multiples of $b$. .

\begin{theorem}[\cite{Ingham, RothSzekeres, rth}]\label{Ingham:theorem}
Assume the set $U$ satisfies~\eqref{U1}, for some $r \in \N$, and let $B \equiv a_r$ and $E \equiv a_{r-1}$.  Let $p_U(n)$ denote the number of partitions of $n$ into part sizes in $U$. Then we have
\begin{equation}\label{Ingham:formula} p_U(n) \sim \frac{\exp\mts\left[{(1+r)\,d(r,B)\, B^{-1/(1+r)}\, n^{1/(1+r)}}\right]}{c_1\, n^{\frac{B\,r + E}{B(r+1)}+\frac{1}{2}}}\ts,\end{equation}
where
\begin{align*}
c_1 & \, = \,  d(r,B)^{1+E/(B\,r)} \ts
B^{\frac{1}{2}+E/(B\,r)}\. (1+r^{-1})^{-1/2}\. (2\pi)^{-(r+1)/2} \, \prod_{j=1}^r \. \Gamma(1+\rho_j)
\end{align*}
is a constant, and where $\{\rho_j\}_{j=1}^d$ denotes the negatives of the roots of $u_k$.
\end{theorem}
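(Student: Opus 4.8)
The plan is to establish this classical asymptotic by the saddle--point method applied to the generating function, in the spirit of Meinardus' theorem, with the Roth--Szekeres hypotheses on $U$ supplying the analytic input that makes the contour truncation rigorous. Write
\[
F(q) \ = \ \sum_{n\ge 0} p_U(n)\, q^n \ = \ \prod_{k\ge 1}\frac{1}{1-q^{u_k}}\ts,
\]
and recover the coefficients by Cauchy's formula $p_U(n) = \frac{1}{2\pi i}\oint F(q)\,q^{-n-1}\,dq$. Substituting $q = e^{-\tau}$ turns this into an integral over a vertical line $\Re\tau = \tau_*>0$, and the whole argument reduces to (i) the behaviour of $\log F(e^{-\tau})$ as $\tau\to 0^+$ along the real axis, and (ii) a bound showing $|F(e^{-\tau_*-it})|$ is much smaller than $F(e^{-\tau_*})$ away from $t=0$.

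First I would extract the singular expansion of $\log F$ near $\tau=0$ by the Mellin transform. Expanding the logarithm gives
\[
\log F(e^{-\tau}) \ = \ \sum_{k\ge 1}\sum_{m\ge 1}\frac1m\,e^{-m u_k \tau}\ts,
\]
whose Mellin transform in $\tau$ is $\Gamma(s)\,\zeta(s+1)\,D(s)$, where $D(s)=\sum_{k\ge 1}u_k^{-s}$ is the Dirichlet series of the part sizes. Since $u_k = Bk^r + Ek^{r-1}+\cdots$, the series $D(s)$ converges for $\Re s>1/r$ and continues meromorphically, with a simple pole at $s=1/r$ of residue $B^{-1/r}/r$ and the local expansion $D(s)=B^{-s}\zeta(rs)-s\tfrac{E}{B}B^{-s}\zeta(rs+1)+\cdots$ near $s=0$. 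Shifting the Mellin inversion contour leftward past the poles at $s=1/r$ and $s=0$ yields
\[
\log F(e^{-\tau}) \ = \ A\,\tau^{-1/r} \ + \ D(0)\,\log\tfrac1\tau \ + \ D'(0) \ + \ o(1)\ts,\qquad A=\Gamma\!\big(1+\tfrac1r\big)\zeta\!\big(1+\tfrac1r\big)B^{-1/r},
\]
where the double pole of $\Gamma(s)\zeta(s+1)$ at $s=0$ produces the logarithmic term, and one computes $D(0)=-\tfrac12-\tfrac{E}{Br}$.

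Next I would run the saddle--point evaluation. Maximizing $\log F(e^{-\tau})+n\tau$ using the leading term gives $\tau_*\sim (A/(rn))^{r/(r+1)}$, at which the leading exponent equals $(1+r)(A/r)^{r/(r+1)}\,n^{1/(r+1)}$; since $(A/r)^{r/(r+1)}$ is precisely the constant $d(r,B)$ of~\eqref{Uber c} in the limit $a\to\infty$, this reproduces the exponential factor of~\eqref{Ingham:formula}. The logarithmic term contributes the factor $\tau_*^{-D(0)}\sim n^{D(0)\,r/(r+1)}$, while expanding $\log F(e^{-\tau_*-it})$ to second order in $t$ and integrating the resulting Gaussian contributes a further factor of order $n^{-(1+2r)/(2(r+1))}$, namely the reciprocal square root of the second derivative of the exponent at the saddle. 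Collecting these powers gives exactly $n^{-(Br+E)/(B(r+1))-1/2}$, and the constant $c_1$ emerges from $e^{D'(0)}$ together with the $\sqrt{2\pi}$ and second--derivative prefactors; here $D'(0)$, computed from the factorization of the polynomial $u_k$ into its roots $-\rho_j$, is what produces the product $\prod_{j=1}^r\Gamma(1+\rho_j)$.

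The main obstacle is the rigorous minor--arc estimate (ii): one must show that $\sum_k \log|1-e^{-(\tau_*+it)u_k}|$ stays well below its value at $t=0$, uniformly for $t$ bounded away from $0$, so that only a shrinking neighbourhood of the real saddle contributes and the tails of the contour are negligible. For a sparse set of parts this is delicate, and it is exactly where the hypotheses on $U$ are used: the strict monotonicity and the polynomial form~\eqref{U1} guarantee the Diophantine equidistribution of the phases $u_k t \bmod 2\pi$ required by the Roth--Szekeres argument, which forces enough of the factors $|1-e^{-(\tau_*+it)u_k}|$ to stay away from their maximal size. Once this bound is in hand, the contour may be truncated and the Gaussian approximation justified. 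Alternatively, one may bypass the explicit oscillatory estimate by invoking Ingham's Tauberian theorem~\cite{Ingham}, which deduces the asymptotics of $p_U(n)$ directly from the singular behaviour of $F(e^{-\tau})$ at $\tau=0$ together with the monotonicity of the partial sums $\sum_{m\le n}p_U(m)$; this route trades the minor--arc estimate for a Tauberian regularity argument and yields the same leading asymptotics.
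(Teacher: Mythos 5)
The paper offers no proof of Theorem~\ref{Ingham:theorem}: it is imported wholesale from \cite{Ingham, RothSzekeres, rth}, and your sketch is a faithful reconstruction of precisely the route those sources take (Mellin analysis of $\log F(e^{-\tau})$, saddle point, Gaussian integration, Roth--Szekeres minor-arc input, with Ingham's Tauberian theorem as the alternative). Your local computations are sound: the Mellin transform is indeed $\Gamma(s)\zeta(s+1)D(s)$, the value $D(0)=-\tfrac12-\tfrac{E}{Br}$ is correct, the only poles in $\Re s \ge 0$ are at $s=1/r$ and the double pole at $s=0$, and the three powers of $n$ (from $\tau_*^{-D(0)}$ and the second-derivative prefactor) do collect to $n^{-(Br+E)/(B(r+1))-1/2}$, matching the displayed polynomial correction; the appearance of $\prod_j\Gamma(1+\rho_j)$ from $D'(0)$ via Hurwitz zeta values is also the standard mechanism.

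Two caveats. First, your assertion that $(1+r)(A/r)^{r/(r+1)}n^{1/(r+1)}$ ``reproduces the exponential factor of~\eqref{Ingham:formula}'' is not literally true: since $A=\Gamma(1+\tfrac1r)\zeta(1+\tfrac1r)B^{-1/r}$, one has $(A/r)^{r/(r+1)}=d(r,B)$ exactly, so your exponent is $(1+r)\,d(r,B)\,n^{1/(1+r)}$, whereas \eqref{Ingham:formula} carries an additional $B^{-1/(1+r)}$. Your version is the correct one. Test with $U=\{2,4,6,\ldots\}$, so $r=1$, $B=2$, $E=0$: then $p_U(n)=p(n/2)\sim \exp\bigl(\pi\sqrt{n/3}\bigr)$ for even $n$, and $2\,d(1,2)\sqrt{n}=(\pi/\sqrt{3})\sqrt{n}$ agrees, while the printed exponent gives $(\pi/\sqrt{6})\sqrt{n}$. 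The source of the discrepancy is that \cite{rth} write the exponent as $(1+r)\bigl[\Gamma(1+\tfrac1r)\zeta(1+\tfrac1r)/r\bigr]^{r/(1+r)}(n/B)^{1/(1+r)}$; the constant $d(r,B)$ of~\eqref{Uber c} already absorbs the $B^{-1/(1+r)}$, so retaining it explicitly double-counts $B$ (invisible in the paper's examples, where $B=1$ or the constant is recomputed from the area normalization). You should flag this rather than claim agreement. Second, the two steps you defer are the genuine content: the minor-arc bound is not a consequence of monotonicity and the polynomial form per se, but of the Roth--Szekeres condition on $J_k$ (condition~\eqref{U2} in \S\ref{credit}), which must be verified for polynomial $u_k$ --- automatic for $r<\tfrac32$, and in general resting on their arithmetic condition (ii), which this paper trades for the convention $n\to\infty$ through multiples of $\gcd(U)$. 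Likewise, the Tauberian bypass needs more than monotonicity of the partial sums $\sum_{m\le n}p_U(m)$, which is automatic: Ingham's theorem controls the summatory function and the averaged counts $P_h(n)$, and descending to $p_U(n)$ itself requires monotonicity of $p_U$ along the relevant residue class, i.e.\ the Bateman--Erd\H{o}s criterion \cite{ErdosBateman}, as the paper itself notes in \S\ref{Ingham Form}.
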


\smallskip

\begin{theorem}[\cite{RothSzekeres}]\label{RothSzekeres}
Assume the set $U$ satisfies~\eqref{U1} for some $r \geq 1$, and let $B \equiv a_r$ and $E \equiv a_{r-1}$.
Let $p_U^d(n)$ denote the number of partitions of $n$ into distinct parts from $U$.
Then we have
\begin{equation}\label{RS:formula}
p_U^d(n) \, \sim \, \frac{\exp\mts\left[{(1+r)\,d(r,B,2)\, B^{-1/(1+r)}\, n^{1/(1+r)}}\right]}{c_2\, n^{\frac{B\,r + E}{2B(r+1)}+1}}\ts,\end{equation}
where
\begin{align*}
c_2 & \, = \, 2^{-(1+E/(r\,B))} \. d(r,B,2)^{1+E/(B\,r)} \. B^{\frac{1}{2}+E/(B\,r)}\. (1+r^{-1})^{-1/2}\pi^{-1/2}
\end{align*}
is a constant.
\end{theorem}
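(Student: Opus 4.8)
The plan is to establish \eqref{RS:formula} by the saddle-point method in the spirit of Meinardus and Roth--Szekeres, applied to the generating function
\[
F(x) \,=\, \prod_{k\ge 1}\bigl(1+x^{u_k}\bigr) \,=\, \sum_{n\ge 0} p_U^d(n)\, x^n ,
\]
so that $p_U^d(n) = \frac{1}{2\pi i}\oint F(x)\,x^{-n-1}\,dx$ over a circle $|x|=e^{-\tau_0}$. Writing $x=e^{-\tau}$ with $\tau=\tau_0+i\phi$, the whole problem reduces to controlling $G(\tau):=\log F(e^{-\tau})=\sum_{k\ge1}\log(1+e^{-u_k\tau})$ as $\tau\to 0^+$, together with the decay of $|F|$ as $\phi$ moves away from $0$.

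First I would obtain the behavior of $G(\tau)$ near $\tau=0$ by a Mellin transform. Since $\log(1+e^{-t})=\sum_{m\ge1}\frac{(-1)^{m-1}}{m}e^{-mt}$ has Mellin transform $\Gamma(s)\,\eta(s+1)$, where $\eta(w)=(1-2^{1-w})\zeta(w)$ is the Dirichlet eta function, one gets
\[
G(\tau)\,=\,\frac{1}{2\pi i}\int_{(\sigma)}\Gamma(s)\,\eta(s+1)\,D(s)\,\tau^{-s}\,ds, \qquad D(s):=\sum_{k\ge1}u_k^{-s}.
\]
Because $u_k = Bk^r+Ek^{r-1}+\ldots$ by \eqref{U1}, the Dirichlet series $D(s)$ continues meromorphically with its rightmost pole at $s=1/r$ and residue $B^{-1/r}/r$; shifting the contour left and collecting residues yields $G(\tau)=A\,\tau^{-1/r}+\ldots$ with $A=\Gamma(1/r)\,\eta(1+1/r)\,B^{-1/r}/r$. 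The key point separating the distinct-parts case from Ingham's Theorem~\ref{Ingham:theorem} is the appearance of $\eta$ in place of $\zeta$: the factor $\eta(1+1/r)=(1-2^{-1/r})\zeta(1+1/r)$ supplies exactly the $(1-2^{-1/r})$ occurring in $d(r,B,2)$ via \eqref{Uber c}, and, crucially, $\eta$ is \emph{entire}, so it contributes no pole at $s=0$. This altered subleading pole structure (together with the shifted poles produced by the $E$-correction to $D(s)$) is what changes the polynomial power of $n$ and the constant relative to the non-distinct case, and tracking the poles at $s=1/r$ and $s=0$ is what pins down the exponent $\tfrac{Br+E}{2B(r+1)}+1$ and the precise value of $c_2$.

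Next I would carry out the major-arc saddle-point evaluation. With $\Phi(\tau):=G(\tau)+n\tau$, the saddle $\tau_0$ solves $\Phi'(\tau_0)=0$, giving $\tau_0\asymp n^{-r/(1+r)}$; substituting the residue into the general saddle value $\Phi(\tau_0)=(1+r)(A/r)^{r/(1+r)}n^{1/(1+r)}$ and invoking the definition \eqref{Uber c} of $d(r,B,2)$ recovers the exponential growth factor of \eqref{RS:formula}. Expanding $\Phi$ to second order and integrating the resulting Gaussian over $\phi$ produces the factor $\bigl(2\pi\,\Phi''(\tau_0)\bigr)^{-1/2}$ with $\Phi''(\tau_0)\asymp n^{(1+2r)/(1+r)}$; combining this with the power of $\tau_0$ coming from the subleading constant and $\log\tau$ terms of $G$ yields the stated power of $n$, while the bounded constant terms in $G$ assemble into $c_2$.

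The hard part will be the minor-arc estimate: one must show that the contour contribution from $\phi$ bounded away from $0$ is negligible against the major-arc term, i.e. that $|F(e^{-\tau_0-i\phi})|$ is exponentially smaller than $F(e^{-\tau_0})$ uniformly there. Unlike the Mellin computation, this step genuinely uses the arithmetic of $U$: it requires a quantitative lower bound on $\sum_k\mathrm{Re}\bigl[\log(1+e^{-u_k\tau_0})-\log(1+e^{-u_k(\tau_0+i\phi)})\bigr]$, which forces the $u_k$ to be sufficiently equidistributed rather than concentrated in a few residue classes. This is precisely the role of the hypothesis that $u_k$ is a genuine degree-$r$ polynomial with positive leading coefficient, and of the $\gcd(U)$ convention governing the mode of $n\to\infty$: it is the Roth--Szekeres non-degeneracy condition that guarantees the needed cancellation. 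Once the minor arc is controlled, the major-arc saddle-point estimate gives \eqref{RS:formula}.
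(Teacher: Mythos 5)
You should know at the outset that the paper contains \emph{no proof} of this theorem: it is imported from Roth--Szekeres, with the hypotheses and the specialization to polynomial $u_k$ discussed in \S\ref{credit}. So there is no internal argument to compare against; your Mellin/saddle-point program is the classical route and is, structurally, the method of the cited source itself, where the minor arcs are controlled by the non-degeneracy condition \eqref{U2} (automatic when $r<\tfrac32$ by \eqref{U1:prime}, and verified for polynomial $u_k$ via equidistribution together with the {\tt gcd} convention on $n\to\infty$) --- you identify that crux correctly, though you only gesture at it. Your major-arc bookkeeping at the top is also right: the residue of $D(s)$ at $s=1/r$ is $B^{-1/r}/r$, and using $\Gamma(1+1/r)=\Gamma(1/r)/r$ one checks that $(A/r)^{r/(1+r)}=d(r,B,2)$ exactly, so the saddle sits at $\tau_0=d(r,B,2)\,n^{-r/(1+r)}$, consistent with the tilting $x(n)$ used throughout the paper.

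The genuine gap is in the endgame, which you assert rather than execute, and which as described is internally inconsistent. You correctly observe that $\eta$ is entire, so $\Gamma(s)\,\eta(s+1)\,D(s)$ has only a \emph{simple} pole at $s=0$ (here $D$ is regular, with $D(0)=-\tfrac12-\tfrac{E}{rB}$), hence $G(\tau)=A\tau^{-1/r}+(\log 2)D(0)+o(1)$ with \emph{no} $\log\tau$ term --- yet you then invoke ``$\log\tau$ terms of $G$'' to produce the power of $n$. With a simple pole, the $E$-correction enters only as the constant $2^{-1/2-E/(rB)}$, and the power of $n$ comes entirely from the Gaussian factor $\sqrt{2\pi\,\Phi''(\tau_0)}\asymp n^{(2r+1)/(2(r+1))}$: it cannot depend on $E$. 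Carried out, your method yields
\[
p_U^d(n)\,\sim\,\frac{2^{-\frac12-\frac{E}{rB}}\,\sqrt{d(r,B,2)}}{\sqrt{2\pi\,(1+r^{-1})}}\;\cdot\;\frac{\exp\bigl[(1+r)\,d(r,B,2)\,n^{1/(1+r)}\bigr]}{n^{\frac{2r+1}{2(r+1)}}}\,,
\]
with no $B^{-1/(1+r)}$ in the exponential (your own identity $(A/r)^{r/(1+r)}=d(r,B,2)$ already precludes it) and an $E$-free power of $n$. The sanity check $r=B=1$, $E=0$ gives the classical $q(n)\sim\tfrac{3^{3/4}}{12}\,n^{-3/4}e^{\pi\sqrt{n/3}}$, whereas \eqref{RS:formula} as printed gives $n^{-5/4}$; similarly for $u_k=2k$ the exponential must be $\pi\sqrt{n/6}=2\,d(1,2,2)\sqrt n$, not the printed $\pi\sqrt{n/12}$. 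So the extra $B^{-1/(1+r)}$ and the $E$-dependent exponent $\frac{Br+E}{2B(r+1)}+1$ appear to be transcription slips in the statement (an $E$-dependent power genuinely arises only in the unrestricted Theorem~\ref{Ingham:theorem}, where the double pole at $s=0$ produces $\log\tau$), and your claim that the pole bookkeeping ``pins down'' exactly that exponent and $c_2$ is false as written: a complete attempt must compute the $s=0$ residue and the Gaussian factor explicitly and would then have flagged the discrepancy rather than declared agreement.
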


Whereas only the leading two coefficients of $u_k$ are relevant for Theorem~\ref{RothSzekeres} above, the values of the roots of $u_k$ are also needed in Theorem~\ref{Ingham:theorem}.
However, in terms of limit shapes only the exponent~$r$ and leading coefficient $B \equiv a_r$ will play a fundamental role.
A set of partitions $\P_U$ taking parts only from the set $U$ will be called \emph{\smooth with parameters $r$ and $B$} if the multiplicities of parts is unrestricted and $U$ satisfies~\eqref{U1} with $u_k = B k^r \bigl(1+o(1)\bigr)$.
A set of partitions $\P_U$ taking parts only from the set $U$ will be called \emph{restrictedly smooth with parameters $r$, $B$ and $a$} if the multiplicity of each part size is strictly bounded from above by $a$ and $U$ satisfies~\eqref{U1} with $u_k = B\, k^r \bigl(1+o(1)\bigr)$.


\subsection{Diagram functions and limit shapes}
\label{diagram functions}
For each $\lambda\in \P$, the \emph{diagram function} of $\lambda$ is a right-continuous step function $D_\lambda:(0,\infty) \longrightarrow \R_+$ defined as
\begin{align*} D_\lambda(t) = \sum_{k\geq t} \thinspace m_k(\lambda), & \qquad \text{with} \qquad  \int_0^\infty D_\lambda(t)\, dt = |\lambda|. \end{align*}
Define a \emph{scaling function} to be any positive, monotonically increasing function $\alpha:\Rplus\to\Rplus$, such that $\alpha(n)\to\infty$ as $n\to\infty$.
We define the \emph{$\alpha$-scaled diagram function} of $\lambda$ as
\begin{align*} \Dhat_\lambda(t) \. =  \. \frac{\a}{n}\. D_{\lambda}(\a\, t), & \qquad \text{with} \qquad \int_0^\infty \Dhat_\lambda(t)\, dt = 1. \end{align*}

Our definition of limit shape is from \cite{Yakubovich}, and is as follows.
Let $\{\Pr_n\}$ denote a family of measures on subsets of partitions $\A_n \subset \Pn$, where $n \geq 1$.
In this paper we consider only $\Pr_n = 1/|\A_n|$, the uniform distribution.
Suppose there exists a scaling function $\a$  and a piecewise continuous function $\Phi: \R_+\longrightarrow\R_+$ such that $\int_0^\infty \Phi(t) dt = 1$.
Suppose further that for any finite collection $0<t_1< \ldots < t_k$ of continuity points of $\Phi$, and any $\epsilon>0$,
\begin{equation}
\label{limit shape definition}
\Pr_n\left( \left| \Dhat_\lambda(t_j) - \Phi(t_j)\right| < \epsilon\  \thinspace \text{for  all}\  \thinspace j=1, \ldots, k\right) \to 1, \ \  \text{as }\thinspace n\to\infty.
\end{equation}
We say that $\Phi$ is the \emph{limit shape of} $\A$ under scaling function $\a$.

Similarly, define the \emph{conjugate diagram function} $\rho_\lambda:(0,\infty)\longrightarrow\R_+$, such that
\[
\rho_\lambda(x) := \begin{cases} \lambda_{\lceil x\rceil}, & \text{if}\ 0<x\leq \ell \\
0, & \text{if}\ x>\ell.
\end{cases}
\]
We similarly define the \emph{$\alpha$-scaled conjugate diagram function}
\[ \rhohat_\lambda(x) \. := \. \frac{\a}{n} \. \rho_\lambda\bigl(\a x\bigr) \qquad \text{ with} \qquad \int_0^\infty \widehat{\rho}_\lambda(t) \, dt = 1.
\]
Suppose there exists a scaling function $\a$ and a piecewise continuous function \ts $\Phi: \R_+\longrightarrow\R_+$ \ts
such that \ts
$\int_0^\infty \Phi(t) \ts dt = 1$. Suppose further, that for any finite collection $0<t_1<\ldots<t_k$ of continuity points of~$\Phi$,
  and any $\epsilon>0$, we have
\[ \Pr_n\left( \left| \rhohat_\lambda(t_j) - \Phi^{-1}(t_j)\right| < \epsilon\  \thinspace \text{for  all}\  \thinspace j=1, \ldots, k\right) \to 1, \ \  \text{as }\thinspace n\to\infty.
\]
Then we say that $\Phi^{-1}$ is the \emph{conjugate limit shape of} $\A$ under scaling function~$\a$.

\begin{remark} {\rm
The scaling function $\a$ in the definition of limit shape is not unique, since it can be replaced with any scaling function $\b$ such that $\b\sim\a$.
One could consider an equivalence class of scaling functions in order to uniquely specify such a class of functions, but we do not pursue this further.
In either case, when the limit shape of $\A\subset \P$ exists, it is unique since it must integrate to 1.
}
\end{remark}

Following \cite{Vershik}, define the space $\DV = \{(\alpha_0, \alpha_\infty, p)\}$  of triples $\alpha_0, \alpha_\infty \in \mathbb{R}_+$, and nonnegative monotonically decreasing functions $p\in L^1_+(\R_+)$, such that
$$
\alpha_0 \. + \. \alpha_\infty \. + \. \int_0^\infty \. p(t)\. dt \, = \, 1.
$$
One can think of $\DV$ as the set of measures on \ts $\mathbb{R}_+$ \ts of the form \ts
$\alpha_0 \delta_0 + \alpha_\infty \delta_\infty + p(t)\. dt$,
where $\delta_0$ and $\delta_\infty$ are point masses at~0 and~$\infty$.
Note that $(0,0,\Dhat_\lambda(t))\in\DV$ for all $\lambda\in\P$ and scaling functions~$\a$, and
similarly, $(0,0,\rhohat_\lambda(t))\in\DV$ for all $\lambda\in\P$ and scaling functions~$\a$.
The space~$\DV$ is closed, so all limits of diagram functions exist in~$\DV$,
even when limit shapes do not exist.

\begin{remark}\label{vanishing:mass}{\rm
Sometimes the limit shape does not exist, even though there is a scaling for which a limit \emph{curve} exists.
Consider, as in~\cite[Example~3.4]{PakNature}, the set of partitions $\A = \{k^k 1^{k^2}\}$, $k \geq 1$, consisting of a $k \times k$ square block and a $1 \times k^2$ tail.
For each $n$ of the form $n = 2k^2, k \geq 1$, there is exactly one partition in $\A$ of size~$n$.
In fact, one can take the limit of the scaled diagram function through values of $n$ indicated, and obtain the limit \emph{curve}
\[
a(x) = \begin{cases} \frac{1}{\sqrt{2}}, & 0 \leq x \leq \frac{1}{\sqrt{2}} \\
0, & \mbox{otherwise,}\end{cases}
\]
which does not have unit area.  This corresponds to the element $ \left(\frac{1}{2}, 0, a(x) \right) \in \DV$.  All other scaling functions would send the scaled diagram function to either $(1,0,0)\in \DV$ or $(0,1,0)\in \DV$ in the limit.
}\end{remark}

\subsection{Two $U$ theorems}

The theorems below indicate the types of limit shape results already known, which we shall use to obtain new ones.
They are specializations of~\cite[Th.~8]{Yakubovich}.

\begin{theorem}[\cite{Yakubovich}]  \label{special unrestricted}
	Suppose $\P_U$ is \smooth with parameters $r\geq 1$ and $B>0$.
	Let $c=d(r,B)$.  Then the limit shape of $\P_U$ under scaling function $\a=n^{r/(1+r)}$ exists and is denoted by $\Phi(t; r, B)$, and we have 	
	\begin{equation}
\label{unrestricted limit shape}
\PhiLS \, = \, \int_{(t/B)^{1/r}}^\infty  \. \frac{e^{-c\, B\, y^{r}}}{1-e^{-c\, B\, y^{r}}}\. dy, \quad t>0.
\end{equation}
\end{theorem}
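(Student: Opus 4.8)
The plan is to realize the uniform measure on $\P_{U,n}$ as a conditioned product measure and to carry out the computation in the grand canonical (Boltzmann--Fristedt) ensemble. First I would let the multiplicities $m_{u_k}$ of the admissible parts be independent geometric random variables with $\Pr(m_{u_k}=j)=(1-q^{u_k})\,q^{u_k j}$ and tilting parameter $q=e^{-c/\a}$, where $\a=n^{r/(1+r)}$ and $c=d(r,B)$, and set the remaining multiplicities to zero. The essential structural fact is that conditioning this product measure on $\{|\lambda|=n\}$ returns the uniform measure on $\P_{U,n}$, since each admissible $\lambda$ receives probability proportional to $q^{|\lambda|}$; meanwhile the independence reduces every unconditioned expectation to a Riemann sum.

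For the weak form of the limit shape I would compute the expected scaled diagram function,
\[
\E\bigl[\Dhat_\lambda(t)\bigr] \, = \, \frac{\a}{n}\sum_{u_j\ge \a t}\frac{e^{-c\,u_j/\a}}{1-e^{-c\,u_j/\a}}\,,
\]
and make the substitution $y=j\ts\a^{-1/r}$, under which $u_j/\a\to By^r$, $\Delta y\sim \a^{-1/r}$, and the constraint $u_j\ge \a t$ becomes $y\ge (t/B)^{1/r}$. The sum then converts to $\tfrac{\a^{(r+1)/r}}{n}\int_{(t/B)^{1/r}}^\infty \tfrac{e^{-cBy^r}}{1-e^{-cBy^r}}\,dy$, and since $\a=n^{r/(1+r)}$ makes the prefactor $\a^{(r+1)/r}/n$ tend to $1$, this is exactly~\eqref{unrestricted limit shape}. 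Applying the same substitution to $\E[|\lambda|]=\sum_j u_j\,\E[m_{u_j}]$ gives $\E[|\lambda|]\sim n\ts B\int_0^\infty y^r \tfrac{e^{-cBy^r}}{1-e^{-cBy^r}}\,dy$; the change of variables $u=cBy^r$ together with $\int_0^\infty u^{1/r}\tfrac{e^{-u}}{1-e^{-u}}\,du=\Gamma(1+\tfrac1r)\ts\zeta(1+\tfrac1r)$ shows that this integral equals $1$ precisely when $c=d(r,B)$ as in~\eqref{Uber:c:no:a}. Thus the stated constant is the unique one making $\E[|\lambda|]\sim n$, equivalently making the area under $\PhiLS$ equal to one.

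The main obstacle is upgrading convergence in expectation to convergence in probability under the \emph{conditioned} measure, i.e.\ establishing the equivalence of ensembles. Here I would first use the independence of the $m_{u_k}$ to bound $\var\bigl(\Dhat_\lambda(t)\bigr)$ and show it vanishes, giving concentration in the grand canonical ensemble; I would then transfer this to the uniform measure on $\P_{U,n}$ via a local central limit theorem for $|\lambda|=\sum_j u_j m_{u_j}$, whose normalization is furnished by the Ingham-type asymptotics of Theorem~\ref{Ingham:theorem}. The delicate point is controlling the corrections introduced by conditioning on a single value of $|\lambda|$ and verifying that they do not displace the limit shape; this is exactly the large-deviation and local-limit analysis to be developed in~\S\ref{sect prob}. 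Because any \smooth family with parameters $r\ge 1$, $B>0$ satisfies the hypotheses of~\cite[Th.~8]{Yakubovich}, a shorter alternative is to invoke that theorem directly and merely check that its general formula specializes to~\eqref{unrestricted limit shape} with $c=d(r,B)$.
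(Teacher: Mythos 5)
Your route is the paper's own. Section~\ref{sect prob} sets up exactly the tilted product measure you describe (equation~\eqref{equal in distribution U} with tilting parameter~\eqref{eq x}), the paper's proof in \S\ref{proof special distinct} is precisely your Riemann-sum computation of $\E\bigl[\Shat_x(t)\bigr]$ with $y_k=(u_k/B\a)^{1/r}$ and $\Delta y_k\sim(1/B\a)^{1/r}$, and the upgrade from convergence in expectation to the limit shape in probability is delegated wholesale to Theorem~\ref{distribution is sufficient}; your ``shorter alternative'' of invoking \cite[Th.~8]{Yakubovich} with $f(x)=1/(1-x)$ is literally how the paper frames the attribution. Your normalization check --- that $c=d(r,B)$ is forced by $\E|\lambda|\sim n$ via the substitution $u=cBy^r$ and $\int_0^\infty u^{1/r}\ts e^{-u}/(1-e^{-u})\ts du=\Gamma\bigl(1+\tfrac1r\bigr)\zeta\bigl(1+\tfrac1r\bigr)$ --- is correct and consistent with \eqref{Uber c}--\eqref{Uber:c:no:a}.

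The one step in your plan that would fail as stated is the self-contained concentration argument: a vanishing variance plus Chebyshev is quantitatively too weak to survive the conditioning. For fixed $t>0$ the same Riemann-sum scaling gives $\var\bigl(\Shat_x(t)\bigr)\asymp n^{-1/(1+r)}$, so Chebyshev yields a grand-canonical deviation probability only of polynomial order $n^{-1/(1+r)}$; but the denominator in the transfer inequality \eqref{LD calculation} is $\Pr\bigl(\sum_{i\in U} i\ts Z_i=n\bigr)\asymp 1/\sigma_n$ with $\sigma_n^2\asymp n^{(2r+1)/(1+r)}$, i.e.\ of order $n^{-(2r+1)/(2(1+r))}$, and since $(2r+1)/(2(1+r))>1/(1+r)$ for every $r\geq 1$, the resulting quotient \emph{diverges}. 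This is exactly why the paper (following \cite{LD,Yakubovich}) uses the stretched-exponential bound of Theorem~\ref{large deviation}, $\exp\bigl(-c_{t,\delta}\ts n^{1/(2(1+r))}\bigr)$, which beats the polynomial lower bound of Theorem~\ref{local bound} supplied by the local limit theorem. The repair is routine --- the $Z_{u_k}$ are geometric with uniform exponential moments after tilting, so a Chernoff-type bound (or Chebyshev with moments of order roughly $2(r+1)$) on the independent sum replaces your second-moment estimate --- and you implicitly concede the point by deferring to the large-deviation analysis of \S\ref{sect prob}; but as written, ``bound the variance and transfer by the LCLT'' is not a proof. With that substitution your outline coincides with the paper's argument.
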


Theorem~\ref{special unrestricted} above covers a large class of examples of interest in our present setting.
It is also the starting point for finding limit shapes of partitions with non-multiplicative restrictions in \S\ref{transfer:theorems}.
We will also need another particular specialization, which is when all multiplicities of allowable part sizes are strictly bounded from above by some fixed number $a \geq 2$.
Typically the value $a=2$, i.e., distinct parts, is sufficient, but~\S\ref{od general} contains an example where we need further generality.

\begin{theorem}[\cite{Yakubovich}] \label{special distinct}
	Suppose $\P_U$ is \rsmooth with parameters $r\geq 1,$ $B>0,$ and $a \geq 2$.
Let $c=d(r,B,a)$.  Then the limit shape of $\mathcal{A}$ under scaling function $\a=n^{r/(1+r)}$ exists and is denoted by $\Phi(t; r, B, a)$, and we have
\begin{equation*}
\Phi(t; r, B,a) \, = \, \int_{(t/B)^{1/r}}^\infty \.
\frac{e^{-c\, B\, y^{r}}+2e^{-2\, c\, B\, y^{r}}
+ \cdots + (a-1)e^{-(a-1)\, c\, B\, y^{r}}}{1+e^{-c\, B\, y^{r}}+e^{-2\, c\, B\, y^{r}}+
\cdots+e^{-(a-1)\, c\, B\, y^{r}}} \. dy, \quad t>0\ts.
\end{equation*}
\end{theorem}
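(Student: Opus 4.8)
The plan is to obtain Theorem~\ref{special distinct} as the specialization of~\cite[Th.~8]{Yakubovich} in which each multiplicity is capped at $a-1$, via the standard grand-canonical (Boltzmann--Fristedt) model. The only genuinely new input relative to Theorem~\ref{special unrestricted} is that the geometric multiplicities are replaced by \emph{truncated} geometric ones; the whole derivation then proceeds with the expected geometric multiplicity $z/(1-z)$ replaced by the expected truncated multiplicity. To each allowed part $u_k$ I assign an independent multiplicity $M_k$ on $\{0,1,\dots,a-1\}$ with $\Pr(M_k=j)=z_k^{\,j}/\sum_{i=0}^{a-1}z_k^{\,i}$, where $z_k=e^{-s\,u_k}$ and $s>0$ is a tilting parameter; conditioning the induced random partition on size exactly~$n$ recovers the uniform measure on $\P_U\cap\Pn$. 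The key elementary identity is the expected truncated multiplicity
\[
g_a(z)\;=\;\frac{\sum_{j=1}^{a-1} j\,z^{\,j}}{\sum_{j=0}^{a-1} z^{\,j}}\;=\;\frac{z}{1-z}-\frac{a\,z^{a}}{1-z^{a}}\;=\;\sum_{j\geq 1} z^{\,j}-a\sum_{j\geq 1} z^{\,aj},
\]
which drives the constant calculation.

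Next I would fix the tilt $s=s(n)$ by the mean-size equation $\sum_k u_k\,g_a(e^{-s u_k})=n$. Replacing the sum by an integral using $u_k\sim B k^r$ and substituting $v=s\,u_k$ reduces the left side to $\frac{B^{-1/r}}{r}\,s^{-(1+1/r)}\int_0^\infty v^{1/r}g_a(e^{-v})\,dv$, and the termwise expansion of $g_a$ above gives
\[
\int_0^\infty v^{1/r}g_a(e^{-v})\,dv\;=\;\bigl(1-a^{-1/r}\bigr)\,\zeta\!\left(1+\tfrac1r\right)\Gamma\!\left(1+\tfrac1r\right),
\]
the factor $1-a^{-1/r}$ being exactly the difference of the two zeta-sums. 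Solving for $s$ yields $s\sim d(r,B,a)\,n^{-r/(1+r)}=c/\a$ with $c$ as in~\eqref{Uber c}; letting $a\to\infty$ sends this factor to $1$ and recovers Theorem~\ref{special unrestricted}, in accordance with~\eqref{Uber:c:no:a}.

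With this tilt the expected scaled diagram function becomes a Riemann sum, exactly as in the convex-partition computation in the proof of Theorem~\ref{conj convex}: writing $y=k\,\a^{-1/r}$ so that $s\,u_k\to c\,B\,y^{r}$ and $\Delta y=\a^{-1/r}$, and using $\frac{\a}{n}\cdot\a^{1/r}=1$, one obtains
\[
\E\,\Dhat_\lambda(t)\;=\;\frac{\a}{n}\sum_{u_k\geq \a t} g_a\bigl(e^{-s u_k}\bigr)\;\longrightarrow\;\int_{(t/B)^{1/r}}^{\infty} g_a\bigl(e^{-c B y^{r}}\bigr)\,dy,
\]
which is precisely the claimed $\Phi(t;r,B,a)$; in particular the mean-size normalization guarantees $\int_0^\infty\Phi(t;r,B,a)\,dt=1$.

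The final and hardest step is to upgrade this weak (in-expectation) statement to convergence in probability under the uniform measure on $\Pn$. This needs (i) a concentration/large-deviation bound pinning $\Dhat_\lambda(t_j)$ to its mean at the chosen continuity points, and (ii) equivalence of ensembles, i.e.\ a local limit theorem for the tilted total size $|\lambda|=\sum_k u_k M_k$ at~$n$, so that conditioning on $|\lambda|=n$ does not distort the finite-dimensional laws. Both are furnished in the required generality by~\cite[Th.~8]{Yakubovich}, whose hypotheses hold here because $\P_U$ is \rsmooth with parameters $r$, $B$, and $a$; truncating the multiplicities changes each per-part mean and variance only by bounded factors, so the fluctuations of $|\lambda|$ are of the same order as in the unrestricted case and the local limit theorem applies. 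I expect this transfer step to be the main obstacle, since it is where the independence-destroying conditioning is controlled, whereas everything preceding it is the routine Mellin/Riemann-sum bookkeeping sketched above.
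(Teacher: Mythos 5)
Your proposal is correct and matches the paper's own argument: the proof in \S\ref{proof special distinct} performs exactly your Riemann-sum computation with the truncated-geometric mean $E_a(x)$ (your $g_a$) under the tilt $x=e^{-c/\a}$, and then invokes Theorem~\ref{distribution is sufficient} (Yakubovich) for the concentration and equivalence-of-ensembles step, just as you do via \cite[Th.~8]{Yakubovich}. The only difference is cosmetic: you rederive the calibration constant $d(r,B,a)$ from the Mellin integral $\int_0^\infty v^{1/r}g_a(e^{-v})\,dv=(1-a^{-1/r})\,\zeta\bigl(1+\tfrac1r\bigr)\Gamma\bigl(1+\tfrac1r\bigr)$, which the paper simply builds into the definition~\eqref{Uber c}.
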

In particular, by taking $f(x) = 1/(1-x)$ and $f(x) = (1+x+\ldots + x^{a-1})$, respectively,
in~\cite[Th.~8]{Yakubovich}, we recover Theorem~\ref{special unrestricted} and Theorem~\ref{special distinct}.

\subsection{\MMb and \MPb}
\label{bijections}

Two subsets of partitions $\A, \B\subset \P$ are called equinumerous if $|\A_n| = |\B_n|$ for all $n \geq 1$.
For sets $\A_n$, $\B_n$ $\subset \P_n$, we consider one-to-one correspondences $\vartheta_n:\A_n \to \B_n$ which can be expressed as a real--valued linear transformations.

We define \MPb (multiplicities to parts) to be a one-to-one correspondence that maps, using a linear transformation, multiplicities of parts of partitions $\lambda\in\A_n$ to parts of partitions $\mu\in\B_n$ for all $n$.
The transformation has the form
\begin{equation}
\label{v transform}
\mu := \left(\sum_{j=1}^n v(1,j)\, m_j, \sum_{j=1}^n v(2,j)\, m_j, \ldots, \sum_{j=1}^n v(n,j)\, m_j\right) \geq 0,
\end{equation}
for some set of coefficients~$\bv = \{v(i,j): 1 \leq i,j\leq n\}$ such that
\[ \sum_i v(i,j) = \begin{cases} j, & \text{ when } j\in U \\
 0, & \text{ otherwise, } \end{cases} \qquad \text{ for all } j=1,\ldots, n\ts.
\]
To avoid excessive subscripting we have opted for the functional notation of indices, i.e., $v(i,j)$ instead of $v_{i,j}$.

Similarly, we define \MMb (multiplicities to multiplicities) to be a one-to-one correspondence $\vartheta_n (m_1,m_2,\ldots, m_n)$ that maps, using a linear transformation, multiplicities of parts of partitions $\lambda\in\A_n$ to multiplicities of parts of partitions $\mu\in\B_n$.  Define the vector~$\bfx = (x_1, x_2, \ldots, x_n)\in \R^n$, with
\[ x_i = \begin{cases}
i, & i \in U \\
0, & \text{ otherwise},
\end{cases} \qquad \text{for all } i=1, 2, \ldots, n.
\]

Denote the multiplicities of $\mu$ by $(m_1', m_2',\ldots, m_n')$, then the transformation has the form
\begin{equation}
\label{v transform MM}
(m_1', m_2', \ldots, m_n') =  \left(\sum_j v(1,j)\, m_j, \sum_j v(2,j)\, m_j, \ldots, \sum_j v(n,j)\, m_j\right) \geq 0,
\end{equation}
and the vector~$\bfx$ is a left eigenvector of the matrix~$\bv$ with eigenvalue~1.

We define $\varphiv := \vartheta_n$ when the correspondence $\vartheta_n$ is an \MMb or \MPb with coefficients~$\bv$, to emphasize the role of $\bv$ in the bijection, and to distinguish \MMbns s and \MPbns s from general one-to-one correspondences between sets of partitions (see for example~\cite{Pak}).

We next extend the notion of \MMb and \MPb  to countable sequences.
  In this setting, the matrix of coefficients~$\bv = \{v(i,j): 1 \leq i, j \}$ lies in $\mathbb{H}$, which we recall is the set of real--valued operators on the Banach space of real--valued, countable sequences.
Define also the vector~$\bfy~=~(y_1, y_2, \ldots)$, with
\[ y_i = \begin{cases}
i, & i \in U \\
0, & \text{ otherwise},
\end{cases}  \qquad  \text{for all }  i\geq 1.
\]

  For \MPbns s, we have
  \[ \sum_i v(i,j) = \begin{cases} j, & \text{ when } j\in U \\
 0, & \text{ otherwise, } \end{cases} \qquad \text{for all } j \geq 1.
 \]

Similarly, for \MMbns s, we have the vector~$\bfy$ is a left eigenvector of matrix~$\bv$ with eigenvalue~1.
Note that it is not necessary for every entry of $v$ to be nonnegative.

\subsection{Stability}
   Suppose $\A \subset \P$ denotes a set of partitions which is \smooth with parameters $r \geq 1,$ $B>0.$
We denote the limit shape of $\A$ by $\Phi(x; r, B),$ and, furthermore, we define $\phi(y; r, B)$ as the continuous function $\phi(\,\cdot\,; r, B)~:~\R_+ \longrightarrow \R_+,$ which satisfies
\[ \Phi(x; r, B) = \int_{(x/B)^{1/r}}^\infty \phi(y; r, B)\, dy,  \quad \text{where} \quad x>0.
\]
Suppose there exists an \MMb or \MPb between a set of partitions $\A$ and some other set of partitions $\B \subset \P$, with coefficients~$\bv$.  Since partitions in $\A$ only have parts in $U$, the nonzero values of coefficients $\bv$ are of the form $v(i, u_k)$, $i \geq 1$, $k \geq 1$.

We require a condition on the coefficients $\bv$ as follows.  We let
\[ y_k := \left(\frac{u_k}{B\, \a}\right)^{1/r},\quad k\ge 1,\]
so that
\[ y_k \sim \frac{k}{\a^{1/r}}\. , \qquad \Delta y_k \sim \left(\frac{1}{\a}\right)^{1/r}\. . \]

We say the coefficients $\bv$ are \MPstable\ if there exists a
bounded and piecewise continuous function  $K:\R_+\times \R_+ \longrightarrow \R_+$ such that for all $t>0$ and $k \geq 1,$ we have
\begin{equation} \label{v nice}
\begin{array}{c}
\displaystyle v(\lceil \b t\rceil , u_k )\, \phi(u_k; r, B) \sim  \beta(n)\th K(t,y_k, \phi(y_k; r, B)).
\end{array}
\end{equation}

Similarly, we say the coefficients $\bv$ are \MPstablea\ if there exists a
bounded and piecewise continuous function  $K:\R_+\times \R_+ \longrightarrow \R_+$ such that for all $t>0$ and $k \geq 1,$ we have
\begin{equation} \label{v nicea}
\begin{array}{c}
\displaystyle v(\lceil \b t\rceil , u_k )\, \phi(u_k; r, B, a) \sim  \beta(n)\th K(t,y_k, \phi(y_k; r, B, a)).
\end{array}
\end{equation}

We say the coefficients $\bv$ are \MMstable\ if there exists a
bounded and piecewise continuous function  $K:\R_+\times \R_+ \longrightarrow \R_+$ such that for all $t>0$ and $k \geq 1,$ we have
\begin{equation} \label{v nice2}
\begin{array}{c}
\displaystyle \sum_{k=1}^{\lfloor u^{-1}(n)\rfloor } v(\lceil \b t\rceil , u_k )\, \phi(u_k; r, B) \sim  \beta(n)\th K(t,y_k, \phi(y_k; r, B)).
\end{array}
\end{equation}

Similarly, we say the coefficients $\bv$ are \MMstablea\ if there exists a
bounded and piecewise continuous function  $K:\R_+\times \R_+ \longrightarrow \R_+$ such that for all $t>0$ and $k \geq 1,$ we have
\begin{equation} \label{v nice2a}
\begin{array}{c}
\displaystyle \sum_{k=1}^{\lfloor u^{-1}(n)\rfloor } v(\lceil \b t\rceil , u_k )\, \phi(u_k; r, B,a) \sim  \beta(n)\th K(t,y_k, \phi(y_k; r, B,a)).
\end{array}
\end{equation}

When an \MMb $\varphiv$ is defined by an \stable\ set of coefficients $\bv$, we say that bijection $\varphiv$ is \stable, and we denote the limit shape of $\B$ by $\Phi(t; r, B, K)$.
Similarly, when an \MPb $\varphiv$ is defined by an \stable\ set of coefficients~$\bv$, we say that bijection $\varphiv$ is \stable, and we denote the limit shape of $\B$ by $\Psi(t; r, B, K)$.

A bijection defined in terms of linear combinations of multiplicities of part sizes acts as a Markov operator on the diagram function; see for example \cite[Section 2.3.39]{Bobrowski}.  Recall that a Markov operator {\rm P} is a linear operator that sends functions $f \in L^1_+(\R_+)$ to $L^1_+(\R_+)$ and is such that
\begin{enumerate}
\item ${\rm P}f \ts \geq \ts 0$ \, for\, $f \geq 0$, and
\item $\int_0^\infty {\rm P}f(x) \. dx \. = \. \int_0^\infty \ts f(x) \. dx$\ts.
\end{enumerate}
The coefficients $\bv$ in Equation~\eqref{v nice} correspond to an explicit form of the operator {\rm P}.  Note that while we must have $\sum_j v(i,j) m_j \geq 0$ for all $i\ge 1$, there is no restriction that each entry $v(i,j)$ be nonnegative.
Most noteworthy, however, is the fact that Markov operators are linear contractions, and hence preserve a form of the large deviation principle, which we revisit in~\S\ref{concentration}.

\begin{remark}{\rm
Not all bijections give a means to compute the limit shape.  For example, it is not apparent how to utilize Dyson's iterated map for odd-distinct parts to compute limit shapes~\cite{PakNature}.
A more natural example would be the \emph{Garsia--Milne involution principle} bijections of Rogers--Ramanujan identities~\cite{GarsiaMilneRR} (see also~\cite{Pak}).  Both sides have a limit shape, however, we are not aware of any way to draw conclusions about one limit shape from the other using the involution, as it would seem to be difficult to do in general~\cite{Konvalinka}.
}\end{remark}

\bigskip\section{Transfer theorems}
\label{sect special cases}

\subsection{\MMb and \MPb}
\label{transfer:theorems}
We now state our key technical theorems, which specify conditions under which the limit shapes map continuously, and provide the means for computing an explicit formula.

\begin{theorem}[First transfer theorem] \label{MT unrestricted}
	Suppose $\P_U$ is \smooth with parameters $r\geq 1$ and $B>0$.
	Let $c~=~d(r,B)$.
	
	\begin{enumerate}
	\item If there exists a set of partitions $\B\subset \mathcal{P}$ and an \MMb
	$\varphi_{\bf v}: \P_U \to \mathcal{B}$ that is \stable, then the limit shape of $\B$ under scaling function $\b~=~n^{1/(1+r)}$ is given by
\[ \int_0^\infty K\left(t,y,\ts \frac{e^{-c \, B\, y^{r}}}{1 - e^{-c \, B\, y^{r}}}\right) \ts dy, \qquad t>0.
\]
	\item If there exists a set of partitions $\B\subset \mathcal{P}$ and an \MPb
	$\varphi_{\bf v}: \P_U \to \mathcal{B}$ that is \stable, then the conjugate limit shape of $\B$ under scaling function $\b~=~n^{1/(1+r)}$ is given by
\[ \int_0^\infty K\left(t,y,\ts \frac{e^{-c \, B\, y^{r}}}{1 - e^{-c \, B\, y^{r}}}\right)\, dy, \qquad t>0.
\]
\end{enumerate}
\end{theorem}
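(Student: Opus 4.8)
The plan is to prove the weak form of both statements first---convergence in expectation under the Boltzmann--Fristedt model---and then upgrade to convergence in probability via the concentration results of Section~\ref{sect prob} and \S\ref{concentration}, using that $\varphi_{\bf v}$ acts as a Markov operator.

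First I would set up the probabilistic model for $\P_U$. Under Fristedt-type sampling with parameter $x = e^{-c/\a}$, $\a = n^{r/(1+r)}$ and $c = d(r,B)$, the multiplicities $\{m_{u_k}\}_{k\ge1}$ of the admissible part sizes are independent geometric random variables; tuning $c$ through the asymptotics of Theorem~\ref{Ingham:theorem} makes the expected size asymptotic to $n$, and the equivalence of ensembles from Section~\ref{sect prob} lets me pass between this model and the uniform measure on $\P_{U,n}$. Crucially, their expectations are exactly the density appearing in Theorem~\ref{special unrestricted}: with $y_k = (u_k/(B\a))^{1/r}$ one has $\E[m_{u_k}] = \frac{e^{-c B y_k^r}}{1 - e^{-c B y_k^r}} = \phi(y_k; r, B)$.

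Next, since $\varphi_{\bf v}$ is linear in the multiplicities I would push expectations through the bijection. In case (2) the $i$th part of $\mu = \varphi_{\bf v}(\lambda)$ is $\mu_i = \sum_k v(i, u_k)\,m_{u_k}$, so the scaled conjugate diagram function has expectation $\frac{\b}{n}\,\E[\mu_{\lceil\b t\rceil}] = \frac{\b}{n}\sum_k v(\lceil\b t\rceil, u_k)\,\phi(y_k; r, B)$; case (1) is parallel, with the scaled diagram function $\Dhat_\mu$ replacing $\rhohat_\mu$. The two scalings are linked by $\a^{1/r} = \b$, so $\Delta y_k \sim \a^{-1/r} = \b^{-1}$, and the \stable\ hypothesis---\eqref{v nice} in the \MPbns\ case, \eqref{v nice2} in the \MMbns\ case---is precisely the normalization guaranteeing that these weighted sums are Riemann sums for $\int_0^\infty K\bigl(t, y, \phi(y; r, B)\bigr)\,dy$. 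Letting $n\to\infty$ then yields convergence in expectation to the claimed integral, i.e.\ the weak (conjugate, in case (2)) limit shape.

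The main obstacle is twofold. The analytic difficulty is making the Riemann-sum limit rigorous: since $\phi(y; r, B) \sim 1/(c B y^r)$ blows up as $y\to0$ and the weights $v(\lceil\b t\rceil, u_k)$ must be controlled uniformly in $n$, I would isolate a neighborhood of $y=0$ and the tail and supply a dominated-convergence or uniform-integrability estimate before interchanging limit and summation. The probabilistic difficulty is upgrading convergence in expectation to convergence in probability: here I would use that $\varphi_{\bf v}$ induces a Markov operator on diagram functions and that Markov operators are linear contractions, so the large-deviation/concentration bound established for $\P_U$ in \S\ref{concentration} transfers to $\B$ with no loss. Combining the transferred concentration with the weak limit shape gives~\eqref{limit shape definition} for $\B$, and its conjugate analogue in case (2), completing the proof.
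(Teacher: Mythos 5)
Your proposal is correct and follows essentially the same route as the paper: the paper likewise computes $\E[T\Shat_x(t)]$ by pushing the linear map through the independent geometric multiplicities, uses the \stable\ hypothesis (equations~\eqref{v nice} and~\eqref{v nice2}) to identify the resulting sum as a Riemann sum converging to $\int_0^\infty K\bigl(t,y,\phi(y;r,B)\bigr)\,dy$, and then invokes Theorem~\ref{distribution:sufficient:bijection} to upgrade to convergence in probability. Your concentration step is exactly the paper's as well --- the large deviation bound of Theorem~\ref{large deviation} is transferred to $\B$ via the contraction principle for the continuous map $T_{\bf v}$ and combined with the local bound of Theorem~\ref{local bound} through the inequality~\eqref{LD calculation} --- and your extra care about the Riemann-sum limit near $y=0$ is subsumed in the paper by the boundedness and piecewise continuity of $K$ built into the stability definition.
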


Note that the integrand function $K$ in the theorem is not necessarily multiplicative with respect to its final argument, as it is in the case of convex partitions in \S\ref{convex}; see also Section~\ref{sect r}.
It is more generally a distribution (in the analysis sense) on $L_+(\R_+)$, and we present an explicit example demonstrating the need for this generality in \S\ref{sect Ohara}.

An analogous result holds for partitions with multiplicities all strictly bounded by some $a \geq 2$.

\begin{theorem}[Second transfer theorem] \label{thm dist}
	Suppose $\P_U$ is \rsmooth with parameters $r\geq 1,$ $B>0$, $a \geq 2.$
	Let $c~=~d(r,B,a)$.
	\begin{enumerate}
	\item If there exists a set of partitions $\mathcal{B}\subset \P$ and an \MMb $\varphi_{\bf v}: \P_U \to \mathcal{B}$, defined by a set of coefficients ${\bf v}$ that is \stable, then
\begin{equation}
\label{distinct transform}
\small \Phi(t; r, B, a, K) = \int_0^\infty K\left(t,y,\th  \frac{e^{-c\, B\, y^r} + 2e^{-2 c\, B\, y^r} + \ldots + (a-1)\, e^{-(a-1)\, c\, B\, y^r}}{1+e^{-c\, B\, y^r}+e^{-2 c\, B\, y^r} + \ldots + e^{-(a-1)\, c\, B\, y^r}}\right)\th dy, \quad t>0.
\end{equation}
is the limit shape under scaling function $\b~=~n^{1/(1+r)}$.
\item If there exists a set of partitions $\mathcal{B}\subset \P$ and an \MPb $\varphi_{\bf v}: \P_U \to \mathcal{B}$, defined by a set of coefficients ${\bf v}$ that is \stable, then
\begin{equation}
\label{distinct transform1}
\small \Phi^{-1}(t; r, B, a, K) = \int_0^\infty K\left(t,y,\th  \frac{e^{-c\, B\, y^r} + 2e^{-2 c\, B\, y^r} + \ldots + (a-1)\, e^{-(a-1)\, c\, B\, y^r}}{1+e^{-c\, B\, y^r}+e^{-2 c\, B\, y^r} + \ldots + e^{-(a-1)\, c\, B\, y^r}}\right)\th dy, \quad t>0.
\end{equation}
is the conjugate limit shape under scaling function $\b~=~n^{1/(1+r)}$.
\end{enumerate}
\end{theorem}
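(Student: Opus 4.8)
The plan is to run the argument for the First transfer theorem (Theorem~\ref{MT unrestricted}) with the unrestricted ensemble replaced by the restricted one throughout, so that Theorem~\ref{special distinct} takes the place of Theorem~\ref{special unrestricted}. Concretely, I would work in the Boltzmann model for $\P_U$ in the restrictedly smooth regime of~\S\ref{sect prob}: the multiplicities $\{m_{u_k}(\lambda)\}_{k\ge 1}$ are independent, and because each multiplicity is capped below $a$, the variable $m_{u_k}$ is a truncated geometric on $\{0,1,\ldots,a-1\}$ with $\Pr(m_{u_k}=\ell)\propto e^{-c\,\ell\,u_k/\a}$. Writing $q_k = e^{-c\,u_k/\a} = e^{-c\,B\,y_k^{r}}\bigl(1+o(1)\bigr)$, one computes
\[
\E\bigl[m_{u_k}\bigr] \, = \, \frac{q_k + 2q_k^2 + \ldots + (a-1)q_k^{a-1}}{1 + q_k + \ldots + q_k^{a-1}} \, = \, \phi(y_k; r, B, a)\bigl(1+o(1)\bigr),
\]
which is precisely the integrand of Theorem~\ref{special distinct}. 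This single identity is the only place where the restricted case departs from the unrestricted one; everything downstream is formally identical.

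Next I would extract the \emph{weak} (in-expectation) form of the limit shape. Both parts follow the same template, and I would illustrate with Part~(2). An \MPb sends multiplicities of $\lambda$ to the parts $\mu_i = \sum_k v(i,u_k)\,m_{u_k}(\lambda)$, so the object to track is the scaled conjugate diagram function $\rhohat_\mu$, and by linearity of expectation
\[
\E\bigl[\rhohat_\mu(t)\bigr] \, = \, \frac{\b}{n}\sum_{k\ge 1} v\bigl(\lceil \b t\rceil, u_k\bigr)\,\E\bigl[m_{u_k}(\lambda)\bigr].
\]
Substituting $\E[m_{u_k}] = \phi(y_k;r,B,a)\bigl(1+o(1)\bigr)$ from above, the \MPstablea\ hypothesis~\eqref{v nicea} is exactly the condition calibrating this weighted sum, through the spacing $\Delta y_k \sim \a^{-1/r}$, to be an asymptotic Riemann sum converging to $\int_0^\infty K\bigl(t,y,\phi(y;r,B,a)\bigr)\,dy$, i.e.\ the right-hand side of~\eqref{distinct transform1}; boundedness and piecewise continuity of $K$ justify the passage to the integral. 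Part~(1) is identical with the scaled diagram function $\Dhat_\mu$ of the image of the \MMb in place of $\rhohat_\mu$, and the \MMstablea\ condition~\eqref{v nice2a} in place of~\eqref{v nicea}, producing~\eqref{distinct transform}.

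It then remains to upgrade convergence in expectation to convergence in probability. Here I would invoke the large deviation principle for $\P_U$ established in~\S\ref{sect prob} together with the equivalence of ensembles, which already pins the uniform measure on $(\P_U)_n$ to the limit shape $\Phi(t;r,B,a)$ of Theorem~\ref{special distinct}. The size-preserving bijection $\varphiv$ pushes the uniform measure on $(\P_U)_n$ to that on $\B_n$, and since $\varphiv$ is given by the linear coefficients $\bv$ it acts on diagram functions as a Markov operator, hence as an $L^1$-contraction; as recorded in~\S\ref{concentration}, such operators preserve the large deviation principle, so the concentration enjoyed by $\P_U$ is inherited by the image set $\B$. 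Combining this concentration with the weak limit shape computed above yields the strong (in-probability) statements of both parts.

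The routine part is the weak computation, which the stability hypothesis reduces to a Riemann sum; the genuine obstacle is the concentration transfer. Specifically, one must verify that the $L^1$-contraction property really survives the $n$-dependent rescaling and delivers a large deviation bound for $\B$ sharp enough to conclude convergence in probability, and that this bound remains compatible with the conditioning on fixed size $n$ (equivalence of ensembles) on both sides of the bijection. This is where the substance of~\S\ref{sect prob} and~\S\ref{concentration} is required.
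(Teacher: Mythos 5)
Your proposal is correct and is essentially the paper's own argument: the weak form is obtained exactly as in the paper's proof via the truncated-geometric mean $E_a(x)$ and the Riemann-sum limit enforced by the stability hypotheses \eqref{v nicea} and \eqref{v nice2a}, and the upgrade from convergence in expectation to convergence in probability is precisely Theorem~\ref{distribution:sufficient:bijection}, proved in the paper by the contraction principle applied to the Markov operator $\Tv$ together with the bound \eqref{LD calculation} of the large deviation estimate against the polynomial conditioning probability of Theorem~\ref{local bound}. Your closing caveat correctly identifies where the substance lies, and it is resolved in the paper exactly as you suggest, in \S\ref{sect prob} and \S\ref{concentration}.
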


The case $a=2$ is most common, corresponding to partitions into distinct parts, but the analysis is the same for any $a \geq 2$, and we shall utilize the full generality in \S\ref{od general}.

\subsection{Geometric bijections}
\label{geometric transformations}

In \cite{PakNature}, a general paradigm for representing partition bijections is presented involving the composition of transformations acting on the Young diagram of an integer partition.
These transformations map points in the plane via a bijection $\phi$ in such a way that under certain conditions, the limit shape along with certain other statistics of interest are mapped continuously via these transformations.
This property is referred to as \emph{asymptotic stability}, and the limit shape is the statistic of the Young diagram corresponding to the boundary.
We briefly recount the transformations as they pertain to the mapping of the limit shape, and defer the interested reader to the more comprehensive treatment in~\cite{PakNature}.

For example, \emph{Stretch and paste} corresponds to combining several components, say with boundaries $f_1, \ldots, f_m$, which exist on the same domain and with the same area, by taking the sum with an appropriate scaling to keep the sum of the resulting expression constant.  We think of this operation as a shuffle, which corresponds to
\begin{equation*}
f_1(m\,t) + \ldots + f_m(m\,t).
\end{equation*}

\smallskip
\nin
\textsf{(1)} \ts \emph{Conjugation} is equivalent to taking a functional inverse: $f(t) \longmapsto  f^{-1}(t)$.

\smallskip
\nin
\textsf{(2)} \ts\emph{Move} adds a constant: $f(t) \longmapsto  f(t)+a$.

\smallskip
\nin
\textsf{(3)} \ts\emph{Shift} adds a linear function $a\, t$: $f(t) \longmapsto  f(t) + a\, t$.

\smallskip
\nin
\textsf{(4)} \ts\emph{Shred} and move acting together on $r$ components effectively splits up the limit shape into $r$ pieces, each having shape scaled by $r$ with area $1/r$, which corresponds to  $f(t) \longmapsto  (f(r\, t), \ldots, f(r\, t)).$

\smallskip
\nin
\textsf{(5)} \ts A \emph{union}, or \emph{sort}, between two components corresponds to the addition $f(t) + g(t)$.

\smallskip
\nin
\textsf{(6)} \ts The \emph{+} operator between two components corresponds to the inverse to the addition of the inverses: $(f^{-1}(t)+g^{-1}(t))^{\<-1\>}$.

\smallskip
\nin
\textsf{(7)} \ts A \emph{cut} is somewhat more complicated, and requires treating the limit shape as defining a region in the positive quadrant of~$\R^2$, which can be split in two pieces, one of which is a piecewise function, the other being the inverse of a piece-wise function; see for example, Figure~\ref{parallel:mapping:limit}.  It partitions the curve into two curves, say via projections $P$ and $Q$, as $f(t) \longmapsto  ( P\, f(t), Q\, f(t) ).$

\smallskip
\nin
\textsf{(8)} \ts A paste \emph{by itself} corresponds to combining two curves whose domains partition some region in $\R^2$ into a single curve.
Under appropriate conditions, paste acts as the inverse to a cut, i.e., $( P\, f(t), Q\, f(t) ) \longmapsto  f(t)$.

\medskip

\begin{theorem}[Third transfer theorem, \cite{PakNature}] \label{transformations}
{\rm
The geometric transformations on integer partitions correspond to the following transformations on limit shapes.\\
\begin{center}
\begin{tabular}{|l|l|} \hline
Transformation & Limit shape map \\ \hline \hline
Stretch and paste & $f_1(m\,t) + \ldots + f_m(m\,t)$ \\ \hline
Cut & $f(t) \longmapsto  ( P\, f(t), Q\, f(t) )$  \\ \hline
Conjugation & $f(t) \longmapsto  f^{-1}(t)$ \\ \hline
Move & $f(t) \longmapsto  f(t)+a$ \\ \hline
Shift & $f(t) \longmapsto  f(t) + a\, t$ \\ \hline
Shred and move & $f(t) \longmapsto  (f(r\, t), \ldots, f(r\, t))$ \\ \hline
Paste & $( P\, f(t), Q\, f(t) ) \longmapsto  f(t)$ \\ \hline
Union & $f(t) + g(t)$ \\ \hline
$+$ & $(f^{-1}(t)+g^{-1}(t))^{\<-1\>}$ \\ \hline
\end{tabular}
\end{center}
}
\end{theorem}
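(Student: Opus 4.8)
The plan is to verify that each geometric operation in the table, viewed as a bijection $\phi:\R^2\to\R^2$ acting on Ferrers diagrams, induces exactly the stated analytic operation on the boundary curve in the scaling limit. Since the limit shape is by definition the scaling limit of the normalized diagram function $\Dhat_\la$ (equivalently, the upper boundary of the normalized Young-diagram region), and since all such limits live in the closed space $\DV$, it suffices to track how each transformation moves this boundary and then to argue that convergence in probability is preserved under the resulting map.

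First I would fix precise definitions of the nine operations as piecewise-affine bijections carrying Ferrers diagrams to Ferrers diagrams, recording for each its effect on a region's upper boundary. The elementary cases are direct plane geometry: \emph{conjugation} reflects across the diagonal $y=x$, hence sends a monotone boundary to its functional inverse $f^{-1}$; \emph{move} and \emph{shift} are a vertical translation and a shear, giving $f(t)+a$ and $f(t)+at$; \emph{shred and move} compresses the region horizontally into $r$ disjoint copies, producing $(f(rt),\dots,f(rt))$; and \emph{union} stacks two regions by adding heights, giving $f(t)+g(t)$, while the $+$ operation stacks conjugately, giving $(f^{-1}(t)+g^{-1}(t))^{\<-1\>}$. \emph{Stretch and paste} is the many-component version of union after rescaling each component to the common domain and area, yielding $f_1(mt)+\dots+f_m(mt)$.

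Next I would treat \emph{cut} and \emph{paste} together, since paste is designed to invert a cut. Here one regards the limit shape as bounding a region in the first quadrant, chooses projections $P$ and $Q$ splitting that region into a piecewise piece and the inverse of a piecewise piece, and checks that $f\longmapsto(Pf,Qf)$ and its inverse $(Pf,Qf)\longmapsto f$ are realized by the corresponding geometric cut and paste. Verifying that these two maps are mutually inverse, under the stated side conditions, is where I expect the bookkeeping to be heaviest, as it requires matching the pieces along the cut locus.

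The main obstacle will be establishing \emph{asymptotic stability}: that each geometric map not only transforms the limiting boundary as claimed but also preserves convergence in probability as $n\to\infty$. Concretely, one must show that the discrete operation on a random Young diagram of size~$n$ commutes, up to error vanishing in probability under the scalings $\a$ and $\b$, with the continuous operation on the limit shape; the delicate points are uniform control near the piecewise boundaries introduced by cut and paste, and the handling of any mass escaping to $0$ or $\infty$ within $\DV$ (cf.\ Remark~\ref{vanishing:mass}). Since all nine operations are continuous in the topology on $\DV$, this reduces to a continuous-mapping argument once the convergence of $\Dhat_\la$ is in hand; the full details are carried out in~\cite{PakNature}, and I would import them, verifying only that the hypotheses of the present setting are met.
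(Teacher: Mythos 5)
Your proposal is correct and follows essentially the route the paper itself takes: the paper likewise treats the nine operations as plane bijections whose effect on the boundary curve is read off geometrically (\S\ref{geometric transformations}), defers the detailed per-transformation verification and the asymptotic-stability bookkeeping to \cite{PakNature}, and secures the probabilistic transfer exactly as in your final paragraph --- in Theorem~\ref{geo distr} it observes that each geometric transformation is a continuous injection on $L^1_+(\R_+)$, so the contraction principle applies to each transformation individually, preserving the exponential concentration that overpowers the conditioning in \eqref{LD calculation}. Your reduction to a continuous-mapping argument on $\DV$ is the same mechanism in marginally weaker form, and your import of the remaining details from \cite{PakNature} matches what the paper does.
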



\bigskip\section{Partitions with nonnegative $r^{th}$ differences}
\label{sect r}
Throughout this section, $r$ is any fixed integer, $r \geq 2$.

\begin{theorem} \label{nonneg}
Let $\C^r$ denote the set of partitions with parts that have nonnegative $r^{th}$ differences; i.e., $\triangle_i^r(\mu) \geq 0$, $i \geq 1$ for each $\mu \in \C^r$.
The conjugate limit shape of $\C^r$ under scaling function $\b~=~n^{1/(1+r)}$ is given by
\[ \Phi\left(t; r, 1/r!, \frac{( y-t)^{r-1}_+}{(r-1)!} \phi(y; r,1/r!) \right), \]
with explicit formula
\[ \int_{t}^\infty \frac{\bigl(y-t\bigr)^{r-1}}{(r-1)!} \frac{e^{-c\, y^r/r!}}{1-e^{-c\, y^r/r!}}\th dy, \qquad t>0, \quad \mbox{ where } \quad c~=~d(r,1/r!). \]
\end{theorem}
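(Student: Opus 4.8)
The plan is to realise $\C^r$ as the image of an explicit \MPb from an \smooth family and then apply the First transfer theorem, Theorem~\ref{MT unrestricted}(2). Set $u_k=\binom{k+r-1}{r}$, so that $u_k=\frac1{r!}\prod_{j=0}^{r-1}(k+j)$ is a degree-$r$ polynomial with leading coefficient $1/r!$; thus $U=\{u_k\}$ satisfies~\eqref{U1} and the set $\mathcal{T}^r$ of partitions with unrestricted multiplicities and parts in $U$ is \smooth with parameters $r$ and $B=1/r!$. Define $v(i,u_k)=\binom{k-i+r-1}{r-1}$ for $1\le i\le k$ and $v(i,u_k)=0$ otherwise, and let $\varphiv$ send $\lambda\in\mathcal{T}^r$ with $b_k$ parts equal to $u_k$ to $\mu$ with $\mu_i=\sum_{k\ge i}\binom{k-i+r-1}{r-1}\,b_k$ (for $r=2$ this is exactly Andrews's map~\eqref{convex bijection}). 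The matrix $\bigl(v(i,u_k)\bigr)$ is the $r$-fold iterated summation operator $S^r$, where $(S)_{ik}=1$ for $k\ge i$ and $0$ otherwise, since counting weakly increasing chains $i\le j_1\le\cdots\le j_{r-1}\le k$ gives $(S^r)_{ik}=\binom{k-i+r-1}{r-1}$. As $S^{-1}$ is the forward difference $\mu_i\mapsto\mu_i-\mu_{i+1}$, its inverse is $\triangle^r$, so $\triangle_i^r(\mu)=b_i$ for every $i$. Hence $\mu\in\C^r$ (that is, $\triangle_i^r(\mu)\ge0$ for all $i$) if and only if all $b_i\ge0$, and every nonnegative finitely-supported sequence $(b_k)$ arises, so $\varphiv$ is a bijection $\mathcal{T}^r_n\to\C^r_n$. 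It is size-preserving and satisfies the \MPb normalisation because $\sum_i v(i,u_k)=\sum_{m=0}^{k-1}\binom{m+r-1}{r-1}=\binom{k+r-1}{r}=u_k$, whence $|\mu|=\sum_k u_k b_k=|\lambda|$.

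Next I verify \MPstability. By Theorem~\ref{special unrestricted} the family $\mathcal{T}^r$ has limit shape $\Phi(t;r,1/r!)$, and in the associated independent model the part $u_k$ has expected multiplicity $\phi(y_k;r,1/r!)=e^{-cBy_k^r}/(1-e^{-cBy_k^r})$, where $y_k=(u_k/(B\a))^{1/r}\sim k/\b$, $\Delta y_k\sim 1/\b$, and $c=d(r,1/r!)$. Fixing $t>0$ and writing $i=\lceil\b t\rceil$, we have $k-i\sim\b(y_k-t)$ on the support $k\ge i$, so the elementary asymptotics of the binomial coefficient give
\[
v(\lceil \b t\rceil,u_k)=\binom{k-i+r-1}{r-1}\ \sim\ \frac{(k-i)^{r-1}}{(r-1)!}\ \sim\ \b^{\,r-1}\,\frac{(y_k-t)_+^{\,r-1}}{(r-1)!}\,,
\]
which exhibits $\varphiv$ as \MPstable\ with kernel $K(t,y,\phi)=\dfrac{(y-t)_+^{\,r-1}}{(r-1)!}\,\phi$.

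Feeding this kernel into Theorem~\ref{MT unrestricted}(2) gives the conjugate limit shape of $\C^r$ under $\b=n^{1/(1+r)}$ as $\int_0^\infty K\bigl(t,y,\phi(y;r,1/r!)\bigr)\,dy$, which, since $(y-t)_+^{r-1}$ vanishes for $y<t$, is exactly the claimed integral. Equivalently one computes the limit directly, as in \S\ref{convex}: combining the binomial asymptotic with $\Delta y_k\sim1/\b$, the sum
\[
\e\Big[\tfrac{\b}{n}\,\mu_{\lceil\b t\rceil}\Big]=\frac{\b}{n}\sum_{k\ge i}\binom{k-i+r-1}{r-1}\,\frac{e^{-cBy_k^r}}{1-e^{-cBy_k^r}}
\]
carries a prefactor $\b^{\,r+1}/n$ in front of $\int_t^\infty\frac{(y-t)^{r-1}}{(r-1)!}\,\frac{e^{-cy^r/r!}}{1-e^{-cy^r/r!}}\,dy$, and this prefactor equals $1$ precisely when $\b=n^{1/(1+r)}$. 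Finally, because $\varphiv$ is a linear map on multiplicities it acts as a Markov, hence contraction, operator on diagram functions, so the large-deviation estimate for $\mathcal{T}^r$ transfers verbatim to $\C^r$ (see~\S\ref{concentration}), upgrading this convergence in expectation to convergence in probability and identifying the conjugate limit shape.

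The main obstacle is the bijection step: establishing that $\triangle^r=(S^r)^{-1}$, so that the $r$-th differences recover the multiplicities exactly, and that this forces $\varphiv$ to be onto all of $\C^r$. Here one must also check that the boundary convention $\triangle_\ell^k(\lambda)=\lambda_\ell$ is precisely the zero-extension of $\mu$ past its last part, so that no spurious constraints appear at the tail; this is what makes the cone $\{\triangle_i^r\ge 0\}$ coincide with the nonnegative span of the staircase columns of $S^r$. The remaining analytic points—uniform validity of the binomial asymptotics and domination of the tail of the sum to justify the Riemann-sum passage—are routine, and the concentration step is quoted from \S\ref{concentration}.
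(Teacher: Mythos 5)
Your proof is correct and takes essentially the same route as the paper: the same \MPb with coefficients $v(i,u_k)=\binom{k-i+r-1}{r-1}$ from partitions into parts $u_k=\binom{k+r-1}{r}$, the same stability kernel $K(t,y,\phi)=\frac{(y-t)_+^{r-1}}{(r-1)!}\,\phi$, and the same appeal to the first transfer theorem and the concentration argument of~\S\ref{concentration} under the scaling $\b=n^{1/(1+r)}$. The only deviations are that you prove bijectivity directly via the identity $\triangle^r=(S^r)^{-1}$ (where the paper quotes the bijection from~\cite{rth} and Lemma~\ref{rth lemma}) and that you track the $\b^{\,r-1}$ growth of the binomial coefficient explicitly through the Riemann sum, which is if anything a more careful rendering of the paper's own normalization.
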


When $r=2$, we obtain Corollary~\ref{conj convex}.  A similarly defined set of partitions with multiplicative restrictions is utilized to obtain this result.

\begin{theorem}[\cite{Yakubovich}, cf.~\S\ref{history}] \label{r-th triangular}
Let $F^r$ denote the set of partitions into parts of sizes given by $u_k = \binom{r+k-1}{r}$, $k\geq 1$.
The limit shape of $F^r$ under scaling function $\a = n^{r/(1+r)}$ is given by $\Phi(t; r, 1/r!)$.
\end{theorem}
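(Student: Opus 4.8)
The plan is to recognize $F^r$ as one of the \smooth ensembles already covered by Theorem~\ref{special unrestricted}, so that the limit shape can simply be read off once the correct parameters $r$ and $B$ are identified. The only genuine content is the elementary determination of the leading behavior of the sequence $u_k = \binom{r+k-1}{r}$.

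First I would expand $u_k$ as a polynomial in $k$. Writing
\[ u_k \, = \, \binom{r+k-1}{r} \, = \, \frac{(k+r-1)(k+r-2)\cdots(k+1)\,k}{r!}\,, \]
one sees that $u_k$ is a product of $r$ linear factors divided by $r!$, hence a polynomial in $k$ of degree exactly $r$ with positive leading coefficient. Thus $U$ is of the form~\eqref{U1} with $a_r = 1/r!$, and in particular $u_k = \tfrac{1}{r!}\,k^r\bigl(1+o(1)\bigr)$, so that $B \equiv a_r = 1/r!$.

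Next I would check the remaining hypotheses in the definition of an \smooth set of partitions. The sequence is strictly increasing, since Pascal's rule gives $\binom{r+k}{r} = \binom{r+k-1}{r} + \binom{r+k-1}{r-1} > \binom{r+k-1}{r}$ for every $k \ge 1$; and the multiplicities of parts in $F^r$ are unrestricted by definition. Hence $\P_U = F^r$ is \smooth with parameters $r$ and $B = 1/r!$. I would also record that $u_1 = \binom{r}{r} = 1$, so that $\gcd(U) = 1$ and there is no constraint on the manner in which $n \to \infty$.

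Finally, applying Theorem~\ref{special unrestricted} with these parameters gives that the limit shape of $F^r$ under scaling function $\a = n^{r/(1+r)}$ exists and equals $\Phi(t; r, 1/r!)$, which is exactly the assertion. There is essentially no obstacle: the result is a direct specialization of the cited theorem of Yakubovich, and the single point deserving care is confirming that the multiset coefficient $\binom{r+k-1}{r}$ carries leading coefficient precisely $1/r!$, since this value is what propagates through~\eqref{unrestricted limit shape} into the final constant $c = d(r,1/r!)$ and the integrand.
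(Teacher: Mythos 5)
Your proposal is correct and matches the paper's treatment exactly: the paper states this result as a direct specialization of Theorem~\ref{special unrestricted} (Yakubovich's theorem), just as the analogous case $r=2$, $B=\tfrac12$ is handled for convex partitions in \S\ref{convex}. Your verification that $u_k = \binom{r+k-1}{r}$ is a strictly increasing degree-$r$ polynomial in $k$ of the form~\eqref{U1} with leading coefficient $B = 1/r!$, together with $u_1 = 1$ so that $\gcd(U)=1$, is precisely the checking needed to invoke that theorem.
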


We now define the bijection $\varphi_r : F^r_n \to \C^r_n$ in \cite{rth}, which is a straightforward generalization to the one in \S\ref{convex}.  For each $\lambda \in F^r_n$, let $m_j(\lambda)$ denote the number of parts of size~$\binom{r-1+j}{r}$ in~$\lambda$.  Then $\mu = \varphi_r(\lambda)$ is given by
\[
\mu_i \, = \, \sum_{j\geq i} \. \binom{r-1+j-i}{r-1} \. m_j(\lambda)\. , \quad i \geq 1\. .
\]

\begin{lemma}
\label{rth lemma}
The bijection $\varphi_r$ is an \MPbns.  Moreover, $\varphi_r$ is $(r, 1/r!, K)$--\textsf{MP-stable}, with
\[
K\left(t,y,\phi(y; r,1/r!)\right) \, = \, \frac{( y-t)^{r-1}_+}{(r-1)!} \. \phi(y; r,1/r!)\. .
\]
\end{lemma}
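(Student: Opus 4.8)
The plan is to verify the two assertions in turn: first that $\varphi_r$ fits the \MPbns\ formalism of \S\ref{bijections}, and then that its coefficient matrix is \MPstable\ with the stated kernel. Reading the coefficients off the displayed formula for $\mu_i$, the map $\varphi_r$ is the linear operator on multiplicities with entries
\[
v(i,u_k) \, = \, \binom{r-1+k-i}{r-1}\quad (k \geq i), \qquad v(i,u_k)=0 \quad (k<i), \qquad v(i,j)=0 \quad (j\notin U),
\]
where $U=\{u_k\}$ and $u_k=\binom{r-1+k}{r}$. That $\varphi_r$ is a genuine size--preserving one-to-one correspondence $F^r_n\to\C^r_n$ is supplied by~\cite{rth}; what remains for the \MPbns\ property is the column--sum condition $\sum_i v(i,u_k)=u_k$. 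First I would establish this by the hockey--stick identity: substituting $\ell=k-i$,
\[
\sum_{i=1}^{k} \binom{r-1+k-i}{r-1} \, = \, \sum_{\ell=0}^{k-1}\binom{r-1+\ell}{r-1} \, = \, \binom{r-1+k}{r} \, = \, u_k,
\]
which is exactly the required relation, and which is the linear--algebra reformulation of size preservation since $\sum_i\mu_i=\sum_k\bigl(\sum_i v(i,u_k)\bigr)\,m_k$. This settles that $\varphi_r$ is an \MPbns.

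For stability I would work in the scaled variables of \S\ref{bijections}: put $B=1/r!$ (the leading coefficient of $u_k\sim k^r/r!$), $\a=n^{r/(1+r)}$, $\b=n^{1/(1+r)}=\a^{1/r}$, so that $y_k=(u_k/(B\a))^{1/r}\sim k/\b$ and $\Delta y_k\sim 1/\b$. By Theorem~\ref{special unrestricted} the relevant density is $\phi(y;r,B)=e^{-c\,B\,y^r}/(1-e^{-c\,B\,y^r})$, and $\phi(y_k;r,B)$ is the expected multiplicity of the part $u_k$, just as in the convex computation of \S\ref{convex}. The key step is the asymptotics of the single coefficient $v(\lceil \b t\rceil,u_k)$ as $n\to\infty$ with $y_k\to y$. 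Writing $i=\lceil\b t\rceil\sim\b t$ and $k\sim\b y$, for $y>t$ we have $k-i\sim\b(y-t)\to\infty$, so
\[
\binom{r-1+k-i}{r-1} \, = \, \frac{(k-i+r-1)\cdots(k-i+1)}{(r-1)!} \, \sim \, \frac{(k-i)^{r-1}}{(r-1)!} \, \sim \, \b^{\,r-1}\,\frac{(y-t)^{r-1}}{(r-1)!},
\]
while for $y<t$ the coefficient vanishes once $k<i$, reproducing the positive part. Multiplying by $\phi(y_k;r,B)$ gives
\[
v(\lceil\b t\rceil,u_k)\,\phi(y_k;r,B) \, \sim \, \b^{\,r-1}\,\frac{(y-t)_+^{r-1}}{(r-1)!}\,\phi(y;r,B),
\]
which is exactly the \MPstable\ relation with $K(t,y,\phi(y;r,1/r!))=\frac{(y-t)_+^{r-1}}{(r-1)!}\,\phi(y;r,1/r!)$, as claimed. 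The piecewise continuity of $K$ is clear from this explicit formula, and the exponential decay of $\phi$ makes the $y$--integral of Theorem~\ref{nonneg} converge.

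I would then note that the power $\b^{\,r-1}$ is precisely the one demanded by the First transfer theorem (Theorem~\ref{MT unrestricted}(2)): inserting the relation above into the scaled conjugate diagram function $\tfrac{\b}{n}\,\E[\mu_{\lceil\b t\rceil}]=\tfrac{\b}{n}\sum_k v(\lceil\b t\rceil,u_k)\,\phi(y_k;r,B)$, the extra factor $\b$ coming from the Riemann sum $\sum_k\sim\b\int dy$ together with the prefactor $\b/n$ telescopes to $\b^{\,r+1}/n=1$, recovering the integral $\int_t^\infty \frac{(y-t)^{r-1}}{(r-1)!}\,\phi(y;r,1/r!)\,dy$; the case $r=2$ is the convex computation, where $\b^{\,r-1}=\b$ matches the normalization there.

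The one genuinely delicate point is the transition at $y=t$, where the polynomial asymptotic $\binom{r-1+k-i}{r-1}\sim(k-i)^{r-1}/(r-1)!$ degrades because $k-i$ stays bounded. I would handle this by observing that this band consists of only $O(1)$ indices $k$, each contributing $\tfrac{\b}{n}\,v\,\phi=O(\b^{-r})$ after normalization, so its total contribution is $O(\b^{-r})=o(1)$ and does not affect the limiting integral; the floor/ceiling discrepancies $\lceil\b t\rceil-\b t=O(1)$ shift $k-i$ by $O(1)$ and are absorbed into the $(1+o(1))$ of the $\sim$. The uniformity needed to pass from this pointwise kernel asymptotic to convergence of the full scaled diagram function is exactly the content of Theorem~\ref{MT unrestricted}, which I invoke rather than reprove; the main obstacle is therefore this boundary analysis at $y=t$ rather than the bulk estimate, which is routine.
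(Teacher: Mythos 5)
Your argument is essentially the paper's own: you read off the coefficients $v(i,u_k)=\binom{r-1+k-i}{r-1}$ from the displayed formula for $\mu_i$, verify the column-sum condition $\sum_i v(i,u_k)=u_k$ by the hockey-stick identity (the paper performs the identical summation, reindexed), and then pass to the scaled variables $y_k\sim k/\b$, $\Delta y_k\sim 1/\b$ to extract the kernel $K(t,y,\phi)=\frac{(y-t)_+^{r-1}}{(r-1)!}\,\phi(y;r,1/r!)$ from the asymptotics of the binomial coefficient, exactly as in the paper's proof of Lemma~\ref{rth lemma}.

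Two places where you are more careful than the paper, both worth noting. First, the power of $\b$: the paper's proof states $v(\b t,u_k)\,\phi\sim\beta(n)\,K$ with $\beta(n)\sim n^{1/(r+1)}$, via the chain $\binom{r-1+\b y_k-\b t}{r-1}\phi\sim\frac{(y-t)_+^{r-1}}{(r-1)!}\phi$; but as your computation shows, $\binom{r-1+\b(y-t)}{r-1}\sim\b^{\,r-1}(y-t)^{r-1}/(r-1)!$, so the stability relation actually holds with the factor $\b^{\,r-1}$, and the first power written in \eqref{v nice} and in the paper's proof agrees with this only when $r=2$ (the convex case of \S\ref{convex}). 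Your bookkeeping $\frac{\b}{n}\cdot\b^{\,r-1}\cdot\b=\b^{\,r+1}/n=1$ is the normalization actually used in the proof of Theorem~\ref{MT unrestricted} and is the one consistent with the integral formula in Theorem~\ref{nonneg}; read charitably, the exponent of $\beta(n)$ in the definition should be $r-1$ for this example, and you have silently supplied the correction rather than introduced an error. Second, your explicit handling of the $O(1)$ band of indices near $y=t$ (where the polynomial asymptotic for the binomial degrades) and of the ceiling shifts is absent from the paper, whose proof compresses both into a single ``$\sim$''; your estimate that this band contributes $O(\b^{-r})=o(1)$ after normalization is correct and closes a gap the paper leaves implicit. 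In both respects your proposal proves the same statement by the same route, but with the asymptotic accounting done correctly where the paper's version is loose.
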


\begin{proof}
For $i\geq1$ and $j \geq 1$, we have $v(i,u_j) = \binom{r-1+j-i}{r-1}$, and 0 otherwise.  We have
\[
\sum_{i} v(i,u_j) \. = \. \sum_{i=1}^{j} \binom{r-1+i-1}{r-1} \. = \. \binom{r-1+j}{r} \. = \. u_j\. .
\]
With $\ts \a = n^{r/(r+1)}$, $\ts \b = n^{1/(r+1)}$, we have
\[
\frac{1}{r!} \. y_j^r \. = \. \frac{u_j}{ \a} \. \sim \. \frac{j^r}{r!\, \a}\. ,
\]
so
\[
y_j~\sim~\frac{j}{\b}, \qquad \Delta y_j~\sim~\frac{1}{\b}\ .
\]
Hence,
\[ 
v\bigl( \b\, t , u_k\bigr) \ts \phi(u_k; r, 1/r!) \. \sim \. \beta(n) \ts K\bigl(t,y_k,\phi(y_k; r, 1/r!)\bigr)\. , 
\]
where $\beta(n) \sim n^{1/(r+1)}$, and
\[ K(t, y, \phi(y; r,1/r!)) \. \sim \.  \binom{r-1 + \b\, y_k - \beta t}{r-1} \phi(y; r, 1/r!) \. \sim \.  
\frac{( y-t)^{r-1}_+}{(r-1)!} \. \phi(y; r,1/r!)\. , \]
which implies the result.
\end{proof}

\bigskip\section{Euler's theorem and generalizations}
\label{sect odd}

\subsection{Glaisher's bijection}

Let $\O$ denote the set of partitions with all parts odd and $\D$ denote the set of partitions with all parts distinct.  We show in this section that Glaisher's bijection $\varphi: \D \to \O$ is an \MMbns, and present several extensions.

Recall the following classical result.

\begin{theorem}[Euler's Theorem] For each $n \geq 1$, we have $|\O_n| = |\D_n|$.  \end{theorem}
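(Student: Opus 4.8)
For each $n \geq 1$, we have $|\O_n| = |\D_n|$.

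This is a classical result, and the cleanest proof is via generating functions; I sketch that approach here, and then indicate the combinatorial alternative.

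The plan is to compute both generating functions and show they coincide term by term. First I would write the generating function for partitions into distinct parts as
\[
\sum_{n \geq 0} |\D_n|\, q^n \, = \, \prod_{i \geq 1} \bigl(1 + q^i\bigr),
\]
since each part size $i$ is either omitted or used exactly once. Next I would write the generating function for partitions into odd parts, where each odd part may be repeated arbitrarily often, as
\[
\sum_{n \geq 0} |\O_n|\, q^n \, = \, \prod_{j \geq 0} \frac{1}{1 - q^{2j+1}}.
\]
The crux is the algebraic identity linking these two products.

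The key step is the telescoping manipulation of the distinct-parts product. Using $1 + q^i = \frac{1 - q^{2i}}{1 - q^i}$ for each $i \geq 1$, I would write
\[
\prod_{i \geq 1} \bigl(1 + q^i\bigr) \, = \, \prod_{i \geq 1} \frac{1 - q^{2i}}{1 - q^i}\,.
\]
The numerator ranges over all even exponents $2i$, so it cancels exactly the even-indexed factors in the denominator, leaving only the odd-indexed factors:
\[
\prod_{i \geq 1} \frac{1 - q^{2i}}{1 - q^i} \, = \, \prod_{j \geq 0} \frac{1}{1 - q^{2j+1}}\,.
\]
Comparing this with the generating function for $\O$ shows the two power series are identical, and extracting the coefficient of $q^n$ yields $|\O_n| = |\D_n|$ for every $n$.

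The main obstacle, such as it is, is making the cancellation of infinite products rigorous; I would treat these as formal power series in $q$ (or restrict to $|q| < 1$ for convergence), noting that each coefficient of $q^n$ depends only on finitely many factors, so the telescoping is legitimate at the level of each coefficient. As an alternative that fits the spirit of this paper, I would mention Glaisher's bijection directly: given a partition into distinct parts, write each part as $2^a m$ with $m$ odd, and record $m$ with multiplicity $2^a$; conversely, given an odd part $m$ with multiplicity expanded in binary as $\sum_k 2^{a_k}$, produce the distinct parts $2^{a_k} m$. This is precisely the \MMb discussed in \S\ref{sect odd}, and verifying it is size-preserving and invertible gives a bijective proof of the same identity.
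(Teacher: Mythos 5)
Your proof is correct and matches the paper's treatment: the paper recalls Euler's theorem as classical and proves its generalization, Andrews's Theorem~\ref{Andrews Theorem}, by exactly your generating-function manipulation --- your telescoping identity $\prod_{i\geq 1}(1+q^i) = \prod_{i\geq 1}\frac{1-q^{2i}}{1-q^i}$ is the specialization of the paper's product $\prod_{i\geq 1}\frac{1-t^{i a_i}}{1-t^i}$ to $\bolda = (2,2,\ldots)$, $\boldb = (\infty,1,\infty,1,\ldots)$ with $\pi(i) = 2i$. Your bijective alternative is precisely Glaisher's bijection, which the paper defines immediately after the statement and subsequently shows to be an \MMbns, so both of your routes are the ones the paper itself uses.
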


Glaisher's bijection $\varphi$ is defined as follows.  Start with $\lambda\in\D$.  Replace each part of size $m\, 2^r$ with $2^r$ parts of size $m$, for all $r\ge 1$ and $m$ odd.

Define the sets $\O^r = \{$odd parts with multiplicities $<2^r\}$ and $\D^r = \{$distinct parts not divisible by $2^r\}$.  The bijection $\varphi: \D \to \O$, restricted to $\D^r$ and $\O^r$, shows that $|\O^r| = |\D^r|$ for all $r\geq 1$.
The limit shapes of  $\O^r$ and $\D^r$ exist, and the bijection between them was shown to be a geometric bijection \cite{PakNature}.
An informal calculation in \cite{PakNature} derives the limit shape of $\O$ using the limit shapes $\D^r$ for all $r\geq 1$.  Below we formalize this calculation by showing that Glaisher's bijection $\varphi$ is an \MMbns.

\begin{prop} Glaisher's bijection $\varphi: \D^r \to \O^r$ is an \MMb for all $r\in \N$.  Furthermore, Glaisher's bijection $\varphi: \D \to \O$ is an \MMbns.
\end{prop}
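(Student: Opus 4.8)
The plan is to produce explicitly the coefficient matrix $\bv$ that realizes $\varphi$ as a linear map on multiplicities, to check that its image has only odd parts, and then to verify the left--eigenvector condition from \S\ref{bijections} that characterizes an \MMbns. First I would record the action of $\varphi$ on multiplicities. Writing each part size uniquely as $j = m\, 2^s$ with $m$ odd and $s = v_2(j) \geq 0$ the $2$--adic valuation, Glaisher's rule replaces one part of size $j$ by $2^s$ parts of size $m$ (the case $s=0$ leaving odd parts fixed). Hence if $\mu = \varphi(\lambda)$ has multiplicities $(m_1', m_2', \ldots)$, then $m_i' = \sum_{s \geq 0} 2^s\, m_{i\, 2^s}$ for odd $i$ and $m_i' = 0$ for even $i$. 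This identifies the coefficients as $v(i,j) = 2^s$ when $i$ is odd and $j = i\, 2^s$, and $v(i,j)=0$ otherwise. Since $v(i,j)$ vanishes unless $i$ is odd, every image multiplicity of an even part is zero, so $\varphi(\lambda) \in \O$.

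Next I would verify that $\bfy$, with $y_i = i$ for all $i \geq 1$ (as the source set $\D$ permits every part size, so $U = \N$), is a left eigenvector of $\bv$ with eigenvalue~$1$. For a fixed column index $j = m\, 2^s$, uniqueness of the factorization into an odd number times a power of two shows that the only nonzero entry is $v(m,j) = 2^s$, so
\[ \sum_i y_i\, v(i,j) \. = \. m \cdot 2^s \. = \. j \. = \. y_j, \]
which is exactly $\bfy\, \bv = \bfy$. This identity encodes size preservation, since $|\mu| = \sum_i i\, m_i' = \sum_j \bigl(\sum_i i\, v(i,j)\bigr) m_j = \sum_j j\, m_j = |\lambda|$. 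Combined with the classical fact that $\varphi$ is a bijection $\D_n \to \O_n$ for every $n$ (Euler's theorem via Glaisher's construction), this establishes that $\varphi \colon \D \to \O$ is an \MMbns.

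For the restricted claim $\varphi \colon \D^r \to \O^r$ I would observe that here $U = \{\, j : 2^r \nmid j\,\}$, so the valuation $s$ ranges only over $\{0,1,\ldots,r-1\}$ and $\bv$ is the same formula truncated to those columns. The eigenvector computation is unchanged on columns with $2^r \nmid j$, while $y_j = 0 = \sum_i i\, v(i,j)$ on the remaining columns. Since $\lambda \in \D^r$ has each $m_{i\,2^s} \in \{0,1\}$, the image multiplicity satisfies $m_i' \le \sum_{s=0}^{r-1} 2^s = 2^r - 1 < 2^r$, confirming $\varphi(\lambda) \in \O^r$ and recovering the multiplicity bound defining $\O^r$.

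I expect no deep obstacle: the argument is essentially a bookkeeping verification. The single point needing care is the passage to the countable setting, where I must confirm that $\bv$ is a genuine operator in $\mathbb{H}$. This is immediate, because any partition has only finitely many nonzero multiplicities, so each image coordinate $m_i' = \sum_{s\ge0} 2^s m_{i\,2^s}$ is a finite sum and each column of $\bv$ is finitely supported (in fact carries a single nonzero entry). Bijectivity in both settings can moreover be read off directly from the binary expansion of the target multiplicities: writing $m_i' = \sum_s b_s\, 2^s$ with $b_s \in \{0,1\}$ and assigning to $\lambda$ a part $i\,2^s$ precisely when $b_s = 1$ inverts $\varphi$ and makes the one-to-one correspondence transparent.
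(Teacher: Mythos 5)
Your proposal is correct and takes essentially the same approach as the paper: you exhibit the explicit coefficient matrix (your $v(i,\ts i\ts 2^s) = 2^s$ for odd $i$ is precisely the paper's $V_\infty$) and verify the left--eigenvector condition $\bfy\ts \bv = \bfy$ with $y_i = i$. The only differences are presentational --- the paper reaches $V_\infty$ as a pointwise limit in $\mathbb{H}$ of the truncated matrices $V_k$ for $\D^r \to \O^r$, whereas you write it directly via the $2$-adic factorization $j = m\ts 2^s$ and additionally record details the paper leaves implicit (column-finiteness, the multiplicity bound $m_i' \leq 2^r - 1$ placing the image in $\O^r$, and the binary-expansion inverse).
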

\begin{proof}

For clarity of exposition, we first show cases $r=1,2,$ then we show in full generality.

In the case $r=1$, $\O^1 = \D^1$ and $\varphi = id$, the identity transformation.  The transformation matrix is given by $V_1(2s+1, 2s+1) = 1$ for all $s \geq 0$, and 0 otherwise.

In the case $r=2$, $\D^2$ consists of parts of odd size and even parts not divisible by 4.
The transformation is given by
\[ V_2(i,j) =
\begin{cases}
1, & i=2s+1 \text{ and } j=2s+1 \\
2, & i=2s+1 \text{ and } j=4s+2  \\
0, & \text{otherwise},
\end{cases} \qquad \mbox{ for all $s \geq 0$}\. .  \qquad  \quad
\]

The general case $r=k$ has transformation given by
\[\qquad  V_k(i,j) =
\begin{cases}
1, & i=2s+1 \text{ and } j=2s+1 \\
2, & i=2s+1 \text{ and } j=4s+2 \\
\vdots \ &  \ \  \qquad \vdots \\
2^{k-1}, & i=2s+1 \text{ and } j=2^{k}\, s + 2^{k-1} \\
0, & \text{otherwise},
\end{cases}\qquad \mbox{ for all $s \geq 0$ and $k \geq 1$}.
\]
It follows that $V_k$ is a transformation matrix for an \MMb for all $k \geq 1$.

Using pointwise convergence in $\mathbb{H}$, we have \ts $V_k \to V_\infty$ \ts as \ts $k\to\infty$,
where
\begin{equation}
\label{Glaisher's linear transformation}
V_\infty(i,j)\. = \. \begin{cases} 2^{k-1}, & i = 2s+1 \text{ and } j = 2^k s+2^{k-1}, \\
 0, & \text{otherwise},
 \end{cases}\qquad \mbox{ for all $s \geq 0$ and $k \geq 1$}.
\end{equation}
It is easy to verify that $V_\infty$ has a left eigenvector of $(1,2,3,\ldots)$ corresponding to eigenvalue~1.  Thus, Glaisher's bijection $\varphi: \D \to \O$ is an \MMbns.
\end{proof}

We will refer to the transformation $V_\infty$ from equation~\eqref{Glaisher's linear transformation} as \emph{Glaisher's linear transformation}.

\subsection{O'Hara's algorithm}
\label{sect Ohara}
A generalization to Euler's Theorem was given by Andrews \cite{Andrews} (see also \cite{Pak}).  For vectors $\bolda~=~(a_1, a_2, \ldots)$ and $\boldb~=~(b_1, b_2, \ldots)$, let $\A_n$ and $\B_n$ denote the Andrews class partitions $\P_\bolda$ and $\P_\boldb$ of size~$n$, respectively.  Define $\rm{supp}(\bolda)$ to be the set of all $i \geq 1$ such that $a_i < \infty$.  We say that $a \sim b$ if there exists a bijection $\pi: \rm{supp}(a) \to \rm{supp}(b)$ such that $i a_i = j b_j$ for all $j = \pi(i)$.

\begin{theorem}[Andrews's Theorem \cite{Andrews}] \label{Andrews Theorem} If $a \sim b$, then $|\A_n| = |\B_n|$ for all $n \geq 1$.
\end{theorem}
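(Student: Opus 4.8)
The plan is to establish the stronger identity of (formal) generating functions $\sum_{n\ge 0}|\A_n|\,q^n = \sum_{n\ge 0}|\B_n|\,q^n$, from which equating coefficients of $q^n$ yields $|\A_n|=|\B_n|$ for every $n$. Throughout, all infinite products and series are read as formal power series in $q$.

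First I would write down the generating function of an Andrews class. A part of size $i$ may be used $0,1,\ldots,a_i-1$ times, and the usage counts for distinct part sizes are chosen independently, so the bookkeeping of $|\lambda|$ factors across part sizes:
\[
\sum_{n\ge 0}|\A_n|\,q^n \;=\; \prod_{i\ge 1}\Bigl(1+q^i+q^{2i}+\cdots+q^{(a_i-1)i}\Bigr) \;=\; \prod_{i\ge 1}\frac{1-q^{a_i i}}{1-q^i},
\]
where for $i\notin\mathrm{supp}(\bolda)$ (that is, $a_i=\infty$) the numerator is interpreted as $1$, so that an unrestricted part size contributes the factor $1/(1-q^i)$. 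Pulling out the denominators, which do not depend on $\bolda$, gives
\[
\sum_{n\ge 0}|\A_n|\,q^n \;=\; \Biggl(\prod_{i\ge 1}\frac{1}{1-q^i}\Biggr)\cdot\prod_{i\in\mathrm{supp}(\bolda)}\bigl(1-q^{a_i i}\bigr),
\]
and the identical computation applies to $\boldb$.

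Next I would use the hypothesis $\bolda\sim\boldb$. The common factor $\prod_{i\ge 1}(1-q^i)^{-1}$ appears on both sides and cancels, so it remains only to prove
\[
\prod_{i\in\mathrm{supp}(\bolda)}\bigl(1-q^{a_i i}\bigr) \;=\; \prod_{j\in\mathrm{supp}(\boldb)}\bigl(1-q^{b_j j}\bigr).
\]
By definition of $\sim$ there is a bijection $\pi:\mathrm{supp}(\bolda)\to\mathrm{supp}(\boldb)$ with $i\,a_i=\pi(i)\,b_{\pi(i)}$ for all $i$. Hence $\pi$ pairs each factor $1-q^{a_i i}$ on the left with the factor $1-q^{b_j j}$ on the right at $j=\pi(i)$, and these paired factors are literally equal because their exponents coincide. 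Since $\pi$ is a bijection the two products run over exactly the same multiset of exponents, so they agree as formal power series; this gives the displayed equality and therefore $|\A_n|=|\B_n|$ for all $n\ge 1$.

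I do not expect a genuine obstacle here: the argument is the standard product-side proof, and the only point needing a remark is the legitimacy of manipulating the infinite products. This is automatic in the ring of formal power series, because the $i$-th factor of each product is $1+O(q^i)$, so any fixed coefficient of $q^n$ is determined by the finitely many factors with $i\le n$. In keeping with the bijective theme of this section, I would also note that Andrews's theorem admits a purely combinatorial proof through O'Hara's algorithm, which builds an explicit bijection $\A_n\to\B_n$; that bijection, rather than the generating-function identity, is what is relevant to transferring limit shapes, but the product computation above is the most direct route to equinumerosity itself.
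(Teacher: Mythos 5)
Your proof is correct and is essentially the paper's own argument: the paper proves the theorem by the same generating-function identity $\prod_{i\ge 1}(1-t^{i a_i})/(1-t^i) = \prod_{j\ge 1}(1-t^{j b_j})/(1-t^j)$, with the convention $t^\infty = 0$ playing the role of your ``numerator equals $1$ when $a_i=\infty$'' interpretation. You simply make explicit the details the paper leaves implicit — the independence of multiplicity choices across part sizes, the cancellation of the common denominator, the pairing of numerator factors via the bijection $\pi$ using $i\,a_i = \pi(i)\,b_{\pi(i)}$, and the formal-power-series justification — so there is nothing to correct.
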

\begin{proof}  We have:
\[ 1 + \sum_{n=1}^\infty |\A_n|t^n \, = \, \prod_{i=1}^\infty  \frac{1 - t^{i a_i}}{1-t^i}
\, = \, \prod_{i=1}^\infty \frac{1-t^{j b_j}}{1-t^j} \, = \, 1 + \sum_{n=1}^\infty |B_n| t^n, \]
where we assume the convention that $t^\infty = 0$.
\end{proof}

The following algorithm, see~\cite{OHara}, performs the bijection given by Andrews classes of partitions, and is known as \emph{O'Hara's algorithm}.  For sets of partitions $\P_\bolda$ and $\P_\boldb$, suppose  $\bolda \sim \boldb$.  Start with $\lambda \in \P_\bolda$.  Replace $a_i$ parts of size $i$ with $b_j$ parts of size $j$, where as before $j = \pi(i)$, $i \geq 1.$  Repeat until $m_i(\lambda) < a_i$ for all $i \geq 1.$
In its full generality, O'Hara's algorithm is not asymptotically stable, and may require exponentially many steps, see~\cite{Konvalinka}.
In the special case below, however, the algorithm is in a sense monotonic, and gives a valid alternative approach to obtaining Glaisher's linear bijection.

Consider $\D = \P_\bolda$, $\bolda = (2, 2, 2, 2, \ldots),$ and $\O = \P_\boldb$, $\boldb = (\infty, 1, \infty, 1, \ldots)$, with $\pi: i \to 2i$.  O'Hara's algorithm exchanges each even part for two parts of half the size.  This is repeated until all parts are odd.  Denote by $V$ the linear transformation of one step in O'Hara's algorithm.  Then
\begin{equation}
V(i,j) = \begin{cases}
1, & i=2m+1,\text{ and } j=2m+1,\\
2, & i=m+1 \text{ and } j=2m+2, \\
0, & \text{otherwise},
\end{cases} \qquad \mbox{ for all $m \geq 0, k \geq 1$\. .}
\end{equation}

Each step in O'Hara's algorithm corresponds to multiplication of the vector of multiplicities by the matrix $V$.  Let $S_k := V^k$ denote $k$ steps in O'Hara's algorithm. We have:
\[ S_k \left(\begin{array}{c} m_1 \\ m_2 \\ \vdots \\ \end{array}\right) = \left( \sum_{i=1}^{k-1} 2^{i-1} m_{2^{i-1}}, 2^k m_{2^{k+1}}, \ldots, \sum_{i=1}^{k-1} 2^{i-1} m_{(2\ell+1) \ts 2^{i-1}}, 2^k m_{(2\ell+1)\ts 2^{k+1}}, \ldots\right)^T. \]

Using pointwise convergence in $\mathbb{H}$, we have:
\[ S_k \to V_\infty \quad \text{ as } \ \, k\to\infty\ts,
\]
where
\[ V_\infty \left(\begin{array}{c} m_1 \\ m_2 \\ \vdots \\ \end{array}\right) \, = \,
\left( \sum_{i=1}^{\infty} 2^{i-1} m_{2^{i-1}}, 0,  \ldots,
\sum_{i=1}^{\infty} 2^{i-1} m_{(2\ell+1) \cdot 2^{i-1}}, 0, \ldots \right)^T.
\]

\subsection{Asymptotic stability}
Let $\psi(y) := \phi(y; 1,1,2)$ denote the integrand for the limit shape of partitions into distinct parts.
\begin{theorem}\label{Glaisher:relation}
Glaisher's linear transformation is \MMstablea\ for $r=1, B=1, a=2,$ with
\[ K\mts\left(t,y,\psi(y)\right) \. = \. \sum_{k=1}^\infty\. 2^{k-1} \psi\left(y/2^{k-1}\right)\mathbbm{1}\mts\mts\left(y \geq t / 2^{k-1}\right), \qquad t > 0\. . \]

Moreover, the limit shapes satisfy
\[ \sqrt{2}\. \Phi\left(x \sqrt{2}\right) \.  = \. \frac{1}{2} \. \sum_{i \geq 1} \. \Psi\mts\left(\frac{t}{2^{i-1}}\right).\]
\end{theorem}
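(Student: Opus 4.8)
The plan is to dispatch the two assertions in sequence: first identify the stability kernel $K$ by unwinding the definition of \MMstablea\ for Glaisher's linear transformation $V_\infty$, and then substitute that kernel into the Second transfer theorem (Theorem~\ref{thm dist}) and evaluate the resulting integral in closed form. Throughout I would take the source class to be $\D$, which is \rsmooth with $r=1$, $B=1$, $a=2$; then both scaling functions collapse to $\a=\b=\sqrt{n}$, and by Theorem~\ref{special distinct} the relevant integrand is $\psi(y)=\phi(y;1,1,2)=e^{-d\,y}/(1+e^{-d\,y})$ with $d=d(1,1,2)=\pi/\sqrt{12}$. The single structural fact driving the whole argument is that $V_\infty(i,2^{k-1}i)=2^{k-1}$ for odd $i$ and $k\ge1$: the transformation is organized entirely by the $2$-adic decomposition of the source part sizes.

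For the stability claim I would insert $V_\infty$ into the defining relation~\eqref{v nice2a}. Fixing $t>0$ and the row index $\lceil\b t\rceil$, the nonzero entries of that row sit in the columns $2^{k-1}\lceil\b t\rceil$ with weights $2^{k-1}$; summing these against $\psi$ and rescaling through $y_k\sim k/\b$ and $\Delta y_k\sim 1/\b$ produces exactly a dyadic superposition of the form $\sum_{k\ge1}2^{k-1}\psi(\,\cdot\,)$, with the indicator recording which source parts are carried above the cutoff fixed by $t$. This is precisely the kernel $K(t,y,\psi(y))$ recorded in the statement, and the only real content is tracking how the weights $2^{k-1}$ interact with the $2$-adic density of the contributing columns.

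With $K$ in hand, part~(1) of Theorem~\ref{thm dist} (the \MMbns\ case) expresses the limit shape of $\O$ as $\int_0^\infty K(t,y,\psi(y))\,dy$. I would interchange this integral with the dyadic sum, justified by the exponential decay of $\psi$, and substitute in each term so that $\int 2^{k-1}\psi(\,\cdot\,)\,dy$ collapses to one copy of $\Psi$ at a dyadically rescaled argument, yielding $\tfrac12\sum_{i\ge1}\Psi(2^{i-1}t)$. The sum then evaluates in closed form: writing $\Psi(s)=\tfrac1d\log(1+e^{-d\,s})$, the sum of logarithms becomes the logarithm of $\prod_{r\ge0}\bigl(1+e^{-d\,2^{r}t}\bigr)$, and the binary-digit identity $\prod_{r\ge0}(1+q^{2^{r}})=(1-q)^{-1}$ with $q=e^{-d\,t}$ turns it into $-\tfrac1{2d}\log(1-e^{-d\,t})$. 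Comparing with the classical shape $\Phi(t)=-\tfrac1c\log(1-e^{-c\,t})$, $c=\pi/\sqrt6$, and using the constant relation $d=c/\sqrt2$ together with the involution symmetry $\Phi=\Phi^{\<-1\>}$, identifies this closed form with the stated rescaling of $\Phi$.

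The main obstacle is the analytic justification of the two limit interchanges forced by the $2$-adic structure. Turning the oscillating row sums of $V_\infty$ into a clean Riemann integral requires controlling the $2$-adic residue densities of the contributing columns and checking that the error is uniform in the dyadic index $k$; likewise, passing the $n\to\infty$ limit through the infinite sum over $k$ needs a uniform tail bound, which I would extract from the exponential decay of $\psi$. A second, more conceptual point is the upgrade from the weak, convergence-in-expectation statement that the Riemann-sum computation delivers to genuine convergence in probability; here I would invoke that $V_\infty$ acts as a Markov operator, hence a contraction, so the large-deviation estimates already available for $\D$ transfer verbatim to $\O$, exactly as set up in \S\ref{concentration}.
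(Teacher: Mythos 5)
Your proposal is correct in substance and follows the same skeleton as the paper's proof: read off the dyadic structure $V_\infty(i,2^{k-1}i)=2^{k-1}$ for odd $i$, unwind the definition of \MMstablea\ with $\a=\b=\sqrt{n}$ and $\psi(y)=e^{-d\,y}/(1+e^{-d\,y})$, $d=\pi/\sqrt{12}$, pass to the Riemann-sum limit, and transfer concentration through the Markov-operator/contraction mechanism of \S\ref{concentration} (Theorem~\ref{distribution:sufficient:bijection}). Where you genuinely differ is the final identification. The paper never evaluates the dyadic sum: it computes $\e[{\rm P}\Dhat(t)]$ a second time, directly as the expected diagram function of the odd-parts ensemble, and matches the two limits. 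You instead close the sum via $\prod_{r\ge0}(1+q^{2^r})=(1-q)^{-1}$ with $q=e^{-d\,t}$, which is more self-contained and buys an independent check of the constants --- and that check matters here, because it exposes that the printed statement is garbled. Your (correct) computation yields the convergent sum $\frac12\sum_{i\ge1}\Psi(2^{i-1}t)=-\frac{1}{2d}\log\left(1-e^{-d\,t}\right)=\frac{1}{\sqrt2}\,\Phi\!\left(t/\sqrt2\right)$, whereas the paper's right side $\frac12\sum_{i\ge1}\Psi\!\left(t/2^{i-1}\right)$ diverges termwise (each term tends to $\Psi(0)=\log 2/d>0$), its left side $\sqrt2\,\Phi\!\left(t\sqrt2\right)$ is not the odd-parts shape, and the printed kernel's scalings are similarly off (its $y$-integral produces $\sum_k 4^{k-1}\Psi(t/4^{k-1})$, also divergent); your careful bookkeeping with the density $2^{-k}$ of columns of $2$-adic valuation $k-1$ is what lands the convergent version.

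The one soft joint in your write-up is exactly at that seam: invoking the involution $\Phi=\Phi^{\<-1\>}$ to ``identify'' your closed form with the stated rescaling conflates a function with its compositional inverse. Rescaling does not commute with inversion: one has $\bigl[\sqrt2\,\Phi(\sqrt2\,t)\bigr]^{\<-1\>}=\frac{1}{\sqrt2}\,\Phi(t/\sqrt2)$, so your closed form and the paper's left side agree only after inverting one of them, not as functions. This is not a defect in your mathematics but a typo in the theorem as printed (the paper's own proof carries the same slip, asserting that $\sqrt2\,\Phi(t\sqrt2)$ is ``the standard expression for the limit shape of partitions into odd parts,'' while the direct computation --- or the unit-area check --- gives $\frac{1}{\sqrt2}\,\Phi(t/\sqrt2)$). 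You should state the corrected identity explicitly rather than force agreement with the printed one; with that repair, your argument, including the analytic points you flag (uniformity of the Riemann-sum approximation in the dyadic index $k$, tail control from exponential decay of $\psi$, and the contraction-principle upgrade to convergence in probability), is a complete and in places sharper version of the paper's proof.
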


\begin{proof}
Note first that $V_\infty\left(u_j, 2^{k-1} u_j\right) = 2^{k-1}$, so
\[ v\left(\b t, y_j \a \right) \. = \.\begin{cases} 2^{k-1} & \b\th t / 2^{k-1} = y_j\th \a \\ 0 & \text{otherwise.} \end{cases}
\]
Next, we split the terms $\ts v\bigl(\b t, y_j \a\bigr) \. \psi(y_j \a)\ts$ according to the integer~$k$ for which we have \. 
$\b\th t = y_j\th \a\th 2^{k-1}$.  This gives
\begin{align*}
 \sum_{j=1}^\infty v\bigl(\b\ts t, y_j \a\bigr) \ts \psi\bigl(y_j \a\bigr)  & \, = \,
 \sum_{k\geq 1} 2^{k-1} \. \psi\bigl(y_j \a\bigr) \. \mathbbm{1}\mts\mts\left(y_j \geq t/2^{k-1}\right), \quad t>0\. .
\end{align*}

\nin
Note that $V_\infty$ corresponds to a Markov operator {\rm P} acting on the diagram function $\Dhat$,  with
\[
{\rm P} \Dhat(t) \, = \, \frac{\a}{n} \. \sum_{i \geq 1} \sum_{\{j\, :\, 2j-1 \geq t \a/2^{i-1}\}}  2^{i-1} Z_{(2j-1) 2^{i-1}}
\, = \, \sum_{i \geq 1} 2^{i-1} \Dhat\left(t/2^{i-1}\right), \quad t>0\. .
\]
Let $\ts y_j = (2j-1)/\a$ \ts for all $j\ge 1$, so that $\ts \Delta y_j = 2/\a$,
and take the limit of the expectation, as $n\to\infty$. We obtain:
\[
\e \left[{\rm P} \Dhat(t)\right] \, =  \,
\sum_{i\geq 1} \. \frac{\a^2}{2n} \. \sum_{j : y_j \geq t/2^{i-1}} 2^{i-1} \e Z_{y_j \a 2^{i-1}} \Delta y_j \, \.
\longrightarrow \, \. \frac{1}{2} \. \sum_{i \geq 1} \Psi\mts\left(t/ 2^{i-1}\right), \quad t>0\. ,
\]
which is finite for each $t>0$ if and only if $\a = \sqrt{n}$.
Finally, note that also directly we have
\[
\e \left[{\rm P} \Dhat(t)\right] \to \sqrt{2} \. \Phi\left(t\.\sqrt{2}\right),
\]
which is the standard expression for the limit shape of partitions into odd parts.
\end{proof}

This example also demonstrates that the function $K$ in equation~\eqref{v nice} is not always multiplicative with respect to its final argument.

\subsection{Geometric bijection}
Another way to understand the connection between the limit shapes of $\mathcal{O}$ and $\mathcal{D}$ is via the construction of a geometric bijection.  In this case, the bijection is more complicated, see \cite[Figure~16]{PakNature}, but the same rules apply to the limit shape.  A summary of the mappings is below.
Note that instead of mapping the diagram function at each stage, we have instead chosen to demonstrate how the bijection acts on the conjugate diagram function.
We have:
\begin{align*}
\sqrt{2}\Phi(x\sqrt{2}) & \longmapsto  (\sqrt{2}\Phi(x\sqrt{2}), \sqrt{2}\Phi(x\sqrt{2}) ) \\
 & \longmapsto  \Scale[1]{\left( \sqrt{2}\Phi(x\sqrt{2}) - 2x, \left(\frac{\sqrt{2}\Phi(x\sqrt{2})}{2}, \frac{\sqrt{2}\Phi(x\sqrt{2})}{2}\right) \right)} \\
 & \longmapsto  \Scale[1.2]{\left( \left(\frac{\sqrt{2}\Phi(x\sqrt{2}) - 2x}{2}, \frac{\sqrt{2}\Phi(x\sqrt{2}) - 2x}{2} \right), \left( \left(\frac{\Phi(x\sqrt{2})}{\sqrt{2}}\right)^{\<-1\>}, \left(\frac{\Phi(x\sqrt{2})}{\sqrt{2}}\right)^{\<-1\>} \right)\right)} \\
 & \longmapsto  \Scale[1.2]{\left(\frac{1}{2} \left(\frac{\sqrt{2}\Phi(x\sqrt{2}) - 2x}{2} + \frac{\sqrt{2}\Phi(x\sqrt{2}) - 2x}{2} \right), \left( \frac{\Phi(x\sqrt{2})}{\sqrt{2}}-x, \frac{\Phi(x\sqrt{2})}{\sqrt{2}}-x \right)\right)} \\
\small & \longmapsto  \Scale[1.2]{\left(\frac{1}{2} \left(\frac{\sqrt{2}\Phi(x\sqrt{2}) - 2x}{2} + \frac{\sqrt{2}\Phi(x\sqrt{2}) - 2x}{2} \right),  \frac{1}{2} \left(\frac{\Phi(x\sqrt{2})}{\sqrt{2}}-x+ \frac{\Phi(x\sqrt{2})}{\sqrt{2}}-x \right)\right)} \\
 & \longmapsto  \left(\frac{\Phi(x\sqrt{2}) - 2x}{\sqrt{2}},  \frac{\Phi(x\sqrt{2}) - 2x}{\sqrt{2}} \right)  \ \longmapsto \  \left(\frac{\Phi(x\sqrt{2})}{\sqrt{2}}-x \right) \, = \, \Psi(x)\ts,
\end{align*}%
where the final equality comes from a straightforward rearrangement of terms.

\subsection{Stanton's generalization}
\label{od general}
In this section, $r \geq 1$, and $m \geq 2$ are integers.
The following is a generalization of Theorem~\ref{Glaisher:relation}.

\begin{theorem}\label{Stanton:relation}
Let $\A$ denote the set of partitions into perfect $r^{th}$ powers with multiplicity at most $m^r-1$, with limit shape given by $\Phi(t; r, 1, m^r)$.  Let $\B$ denote the set of partitions into perfect $r^{th}$ powers not divisible by $m^r$, with limit shape given by $\Phi\left(t; r, \left(\frac{m}{m-1}\right)^r \right).$
Then we have
\begin{equation}\label{Stanton:equation} (m-1) \sum_{k \geq 1} m^{r(k-1)-k} \Phi\left(t\, m^{r(k-1)}; r, 1, m^r\right) = \Phi\left(t; r, \left(\frac{m}{m-1}\right)^r\  \right), \qquad t>0\. .
\end{equation}
\end{theorem}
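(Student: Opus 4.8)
The plan is to realize Stanton's map as an \MMbns\ $\varphi\colon\A\to\B$ and then read off the weak limit shape of $\B$ by a direct Riemann-sum computation, exactly as in the second half of the proof of Theorem~\ref{Glaisher:relation}. Index parts by their $r$-th roots: a part $j^r$ of $\lambda\in\A$ with $j=m^{p}k$, $m\nmid k$, $p\ge0$, is replaced by $m^{rp}$ copies of $k^r$. Writing $M_j$ for the multiplicity of $j^r$ in $\lambda$ and $M'_k$ for the multiplicity of $k^r$ in $\varphi(\lambda)$, this is the linear map
\[ M'_k \,=\, \sum_{p\ge0} m^{rp}\,M_{m^{p}k}, \qquad m\nmid k. \]
Because the digits satisfy $0\le M_j<m^r$, recovering the $M_j$ from a given $M'_k$ is exactly reading off the base-$m^r$ expansion of $M'_k$, so $\varphi$ is a bijection; it is an \MMbns\ since the map is linear in the multiplicities and $\sum_j j^r M_j=\sum_{m\nmid k} k^r M'_k$ after reindexing $j=m^{p}k$, i.e. the vector $\bfy$ of part sizes is a left eigenvector of eigenvalue $1$. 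For $r=1$, $m=2$ this is precisely Glaisher's linear transformation.

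Next I would compute the expected scaled diagram function of $\B$ under the independent model for $\A$, in which $M_j$ is truncated geometric on $\{0,\dots,m^r-1\}$ with $\E M_j=\phi(y_j;r,1,m^r)$, where $y_j=j/\b$, $\b=n^{1/(1+r)}$ and $\a=\b^{r}=n^{r/(1+r)}$. Since $\Dhat_\B(t)=\tfrac{\a}{n}D_\B(\a t)$ counts the parts $k^r\ge\a t$ with multiplicity, and the condition $k\ge\b\,t^{1/r}$ is equivalent to $y_k\ge t^{1/r}$,
\[ \E\bigl[\Dhat_\B(t)\bigr] \,=\, \frac{\a}{n}\sum_{\substack{k\ge\b\,t^{1/r}\\ m\nmid k}}\ \sum_{p\ge0} m^{rp}\,\phi\bigl(m^{p}y_k;r,1,m^r\bigr). \]
Summation over $\{m\nmid k\}$ contributes the density factor $\tfrac{m-1}{m}$, and $\tfrac{\a\b}{n}=\tfrac{\b^{r+1}}{n}=1$, so the inner sum passes to a Riemann integral; the substitution $z=m^{p}y$ turns $\int_{t^{1/r}}^\infty\phi(m^{p}y;r,1,m^r)\,dy$ into $m^{-p}\,\Phi(m^{rp}t;r,1,m^r)$. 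Reindexing $p=k-1$ and using the identity $(r-1)(k-1)-1=r(k-1)-k$ then yields precisely the left-hand side of \eqref{Stanton:equation}.

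To obtain the right-hand side I would observe that the part set $\{k^r:m\nmid k\}$ has $\ell$-th element asymptotic to $(m/(m-1))^r\ell^r$, so $\B$ is smooth with parameters $r$ and $B=(m/(m-1))^r$; by Theorem~\ref{special unrestricted} its limit shape is $\Phi\bigl(t;r,(m/(m-1))^r\bigr)$. The weak limit shape computed above is the genuine limit shape by the Markov-contraction / large-deviation argument of \S\ref{concentration} (the operator induced by $\varphi$ is a linear contraction preserving total mass), and since limit shapes are unique the two expressions must coincide, giving \eqref{Stanton:equation}.

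The main difficulty is analytic rather than structural: I expect the delicate step to be justifying the interchange of the Riemann-sum limit with the infinite sum over the powers $p$ of $m$, equivalently controlling the tail of $\sum_{p}m^{rp}\phi(m^{p}y;r,1,m^r)$ uniformly on compact sets, where the rapid truncated-geometric decay of $\phi(\,\cdot\,;r,1,m^r)$ is exactly what makes the series summable. Once both sides are known to be genuine limit shapes, the compatibility of the two normalizing constants $d(r,1,m^r)$ and $d\bigl(r,(m/(m-1))^r\bigr)$ is automatic, being forced by the requirement that each side integrate to $1$.
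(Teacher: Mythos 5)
Your proposal is correct and takes essentially the same route as the paper: the paper also realizes Stanton's $(r,m)$-bijection as an \MMbns\ via the transformation matrix $V_{r,m}$ of equation~\eqref{Stanton:transformation}, and then derives \eqref{Stanton:equation} ``in a similar fashion as the proof of Theorem~\ref{Glaisher:relation}'' --- which is precisely the Riemann-sum expectation computation, the substitution $z=m^{p}y$, and the reindexing $p=k-1$ that you carry out, with the right-hand side identified through Theorem~\ref{special unrestricted} and the concentration results of \S\ref{concentration}. Your base-$m^{r}$ digit argument for bijectivity and your remark that the constants $d(r,1,m^{r})$ and $d\bigl(r,(m/(m-1))^{r}\bigr)$ are automatically compatible simply make explicit the details the paper leaves to the Glaisher analogy.
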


Note that the limit shapes of $\A$ and $\B$ in Theorem~\ref{Stanton:relation} follow from Theorem~\ref{special unrestricted} and Theorem~\ref{special distinct}, respectively.
The relation in equation~\eqref{Stanton:equation} is obtained from the following generalization of Euler's theorem.

\begin{theorem}[Stanton \cite{Stanton}] \label{q prop}Let $\{u_k\}_{k\geq1}$ be a sequence of distinct positive integers, and let $m\geq 2$ be an integer.  If $u_{mk}/u_k = m_k$ is an integer for all $k$, then the number of integer partitions of $n$ into parts of sizes avoiding $\{u_{mk}\}_{k\geq 1}$ is equal to the number of integer partitions of $n$ into parts of size $\{u_k\}_{k\geq1}$, where $u_k$ has multiplicity at most $m_k-1$.
\end{theorem}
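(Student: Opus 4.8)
The plan is to establish the claimed equinumerosity at the level of generating functions, by the same telescoping cancellation used in the proof of Theorem~\ref{Andrews Theorem}. Let $P_1(n)$ count partitions of $n$ with parts drawn from $\{u_k\}_{k\geq 1}$ while avoiding the subset $\{u_{mk}\}_{k\geq 1}$, and let $P_2(n)$ count partitions of $n$ with parts from $\{u_k\}_{k\geq 1}$ in which each $u_k$ occurs at most $m_k-1$ times. Because the $u_k$ are distinct positive integers they tend to infinity, so for each fixed exponent only finitely many factors in the products below are active and every infinite product is a well-defined element of $\mathbb{Z}[[q]]$.

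First I would record the two generating functions. For the first class the admissible part sizes are exactly the $u_k$ with $m\nmid k$, each with unrestricted multiplicity, giving
\[
\sum_{n\geq 0} P_1(n)\, q^n \;=\; \prod_{\substack{k\geq 1\\ m\nmid k}} \frac{1}{1-q^{u_k}}.
\]
For the second class the bounded multiplicity of $u_k$ contributes a truncated geometric series; here the integrality of $m_k=u_{mk}/u_k$ is precisely what lets me put that finite sum in closed form, using $m_k u_k = u_{mk}$:
\[
\sum_{n\geq 0} P_2(n)\, q^n \;=\; \prod_{k\geq 1}\bigl(1+q^{u_k}+\cdots+q^{(m_k-1)u_k}\bigr) \;=\; \prod_{k\geq 1}\frac{1-q^{u_{mk}}}{1-q^{u_k}}.
\]

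The decisive step is the cancellation in this last product. Reindexing by $j=mk$ gives $\prod_{k\geq 1}(1-q^{u_{mk}}) = \prod_{j:\, m\mid j}(1-q^{u_j})$, and since each $1-q^{u_j}$ is a unit in $\mathbb{Z}[[q]]$ I may split the full denominator as $\prod_{j\geq 1}(1-q^{u_j}) = \prod_{j:\, m\mid j}(1-q^{u_j})\cdot\prod_{j:\, m\nmid j}(1-q^{u_j})$. The $m\mid j$ factors then cancel, leaving $\prod_{m\nmid j}(1-q^{u_j})^{-1}$, which is exactly the generating function for $P_1$. Comparing coefficients yields $P_1(n)=P_2(n)$ for every $n$.

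I do not expect a serious obstacle: once both products are written down the argument is a one-line telescoping, entirely parallel to the proof of Theorem~\ref{Andrews Theorem}. The only points that require care are (i) reading ``avoiding $\{u_{mk}\}$'' as a restriction \emph{within} the part set $\{u_k\}$ rather than within all of $\mathbb{Z}_{\geq 1}$, which is the reading forced by the intended application in Theorem~\ref{Stanton:relation}; and (ii) observing that integrality of each $m_k$ is exactly the hypothesis that collapses the truncated geometric sum to $(1-q^{u_{mk}})/(1-q^{u_k})$, so that is where the assumption $u_{mk}/u_k\in\mathbb{Z}$ enters.
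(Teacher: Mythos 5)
Your proof is correct and is essentially the paper's own argument: the paper deduces the statement from Andrews's Theorem~\ref{Andrews Theorem} via the correspondence $\pi\colon u_{mk} \to m_k u_k$, and since Andrews's Theorem is itself proved there by exactly the telescoping generating-function identity you write out, $\prod_{k\geq 1}\bigl(1-q^{u_{mk}}\bigr)/\bigl(1-q^{u_k}\bigr) = \prod_{m\nmid j}\bigl(1-q^{u_j}\bigr)^{-1}$, your direct computation simply inlines that citation. Your reading of ``avoiding $\{u_{mk}\}$'' as a restriction within the part set $\{u_k\}$ is likewise the one the paper intends (it is what the application in Theorem~\ref{Stanton:relation} requires), so there is nothing to add.
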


Theorem~\ref{q prop} also follows by Andrews's Theorem \ref{Andrews Theorem} using the mapping $\pi: u_{mk} \to m_k u_k$.  The generalization of Glaisher's bijection in this case is replacing parts of size $u_{mk}$ into $m_k$ parts of size $u_k$.   When $m_k = m =2$ and $\{u_k\}_{k\geq 1} = \{2,4,6,\ldots\}$, we obtain Glaisher's bijection.

Define \emph{Stanton's $(r,m)$-bijection} $\varphi_{r,m}:~\A~\to~\B$ as follows.  Replace parts of size $(k\,m^j)^{r}$ into $m^{j\,r}$ parts of size $k^r$, for all $k \geq 1$ not divisible by $m^r$.

\begin{theorem}
Stanton's $(r,m)$-bijection $\varphi_{r,m}$ is an \MMbns.
\end{theorem}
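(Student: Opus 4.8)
The plan is to realize $\varphi_{r,m}$ explicitly as an infinite matrix $V_\infty \in \mathbb{H}$ acting on multiplicity vectors, in direct parallel with Glaisher's linear transformation~\eqref{Glaisher's linear transformation}, of which the present map is the generalization (Glaisher being the case $r=1$, $m=2$). Every integer $K \geq 1$ factors uniquely as $K = \ell\, m^s$ with $m \nmid \ell$ and $s \geq 0$, taking $m^s$ to be the largest power of $m$ dividing $K$. Since $\varphi_{r,m}$ replaces one part of size $(\ell m^s)^r$ by $m^{sr}$ parts of size $\ell^r$, I would set the nonzero entries to be
\[ V_\infty\bigl(\ell^r,\, (\ell m^s)^r\bigr) \, = \, m^{sr}, \qquad m \nmid \ell, \ \ s \geq 0, \]
with $V_\infty(i,j) = 0$ for every other pair. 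The resulting multiplicities are $m'_{\ell^r} = \sum_{s \geq 0} m^{sr}\, m_{(\ell m^s)^r}$ for $m \nmid \ell$, and $m'_i = 0$ otherwise, which is exactly the combinatorial description of the image in $\B$. As in the Glaisher and O'Hara arguments, $V_\infty$ is placed in $\mathbb{H}$ as the pointwise limit of its finite truncations, and it is well-defined because each column $(\ell m^s)^r$ carries a single nonzero entry, by uniqueness of the factorization $K = \ell m^s$.

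With this matrix fixed, I would verify the three requirements in the definition of an \MMbns\ from \S\ref{bijections}. Linearity holds by construction; every entry of $V_\infty$ is a nonnegative integer, so the output multiplicities $\sum_j V_\infty(i,j)\, m_j$ are automatically nonnegative. That the correspondence is a bijection between $\A_n$ and $\B_n$ for each $n$ is already supplied by Stanton's Theorem~\ref{q prop} (equivalently by Andrews's Theorem~\ref{Andrews Theorem} through $\pi: u_{mk} \mapsto m_k u_k$), so no separate injectivity or surjectivity argument is needed. It then remains only to exhibit the correct left eigenvector.

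Since the part sizes of the domain $\A$ form the set $U = \{K^r : K \geq 1\}$, the relevant vector is $\bfy$ with $y_i = i$ when $i$ is a perfect $r$-th power and $y_i = 0$ otherwise. I would check the identity $\sum_i y_i\, V_\infty(i,j) = y_j$ column by column. When $j$ is not a perfect $r$-th power both sides vanish, since then $j \notin U$ and the column is empty. When $j = K^r$ with $K = \ell m^s$, $m \nmid \ell$, the single nonzero entry gives
\[ \sum_i y_i\, V_\infty(i, K^r) \, = \, \ell^r \cdot m^{sr} \, = \, (\ell m^s)^r \, = \, K^r \, = \, y_{K^r}, \]
so $\bfy$ is a left eigenvector with eigenvalue $1$, which is the abstract form of the size-preservation of $\varphi_{r,m}$. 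I expect this last computation to be the crux of the matter: it is elementary, but it is precisely the step where the $r$-th power grading must be tracked, since $\bfy$ lives only on perfect $r$-th powers while the factor $m^{sr}$ has to recombine with $\ell^r$ to reconstitute $K^r$. Once this is confirmed, the definition of \S\ref{bijections} is met and $\varphi_{r,m}$ is an \MMbns.
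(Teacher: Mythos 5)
Your proposal is correct and is essentially the paper's own proof: after the substitution $w = s+1$, your matrix $V_\infty\bigl(\ell^r,(\ell m^s)^r\bigr)=m^{sr}$ is precisely the transformation matrix $V_{r,m}$ of equation~\eqref{Stanton:transformation}, and your left-eigenvector computation merely makes explicit the verification that the paper leaves implicit (as it did for Glaisher's $V_\infty$). One small point in your favor: your condition $m \nmid \ell$ is the right one, whereas the paper's literal condition that $k$ be ``not divisible by $m^r$'' would, e.g.\ for $r=m=2$, place two nonzero entries in column $j=4$ (namely $V(1,4)=4$ and $V(4,4)=1$) and falsify the eigenvector identity, so it should be read as $m\nmid k$ (equivalently $m^r \nmid k^r$).
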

\begin{proof}
Stanton's $(r,m)$-bijection can be written using the transformation matrix $V_{r,m}$ with entries
\begin{equation}\label{Stanton:transformation}V_{r,m}(i,j) = \begin{cases} (m^r)^{ w - 1} & \text{ for  $ i = k^r$ and  $j = m^{r\,(w-1)} k^r$ } \\
 0 & \text{otherwise} \end{cases}, \end{equation}
 for all $k \geq 1$ not divisible by $m^r$ and $w \geq 1$.
\end{proof}
Theorem~\ref{Stanton:relation} then follows from Equation~\eqref{Stanton:transformation} in a similar fashion as the proof of Theorem~\ref{Glaisher:relation}.

\bigskip\section{Lebesgue's identity}\label{Lebesgue}

\subsection{Limit shapes and Bressoud's bijection}

\begin{figure}
\scalebox{0.8}{
\includegraphics[scale=0.57]{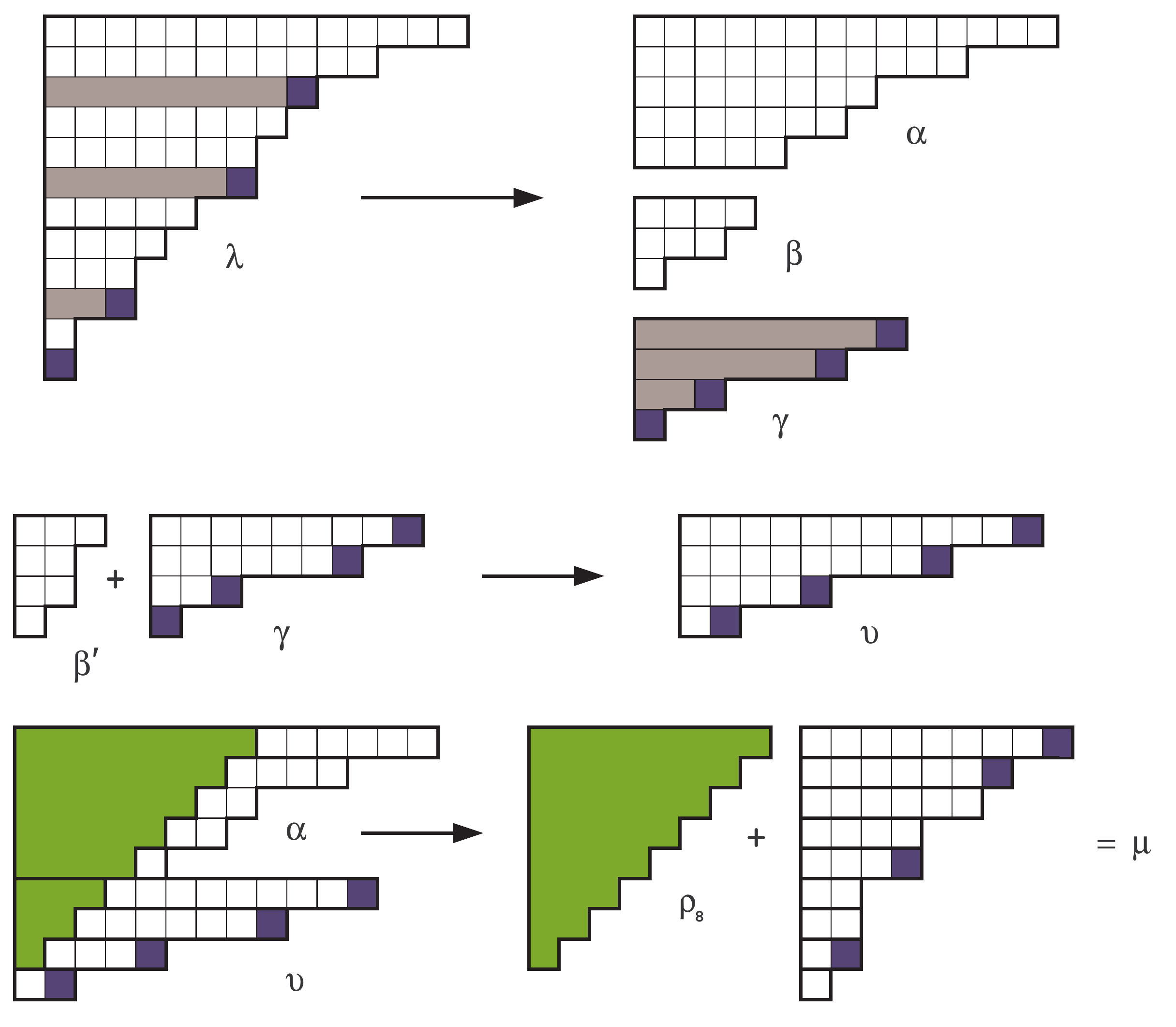}
}
\caption{An example of a bijection $\vp: \la \to \mu$
for  $\la \in \ca_{n}$, $\mu \in \cl_{n}$. }
\label{Bressoud}
\end{figure}

In this section, we use a bijection due to Bressoud, see~\cite[4.3.2]{Pak}, to obtain the limit shape below.

\begin{theorem}\label{Lebesgue:theorem}
Let $1 \leq \ell < k$.  Let $\mathcal{L}^{\ell,k}$ denote the set of partitions $\mu$ into parts congruent to $0$ or $\ell$ mod $k$ such that parts differ by at least $k$, and parts congruent to $\ell$ mod $k$ differ by at least $2k$.
The limit shape of $\mathcal{L}^{\ell,k}$ is given by
\begin{equation}\label{Lebesgue:shape}
\frac{2 \sqrt{2}}{\pi\sqrt{k}} \. \log\left[\frac{1}{2} \left(1+e^{-\frac{\pi  t}{2 \sqrt{2\, k}}}+\sqrt{1+e^{-\frac{\pi  t}{\sqrt{2\, k}}}+6 e^{-\frac{\pi  t}{2 \sqrt{2\, k}}}}\right)\right], \quad t \geq 0\. .
\end{equation}
\end{theorem}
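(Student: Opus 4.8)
The plan is to obtain the limit shape of $\mathcal{L}^{\ell,k}$ not directly, but by transferring it through Bressoud's bijection from a set of partitions into \emph{distinct} parts whose limit shape is already known. Concretely, Bressoud's bijection (see~\cite[4.3.2]{Pak} and Figure~\ref{Bressoud}) identifies $\mathcal{L}^{\ell,k}$ with the set $\mathcal{R}^{\ell,k}$ of partitions into distinct parts lying in the three residue classes $0,\ell,k \pmod{2k}$; for $k=2,\ell=1$ this is exactly the set of distinct parts $\equiv 0,1,2 \pmod 4$ mentioned in the introduction. Since Bressoud's bijection is assembled from the elementary geometric moves of Theorem~\ref{transformations}, it is asymptotically stable, so both limit shapes exist and the shape of $\mathcal{L}^{\ell,k}$ is the image of the shape of $\mathcal{R}^{\ell,k}$ under the corresponding composition of limit-shape maps.

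First I would pin down the starting shape. The parts of $\mathcal{R}^{\ell,k}$ form three arithmetic progressions of common difference $2k$, so $u_j \sim \tfrac{2k}{3}\,j$; thus $\mathcal{R}^{\ell,k}$ is restrictedly smooth with $r=1$, $B=\tfrac{2k}{3}$, $a=2$, and by Theorem~\ref{special distinct} its limit shape is $\Phi(t;1,\tfrac{2k}{3},2)=\tfrac{1}{cB}\log\!\left(1+e^{-ct}\right)$, where $c=d(1,\tfrac{2k}{3},2)=\tfrac{\pi}{2\sqrt{2k}}$. This already accounts for the combination $e^{-\pi t/(2\sqrt{2k})}$ appearing in~\eqref{Lebesgue:shape}.

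Next I would translate Bressoud's bijection into a sequence of the maps in Theorem~\ref{transformations} and apply them to this shape. The two gap conditions, difference $\ge k$ on all parts and difference $\ge 2k$ on the parts $\equiv \ell \pmod k$, are enforced by cutting the diagram along residue classes and applying staircase \emph{shifts} (subtracting linear functions), exactly as in the gap-$\ge 2$ computation for the set of partitions whose parts differ by at least $2$ in the last introductory corollary; the two conjugate pieces are then recombined by a \emph{union} and a $+$ operation. Carrying out this bookkeeping produces a functional relation between the target shape $m(t)$ and $t$ which, after the substitutions $u=e^{-\pi t/(2\sqrt{2k})}$ and $w=e^{\frac{\pi\sqrt{k}}{2\sqrt 2}\,m(t)}$, collapses to the single algebraic identity
\[
w^2-(1+u)\,w-u \,=\,0, \qquad\text{equivalently}\qquad u=\frac{w^2-w}{w+1}.
\]
The reciprocal relation $u=\frac{w+1}{w^2-w}$ is precisely the one recorded for the conjugate family in the first introductory corollary, which serves as a useful consistency check.

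Finally, solving this quadratic in $w$ and keeping the branch $w=\tfrac12\bigl(1+u+\sqrt{1+6u+u^2}\bigr)$ — the one giving a positive, decreasing shape of unit area — and then inverting the exponential $w=e^{\frac{\pi\sqrt k}{2\sqrt 2}\,m(t)}$ yields exactly~\eqref{Lebesgue:shape}, since $\sqrt{1+6u+u^2}=\sqrt{1+e^{-\pi t/\sqrt{2k}}+6\,e^{-\pi t/(2\sqrt{2k})}}$. I expect the genuine difficulty to lie entirely in the third step: faithfully decomposing Bressoud's bijection into the moves of Theorem~\ref{transformations} and verifying asymptotic stability at each stage, possibly after refining by the number of parts as in Romik's example. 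Once the correct functional relation is in hand, the passage to~\eqref{Lebesgue:shape} is a routine quadratic solve.
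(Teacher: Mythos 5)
Your proposal follows the paper's proof essentially verbatim: you start from Theorem~\ref{special distinct} with $r=1$, $B=2k/3$, $a=2$ (for $k=2$, $\ell=1$ this is the paper's $s(x)=\frac{3}{\pi}\ln\bigl(1+e^{-\pi x/4}\bigr)$ with $c=\pi/(2\sqrt{2k})=\pi/4$), push the shape through Bressoud's bijection decomposed into the geometric moves of Theorem~\ref{transformations}, and land on the same quadratic relation — your $u=\frac{w^2-w}{w+1}$ is the reciprocal form of the paper's $u=\frac{w+1}{w^2-w}$ obtained from $m^{-1}(x)=2(x_0-x)+t(x)$ — whose positive branch gives~\eqref{Lebesgue:shape}. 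The one caveat, precisely at the step you flag as the genuine difficulty, is your claim that the bijection is assembled entirely from the moves of Theorem~\ref{transformations}: as Remark~\ref{stable:remark} notes, the initial separation into the residue-class piece $\gamma$ and the remainder $\alpha\cup\beta$ (and the subsequent cut of $\beta$ at the stable statistic $2\ell(\gamma)$) is \emph{not} one of those transformations and is instead justified by the probabilistic convergence arguments of Section~\ref{sect prob}.
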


Let $\A_n$ denote the set of partitions of size~$n$ into distinct parts which are congruent to~$0, 1$ or~$2$ modulo~$4$.
Let $\mathcal{L}_n$ denote the set of partitions $\mu$ of size~$n$ with consecutive parts $\mu_i - \mu_{i-1} \geq 2$ for even part sizes $\mu_i$ and $\mu_i - \mu_{i-1} \geq 4$ for all odd part sizes $\mu_i$, $i \geq 1$.  \ts Recall \emph{Bressoud's bijection} \ts
$\varphi : \mathcal{A}_n \to \mathcal{L}_n$  illustrated in Figure~\ref{Bressoud} (see~\cite{Pak}).
The bijection $\varphi$ is used in~\cite[\S4.3.2]{Pak} to prove Lebesgue's identity, given below.

\begin{theorem}[See~\cite{Pak}]  We have:
\[ \sum_{\ell=1}^\infty t^{\binom{\ell+1}{2}}\frac{(1+z\ts t)(1+z\ts t^2) \cdots (1+z\ts t^\ell)}{(1+t)(1+t^2) \cdots (1+t^\ell)} = \prod_{i=1}^\infty \left(1+z\ts t^{2i}\right)\left(1+t^i\right). \]
\end{theorem}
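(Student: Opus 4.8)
The plan is to prove the identity by reading both sides as generating functions in the two variables $t$ (recording size) and $z$, and to exploit the combinatorial content of Bressoud's bijection $\varphi \colon \mathcal{A}_n \to \mathcal{L}_n$ introduced just above. On the right, the factor $\prod_{i\geq 1}(1+t^i)$ is the generating function for partitions into distinct parts, while $\prod_{i\geq 1}(1+zt^{2i})$ is the generating function for partitions into distinct even parts with $z$ marking the number of parts; thus the right-hand side enumerates pairs consisting of one partition of each kind, weighted by $t^{(\text{total size})}z^{(\text{number of distinct even parts})}$. After Euler's splitting $\prod_{i\geq 1}(1+t^i)=\prod_{i\geq 1}(1+t^{2i-1})(1+t^{2i})$, these are precisely the objects that $\varphi$ produces from an element of $\mathcal{A}$ once its parts are sorted by residue modulo $4$.

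On the left, I would read the $\ell$-th summand through its three factors: $t^{\binom{\ell+1}{2}}$ is the weight of the staircase $(\ell,\ell-1,\ldots,1)$, the numerator $(1+zt)(1+zt^2)\cdots(1+zt^\ell)$ generates partitions into distinct parts of size at most $\ell$ with $z$ tracking the number of parts, and the denominator generates partitions whose parts are bounded by $\ell$. Here $\ell$ plays the role of a Durfee-type statistic, and the whole sum is the bivariate generating function for $\mathcal{L}$ organized by this statistic. The heart of a bijective proof is then to check that $\varphi$ is weight-preserving for this refined $(t,z)$-weight: it must carry the staircase-indexed decomposition on the $\mathcal{L}$-side to the distinct-plus-even decomposition on the $\mathcal{A}$-side, matching both the $t$-degree (size, which $\varphi$ preserves by construction) and the $z$-degree. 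Tracking the $z$-statistic through the geometric steps of Figure~\ref{Bressoud} is the step I expect to be the main obstacle, since it requires pinning down exactly which parts of the image carry the marked statistic.

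A cleaner way to finish, and a useful independent check, is purely analytic. Writing $f(z)$ for the left-hand side, I would shift the summation index (equivalently, apply the $q$-binomial recurrence to the numerator product) to establish the functional equation
\[ f(z) \, = \, (1+zt^2)\, f(zt^2), \]
whose formal power series solution is unique once its constant term is fixed. Iterating gives $f(z)=\prod_{k\geq 1}(1+zt^{2k})\,f(0)$, and $f(0)=\prod_{i\geq 1}(1+t^i)$ is the classical Euler identity coming from the Durfee-staircase decomposition of partitions into distinct parts. Multiplying the two evaluations reproduces the right-hand side and completes the proof; in this route the only genuine work is the bookkeeping that produces the functional equation from the sum.
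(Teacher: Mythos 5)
Your proposal proves a corrected form of the statement, and it is worth saying so explicitly: as printed, the identity is false. The sum must start at $\ell=0$ and the denominator must be $(1-t)(1-t^2)\cdots(1-t^\ell)$, not $(1+t)(1+t^2)\cdots(1+t^\ell)$ --- as printed the left side has constant term $0$ while the right side has constant term $1$, and even after restoring the $\ell=0$ term the coefficients of $t^2$ disagree ($z-1$ versus $z+1$). Your reading of the denominator as generating partitions with parts at most $\ell$ silently makes both corrections, and that corrected statement is the classical Lebesgue identity proved in \cite{Pak}.

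As for the comparison: the paper gives no internal proof at all --- it cites \cite{Pak}, where the identity is established bijectively via Bressoud's bijection $\varphi:\mathcal{A}_n\to\mathcal{L}_n$, i.e.\ the route of your first two paragraphs. There your sketch is directionally right but incomplete exactly where you say it is: the $z$-statistic is not obtained by simply ``sorting parts by residue modulo $4$'' (for instance, the $z^0$ coefficient of the right side is $\prod_i(1+t^i)$, the generating function of \emph{all} distinct partitions, not of distinct parts $\equiv 1 \pmod 4$), so tracking the marked statistic through Figure~\ref{Bressoud} is the genuine crux of the bijective proof and is not supplied. What makes your proposal succeed is the analytic branch, which is a complete and genuinely different argument --- in fact the classical one. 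The functional equation $f(z)=(1+zt^2)f(zt^2)$ does hold; its formal power series solution is unique given $f(0)$, since writing $f(z)=\sum_n a_n(t)z^n$ gives $a_n(1-t^{2n})=t^{2n}a_{n-1}$ with $1-t^{2n}$ invertible in $\mathbb{Q}[[t]]$; iteration gives $\prod_{k\geq 1}(1+zt^{2k})\,f(0)$; and $f(0)=\prod_i(1+t^i)$ by the staircase--Durfee decomposition, as you say. One caution: the functional equation is not a mere index shift. The clean derivation expands the numerator by the $q$-binomial theorem, $(1+zt)\cdots(1+zt^\ell)=\sum_j \binom{\ell}{j}_t t^{\binom{j+1}{2}}z^j$, interchanges sums, and evaluates the inner sum by Euler's identity $\sum_m t^{\binom{m+1}{2}}x^m/\bigl((1-t)\cdots(1-t^m)\bigr)=\prod_i(1+xt^i)$ with $x=t^j$ --- which in fact yields $a_j = t^{j(j+1)}\prod_i(1+t^i)/\bigl((1-t^2)\cdots(1-t^{2j})\bigr)$ and hence the product form directly, making the functional equation a corollary rather than a lemma. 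Either way the analytic route closes. The trade-off is the expected one: your argument buys a short, self-contained proof of the identity, while the bijective route cited by the paper buys the refined, weight-preserving combinatorial correspondence that the surrounding section actually exploits for the limit shape of $\mathcal{L}$.
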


\begin{figure}
\begin{center}\includegraphics[scale=0.35]{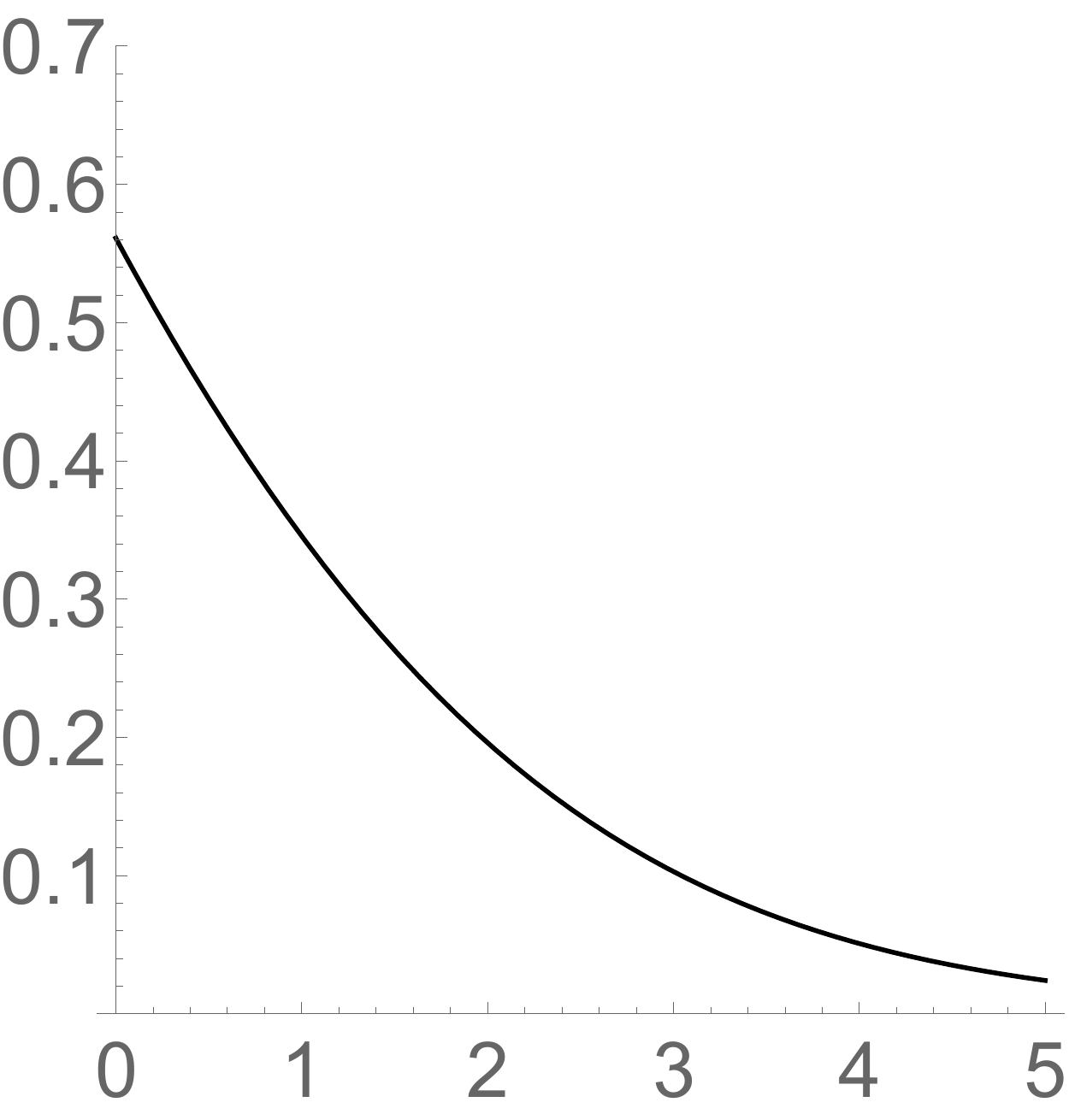}\end{center}
\caption{Plot of the limit shape of $L^{1,2}$, the set of partitions $\mu$ with consecutive parts $\mu_i - \mu_{i-1} \geq 2$ for even part sizes $\mu_i$ and $\mu_i - \mu_{i-1} \geq 4$ for all odd part sizes $\mu_i$, $i\ge 1$.}
\label{Lebesgue:plot}
\end{figure}

The proof of Theorem~\ref{Lebesgue:theorem}, using $k=2$ and $\ell=1$ for simplicity, is explicitly worked out below, and a plot appears in Figure~\ref{Lebesgue:plot}.
Whereas the value of $\ell$ is an important aspect of the bijection, the limit shape is invariant to the particular value of $\ell$ chosen.

\subsection{Proof of Theorem~\ref{Lebesgue:theorem}}
Let $\A$ denote the set of partitions into distinct parts which are congruent to $0, 1$ or~$2$ modulo~$4$.  
Let~$\mathcal{L}$ denote the set of partitions~$\mu$ with consecutive parts $\ts\mu_i - \mu_{i-1} \geq 2$ \ts 
for even part sizes $\mu_i$ and $\mu_i - \mu_{i-1} \geq 4$ for all odd part sizes~$\mu_i$, $i\ge 1$.  
By Theorem~\ref{special distinct}, the limit shape of $\A$ is given by
$$s(x) \. := \, \sqrt{\frac{3}{4}} \, \Psi\left(x\, \sqrt{\frac{3}{4}}\right) 
\, = \, \frac{3}{\pi} \. \ln\left(1+e^{-\frac{\pi}{4}x}\right). 
$$ 
Since the bijection to~$\mathcal{L}$ is given by a geometric bijection, see~\cite[Corollary 10.2]{PakNature}, 
the limit shape of $\mathcal{L}$ exists, and we now derive its explicit form.
We are unaware of its explicit form appearing in the literature.

In Figure~\ref{Bressoud}, each non-blue square is the equivalent of two dots side by side, and each blue square is the equivalent of one dot.  Asymptotically, this distinction is negligible, but for the bijection it is essential.
The first step in the bijection is to separate all of the parts divisible by three; call this partition $\gamma$, and the partition consisting of the remaining parts by $\alpha\cup \beta$.

The first step of the bijection, see Remark~\ref{stable:remark} below, is the map
\[ \begin{array}{cll}
s(x) & \. \longmapsto \. \left(\frac{2}{3} s(x), \frac{s(x)}{3}\right) & (x\geq 0, x \geq 0)\. ,
\end{array}
\]
which corresponds to separating the limit curves of $\alpha \cup \beta$ and $\gamma$, respectively, in Figure~\ref{Bressoud}. 
Note that since $s(x) = \frac{3}{\pi} \ln\left(1+e^{-\frac{\pi x}{4}}\right)$, we have $s(0) = \frac{3}{\pi}\ln(2)$\. .

\begin{figure}
	\begin{subfigure}[l]{0.3 \textwidth}
	\includegraphics[scale=.3]{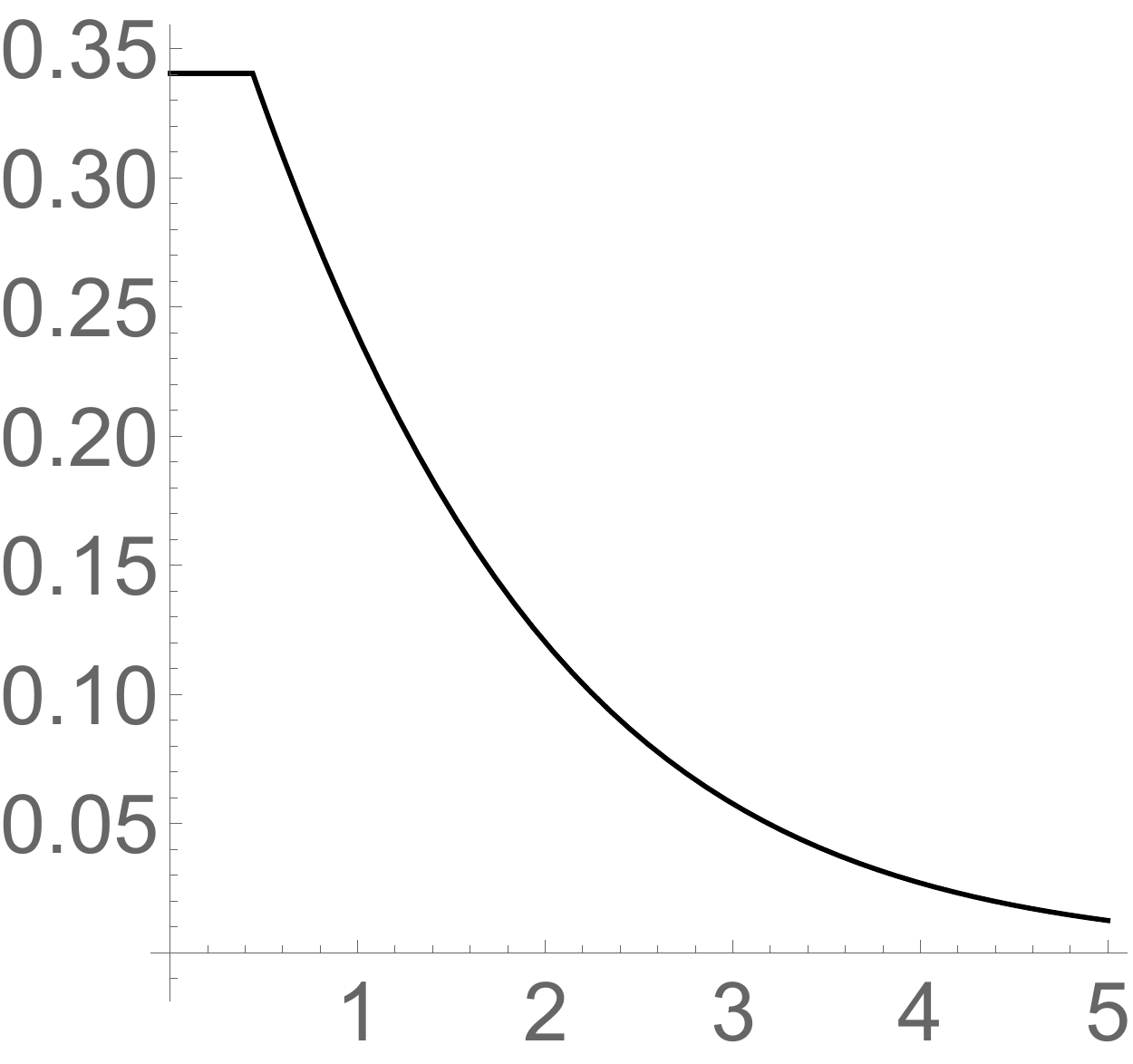}
	\caption{$a(x)$}
	\end{subfigure}
	\begin{subfigure}[l]{0.3 \textwidth}
	\includegraphics[scale=.3]{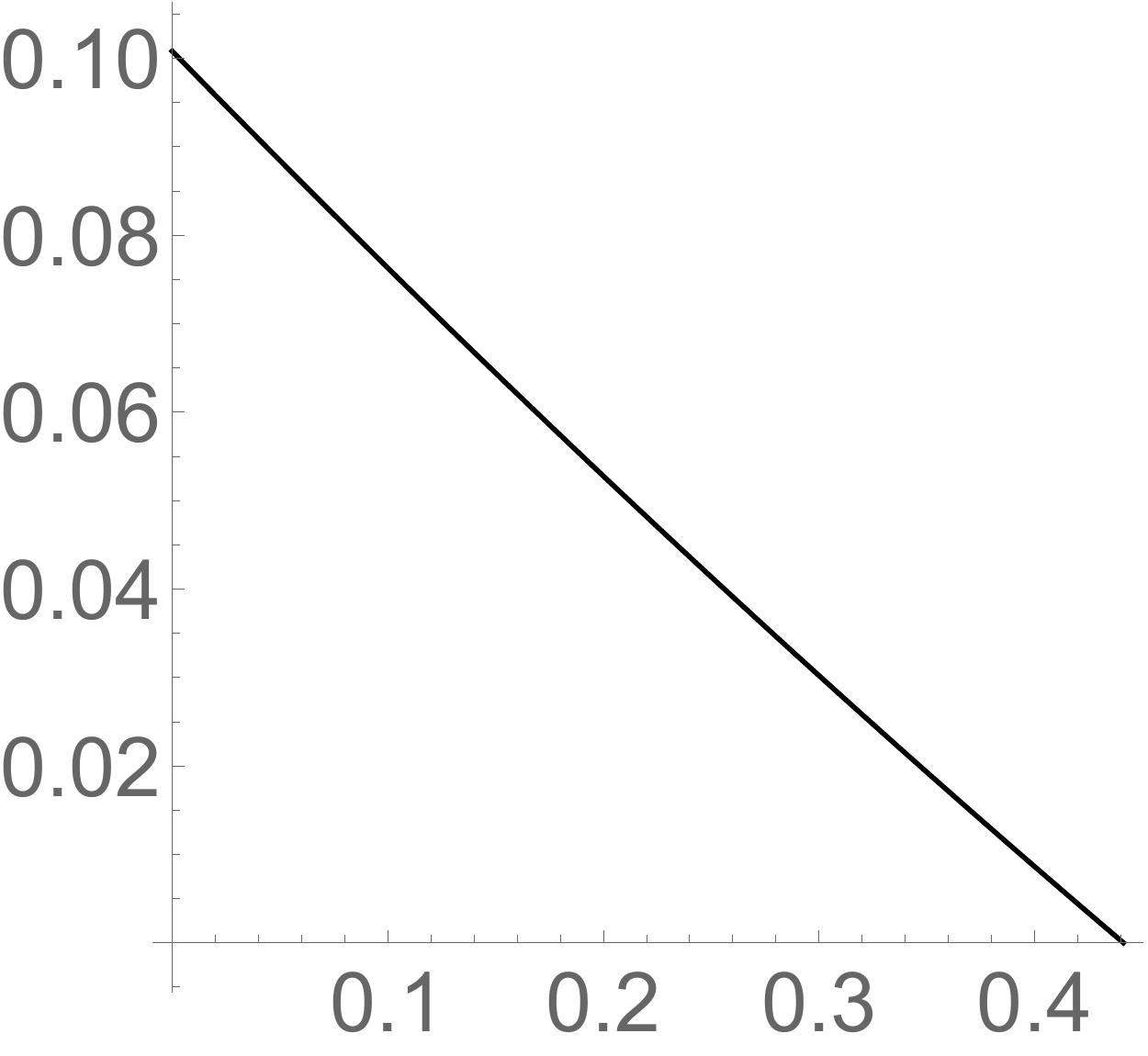}
	\caption{$b(x)$}
	\end{subfigure}
	\begin{subfigure}[l]{0.3 \textwidth}
	\includegraphics[scale=.3]{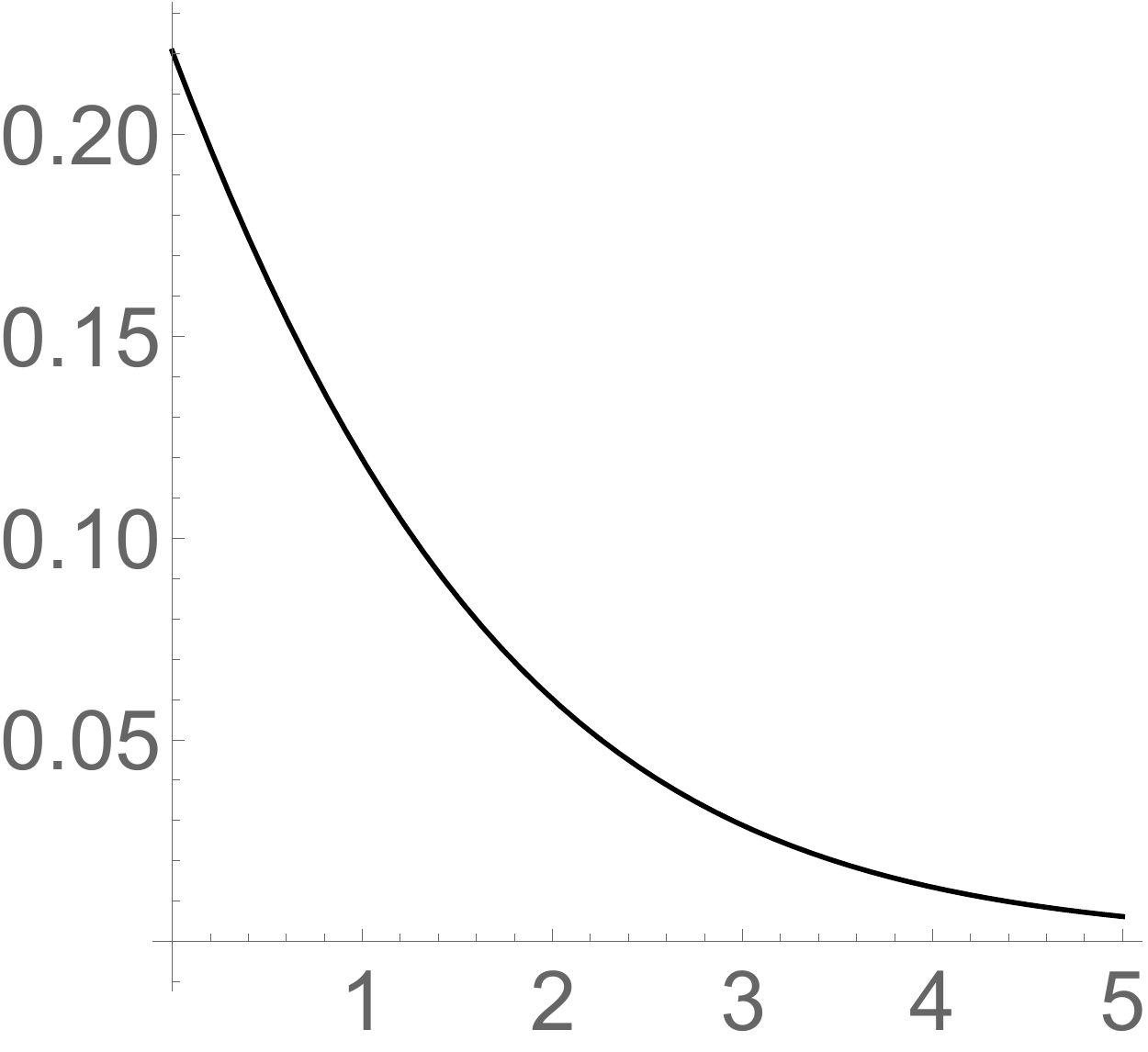}
	\caption{$c(x)$}
	\end{subfigure}
\caption{Plots of limit curves corresponding to the different parts of Bressoud's bijection.}
\end{figure}

The next step in the bijection is to separate out partitions $\alpha$ and $\beta$ from $\alpha\cup \beta$.  The bijection places all parts in $\alpha\cup\beta$ which are $\leq 2\ell(\gamma)$ in $\beta$, and leaves the rest in $\alpha$.  Since $\ell(\gamma)$ is an asymptotically stable statistic, with $2\ell(\gamma)/\sqrt{n} \to 2s(0)/3$, we may further separate limit curves $a(x), b(x), c(x)$ corresponding to $\alpha, \beta, \gamma$ as follows:
\[ s(x) \longmapsto  (a(x), b(x), c(x))\. , \]
where
\begin{align*}
 a(x) \. = \. \begin{cases} \eta_0, \quad & 0 \leq x \leq \frac{2s(0)}{3} \\
 \frac{2}{3}s(x), \quad & x > \frac{2s(0)}{3} \end{cases}, \qquad
 b(x) \. = \. \begin{cases} \frac{2}{3}s(x) - \eta_0, & 0\leq x \leq \frac{2s(0)}{3} \\ 0 & \text{otherwise}\end{cases},
\end{align*}
$c(x) \ts = s(x)/3$ \ts for \ts $x \geq 0$, \ts and
$$
\eta_0 \. := \, \frac{2}{3}\. s\bigl(2s(0)/3\bigr) \, = \, \frac{2}{\pi} \. \ln\left(1+\frac{1}{\sqrt{2}}\right).
$$

\nin
We also  need the following inverses:
\begin{align*}
 a^{-1}(x)  \, = \, \begin{cases} -\frac{4 \ln \left(-1+e^{\frac{\pi  x}{2}}\right)}{\pi}, & 0 \leq x \leq \eta_0 \\
0 & x > \eta_0 \end{cases}, \qquad
 c^{-1}(x) \, =  \, \frac{-4 \ln\left(-1+e^{\pi  x}\right)}{\pi }, \ \   0 \leq x \leq \frac{s(0)}{3}\..
 \end{align*}

\nin
The limit curve of $\nu$ is given by
\begin{align*}
 v(x) \, = \,2b(2x) + c^{-1}(x) \, = \, -2x-\frac{2}{\pi}\ln\left(\frac{3}{2}+\sqrt{2}\right)-\frac{4}{\pi}\log\left(e^{\frac{\pi}{2} x}-1\right),
 \quad 0 \leq x \leq \frac{s(0)}{3}\..
\end{align*}

\nin
The limit curve of $\rho_s$ is the line $2(\eta_0+s(0)/3 - x)$, for $0\leq x \leq \eta_0+s(0)/3$.
We let $x_0 := \eta_0+s(0)/3.$
Now we subtract out the staircase by defining $\delta := \alpha \setminus \rho_s$ and $\xi := \nu \setminus \rho_s$.  Denote the limit curve of $\delta$ by $d(x)$ and the limit curve of $\xi$ by $e(x)$.
We have
\begin{align*}
 d(x) & \, = \, \left( a^{-1}(x) - 2(x_0 - x)\right) = -\frac{4}{\pi} \ln\left(e^{\frac{\pi}{2}x}-1\right) - 2x_0 + 2x, \qquad 0 \leq x \leq \eta_0, \\
 e(x) & \, = \, v(x) - 2(s(0)/3 - x) \, = \, -2x_0 - 2x - \frac{4}{\pi}\ln\left(e^{\frac{\pi}{2} x} - 1\right), \qquad 0 \leq x \leq s(0)/3.
 \end{align*}

\begin{figure}
\includegraphics[scale=0.3]{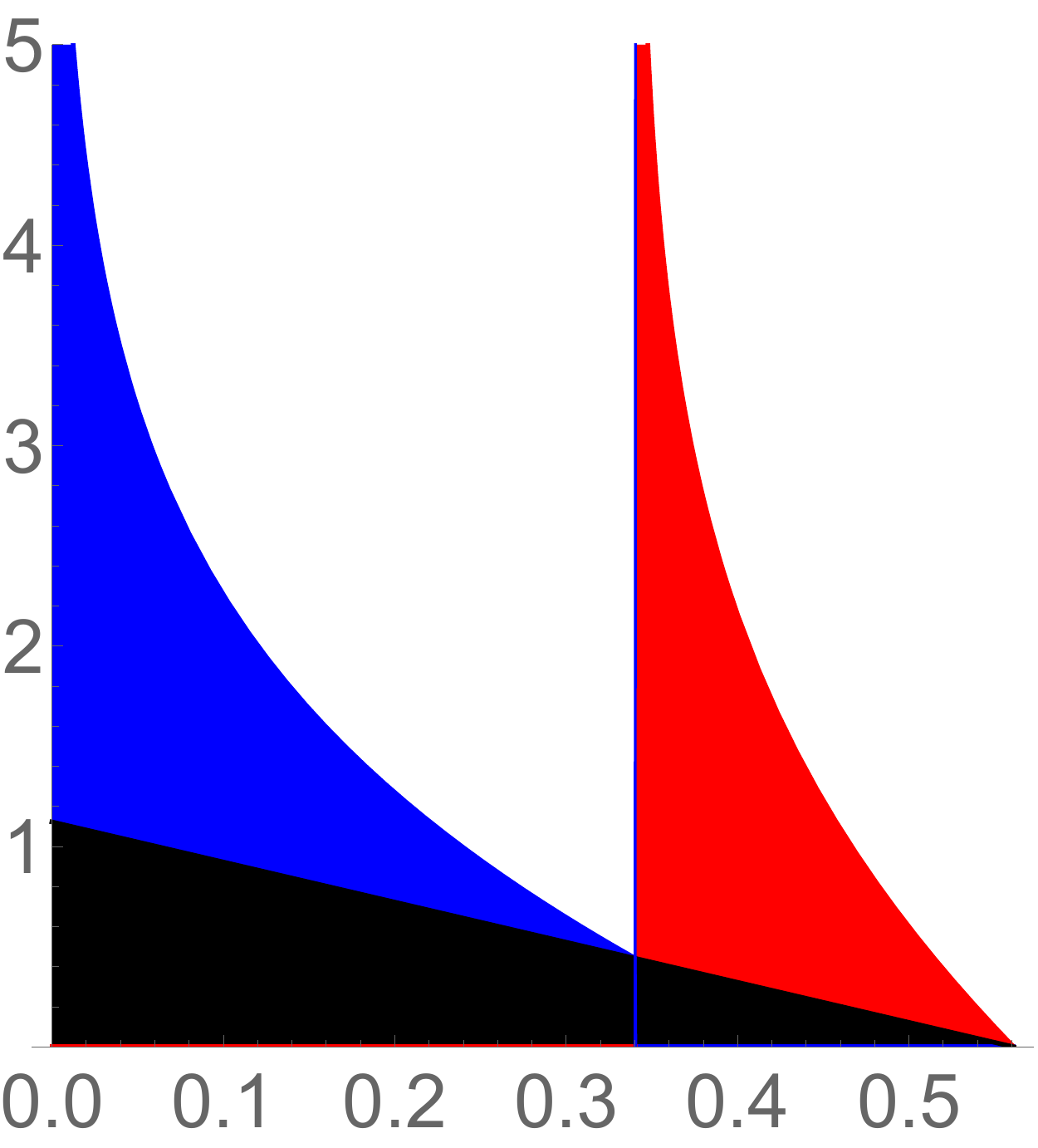}
\caption{Limit curves of $\xi$ ({\color{red}red}), $\rho_s$ ({\color{black}black}), and $\delta$ ({\color{blue}blue}).}
\label{triangle}
\end{figure}

Next, we sort the parts of partitions $\delta$ and $\xi$, which corresponds to a union of parts;
call this partition $\theta$, with corresponding limit curve $t(x)$.
First, we have
\begin{align*}
d^{-1}(x) \, = \, -\frac{2}{\pi} \ln\left(1 - e^{-\frac{\pi}{4}\left(x + x_0\right)}\right), \quad
e^{-1}(x) \, = \, \frac{2}{\pi} \ln\left(1 + e^{-\frac{\pi}{4}\left(x + x_0\right)}\right), \qquad x \geq 0\.,
\end{align*}
and
\begin{align*}
 t^{-1}(x) & \, = \, \left( d^{-1}(x) + e^{-1}(x) \right)  \, = \,
 \frac{2}{\pi}\. \ln\ts \coth\left(\frac{\pi}{8}(x+x_0)\right), \qquad x \geq 0\..
\end{align*}
Note that $\ts\coth^{-1}(x) \. = \. \frac{1}{2} \ts \ln\left(\frac{x+1}{x-1}\right)$ \t for $|x|>1$.  Therefore,
\begin{align*}
t(x) & \, = \, -2x_0\ts + \ts\frac{4}{\pi} \ts \ln\left(\frac{e^{\frac{\pi}{2}x}+1}{e^{\frac{\pi}{2}x}-1}\right), \qquad 0 \leq x \leq x_0\..
\end{align*}

The last step of the bijection adds partitions $\theta$ and $\rho_s$, and corresponds to the conjugate limit shape of $\mathcal{L}$.
Namely,
\begin{align}
\label{minv}
m^{-1}(x) & \, = \, 2(x_0-x) + t(x) \, = \,
\frac{4}{\pi} \. \ln\left(\frac{e^{-\frac{\pi}{2}x}+1}{e^{\frac{\pi}{2}x}-1}\right), \quad 0 \leq x \leq x_0\..
\end{align}

\nin
We can also invert to obtain the limit shape of~$\mathcal{L}$.
To obtain the inverse, we solve for $w$ in
\[
u \. = \. \frac{w+1}{w^2-w}\.,
\]
where $\ts w = e^{\frac{\pi}{2} m(x)}$, and $\ts u = e^{\frac{\pi}{4}x}$.
The positive solution for positive $w$ is of the form
\[
w \. = \. \frac{1+u+\sqrt{1+6 u+u^2}}{2 u}\. .
\]
The limit shape of $\mathcal{L}$ is thus given by
\begin{equation}\label{local:m}
 m(x) \, = \, \frac{2}{\pi} \.
 \log\left( \frac{1+e^{-\frac{\pi}{4} x}+\sqrt{1+6e^{-\frac{\pi}{4} x}+e^{-\frac{\pi}{2}x}}}{2}\right), \qquad x \geq 0\..
 \end{equation}
This completes the proof of Theorem~\ref{Lebesgue:theorem}.\qed

\begin{remark}\label{stable:remark}{\rm
Note that the limit shapes of $\gamma$ and $\alpha\. \cup\. \beta$ do not follow from any of the transformations defined in \S \ref{geometric transformations}.
Instead, the limit shapes exist in the stated form by Section~\ref{sect prob}.
First, the expected number of parts divisible by three over any collection of $(b-a)\sqrt{n}$ parts, $0<a<b$, in the joint distribution of independent random variables exists and is given by $c(x)$.
Appealing again to Section~\ref{sect prob}, we see that the pointwise convergence of expectation of the independent random variables is sufficient to guarantee convergence in probability for the set of integer partitions.
}\end{remark}

\subsection{Proof of corollaries~\ref{number:of:parts:cor}~and~\ref{durfee:square:cor}}
From the limit shape given in equation~\eqref{local:m}, we can now obtain various statistics.
For example, the expected number of parts in a random partition in $\mathcal{L}$ is given by $m(0)$, which corresponds to equation~\eqref{number:of:parts} in Corollary~\ref{number:of:parts:cor}.

To obtain Corollary~\ref{durfee:square:cor}, we solve either for $m(x) = x$ or $m^{-1}(x) = x$.  In this case, solving for $m^{-1}(x)=x$ using equation~\eqref{minv} is the simpler calculation, from which equation~\eqref{durfee:square} follows.

\bigskip\section{Minimal difference $d$ partitions}
\label{diff d}
In this section, let $d$ denote a fixed positive integer, i.e.~$d \geq 1$.

\begin{theorem}[cf.~\S\ref{Romik:example} and \S\ref{Romik:mistake}]\label{diff:d:theorem}
Let $\mathcal{D}_d$ denote the set of partitions $\mu$ in which $\mu_i - \mu_{i-1} \geq d$; that is, parts are at least $d$ apart and there is no restriction on the number of parts.
Let $D_d(x)$ denote the limit shape of $\mathcal{D}_d$.  Then we have
\begin{equation}
\label{diff d inv} D^{(-1)}_d(x) \,= \,\frac{w}{c} \.\log \left[ \left(e^{\frac{c\, \gamma}{w}\, d} - e^{\frac{c\, \gamma}{w}\, (d-1)}\right) \frac{e^{-\frac{c\, d}{w} x}}{1-e^{-\frac{c}{w} x}}\right], \qquad x>0\.,
\end{equation}
where
\begin{equation}\label{gamma} w = \sqrt{1 - \frac{d}{2} \gamma^2}\ , \qquad \gamma = \frac{- \log(1-y_d)}{\sqrt{\text{\rm Li}_2(y_d) + \frac{d}{2} \log^2(1-y_d)}} \. , \end{equation}
where $y_d$ is the unique solution to $(1-y)^d=y$ in the interval $(0,1)$, and $c$ is the solution to
\[ \qquad c = \sqrt{\text{\rm Li}_2\left(1 - e^{-\frac{c\, \gamma}{w}}\right)}\ .\]
\end{theorem}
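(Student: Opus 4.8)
The plan is to reduce $\mathcal{D}_d$ to ordinary partitions with a prescribed number of parts by subtracting a staircase, exactly as in the $d=2$ case of \S\ref{Romik:example}, and then to run the probabilistic machinery of \S\ref{sect prob} with one extra thermodynamic variable to account for the number of parts. First I would record the bijection: a partition $\mu\in\mathcal{D}_d$ with exactly $k$ parts corresponds to the ordinary partition $\lambda$ with exactly $k$ parts defined by $\lambda_i=\mu_i-d(k-i)$, and this satisfies $|\mu|=|\lambda|+d\binom{k}{2}$. Geometrically this is the \emph{Shift} operation of Theorem~\ref{transformations}, $f(t)\mapsto f(t)+a\,t$, applied in the conjugate (part-index) coordinate, so it is precisely the step that will produce the additive linear term in $D_d^{(-1)}$. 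The subtlety, and the reason this does not fit the transfer theorems of \S\ref{transfer:theorems} directly, is that the slope of the shift depends on the number of parts $k$, which is itself random.

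Second, I would condition on the number of parts. For fixed $k$ the residual partition $\lambda$ is a uniform partition with exactly $k$ parts, whose limit shape is the Vershik--Yakubovich/Romik shape for a prescribed number of parts~\cite{Romik,VershikYakubovich}; in the grand-canonical description its multiplicities are independent geometric variables with a fugacity tuned so that the expected number of parts equals $k$. The outer weight on $k$ is $q^{d\binom{k}{2}}$, a quadratic (Gaussian-type) penalty, so a one-dimensional Laplace argument concentrates $k$ at $\gamma\sqrt{n}$, where $\gamma$ balances the entropy of $k$-part partitions against this penalty. That balance is exactly the first-order condition $(1-y)^d=y$ defining $y_d$, the limiting occupation at the bottom of the staircase, with $1-y_d=e^{-c\gamma/w}$; the normalization $\E|\mu|=n$ gives $c=\sqrt{\mathrm{Li}_2(1-e^{-c\gamma/w})}$, and the area bookkeeping $d\binom{k}{2}\sim\tfrac d2\gamma^2 n$ gives $w^2=1-\tfrac d2\gamma^2$. (One checks directly that these relations are mutually consistent with the stated formula~\eqref{gamma} for $\gamma$.)

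Third, since the staircase consumes a fraction $\tfrac d2\gamma^2$ of the mass, $\lambda$ is a typical partition of size $\sim w^2 n$ carrying $\tfrac{\gamma}{w}\sqrt{w^2 n}$ parts, and must be rescaled by $w$ relative to the $\sqrt{n}$-scaling of $\mu$. A Riemann-sum evaluation of the rescaled expected part sizes, as in the convex-partition sketch of \S\ref{convex}, then yields a closed $\tfrac{w}{c}\log(\cdot)$ expression for the conjugate shape of $\lambda$. Re-adding the staircase $d(k-i)$ through the Shift, and collecting the constants $\gamma,w,c$ into the bracketed argument of the logarithm, produces $D_d^{(-1)}(x)$ in the displayed form~\eqref{diff d inv}.

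Finally, I would upgrade the weak (convergence-in-expectation) statement to convergence in probability using \S\ref{sect prob}: the staircase subtraction is a measure-preserving, contraction-type operation, so the large-deviation and local-central-limit concentration available for partitions with a controlled number of parts transfers to $\mathcal{D}_d$. I expect the main obstacle to be the two-parameter saddle point. Unlike the multiplicative-restriction ensembles, here the number of parts is a genuinely free thermodynamic variable, so establishing concentration of $k$ and the correct scaling factor $w$ requires a joint local central limit theorem in $(|\mu|,k)$ rather than a one-dimensional one. This coupling is, I suspect, also the source of the error corrected in \S\ref{Romik:mistake}, so verifying the optimality condition that yields $(1-y)^d=y$, and hence the precise value of $\gamma$, is the delicate heart of the argument.
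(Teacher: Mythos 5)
Your proposal follows essentially the same route as the paper's proof: the staircase bijection $|\mathcal{D}_{n,d,k}| = |\mathcal{P}_{n-d\binom{k}{2},k}|$, the fixed-$k$ conjugate limit shape of Romik and Vershik--Yakubovich with constant $c^2 = \text{\rm Li}_2(1-e^{-c\,z})$, the tilt of the ensemble to expected size $n-d\binom{k}{2}$ producing the scaling factor $w$ with $w^2 = 1-\frac{d}{2}\gamma^2$, and the final addition of the slope-$d$ triangle, which is exactly the paper's proof of Theorem~\ref{theorem diff d k}. The one substantive difference is at the last step: where you propose to derive the concentration of the number of parts at $\gamma\sqrt{n}$ and the stationarity condition $(1-y)^d=y$ yourself via a Laplace argument in $k$ (with the attendant joint control of $(|\mu|,k)$ that you rightly flag as the delicate point), the paper simply cites this concentration as a proposition from~\cite{Romik}, so your extra analysis, while consistent with the caveat in \S\ref{Romik:mistake}, is not needed once Romik's published result is invoked.
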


Here $\text{\rm Li}_2(x)$ is the usual \emph{dilogarithm function}, which for positive real-valued inputs $x$ is defined as
\[ \text{\rm Li}_2(x) \. = \, \sum_{k \geq 1} \. \frac{x^k}{k^2}\ . \]

It is easy to obtain a minimal difference $d$ partition: take any partition $\lambda_1 \geq \lambda_2 \geq \ldots \geq \lambda_{\ell}$ and apply the transformation $\lambda_i \longmapsto  \lambda_i + d\cdot (\ell-i)$, for all $i=1,\ldots,\ell(\lambda)$.
This mapping, however, is not size-preserving, as it is between unrestricted partitions of $n$ into exactly $k$ parts, and minimal difference $d$ partitions of $n+d\binom{k}{2}$ into exactly $k$ parts.
Geometrically, we are adding a right triangle with slope $d$ whose right angle lies at the origin to the \emph{conjugate} limit shape.

\begin{theorem}[cf.~\S\ref{Romik:mistake}] \label{theorem diff d k}
Let $\mathcal{D}_{d,k}$ denote the set of partitions $\mu$ into exactly $k$ parts, in which $\mu_i - \mu_{i-1} \geq d$ for $i=1,\ldots, k$.
Suppose $k \sim z \sqrt{n}$\. , and $d \geq 1$.
Let $D_{d,z}(x)$ denote the limit shape of $\mathcal{D}_{d,k}$.
Then we have
\begin{equation}
\label{diff d inv1} D^{-1}_{d,z}(x) \, = \,
\frac{w}{c} \. \log \left[ \left(e^{\frac{c\, z}{w}\, d} - e^{\frac{c\, z}{w}\, (d-1)}\right) \frac{e^{-\frac{c\, d}{w} x}}{1-e^{-\frac{c}{w} x}}\right], \qquad x>0\. ,
\end{equation}
where $D_{d,z}(D_{d,z}^{-1}(x) = x$.
\end{theorem}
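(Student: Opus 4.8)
The plan is to realize $\mathcal{D}_{d,k}$ as a deterministic, size-shifting image of the unrestricted partitions into exactly $k$ parts, and then push the (known) limit shape of the latter through the resulting map. Concretely, the assignment $\mu_i = \lambda_i + d(k-i)$, $1 \le i \le k$, is a bijection between unrestricted partitions $\lambda$ into exactly $k$ parts and partitions $\mu \in \mathcal{D}_{d,k}$: the differences transform as $\mu_i - \mu_{i+1} = (\lambda_i - \lambda_{i+1}) + d \ge d$, so the minimal-difference-$d$ condition is exactly the image of the nonincreasing condition, while $\lambda_k = \mu_k \ge 1$ keeps the part count equal to $k$. The only subtlety is that this map is not size-preserving: $|\mu| = |\lambda| + d\binom{k}{2}$. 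Hence if we fix $|\mu| = n$ and $k \sim z\sqrt{n}$, the preimage $\lambda$ has size $n' := n - d\binom{k}{2} \sim (1 - \tfrac{d}{2}z^2)\,n = w^2 n$, with $w = \sqrt{1 - \tfrac{d}{2}z^2}$. The uniform measure on $\mathcal{D}_{d,k}$ of size $n$ transports, under the bijection, exactly to the uniform measure on unrestricted partitions of size $n'$ into exactly $k$ parts.

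Second, I would record the conjugate limit shape of unrestricted partitions into exactly $k \sim z'\sqrt{n'}$ parts, where $z' = k/\sqrt{n'} = z/w$. This is the fixed-number-of-parts ensemble treated by Romik~\cite{Romik} and Vershik--Yakubovich~\cite{VershikYakubovich}; alternatively it follows from a two-parameter grand canonical model in which multiplicities are independent with $\mathbb{E}\,m_j = sq^j/(1-sq^j)$, $q = e^{-c/\sqrt{n'}}$, $s = e^{-b}$, the two fugacities $b,c$ being pinned by the constraints $\mathbb{E}\,\ell = k$ and $\mathbb{E}\,|\lambda| = n'$. These give $e^{-b} = 1 - e^{-cz'}$ and $c = \sqrt{\mathrm{Li}_2(1 - e^{-cz'})}$, and integrating the expected diagram function yields the limit diagram function $-\tfrac1c\log(1 - e^{-b-c\tau})$, whose inverse is the conjugate limit shape
\[
R_{z'}(x) \, = \, -\frac{1}{c}\,\log\frac{1 - e^{-cx}}{1 - e^{-cz'}}, \qquad 0 \le x \le z',
\]
with $R_{z'}(0) = +\infty$ and $R_{z'}(z') = 0$ reflecting the largest part and the finite part count. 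Note that $c = \sqrt{\mathrm{Li}_2(1 - e^{-cz/w})}$ then matches the constant in~\eqref{gamma} with $\gamma$ replaced by $z$.

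Third, I would transfer $R_{z'}$ through the bijection, keeping track of the two scalings: $\mu$ is scaled by $\sqrt{n}$ while its preimage $\lambda$ is scaled by $\sqrt{n'} = w\sqrt{n}$. Writing the index as $i = x\sqrt{n}$, the scaled $\lambda$-index is $x' = i/\sqrt{n'} = x/w$ and $k/\sqrt{n'} = z/w = z'$, so $\mu_i/\sqrt{n} = (\lambda_i/\sqrt{n}) + d(k-i)/\sqrt{n} = w\,R_{z'}(x/w) + d(z-x)$. This is precisely the \emph{Shift} operation of Theorem~\ref{transformations} (adding the linear function $d(z-x)$, i.e.\ a right triangle of slope $d$, to the conjugate shape) composed with rescaling by $w$. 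Substituting $R_{z'}$ gives
\[
D^{-1}_{d,z}(x) \, = \, -\frac{w}{c}\,\log\frac{1 - e^{-cx/w}}{1 - e^{-cz/w}} \; + \; d(z - x), \qquad 0 \le x \le z,
\]
and a routine rearrangement, using the identity $(e^{cz/w}-1)/(1-e^{-cz/w}) = e^{cz/w}$, collapses this into the stated closed form~\eqref{diff d inv1}.

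The main obstacle is the non-size-preserving nature of the bijection, which forces the two distinct scaling factors $\sqrt{n}$ and $w\sqrt{n}$ and is the sole origin of the factors of $w$ in~\eqref{diff d inv1}; mishandling this rescaling is exactly the error discussed in \S\ref{Romik:mistake}, so the care lies in checking that the index and height rescalings are applied consistently. The remaining point is rigor: the statement asserts convergence in probability, not merely of expectations. Since the bijection is deterministic once $k$ is fixed and affects each part by the additive constant $d(k-i)$, any concentration of the conjugate diagram function of $\lambda$ transfers verbatim to that of $\mu$; and the concentration needed for the fixed-$k$ ensemble is supplied by~\cite{Romik, VershikYakubovich} (equivalently by the large-deviation and equivalence-of-ensembles arguments of \S\ref{sect prob}). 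This completes the plan.
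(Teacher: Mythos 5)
Your proposal is correct and takes essentially the same route as the paper: the staircase bijection $\mu_i = \lambda_i + d(k-i)$ reducing $\mathcal{D}_{n,d,k}$ to unrestricted partitions of $n - d\binom{k}{2}$ into exactly $k$ parts, Romik's fixed-parts conjugate limit shape with constant $c^2 = \mathrm{Li}_2\bigl(1-e^{-cz/w}\bigr)$, and the final addition of the slope-$d$ triangle $d(z-x)$. The only cosmetic difference is that the paper realizes the area deficit by re-tilting the grand-canonical measure to expected size $n - d\binom{k}{2}$ (producing its curve $h_{z,w}$ directly), whereas you transport the uniform measure to size $n' = w^2 n$ and rescale coordinates by $w$; these coincide, since $h_{z,w}(t) = w\,R_{z/w}(t/w)$, and your rearrangement to the closed form \eqref{diff d inv1} checks out.
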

\begin{proof}
Let  $\mathcal{D}_{n,d,k}$ denote the set of minimal difference $d$ partitions of $n$ into exactly $k$ parts.
Let $\mathcal{P}_{n,k}$ denote the set of unrestricted partitions of $n$ into exactly $k$ parts.
We have $\ts |\mathcal{D}_{n,d,k}| \ts = \ts |\mathcal{P}_{n-d\binom{k}{2},k}|$,
so we must tilt the distribution of the random size of the partition to have expected value
$n-d\binom{k}{2}$ and exactly $k$ parts.

By conjugation, each partition in $\mathcal{P}_{n,k}$ corresponds to an unrestricted partition with largest part at most $k$.
Hence, for $k \sim z \sqrt{n}\. ,$ $z >0$, we may generate the conjugate limit shape of $\mathcal{P}_{n,k},$ which we denote by $g_z$, directly using the method of Section~\ref{sect prob}.
Let $g_z(t)$ denote the limit shape of $\mathcal{P}_{n,k}$ for $k=z\sqrt{n}$, $z>0$.  We have
\[g_z(t) \, = \, \int_0^{z} \. \frac{e^{-c\, y}}{1-e^{-c\, y}}\. dy \, 
= \, \frac{1}{c}\.\ln(1 - e^{-c\, z}) \. - \. \frac{1}{c}\ln(1 - e^{-c\, t}), \qquad 0<t \leq z,
\]
where $c$ is the constant defined by the equation $\ts\int_0^{z}\ts g(t)\ts dt = 1.$
In~\cite{Romik}, it is shown that the constant $c$ satisfies
\[
c^2 \. = \. \text{\rm Li}_2\left(1 - e^{-c\, z}\right).
\]

However, rather than tilt the distribution of a random partition to have expected size~$n$, we instead tilt the distribution so that the expected size of the partition is $\ts n - d\binom{k}{2}$.
This changes the limit shape from $g_z(t)$ to a limit curve, say $h_{z,w}(t)$, given by
\[
h_{z,w}(t) \, = \, \frac{w}{s}\.\ln(1 - e^{-s \frac{z}{w}}) \. - \. \frac{w}{s}\.\ln(1 - e^{-s \frac{t}{w}}), \quad t >0\.,
\]
where $s$ is such that
\[ s^2 \, = \, \text{\rm Li}_2\left(1 - e^{-s\, \frac{z}{w}}\right).
\]
The final step is to add the triangle of slope $d$, i.e., the curve $d(z - t)$, to $h_{z,w}(t)$.
Rearranging the terms yields equation~\eqref{diff d inv}.
\end{proof}

Theorem~\ref{diff:d:theorem} then follows by the result below, c.f.~\S\ref{Romik:mistake}.

\begin{prop}[\cite{Romik}]
A random partition in $\mathcal{D}_d$ has number of parts asymptotic to $\gamma \sqrt{n}$, where $\gamma$ is given by equation~\eqref{gamma}.
\end{prop}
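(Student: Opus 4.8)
The plan is to determine the typical number of parts by locating the \emph{mode} of the one-parameter family $k \mapsto |\mathcal{D}_{d,n,k}|$, where $\mathcal{D}_{d,n,k}$ denotes the minimal difference $d$ partitions of $n$ into exactly $k$ parts, and then to upgrade this to convergence in probability. First I would invoke the staircase bijection used above (subtracting the triangle, $\lambda_i \mapsto \lambda_i + d(\ell-i)$), which gives $|\mathcal{D}_{d,n,k}| = |\mathcal{P}_{n - d\binom{k}{2},\, k}|$, where $\mathcal{P}_{m,j}$ is the set of unrestricted partitions of $m$ into exactly $j$ parts. Writing $k = z\sqrt{n}$, so that $m = n - d\binom{k}{2} \sim n\, w^2$ with $w = w(z) = \sqrt{1 - \tfrac{d}{2}z^2}$, reduces the problem to maximizing the exponential growth rate of $|\mathcal{P}_{m,k}|$ over $z \in (0,\sqrt{2/d})$.

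For the growth rate of $\mathcal{P}_{m,j}$ with $j \sim \zeta\sqrt{m}$ I would reuse the grand-canonical computation carried out in the proof of Theorem~\ref{theorem diff d k} (this is Romik's analysis of partitions with a restricted number of parts): conjugating to partitions with parts $\le j$ and tilting the size by $q = e^{-s/\sqrt m}$, a Riemann-sum evaluation of $\log \prod_{i=1}^{j}(1-q^i)^{-1}$ together with the saddle-point condition fixing $s$ gives
\[ \tfrac{1}{\sqrt m}\log|\mathcal{P}_{m,j}| \longrightarrow R(\zeta) := 2s - \zeta\log\!\bigl(1 - e^{-s\zeta}\bigr), \qquad s = s(\zeta) \ \text{solving} \ s^2 = \text{\rm Li}_2\!\bigl(1 - e^{-s\zeta}\bigr), \]
which is the same constant relation $c^2 = \text{\rm Li}_2(1 - e^{-cz})$ appearing in Theorem~\ref{diff:d:theorem}. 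Assembling the two pieces with $\zeta = k/\sqrt m = z/w$ yields $\tfrac{1}{\sqrt n}\log|\mathcal{D}_{d,n,k}| \to F(z) := w(z)\,R\!\bigl(z/w(z)\bigr)$, and $\gamma$ is the maximizer of $F$.

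The maximization I expect to be clean, and is the heart of the matter. Reparametrizing by $\zeta$ (so that $w = (1 + \tfrac d2\zeta^2)^{-1/2}$ and $z = \zeta w$) turns the problem into maximizing $\widetilde F(\zeta) = R(\zeta)\,(1 + \tfrac d2\zeta^2)^{-1/2}$. The key simplification is that, since $s(\zeta)$ is itself defined by the stationarity of the free energy in $s$, the envelope theorem gives $R'(\zeta) = -\log(1 - e^{-s\zeta})$ with no contribution from the implicit $s$-dependence. Writing $p := 1 - e^{-s\zeta}$, the condition $\widetilde F'(\zeta) = 0$ becomes $R'(\zeta)\,(1+\tfrac d2\zeta^2) = \tfrac d2\zeta\, R(\zeta)$, and substituting $R'(\zeta) = -\log p$ and $R(\zeta) = 2s - \zeta\log p$ the $\zeta^2\log p$ terms cancel, collapsing the equation to $-\log p = s d\,\zeta$, i.e.\ $p = e^{-s d\zeta} = (e^{-s\zeta})^d = (1-p)^d$. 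Thus the optimal $p$ is exactly the root $y_d$ of $(1-y)^d = y$ in $(0,1)$. Finally, feeding $s\zeta = -\log(1-y_d)$ and $s = \sqrt{\text{\rm Li}_2(y_d)}$ (the constraint, since $1 - e^{-s\zeta} = y_d$) into $\gamma = z = \zeta/\sqrt{1 + \tfrac d2\zeta^2}$ reproduces equation~\eqref{gamma}, along with the accompanying value $w = \sqrt{1 - \tfrac d2\gamma^2} = (1+\tfrac d2\zeta^2)^{-1/2}$.

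It remains to promote the location of the mode to convergence in probability, i.e.\ to show $\ell(\mu)/\sqrt n \to \gamma$ for uniformly random $\mu \in \mathcal{D}_{d,n}$. For this I would verify that $F$ has a unique, nondegenerate maximum at $z = \gamma$ (strict concavity near the top), so that a Laplace/saddle-point estimate summing the family $k \mapsto |\mathcal{D}_{d,n,k}|$ draws its overwhelming contribution from $|k - \gamma\sqrt n| = o(\sqrt n)$; the requisite concentration (a local-limit and large-deviation bound for the number of parts) is supplied by the probabilistic framework of Section~\ref{sect prob}, as those estimates transfer along the bijection. The main obstacle is not the optimization but this concentration step, together with the precise asymptotics of $|\mathcal{P}_{m,k}|$ holding uniformly in $k$ near $\gamma\sqrt n$; this is exactly the delicate point flagged in \S\ref{Romik:mistake}, where Romik's original treatment requires correction.
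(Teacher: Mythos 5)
Your proposal is correct, but note that the paper contains no proof of this proposition at all: it is quoted from \cite{Romik}, and the only ingredients actually established in the text are those inside the proof of Theorem~\ref{theorem diff d k} --- the staircase bijection $|\mathcal{D}_{n,d,k}| = |\mathcal{P}_{n-d\binom{k}{2},k}|$, the tilted measure, and the saddle relation $c^2 = \text{\rm Li}_2(1-e^{-c z})$. What you have written is a faithful reconstruction of the cited argument, and the computation at its heart checks out: stationarity of $s \mapsto s - \int_0^\zeta \log(1-e^{-su})\,du$ is equivalent, via the reflection identity $\text{\rm Li}_2(x)+\text{\rm Li}_2(1-x) = \tfrac{\pi^2}{6} - \log x \ts \log(1-x)$, to $s^2 = \text{\rm Li}_2(1-e^{-s\zeta})$, and the same identity converts the free energy into your $R(\zeta) = 2s - \zeta\log(1-e^{-s\zeta})$; the envelope theorem then legitimately gives $R'(\zeta) = -\log p$ with $p = 1-e^{-s\zeta}$; the advertised cancellation in $\widetilde{F}'(\zeta)=0$ is genuine and yields $p=(1-p)^d$, i.e.\ $p = y_d$; and back-substitution reproduces \eqref{gamma} exactly, including $w = \sqrt{1-\tfrac{d}{2}\gamma^2}$. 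As a sanity check at $d=1$: $y_1 = \tfrac12$ and $\text{\rm Li}_2(\tfrac12) = \tfrac{\pi^2}{12}-\tfrac{\log^2 2}{2}$ give $\gamma = \tfrac{2\sqrt{3}\log 2}{\pi}$, the classical constant for partitions into distinct parts.

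Two refinements. First, the concentration step is easier than you fear: since $d\binom{k}{2}\le n$ forces $k = O(\sqrt n)$, there are only polynomially many classes $k$, so convergence in probability needs only (i) the non-asymptotic grand-canonical upper bound $|\mathcal{P}_{m,k}| \le q^{-m}\prod_{i=1}^{k}(1-q^i)^{-1}$, valid for every $q\in(0,1)$ and hence uniform in $k$ for free, and (ii) a matching lower bound at the single value $k\sim\gamma\sqrt n$, supplied by a local limit theorem as in Theorem~\ref{local bound} or by \cite{Romik} itself; no nondegeneracy of the maximum is required, and uniqueness of the maximizer follows since $\zeta\mapsto s(\zeta)\ts\zeta$ is strictly increasing (so $p(\zeta)=y_d$ has exactly one solution) while $\widetilde{F}$ is positive inside and vanishes at both ends of its domain. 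Second, a small mis-attribution: the issue flagged in \S\ref{Romik:mistake} is that the limit-shape formula in the \emph{unpublished} preprint \cite{RomikUnpublished} fails to have unit area; it does not concern uniformity of the $p(n,k)$ asymptotics in the published \cite{Romik}, so it has no bearing on your concentration step.
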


\bigskip\section{Further examples}
\label{sect further}

\subsection{Partitions into an even number of parts $\leq k$}\label{further:example}

This section presents a bijection which requires one additional transformation before applying the conjugation transformation, see Figure~\ref{stretch}.

\begin{figure}
\includegraphics[scale=0.77]{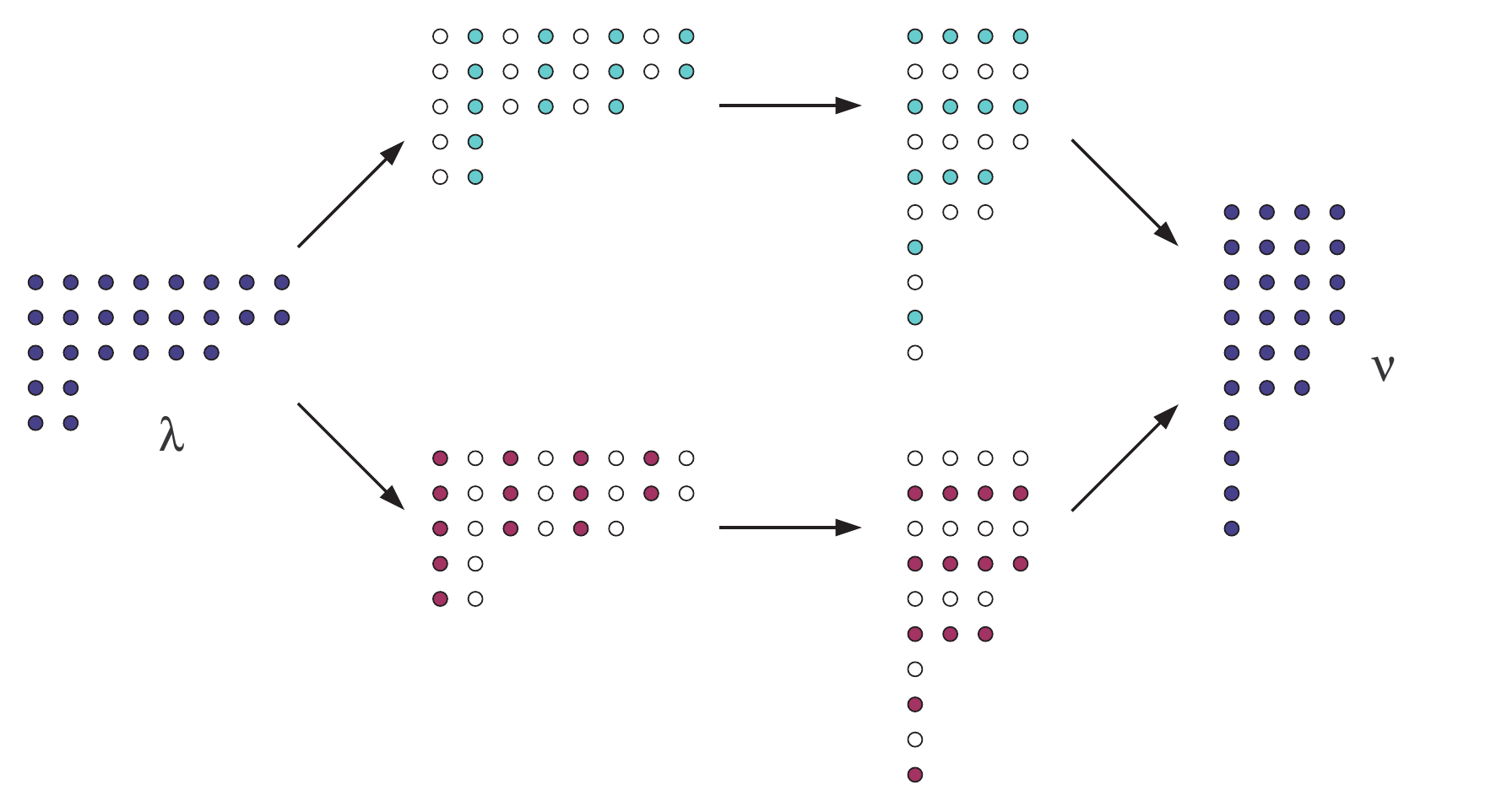}
\caption{The geometric bijection between $\A_{n,k}$, the set of partitions into even parts such that the largest part has size $\leq 2k$, and $\B_{n,k}$, the set of partitions into even parts such that the number of parts is $\leq k$.}
\label{stretch}
\end{figure}

\begin{theorem}[\cite{PakNature}] \label{Ank}\label{Bb:theorem}
Let $\B_{n,k}$ denote the set of partitions into even parts such that the number of parts is $\leq k$.
 Suppose $k/\sqrt{n}\to b>0$, and let $\B_b = \cup_n \B_{n, k}$.  Then the limit shape of $\B_b$ is given by
\[
G(t) := \int_t^{2b} \frac{2e^{- c\, 2y}}{1-e^{- c\, 2y }} \, dy, \qquad t>0,
\]
where $c$ is the constant such that $\int_0^\infty G(t)\, dt = 1$.
\end{theorem}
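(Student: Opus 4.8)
The plan is to compute the limit shape of the dual family $\A_b$ first, where $\A_{n,k}$ consists of partitions into even parts whose largest part is at most $2k$, and then transport it through the explicit geometric bijection of Figure~\ref{stretch} to reach $\B_b$. The family $\A_b$ is the tractable one: bounding the largest part by $2k$ merely restricts the allowable part sizes to the finite set $\{2,4,\dots,2k\}$ with unrestricted multiplicities, so there is no global constraint on the number of parts to contend with. First I would set up the Boltzmann/Fristedt model of \S\ref{sect prob}, in which the multiplicity of the part $2j$ is an independent geometric variable with mean $e^{-2\kappa j/\sqrt n}/(1-e^{-2\kappa j/\sqrt n})$, and calibrate the parameter $\kappa$ by $\E|\lambda| = n$. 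Summing these means over the parts of size $\geq \sqrt n\,t$ and passing to the Riemann sum in the variable $y = 2j/\sqrt n$ with $\Delta y = 2/\sqrt n$ (exactly as in the convex-partition computation of \S\ref{convex}) yields the weak limit shape $\tfrac12\int_t^{2b} e^{-\kappa y}/(1-e^{-\kappa y})\,dy$ of $\A_b$ under the scaling $\sqrt n$.

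Next I would upgrade this in-expectation statement to a genuine limit shape. Because the allowable part sizes form a restricted set with unrestricted multiplicities, the large-deviation and local-limit arguments of \S\ref{sect prob} and \S\ref{concentration} apply verbatim, so the scaled diagram function of $\A_b$ concentrates around its mean. With the limit shape of $\A_b$ in hand, I would invoke the geometric-transformation calculus of Theorem~\ref{transformations}. The bijection of Figure~\ref{stretch} is a stretch accounting for the evenness of the parts, followed by a conjugation, and each acts continuously on the limit shape via an argument-rescaling (the \emph{Stretch}/\emph{Shred} rules) composed with \emph{Conjugation} $f\mapsto f^{\<-1\>}$. Composing these two operations transports the diagram limit shape of $\A_b$ onto the (conjugate) limit shape of $\B_b$; carrying the scaling factors through the rescaling and inversion produces an integrand of the stated form $2e^{-2cy}/(1-e^{-2cy})$, and the single constant $c$ is then pinned down by the normalization $\int_0^\infty G(t)\,dt = 1$.

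The main obstacle will be the asymptotic stability of the bijection, that is, justifying that the stretch--conjugation actually maps the limit shape of $\A_b$ continuously onto that of $\B_b$ rather than merely acting on individual Young diagrams. Two points need care. First, the constraint defining $\B_b$, namely at most $k\sim b\sqrt n$ parts, is a \emph{global} constraint, and its compatibility with the local Boltzmann model is precisely what the bijection buys us: the asymptotically stable statistic ``largest part $\leq 2k$'' on $\A$ maps to ``number of parts $\leq k$'' on $\B$, and one must verify that this statistic concentrates so that no mass escapes (in the sense of Remark~\ref{vanishing:mass}). Second, the shape $G$ is singular at the origin, reflecting that partitions with few parts carry atypically large parts, so the transfer must be controlled uniformly on compact subsets of $(0,\infty)$ and the endpoint $t\to 0^+$ handled separately, in the spirit of the self-conjugate and minimal-difference examples. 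A more self-contained route, which I would use as a cross-check on $c$, avoids $\A_b$ entirely: conjugation sends $\B_b$ to partitions of $n$ in which every part has even multiplicity and largest part at most $k$, equivalently twice a partition of $n/2$ with largest part at most $k$, whereupon Romik's tilted computation~\cite{Romik} already deployed in \S\ref{diff d} reproduces the same limit curve directly.
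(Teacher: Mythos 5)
Your proposal is correct and follows the same two-step architecture as the paper: establish the limit shape of $\A_b$ first, then push it through the divide-by-two/double-the-multiplicities/conjugate bijection of Figure~\ref{stretch}. The paper gets the first step by quoting Theorem~\ref{Ab:theorem} from~\cite{Romik} (whose proof is exactly the tilted Boltzmann/Riemann-sum computation you outline, with concentration supplied by Section~\ref{sect prob}), so your re-derivation is the same method. Where you genuinely diverge is the transfer mechanism: you invoke the geometric calculus of Theorem~\ref{transformations} (stretch, then conjugation), whereas the paper formalizes $\vartheta_n$ as an \MPb with matrix $V(i,j)=2$ for $j$ even, $j\geq 2i$, factors it as $V=V_1V_2$ (conjugation composed with $\lambda/2\cup\lambda/2$), checks the stability condition with kernel $K\equiv 2$ on the triangle $\{t\leq y\}$, and feeds that into the first transfer theorem, Theorem~\ref{MT unrestricted}(2). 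The MP route buys you precisely the two things you worry about: the conclusion is stated as the \emph{conjugate} limit shape of $\B_b$ (so no separate inversion step), and the global constraint ``at most $k$ parts'' on $\B_b$ never has to be reconciled with the Boltzmann model, because it is the image of the local restriction ``largest part $\leq 2k$'' on $\A_b$ --- the observation you correctly identify as what the bijection buys. Your conjugation cross-check (every multiplicity even, largest part $\leq k$, i.e.\ twice a partition of $n/2$, then Romik's computation as in Section~\ref{diff d}) does not appear in the paper and is worth carrying out, because it exposes an endpoint subtlety your sketch glosses over when you say the composition ``produces an integrand of the stated form'': tracking the scalings honestly gives the conjugate limit shape $2\int_t^{b}\frac{e^{-2c\,y}}{1-e^{-2c\,y}}\,dy$ supported on $(0,b]$ --- as it must be, since $\mu\in\B_{n,k}$ has at most $k\sim b\sqrt{n}$ parts --- with the \emph{same} $c$ as in Theorem~\ref{Ab:theorem}; the upper limit $2b$ in the printed $G$ conflates the scaled part size $u_j/\sqrt{n}\in(0,2b]$ of $\A_b$ with the normalized variable $y_j=j/\sqrt{n}\in(0,b]$, and note that the paper's own remark that the constants in theorems~\ref{Ab:theorem} and~\ref{Bb:theorem} coincide is consistent only with upper limit $b$. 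So your plan is sound, but in executing it you should trust your own bookkeeping at the endpoint rather than the stated form, and state the conclusion as the conjugate limit shape (or invert it) rather than the diagram limit shape, which is bounded by $b$ and cannot equal $G$.
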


Let $\A_{n,k}$ denote the set of partitions into even parts such that the largest part has size $\leq 2k$.
The map $\vartheta_n: \A_{n,k} \to \B_{n,k}$ given in \cite[Section 5]{PakNature} is defined as follows: divide each part of $\lambda\in \B_{n,k}$ by 2, double the resulting multiplicities, and take the conjugate.
\begin{theorem}[\cite{Pak}]
For each $n \geq 1$ and $1 \leq k \leq n$, the map $\vartheta_n$ is a bijection between sets $\A_{n,k}$ and $\B_{n,k}$.
\end{theorem}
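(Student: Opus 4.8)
The plan is to realize $\vartheta_n$ as a composition of three elementary operations, each visibly a bijection between appropriate intermediate classes of partitions, and then to read off the domain and codomain of the composite. First I would dispose of the trivial case: if $n$ is odd then both $\A_{n,k}$ and $\B_{n,k}$ are empty, since a sum of even parts is even, and the statement holds vacuously; so assume $n$ is even.

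Write $\vartheta_n = \kappa \circ \delta \circ \eta$, where $\eta$ halves every part, $\delta$ doubles every multiplicity, and $\kappa$ is conjugation. Starting from $\lambda \in \A_{n,k}$, all of whose parts are even and at most $2k$, the operation $\eta$ produces a partition of $n/2$ with integer parts all of size at most $k$ and no constraint on multiplicities; this is a bijection onto the set of all partitions of $n/2$ with largest part at most $k$, with inverse ``double every part.'' Next, $\delta$ sends a partition $\sigma$ of $n/2$ to the partition of $n$ determined by $m_i\bigl(\delta(\sigma)\bigr) = 2\ts m_i(\sigma)$; this is a bijection onto the partitions of $n$ all of whose multiplicities are even, with inverse ``halve every multiplicity,'' and it preserves the bound that the largest part is at most $k$.

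The substantive step is the conjugation $\kappa$, and this is where I would concentrate the argument. Using the identity $m_j(\mu') = \mu_j - \mu_{j+1}$, together with the duality that the largest part of $\mu$ equals the number of parts of $\mu'$, I claim $\kappa$ restricts to a bijection from the set of partitions of $n$ with all multiplicities even and largest part at most $k$ onto $\B_{n,k}$, the partitions of $n$ into even parts with at most $k$ parts. Indeed, if every multiplicity of $\mu$ is even, then each difference $\mu'_i - \mu'_{i+1} = m_i(\mu)$ is even; since $\mu'_i$ eventually vanishes, writing $\mu'_i = \sum_{j \ge i} m_j(\mu)$ shows every part $\mu'_i$ is even, while the bound ``largest part at most $k$'' turns into ``at most $k$ parts.'' The converse direction is identical because conjugation is an involution: if $\nu$ has all even parts then each $m_j(\nu') = \nu_j - \nu_{j+1}$ is even. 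Composing the three bijections yields $\vartheta_n \colon \A_{n,k} \to \B_{n,k}$, and running the three inverses in reverse order (conjugate, halve multiplicities, double parts) exhibits the two-sided inverse explicitly. The only point demanding care --- the main obstacle, such as it is --- is the parity bookkeeping in $\kappa$: one must verify in both directions that ``all multiplicities even'' and ``all parts even'' correspond exactly under conjugation, which is precisely the telescoping-parity observation above.
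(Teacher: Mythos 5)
Your proof is correct and matches the paper's approach: the paper states this theorem with a citation to~\cite{Pak} rather than an in-text proof, but its definition of $\vartheta_n$ and its subsequent proof that $\vartheta_n$ is an \MPb factor the map exactly as you do, via $V = V_1 \ts V_2$ with $V_1$ the conjugation and $V_2$ the halve-parts/double-multiplicities step. Your verification that conjugation exchanges ``all multiplicities even'' with ``all parts even'' (via $m_j(\nu') = \nu_j - \nu_{j+1}$ and $\mu'_i = \sum_{j\ge i} m_j(\mu)$) is the standard identity and correctly supplies the only nontrivial step.
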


\begin{theorem}
Bijection $\vartheta_n$ is an \MPbns.
\end{theorem}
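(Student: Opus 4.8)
The plan is to exhibit $\vartheta_n$ explicitly as a linear map sending the multiplicity vector of an $\A_{n,k}$-partition to the \emph{part} vector of its image, and then to check the column-sum normalization in the definition of an \MPb. First I would trace the three constituent operations. A partition $\lambda\in\A_{n,k}$ has only even parts of size at most $2k$; write $m_{2i}$ for the multiplicity of the part $2i$, $1\le i\le k$, so that the support set is $U=\{2,4,\ldots,2k\}$. Dividing every part by $2$ produces a partition with parts in $\{1,\ldots,k\}$ in which $i$ has multiplicity $m_{2i}$; doubling the multiplicities gives part $i$ multiplicity $2m_{2i}$; and conjugating yields $\mu=\vartheta_n(\lambda)$ with
\[
\mu_j \, = \, \sum_{i\ge j} 2\, m_{2i}, \qquad 1\le j\le k.
\]
Since every multiplicity was doubled, each $\mu_j$ is even, and the number of parts of $\mu$ equals the largest part before conjugation, which is at most $k$; thus $\mu\in\B_{n,k}$, as required. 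A one-line computation confirms $|\mu|=\sum_j\sum_{i\ge j}2m_{2i}=\sum_i 2i\, m_{2i}=|\lambda|$.

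Each operation is linear in the multiplicities (relabeling of indices, scalar doubling, and conjugation, which is itself the prototypical multiplicities-to-parts map), so the composite is linear. Reading off the coefficients from the displayed formula, I would set
\[
v(j,2i) \, = \, 2\,\mathbbm{1}(i\ge j), \qquad v(j,\ell)=0 \ \text{ for } \ell \text{ odd or } \ell>2k,
\]
so that $\mu_j=\sum_\ell v(j,\ell)\,m_\ell$ has the form of Equation~\eqref{v transform}, with all entries nonnegative and hence $\mu\ge 0$.

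The final and decisive step is to verify the normalization $\sum_i v(i,j)=j$ for $j\in U$ and $0$ otherwise. For $\ell=2i\in U$ we have $v(j,2i)=2$ exactly for $j=1,\ldots,i$, whence $\sum_j v(j,2i)=2i=\ell$; for $\ell\notin U$ every coefficient vanishes and the column sum is $0$. This matches the \MPb\ requirement, and since $\vartheta_n$ is already known to be a bijection $\A_{n,k}\to\B_{n,k}$ by the preceding theorem, it is an \MPbns.

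The only genuine subtlety — the main obstacle — is bookkeeping the factor of $2$ correctly across the divide-by-two and double-the-multiplicity steps, and confirming that composition with conjugation stays inside the linear framework: conjugation is precisely what turns multiplicities into parts, which is what makes $\vartheta_n$ an \MPb\ rather than an \MMbns. One must also take care that the support set $U$ entering the normalization condition is the set of even sizes $\le 2k$, not $\{1,\ldots,k\}$.
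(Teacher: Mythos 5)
Your proposal is correct and takes essentially the same route as the paper: you arrive at exactly the transformation matrix $v(j,2i)=2\,\mathbbm{1}(i\ge j)$ of equation~\eqref{even V}, which the paper presents directly and then factors as $V=V_1V_2$ (conjugation composed with the divide-by-two/double-multiplicities map), precisely the composition you trace step by step. Your explicit checks of the column-sum normalization $\sum_j v(j,2i)=2i$, of size preservation, and that the image lands in $\B_{n,k}$ only spell out what the paper's terser proof leaves implicit.
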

\begin{proof}
The bijection $\vartheta_n$ corresponds to the transformation matrix
\begin{equation}
\label{even V}
 V(i,j)\. = \. \begin{cases} 2, & \ i\geq 1 \ \text{ and } \ j=2i+2k \\
 0 & \ \text{otherwise} \end{cases}, \qquad k \geq 0.
 \end{equation}
Furthermore, this transformation $V$ can also be realized as a composition of two transformations, $V = V_1 \ts V_2$, where
\[
V_1(i,j) \. = \. \begin{cases} 1, & \ i \geq 1, \ j\ge i \\
0, & \text{ otherwise,} \end{cases}\qquad
\]
\[
V_2(i,j) \. = \. \begin{cases} 2, & \ i \geq 1 \ \text{ and } \ j=2i \\
 0, & \ \text{otherwise}\ts. \end{cases}\qquad \qquad
\]
We have $V_1$ is the conjugate transformation, and $V_2$ is the transformation \ts
$(\lambda/2\cup \lambda/2)$ \ts defined above.
\end{proof}

\begin{lemma}
Suppose \ts
$k/\sqrt{n} \to b>0$.  Let $\ts\A_b = \cup_n \A_{n, k}\ts$ and let $\ts\B_b = \cup_n \B_{n, k}$.
The bijection $\ts\varphiv: \A_b \to \B_b$ \ts is \stable\ with $r=1$, \ts $B=2$, scaling function \ts
$\a = \sqrt{n}$,
and $\ts K(t,y, \phi) = 2$ \ts for $\ts 0\leq t \leq y \leq 2b,$ and~0 otherwise.
\end{lemma}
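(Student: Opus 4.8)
The plan is to verify directly that the coefficients $\bv$ of $\vartheta_n$ are \MPstable, following the template of the proof of Lemma~\ref{rth lemma} together with the Riemann-sum computation of \S\ref{convex}. First I would read the coefficients off the factorization $V=V_1V_2$ established above. Since $\vartheta_n$ conjugates and then doubles, it sends the multiplicity vector $m$ of $\lambda\in\A$ to $\mu_i=2\sum_{j\ge 2i}m_j(\lambda)$, so the nonzero entries are $v(i,u_j)=2$ whenever the admissible (even) part $u_j=2j$ satisfies $u_j\ge 2i$, and $0$ otherwise. As $\A$ consists of partitions into even parts with unrestricted multiplicities, the ambient parameters are $r=1$, $B=2$, and the relevant integrand is $\phi(y;1,2)=e^{-2cy}/(1-e^{-2cy})$, $c=d(1,2)$, from Theorem~\ref{special unrestricted}.

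Next I would fix the scaling $\a=\b=\sqrt n$ and let $y_j$ be the scaled size of the $j$-th admissible part, so that $\Delta y_j\sim 1/\b$ exactly as in Lemma~\ref{rth lemma}. Then $v(\lceil\b t\rceil,u_j)=2$ as soon as the part index clears $\lceil\b t\rceil$ and is $0$ otherwise; passing to the limit $y_j\to y$ this is the constant $2$ on the region $\{t\le y\}$, which is precisely $K(t,y,\phi)=2$ there. The doubling factor $2$ is inherited from $V_2$, and the upper truncation $y\le 2b$ records that no admissible source part exceeds $2k\sim 2b\sqrt n$ in size. As a corroborating computation in the style of \S\ref{convex}, I would track the scaled conjugate diagram function: since $\hat\rho_\mu(t)=2\,\hat D_\lambda(2t)$, taking the expectation against the independent geometric variables $Z_{2j}$ of Section~\ref{sect prob} collapses to a Riemann sum whose integrand is exactly $K(t,y,\phi(y;1,2))$ on the triangle $\{0\le t\le y\le 2b\}$, with $K$ constant because the coefficient $2$ does not reweight $\phi$.

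The main obstacle is the upper cutoff, and it is genuine: were $\A$ an honest \smooth set I could simply cite Theorem~\ref{special unrestricted}, but here the largest part is capped at $2k$, a bound that grows with $n$ and carries $\A$ outside the clean multiplicative framework. I would dispose of this exactly as the length statistic $\ell(\gamma)$ is handled in \S\ref{Lebesgue} (cf.\ Remark~\ref{stable:remark}): the capped largest part is an asymptotically stable statistic, so the probabilistic model of Section~\ref{sect prob}—and the contraction/large-deviation estimates recorded there—shows both that the weak computation upgrades to convergence in probability and that the cap enters only as the hard truncation at scaled value $2b$, leaving the coefficient stabilized to $K=2$ throughout the interior of its support. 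The remaining checks are routine: $K$ is bounded (by $2$) and piecewise continuous, with jumps only across $y=t$ and $y=2b$, so the coefficients $\bv$ are \MPstable, and the transfer theorems of \S\ref{transfer:theorems} then relate the limit shapes of $\A_b$ and $\B_b$.
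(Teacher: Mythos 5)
You should know at the outset that the paper gives \emph{no} written proof of this lemma: it is stated bare, and is evidently meant to follow from the explicit matrix~\eqref{even V} by the same one-line verification as in the proof of Lemma~\ref{rth lemma}. That is exactly the template you follow, and most of your reconstruction is right: the coefficients are $v(i,u_j)=2$ for even $u_j\ge 2i$ and $0$ otherwise, the identity $\rhohat_\mu(t)=2\ts\Dhat_\lambda(2t)$ is correct, the kernel is the constant multiplier $2$ against $\phi(y;1,2)$ on a triangular region, and your observation that $\A_b$ is not literally an \smooth family (its set $U$ depends on $n$ through the cap $2k$) is a fair point the paper glosses over --- the paper imports the limit shape of $\A_b$ from Romik (Theorem~\ref{Ab:theorem}) rather than from Theorem~\ref{special unrestricted}.

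There is, however, one concrete bookkeeping flaw, in your treatment of the cap. With the paper's own normalization $y_j=\left(u_j/(B\,\alpha(n))\right)^{1/r}$ and $r=1$, $B=2$, $u_j=2j$, you get $y_j=j/\sqrt{n}$; then the lower constraint $u_j\ge 2\lceil \beta(n)\ts t\rceil$ does become $y\ge t$, but the cap $u_j\le 2k\sim 2b\sqrt{n}$ becomes $y\le b$, \emph{not} $y\le 2b$. Your sentence deriving $y\le 2b$ from ``no admissible source part exceeds $2k\sim 2b\sqrt{n}$'' silently switches to the unnormalized variable $u/\sqrt{n}$ for the upper constraint while using $u/(2\sqrt{n})$ for the lower one; under the unnormalized convention the lower limit would instead be $y\ge 2t$ and $\Delta y_j\sim 2/\sqrt{n}$, which changes the constant. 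With the support $\{t\le y\le b\}$ the transfer theorem yields the conjugate limit shape
\[
\int_t^b 2\,\phi(y;1,2)\,dy \, = \, 2\,F(2t)\ts,
\]
which is supported on $[0,b]$ --- as it must be, since $\ell(\mu)\le k\sim b\sqrt{n}$ forces $\rhohat_\mu$ to vanish beyond $b$. (The ``$2b$'' in the lemma as printed, and the upper limit $2b$ in Theorem~\ref{Bb:theorem}, carry the same latent factor-of-two slip, so your argument matches the statement only by reproducing its conflation of conventions; a careful proof should either fix the support to $y\le b$ or change variables consistently throughout.) Separately, the lemma is a deterministic assertion about the coefficients $\bv$, so your appeal to Remark~\ref{stable:remark}-style concentration to ``dispose of'' the cutoff is out of place here: the truncation enters only because $v(i,u_j)=0$ for $u_j>2k$, and the probabilistic upgrade from the weak computation belongs to Theorem~\ref{distribution:sufficient:bijection}, not to the stability verification.
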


\smallskip

\begin{theorem}[\cite{Romik}] \label{Ab:theorem}
Suppose $k/\sqrt{n}\to b>0$, and let $\A_b = \cup_n \A_{n, k}$.
The limit shape of $\A_b$ under scaling function $\a = \sqrt{n}$ is given by
\[
F(t) := \int_{t/2}^{b} \frac{e^{- c\, 2y}}{1-e^{- c\, 2y }} \, dy,
\]
where $c$ is the constant such that $\int_{0}^{2b} F(t)\, dt = 1$.
\end{theorem}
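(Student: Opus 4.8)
The plan is to establish the limit shape of $\A_b$ directly by the probabilistic method of Section~\ref{sect prob}, which is the route taken by Romik; the stable bijection $\vartheta_n$ and Theorem~\ref{Bb:theorem} then serve as a consistency check rather than as the primary engine (indeed $\A_b$ carries a cap on the largest part and so is not \smooth in the sense of Theorem~\ref{special unrestricted}, so the transfer theorems do not apply verbatim). First I would model a uniformly random element of $\A_{n,k}$ by the tilted ensemble of independent multiplicities: since parts are even and bounded by $2k$, I assign to each admissible part size $2j$, $1 \le j \le k$, an independent geometric multiplicity $Z_{2j}$ with $\E Z_{2j} = x^{2j}/(1-x^{2j})$, and set $x = e^{-c/\sqrt{n}}$ so that the scaling function is $\a = \sqrt{n}$. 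The free parameter $c$ is fixed by requiring the expected total size to be asymptotic to $n$, equivalently by the normalization $\int_0^{2b} F(t)\,dt = 1$.

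Next I would compute the expected scaled diagram function and recognize it as a Riemann sum. Writing $y_j = 2j/\sqrt{n}$, so that $\Delta y_j = 2/\sqrt{n}$ and the cap $j \le k \sim b\sqrt{n}$ becomes $y_j \le 2b$, one has
\[
\E\bigl[\Dhat_\lambda(t)\bigr]
\, = \, \frac{1}{\sqrt{n}} \sum_{t \le y_j \le 2b} \frac{e^{-c\,y_j}}{1-e^{-c\,y_j}}
\, = \, \frac{1}{2}\sum_{t\le y_j\le 2b}\frac{e^{-c\,y_j}}{1-e^{-c\,y_j}}\,\Delta y_j
\, \longrightarrow \, \frac{1}{2}\int_t^{2b}\frac{e^{-c\,y}}{1-e^{-c\,y}}\,dy .
\]
The substitution $y \mapsto 2y$ turns the right-hand side into $\int_{t/2}^{b} e^{-2c\,y}/(1-e^{-2c\,y})\,dy = F(t)$, which is exactly the claimed shape: it is the uncapped even-part shape $\Phi(t;1,2)$ of Theorem~\ref{special unrestricted}, with the upper limit of integration cut at $b$ in place of $\infty$ by the part-size bound.

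The remaining, and principal, difficulty is to upgrade this convergence in expectation to the strong (in-probability) statement of the limit-shape definition~\eqref{limit shape definition}. Here I would invoke the concentration and local central limit results of Section~\ref{sect prob} to transfer the law of large numbers from the independent ensemble to the uniform measure on $\A_{n,k}$, i.e.\ the equivalence of ensembles. The delicate point is that the cap on the largest part sits at the macroscopic scale $2k \sim 2b\sqrt{n}$, precisely where the limit shape lives, so the truncation is not asymptotically negligible: one must verify that the independent model conditioned simultaneously on total size $n$ and on part sizes $\le 2k$ still obeys a large-deviation principle, and that $c$ is uniquely determined by the resulting saddle-point equation. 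This is the step where the argument is genuinely harder than for an uncapped \smooth ensemble, and it is exactly the content of Romik's analysis. Finally, as an independent confirmation, the \MPstable\ bijection $\vartheta_n\colon \A_{n,k} \to \B_{n,k}$ of the preceding lemma, together with Theorem~\ref{Bb:theorem}, relates $F$ and $G$ through conjugation composed with the doubling map $V_2$; checking that this relation reproduces $F$ recovers the shape a second time.
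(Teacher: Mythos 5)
Your proposal is correct, and it reconstructs what the paper itself leaves implicit: the paper offers no proof of Theorem~\ref{Ab:theorem} at all --- it is imported wholesale from Romik~\cite{Romik} --- and within Section~\ref{further:example} the logic actually runs in the opposite direction, with the known shape $F$ of $\A_b$ plus the stable \MPb (where $K(t,y,\phi)=2$ for $0\leq t\leq y\leq 2b$) serving to recover $G$ for $\B_b$; so Theorem~\ref{Ab:theorem} is an input to the section, not an output. Your direct tilted-ensemble derivation is precisely the computation the paper carries out for the analogous capped ensemble $g_z$ in the proof of Theorem~\ref{theorem diff d k} in Section~\ref{diff d}: the Riemann-sum step with $y_j = 2j/\sqrt{n}$, $\Delta y_j = 2/\sqrt{n}$ is arithmetically correct, the substitution $y\mapsto 2y$ does produce $F(t)$, and fixing $c$ by $\int_0^{2b} F(t)\,dt = 1$ is equivalent to tilting the expected size to $n$. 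You are also right about the two delicate points: since the cap $2k \sim 2b\sqrt{n}$ makes the part-size set depend on $n$, the ensemble $\A_b$ is not \smooth in the sense required by Theorems~\ref{special unrestricted} and~\ref{distribution is sufficient}, so neither those results nor the transfer theorems apply verbatim, and the equivalence of ensembles under a macroscopic truncation is exactly the content of Romik's analysis --- which is why the paper cites rather than proves the theorem (compare Remark~\ref{stable:remark}, where the same issue is resolved by appeal to Section~\ref{sect prob}). Your closing consistency check through the bijection also reproduces the paper's observation that the constants $c$ in Theorems~\ref{Ab:theorem} and~\ref{Bb:theorem} coincide. In short: no gap, same approach as the paper's framework, with the genuinely hard concentration step correctly identified and correctly attributed.
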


In fact, as written, the constant $c$ in Theorem~\ref{Ab:theorem} and the constant $c$ in Theorem~\ref{Bb:theorem} coincide.

\subsection{A generalization to \S\ref{further:example}}
\label{further:example:generalization}
For any $m \geq 2$ and $r \geq 1$, let $A_{n,k,m,r}$ denote the set of partitions of $n$ into parts from the set $U$ with $u_j = m^r\, j^r$, with largest part having size at most $m^r\, k.$
Let $B_{n,k,m,r}$ denote the set of partitions of $n$ into parts from the set $U$ with $u_j = m^r\, j^r$ and number of summands $\leq k$.
The bijection $\vartheta_n : A_{n,k,m,r} \to B_{n,k,m,r}$ is as follows: divide each part of $\lambda\in \B_{n,k,m,r}$ by $m^r$, multiply the resulting multiplicities by $m^r$, and take the conjugate.
Define $\A_{b,m,r} = \cup_n \A_{n, k,m,r}$ and similarly $\B_{b,m,r} = \cup_n \B_{n,k,m,r}$.
\begin{theorem}
Suppose $k/n^{r/(r+1)} \to b>0$.  Let $\A_{b,m,r} = \cup_n \A_{n, k,m,r}$ and similarly let $\B_{b,m,r} = \cup_n \B_{n,k,m,r}$.

\begin{enumerate}
\item {\rm(cf.~\S\ref{history})} The limit shape of $\A_{b,m,r}$ under scaling function $\a = n^{r/(r+1)}$ is given by
\[ F_{m,r}(t) = \int_{(t/m)^{1/r}}^{b} \frac{e^{- c\, m^r\, y^r}}{1-e^{- c\, m^r\, y^r }} \, dy, \qquad 0 < t \leq m^r\, b^r,
\]
where $c$ is a constant such that $\int_{0}^{m^r\,b^r} F_m(t)\, dt = 1$.

\item The bijection $\varphiv: \A_{n,k_n,m,r} \to \B_{n,k_n,m,r}$ is \stable\ with $B=m^r$, scaling function $\a = n^{r/(r+1)}$, and $K(t,y, \phi) = m^r$ for $0\leq t \leq y \leq m^r\, b^r,$ and 0 otherwise.

\item The limit shape of $\B_{b,m,r}$ under scaling function $\a = n^{1/(r+1)}$ is given by
\[G_{m,r}(t) = \int_t^{m^r\, b^r} m^r \frac{e^{- c\, m^r\, y^r}}{1-e^{- c\, m^r\, y^r }} \, dy,  \qquad 0 < t \leq m^r\, b^r,\]
where $c$ is the constant such that $\int_0^{m^r\, b^r} G_{m,r}(t)\, dt = 1$.
\end{enumerate}
\end{theorem}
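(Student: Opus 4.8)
The plan is to establish the three parts in order, each step reducing to the machinery already set up for the $r=1$, $m=2$ case of \S\ref{further:example}. For part~(1), note that $\A_{b,m,r}$ is the \smooth ensemble with $u_j=m^r j^r$ (exponent $r$, $B=m^r$) subject only to the cap ``largest part $\le m^r k$''. I would run the probabilistic model of Section~\ref{sect prob}: the multiplicity of $u_j$ is an independent geometric variable whose Boltzmann parameter is tilted to make the expected size equal $n$, and the cap restricts the index range to $m^r j^r\le m^r k$, i.e.\ the scaled variable $y\lesssim b$ (with $\a=n^{r/(r+1)}$, $y_j\sim j/\b$, $\Delta y_j\sim 1/\b$). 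The expectation of the scaled diagram function is then a Riemann sum converging to $F_{m,r}$, with $c$ pinned by $\int_0^{m^r b^r}F_{m,r}=1$, and the large-deviation estimate of \S\ref{concentration} upgrades convergence in expectation to convergence in probability. This is exactly the generalisation of Romik's Theorem~\ref{Ab:theorem}, its $r=1$, $m=2$ specialisation.

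For part~(2), I would first present $\vartheta_n$ as a linear map $V=V_1V_2$ on multiplicity vectors, where $V_2$ implements ``divide each part by $m^r$ and scale its multiplicity by $m^r$'' and $V_1$ is conjugation, $V_1(i,\ell)=1$ for $\ell\ge i$. Checking $\sum_i V(i,u_j)=u_j$ then certifies that $\varphiv=\vartheta_n$ is an \MPbns{}. The substantive step is to verify that it is \MPstable: one inserts the explicit entries $v(\lceil\b t\rceil,u_k)$, together with $y_k\sim k/\b$ and $\Delta y_k\sim1/\b$, into the defining relation~\eqref{v nice} and shows that, after the $\b$-normalisation, the limit is $K(t,y,\phi)=m^r\phi$ on the region $0\le t\le y\le m^r b^r$ and $0$ outside. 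Here the upper cutoff $m^r b^r$ is inherited from the largest-part cap on $\A$, while the lower cutoff $t\le y$ comes from the triangular support of the conjugation factor $V_1$. This mirrors the stability lemma of \S\ref{further:example} and Lemma~\ref{rth lemma}.

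Part~(3) is then immediate. Feeding the \MPstable{} bijection of part~(2) into the first transfer theorem (Theorem~\ref{MT unrestricted}(2)), in the truncated form in which the cap on $\A$ enters only through the bounded support of $K$, yields the conjugate limit shape of $\B_{b,m,r}$ as $\int_0^\infty K\!\left(t,y,\frac{e^{-c\,m^r y^r}}{1-e^{-c\,m^r y^r}}\right)dy=\int_t^{m^r b^r} m^r\frac{e^{-c\,m^r y^r}}{1-e^{-c\,m^r y^r}}\,dy=G_{m,r}(t)$; the ordinary-versus-conjugate labelling is settled by the number-of-parts restriction exactly as in Theorem~\ref{Bb:theorem}. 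Concentration transfers for free, since an \MPbns{} acts as a Markov operator, hence a linear contraction, on the diagram function (\S\ref{concentration}).

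The hard part will be part~(2). For $r\ge2$, dividing a part $m^r j^r$ by $m^r$ produces the perfect power $j^r$ rather than an arbitrary integer, so one must first check that $\vartheta_n$ genuinely lands in $\A_{n,k,m,r}$ (parts in $U$, largest $\le m^r k$) and is a bijection onto it; and then one must pin the stability constant so that the $\b$-scaling delivers precisely the factor $m^r$ on a truncated domain whose cutoff $m^r b^r$ matches the support found in part~(1). Once $K$ is identified in this form, the remaining two parts are routine substitutions into the already-established transfer and concentration theorems.
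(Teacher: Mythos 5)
Your proposal is correct and follows exactly the route the paper intends: the theorem is stated without a written-out proof as the direct generalization of \S\ref{further:example}, whose template you reproduce faithfully --- the probabilistic model of Section~\ref{sect prob} with the largest-part cap and Riemann-sum limit for part~(1), the factorization $V=V_1V_2$ (conjugation composed with divide-by-$m^r$/multiply-multiplicities-by-$m^r$) and the \MPstable{} verification with $K(t,y,\phi)=m^r$ on $0\le t\le y\le m^r b^r$ for part~(2), and Theorem~\ref{MT unrestricted}(2) plus the Markov-operator contraction argument of \S\ref{concentration} for part~(3). Your closing caveat about well-definedness for $r\ge 2$ (conjugation yields parts of the form $m^r$ times an arbitrary count, which need not equal $m^r j^r$, so one must take $\B_{n,k,m,r}$ to be the image of $\vartheta_n$ rather than literally partitions into parts from $U$) is a genuine subtlety that the paper glosses over, and flagging it is to your credit.
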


\bigskip\section{Probabilistic setting}
\label{sect prob}

\subsection{Probabilistic definitions for unrestricted partitions}
In this section we introduce the probabilistic tools used in the proofs of the theorems.
Denote by $\Lambda_n$ a random partition of size $n$, where each partition is equally likely.
Similarly, denote by $C_i(n)$ the number of parts of size $i$ in the random partition $\Lambda_n$,
so that $\sum_{i=1}^n i\, C_i(n) = n$ is satisfied with probability~1.  The diagram function
(see \S\ref{diagram functions}) for a random partition of size~$n$ is given by
\begin{equation}
\label{random df}
D_{\Lambda_n}(t) \, = \, \sum_{i \geq t} C_i(n)\ts, \quad \text{for all } t>0.
\end{equation}

\label{convergence lemma}

Let $\{X_i(x)\}$ denote a sequence of \emph{independent}, but not identically distributed, geometrically distributed random variables with parameters $1-x^i$, $i\geq 1$, where $\p(X_i(x) \geq k) = x^{ik}$, $k\geq 0$, $0<x<1$.
We write $X_i \equiv X_i(x)$ and $C_i \equiv C_i(n)$ when the parameters are understood by context.

The diagram function $D_{\Lambda_n}(t)$ of Equation \eqref{random df} is constrained to have fixed total area for a given $n$, corresponding to a partition of fixed size $n$, for $n\geq 1$.  The random variables $X_i$ and $C_i$ are related by, for all $0<x<1$,
\begin{equation}
\label{equal in distribution}
\left((X_1, \ldots, X_n) \, \left\vert \, \sum_{i=1}^n i X_i = n\right)\right. \stackrel{D}{=} (C_1, \ldots, C_n), \qquad \text{for each $n \geq 1$,}
\end{equation}
see \cite{Fristedt}; see also \cite{IPARCS}.
Here, equality means in distribution.

  We next introduce a sequence of random variables $\{Z_i(x)\}_{i \geq 1}$, where $Z_i(x)$ is defined as the value of $X_i(x)$ conditional on $X_i(x) < a$, $i \geq 1$, $a\geq 1$, $0<x<1$; that is,
\[ Z_i(x) \stackrel{D}{=} (X_i(x) | X_i(x) < a), \qquad \mbox{ where $i \geq 1$.} \]
We also allow for the case when $a=\infty$, in which case we have $Z_i(x) \stackrel{D}{=} X_i(x)$.
Denote by $\Lambda$ a random partition of fixed size $n$ from an unrestrictedly smooth or restrictedly smooth set of partitions with parameters $r \geq 1$ and $B>0$.
Let $M_i(n)$ denote the number of parts of size $i$ in the random partition $\Lambda$, $i\geq 1$.  Then $M_i(n) = 0$ for $i\notin U$ and $M_i(n) < a$,  $i\in U$.
An analogous result to \eqref{equal in distribution} holds, namely, for all $0<x<1$,
\begin{equation}
\label{equal in distribution U}
\left(\bigl(Z_{u_1}, Z_{u_2}, \ldots \bigr) \left\vert \sum_{i\in U} i Z_i = n\right)\right. \stackrel{D}{=} \left(M_{u_1}, M_{u_2}, \ldots \right), \qquad \text{for each $n \geq 1$.}
\end{equation}

For a given scaling function $\a$ and constant $c$, define the \emph{tilting parameter}
\begin{equation}
\label{eq x}
x(n) := \exp\left(-c / \a\right), \qquad n \geq 1.
\end{equation}

    We define the \emph{independence diagram function} by
\[ S_x(t) = \sum_{k\in U, k\geq t} Z_k(x),\qquad  t> 0,~0<x<1.
\]
It has the same form as the diagram function $D_{\Lambda_n}$, except it is a sum of independent random variables.
  The tilting parameter $\xofn$ tilts the distribution, and we choose $c$ so that the expected area of $S_{\xofn}$ is $n$.  In this particular case, the tilt given in Equation \eqref{eq x} also maximizes the probability that the area of $S_{\xofn}$ is $n$ (see \cite[Section~5]{IPARCS}).

We also define the  \emph{scaled independence diagram function} as
\begin{equation}\label{shat}
\Shat_x(t) \. = \. \frac{\a}{n} \. S_x\left(\a t\right), \qquad t> 0,~0<x<1.
\end{equation}

\subsection{Concentration of the independence process}\label{concentration}
In this section, we formalize the intuition that the weak notion of a limit shape, i.e., the pointwise convergence in expectation of the scaled independence diagram function, coincides with the limit shape of a certain set of integer partitions.

\begin{theorem}[\cite{Yakubovich}, cf.~\S\ref{history}]
\label{distribution is sufficient}
Let $\Shat_x$ denote the scaled independence diagram function corresponding to a set of partitions $\P_U$ which is either restrictedly smooth or unrestrictedly smooth.
Suppose there exists a piecewise continuous function $\Phi: \Rplus~\longrightarrow~\Rplus$, such that,
\[\mathbb{E}\left[\Shat_{x}(t)\right]\ \longrightarrow\ \Phi(t) \qquad \text{ for each continuity point \,\. $t>0$}.\]
Then the limit shape of $\A$ is given by $\Phi$.
\end{theorem}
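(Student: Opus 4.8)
The plan is to establish the \emph{equivalence of ensembles}, transferring the pointwise concentration of $\Shat_x$ from the independent model to the uniform measure $\Pr_n$ on $\A_n = \P_{U,n}$. Recall from~\eqref{equal in distribution U} that under $\Pr_n$ the multiplicity vector $(M_{u_1}, M_{u_2}, \ldots)$ has the law of $(Z_{u_1}, Z_{u_2}, \ldots)$ conditioned on the event $\Omega_n := \{\sum_{i\in U} i\, Z_i(\xofn) = n\}$, and that under this conditioning the scaled diagram function $\Dhat_\lambda(t)$ coincides with $\Shat_{\xofn}(t)$. Fixing continuity points $0 < t_1 < \cdots < t_k$ of $\Phi$ and $\epsilon > 0$, a union bound reduces the defining condition~\eqref{limit shape definition} to showing, for a single continuity point $t$, that $\Pr(A_{t,\epsilon} \mid \Omega_n) \to 0$, where $A_{t,\epsilon} := \{|\Shat_x(t) - \Phi(t)| \geq \epsilon\}$. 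Writing $\Pr$ for the independent law, I would then use
\[
\Pr(A_{t,\epsilon} \mid \Omega_n) \; = \; \frac{\Pr(A_{t,\epsilon} \cap \Omega_n)}{\Pr(\Omega_n)} \; \leq \; \frac{\Pr(A_{t,\epsilon})}{\Pr(\Omega_n)},
\]
and bound the numerator by a super-polynomially small quantity and the denominator below by an inverse polynomial.

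For the numerator, the hypothesis $\E[\Shat_x(t)] \to \Phi(t)$ lets me replace $\Phi(t)$ by $\E[\Shat_x(t)]$, so that for large $n$ it suffices to control $\Pr(|\Shat_x(t) - \E\Shat_x(t)| \geq \epsilon/2)$. Since $\xofn = e^{-c/\a}$ with $\a = n^{r/(1+r)}$, the variable $\Shat_x(t) = \frac{1}{\b}\sum_{u_j \geq \a t} Z_{u_j}$ is, up to the factor $\a/n = 1/\b$, a sum of independent geometric (or, in the restrictedly smooth case, truncated geometric) random variables, of which only $\Theta(\b)$ have non-negligible mean and variance (those with $u_j = O(\a)$), the remaining tail being exponentially small. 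I would apply a Chernoff/Bernstein estimate for sums of independent sub-exponential variables to obtain a large deviation bound of the form
\[
\Pr\!\left(\left|\Shat_x(t) - \E \Shat_x(t)\right| \geq \tfrac{\epsilon}{2}\right) \; \leq \; \exp\!\left(-\kappa\, \b\, \epsilon^2\right)
\]
for some $\kappa > 0$; since $\b = n^{1/(1+r)}$, this decays faster than any power of $n$. In the bounded-multiplicity regime the summands are bounded and Hoeffding's inequality applies directly. This is the large deviation property preserved by the Markov operators discussed in~\S\ref{concentration}.

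For the denominator, the constant $c$ in $\xofn$ is chosen precisely so that $\E[\sum_{i\in U} i\, Z_i] = n\,(1+o(1))$, i.e.\ the conditioning value $n$ sits at the centre of the distribution of the total size. A local central limit theorem for the integer-valued lattice sum $\sum_{i\in U} i\, Z_i$, whose variance is $\sigma_n^2 = \sum_j u_j^2\, \var Z_{u_j} = \Theta(\b^{2r+1})$, then gives $\Pr(\Omega_n) \sim (2\pi \sigma_n^2)^{-1/2}$, which is only inverse-polynomially small in $n$. Combining the two estimates yields
\[
\Pr(A_{t,\epsilon}\mid \Omega_n) \; \leq \; \frac{\exp(-\kappa\,\b\,\epsilon^2)}{\Pr(\Omega_n)} \; \longrightarrow \; 0,
\]
which verifies~\eqref{limit shape definition} and proves that $\Phi$ is the limit shape of $\A$.

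The main obstacle is the matching of these two rates. A second-moment (Chebyshev) estimate gives only a polynomially small numerator, which would be overwhelmed by the polynomially small denominator $\Pr(\Omega_n)$; the crux is therefore the \emph{super-polynomial} large deviation bound, which forces the use of the exponential-moment method and a uniform control of the moment generating functions of the $Z_{u_j}$ across the relevant range of part sizes, in both the unrestrictedly and restrictedly smooth cases. The accompanying local central limit theorem for $\sum_{i\in U} i\, Z_i$ is standard for tilted lattice sums, but it must be invoked with the correct centering and variance scaling $\sigma_n^2 = \Theta(\b^{2r+1})$ so that the denominator is genuinely only inverse-polynomial.
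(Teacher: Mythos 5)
Your proposal is correct and takes essentially the same route as the paper: the conditioning identity~\eqref{equal in distribution U} combined with the ratio bound~\eqref{LD calculation}, where the numerator is killed by a super-polynomial large-deviation estimate (the paper's Theorem~\ref{large deviation}, cited from~\cite{LD,Yakubovich}) and the denominator is bounded below by an inverse polynomial via a local limit estimate for $\sum_{i\in U} i\, Z_i$ (the paper's Theorem~\ref{local bound}, cited from~\cite{Yakubovich}). The only difference is that you sketch proofs of these two ingredients---a Bernstein-type bound for the $\Theta(\b)$ effective sub-exponential summands, and a local CLT with the correct scaling $\sigma_n^2 = \Theta(\b^{2r+1})$, whose arithmetic hypotheses on $U$ do hold under the paper's polynomial assumption~\eqref{U1}---whereas the paper imports them as citations.
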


The same kind of result holds in our setting as well.
Note that an \MMb or \MPb $\varphiv: \A\to\B$ induces a continuous, bijective map $\Tv:L_+^1(\R_+)\longrightarrow L_+^1(\R_+)$.  For diagram functions, we have
\[ \Tv D_\lambda(t) \. = \. \Tv \sum_{k\geq t} m_k(\lambda) \. = \, \sum_k v({ t ,k})\, m_k(\lambda) \,
= \, \begin{cases} \rho_{\varphi_{\bf v}(\lambda)}(t), & \text{\MPbns} \\
D_{\varphiv(\lambda)}(t), & \text{\MMbns}\end{cases}
\]

\begin{theorem}\label{distribution:sufficient:bijection}
Let $\Shat_x$ denote the scaled independence diagram function corresponding to a set of partitions $\P_U$ which is either restrictedly smooth or unrestrictedly smooth with parameters $r \geq 1$ and $B>0$.
Suppose there exists a piecewise continuous function $\Phi: \Rplus~\longrightarrow~\Rplus$, such that,
\[\qquad \qquad  \mathbb{E}\left[\Shat_{x}(t)\right]\ \longrightarrow\ \Phi(t), \quad \text{ for each continuity point \,\. $t>0$}.\]
Suppose further there exists a set of partitions $\B$ and an \MMb or \MPb $\varphiv:\A\longrightarrow\B$ that is \stable.
Suppose there exists a piecewise continuous function $\Phi_K: \Rplus \longrightarrow \Rplus$, such that,
\[\qquad \qquad \mathbb{E}\left[\Tv \Shat_{x}(t)\right]\ \longrightarrow\ \Phi_K(t), \quad \text{ for each continuity point \,\. $t>0$}.\]
Then the limit shape of $\B$ is given by $\Phi_K$.
\end{theorem}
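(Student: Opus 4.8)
The plan is to reduce the strong, in-probability statement for $\B$ to Theorem~\ref{distribution is sufficient} for $\P_U$, transporting every object through the induced operator $\Tv$ and exploiting that $\Tv$ is a linear contraction. First I would record the probabilistic reduction: since $\varphiv\colon \A_n\to\B_n$ is a size-preserving bijection and each measure is uniform, the law of a uniformly random $\mu\in\B_n$ is the pushforward under $\varphiv$ of the uniform law on $\A_n$, so $\mu\stackrel{D}{=}\varphiv(\lambda)$ for $\lambda$ uniform in $\A_n$. Using the structural identity recorded just before the statement, namely $\Tv D_\lambda = D_{\varphiv(\lambda)}$ in the \MMbns\ case and $\Tv D_\lambda=\rho_{\varphiv(\lambda)}$ in the \MPbns\ case, the scaled diagram function (respectively conjugate diagram function) of $\mu$ under the target scaling $\b=n^{1/(1+r)}$ is exactly $\Tv$ applied to the scaled diagram function of $\lambda$. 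Thus establishing that $\Phi_K$ is the (conjugate) limit shape of $\B$ is equivalent to proving $\Tv\Shat_x(t)\to\Phi_K(t)$ in probability at each continuity point, after passing from the independence model to the conditioned one.

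Next I would handle the expectation in the independence model. The hypothesis $\E[\Tv\Shat_x(t)]\to\Phi_K(t)$ unwinds, via $\E[\Tv\Shat_x]=\Tv\,\E[\Shat_x]$, into the \stable\ condition \eqref{v nice}--\eqref{v nice2} rewritten as a Riemann sum that reduces to $\int_0^\infty K(t,y,\phi)\,dy$, exactly as in the computation for convex partitions in \S\ref{convex}; here one notes that $\Tv$ carries the source scaling $\a=n^{r/(1+r)}$ to the target scaling $\b$, consistent with the transfer theorems. Since $\E[\Shat_x]\to\Phi$ and $\Tv$ is bounded on $L^1_+(\R_+)$, one identifies $\Phi_K$ as the image of $\Phi$ under $\Tv$ and checks that it integrates to $1$.

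The substance, and the step I expect to be the main obstacle, is the transfer of concentration. The crucial feature is that $\Tv$, being induced by a linear transformation of multiplicities, acts as a Markov operator on diagram functions and hence is a linear contraction; such operators preserve the large deviation estimates underlying Theorem~\ref{distribution is sufficient}. Concretely, that theorem already provides concentration of $\Shat_x$ about $\Phi$, and applying the contraction $\Tv$ preserves concentration, yielding $\Tv\Shat_x(t)\to\Phi_K(t)$ in probability at each continuity point. The remaining, and most delicate, task is to pass from the independence (Boltzmann) ensemble to the uniform measure on $\B_n$: the column-sum condition for an \MPbns\ and the left-eigenvector condition for an \MMbns\ guarantee that $\varphiv$ preserves size, so conditioning the independent variables on $\sum_{i\in U} i\,Z_i=n$ matches the selection of size-$n$ partitions in the image. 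Invoking the local central limit theorem for the size together with the contraction property of $\Tv$, as in the proof of Theorem~\ref{distribution is sufficient}, shows that this conditioning leaves the pointwise limit intact. Combining these steps gives convergence in probability of the scaled (conjugate) diagram function of a uniform $\mu\in\B_n$ to $\Phi_K$, so $\Phi_K$ is the (conjugate) limit shape of $\B$, as claimed.
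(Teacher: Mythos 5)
Your proposal is correct and follows essentially the same route as the paper: exponential concentration for the independence diagram function (Theorem~\ref{large deviation}) transferred through the induced operator $\Tv$ via the contraction principle (the paper cites \cite[Th.~4.2.1]{LDBook}, matching your Markov-operator/linear-contraction argument), then ``overpowering the conditioning'' on $\bigl\{\sum_{i\in U} i\,Z_i = n\bigr\}$ using the polynomial local bound of Theorem~\ref{local bound} through the inequality~\eqref{LD calculation}. Your preliminary pushforward observation (uniform on $\A_n$ maps to uniform on $\B_n$, with $\Tv D_\lambda = D_{\varphiv(\lambda)}$ or $\rho_{\varphiv(\lambda)}$) is left implicit in the paper but is exactly the intended reduction.
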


The proof is straightforward, but requires several technical lemmas, so we present it in the section below.
The analogous theorem, valid for geometric transformations, is stated below.

\begin{theorem}[\cite{PakGeo}]\label{geo distr}
Let $\Shat_x$ denote the scaled independence diagram function corresponding to a set of partitions $\P_U$ which is either restrictedly smooth or unrestrictedly smooth.
Suppose there exists a piecewise continuous function $\Phi: \Rplus~\longrightarrow~\Rplus$, such that,
\[\qquad \qquad \mathbb{E}\left[\Shat_{x}(t)\right]\ \longrightarrow\ \Phi(t) \qquad \quad \quad  \text{ for each continuity point \,\. $t>0$}.\]
Suppose further  there exists a set of partitions $\B$ and a geometric bijection $\varphiv:\P_U\longrightarrow\B$.
Then the limit shape of $\B$ exists and can be found via the application of transformations in Theorem~\ref{transformations}.  In addition, each geometric transformation is asymptotically stable.
\end{theorem}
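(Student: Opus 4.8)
The plan is to reduce the statement to the concentration result already available for $\P_U$ itself (Theorem~\ref{distribution is sufficient}), together with the observation, recorded in the bijection framework of \S\ref{bijections}, that $\Tv$ is a Markov operator on $L^1_+(\R_+)$ and hence a linear contraction. The guiding principle is that contractions preserve the large deviation principle revisited in \S\ref{concentration}, so that concentration of the source diagram function around $\Phi$ is pushed forward to concentration of the image diagram function around $\Phi_K$; the uniform measure on $\B$ is then handled through the equivalence of ensembles.

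First I would set up the correspondence of measures. Since $\varphiv:\A_n\to\B_n$ is a bijection for every $n$, the pushforward of the uniform measure on $\A_n$ is exactly the uniform measure on $\B_n$, so a uniformly random $\mu\in\B_n$ may be written as $\mu=\varphiv(\lambda)$ with $\lambda\in\A_n$ uniform. Using the identity $\Tv D_\lambda = D_{\varphiv(\lambda)}$ in the \MMbns\ case and $\Tv D_\lambda = \rho_{\varphiv(\lambda)}$ in the \MPbns\ case, the scaled (conjugate) diagram function of $\mu$ equals $\Tv$ applied to the scaled diagram function of $\lambda$, provided the source is scaled by $\a = n^{r/(1+r)}$ and the image by $\b = n^{1/(1+r)}$. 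The stability hypothesis on $\bv$ is precisely the bookkeeping that makes this rescaled operator converge, and it is what identifies the limit of $\E[\Tv\Shat_x(t)]$ with the integral defining $\Phi_K$.

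Next I would invoke concentration. By the equivalence of ensembles \eqref{equal in distribution U} and Theorem~\ref{distribution is sufficient}, the convergence $\E[\Shat_x(t)]\to\Phi(t)$ upgrades to convergence in probability $\Dhat_\lambda(t_j)\to\Phi(t_j)$ at any finite collection of continuity points; equivalently the independence process $\Shat_x$ obeys a large deviation principle concentrating on $\Phi$. Applying the contraction $\Tv$, and using that Markov operators do not increase $L^1$ fluctuations, the image process $\Tv\Shat_x$ concentrates around its expectation, which by hypothesis converges to $\Phi_K$. Conditioning back on the exact size $n$, via the local central limit theorem underlying \eqref{equal in distribution U}, then yields $\Tv\Dhat_\lambda(t_j)\to\Phi_K(t_j)$ in probability, i.e.\ $\Phi_K$ is the (conjugate) limit shape of $\B$.

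The main obstacle is making the contraction-preserves-concentration step quantitative across the change of scaling function from $\a$ to $\b$. One must show that the pointwise tail estimates controlling the source process survive after summation against the coefficients $v(\lceil\b t\rceil,u_k)$ — a genuine issue in the \MMbns\ case, where $\Tv$ aggregates infinitely many independent coordinates, so that interchanging limit and summation requires a uniform integrability or dominated-convergence argument justified by the boundedness of $K$. Once the image process is shown to concentrate at scale $\b$, the conditioning step is routine and mirrors the unrestricted case, so I expect the technical heart of the proof to lie entirely in transferring the large deviation bound through $\Tv$.
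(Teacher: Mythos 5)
Your proposal proves the wrong theorem. Everything in your argument is built on the \MMb/\MPb machinery of \S\ref{bijections}: the linear operator $\Tv$ with coefficients $\bv$, the identity $\Tv D_\lambda = D_{\varphiv(\lambda)}$ or $\rho_{\varphiv(\lambda)}$, the stability hypothesis \eqref{v nice}--\eqref{v nice2a}, the kernel $K$, the limit $\Phi_K$, and the change of scaling from $\a$ to $\b$. None of these appear in the statement of Theorem~\ref{geo distr}, which concerns a \emph{geometric} bijection in the sense of \S\ref{geometric transformations}: a composition of cut, paste, shift, move, shred, union, and conjugation acting on Young diagrams as regions in the plane. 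What you have sketched is essentially the proof of Theorem~\ref{distribution:sufficient:bijection}, and your central mechanism --- that $\Tv$ is a Markov operator, hence a linear $L^1$-contraction that ``does not increase fluctuations'' --- simply does not apply here: conjugation is the map $f \longmapsto f^{\<-1\>}$ on diagram functions, which is not linear, and cut/paste are not realized by any fixed matrix of multiplicities either. There is no $\bv$, no stability condition, and no $\Phi_K$ to identify in this theorem; the limit shape of $\B$ is instead obtained by composing the limit-shape maps listed in Theorem~\ref{transformations}.

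The paper's actual proof is one line, and it uses a different (and weaker) property than linearity: each geometric transformation is a \emph{continuous injection} on the space $L_+^1(\R_+)$, so the contraction principle of large deviations theory (see \cite[Th.~4.2.1]{LDBook}) applies to each transformation individually. That contraction principle requires only continuity of the map, not that it be a linear contraction --- this is exactly what rescues nonlinear operations like conjugation. Concentration of $\Shat_x$ around $\Phi$ (Theorem~\ref{distribution is sufficient} together with the large deviation bound of Theorem~\ref{large deviation}) is then pushed forward through each continuous transformation in turn, which is the content of ``each geometric transformation is asymptotically stable,'' and the conditioning on exact size is handled as in \eqref{LD calculation}. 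Your concern about interchanging limits with an infinite sum against the coefficients $v(\lceil \b t\rceil, u_k)$ is a real issue for Theorem~\ref{MT unrestricted} and Theorem~\ref{thm dist}, but it is vacuous here, since a geometric bijection involves no such summation. If you want to salvage your write-up, redirect it at Theorem~\ref{distribution:sufficient:bijection}; for Theorem~\ref{geo distr} you must instead verify continuity (and injectivity) of the eight elementary transformations on $L_+^1(\R_+)$ and invoke the large-deviations contraction principle.
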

\begin{proof}
Each geometric transformation is a continuous injection in the space $L_+^1(\R_+)$, and thus the contraction principle applies to each transformation individually.
\end{proof}

\subsection{Proof of Theorem~\ref{distribution:sufficient:bijection}}

The proof has two steps.
The first establishes an exponential concentration, and the second is an example of ``overpowering the conditioning" (see~\cite[Section~10]{IPARCS}); that is, that the conditioning on the event $\{\sum_{i=1}^n i\, Z_i = n\}$ is not too strong.

Let $\Phi(t)$ denote the pointwise limit of the expected value of a scaled independence diagram function.  A well-known and key calculation is the following:
\begin{equation}\label{LD calculation}\begin{aligned}
 \Pr_n\left( \left| \Dhat_\lambda(t) - \Phi(t)\right| > \epsilon\right) &
\, = \, \Pr_n\left( \left| \Shat_\lambda(t) - \Phi(t)\right| > \epsilon\, \middle|\, \sum_{i\in U} i\, Z_i = n\right)  \\
 & \, \leq \, \frac{\Pr_n\left( \left| \Shat_\lambda(t) - \Phi(t)\right| >
\epsilon\right)}{\Pr\left(\sum_{i\in U} i\, Z_i = n\right)}\..
\end{aligned}\end{equation}
This formalizes the intuition that it is sufficient to have a large deviation which is little-o of the probability of hitting the target.
First, we note that it is sufficient to find a concentration inequality for the \emph{independence} diagram, as long as we have some bound on the probability of hitting the target of interest.
Fortunately, in the case of integer partitions, much is already known.

\begin{theorem}[\cite{LD, Yakubovich}] \label{large deviation}
Suppose $\Shat_x(t)$ is the independence diagram function for a set of partitions $\P_U$ that is either unrestrictedly smooth or restrictedly smooth with parameters $r\geq 1$ and $B>0$.  Assume $\e \Shat_x(t) \to \Phi(t)$ for all continuity points $t>0$ of $\Phi$.  Then for every $\epsilon>0$, there exists $\delta := \delta(\epsilon, t) \in\left(0,\frac{1}{2r+2}\right)$ such that for all $n$ large enough we have
\begin{equation}
\label{large deviations}
\p\left( \big|\Shat_x(t) - \Phi(t)\big| > \epsilon \right) \, \leq \,  \exp\left(- c_{t,\delta}\, n^{\frac{1}{2(1+r)}}\right),
\end{equation}
where $c_{t,\delta}\in (0,\infty)$ is a constant that does not depend on $n$.
\end{theorem}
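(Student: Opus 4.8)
The plan is to prove the estimate directly for the \emph{independence} diagram, using that $\Shat_x(t)$ is, up to the factor $\a/n$, a sum of independent random variables. Writing $W := S_x(\a t) = \sum_{k \in U,\, k \geq \a t} Z_k(x)$, equation~\eqref{shat} gives $\Shat_x(t) = \frac{\a}{n}\,W$, and since $\a = n^{r/(1+r)}$ we have $n/\a = n^{1/(1+r)} = \b$. As $\e\Shat_x(t) \to \Phi(t)$, for all large $n$ we have $|\e\Shat_x(t) - \Phi(t)| < \epsilon/2$, so it suffices to bound $\p\bigl(|W - \e W| > \tfrac{\epsilon}{2}\,\b\bigr)$. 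First I would record the two quantities that control everything: the number of effective summands and the total variance. Each $Z_k(x)$ is a (possibly truncated) geometric variable with ratio $q_k := x^k = e^{-c\,k/\a}$; for $k \geq \a t$ one has $q_k \leq e^{-c\,t} < 1$, so the ratios stay bounded away from $1$ and each $\var Z_k(x)$ is $O(1)$. Because $u_k \sim B\,k^r$, only the indices with $k = O(\a)$ contribute non-negligibly, roughly $\b$ of them, so both the number of such terms and $V := \sum_k \var Z_k(x)$ are of order $\b = n^{1/(1+r)}$.

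In the \rsmooth\ case each $Z_k(x) < a$ is bounded, and Bernstein's inequality applied to $W$ gives $\p\bigl(|W - \e W| > s\bigr) \leq 2\exp\bigl(-\kappa\, s^2/V\bigr)$ with $s = \tfrac{\epsilon}{2}\b$, an exponent of order $\b = n^{1/(1+r)}$ — already stronger than~\eqref{large deviations}. The real work is the \smooth\ case, where the $Z_k(x)$ are unbounded (merely sub-exponential) and the upper tail must be handled by truncation. I would fix a level $L = L(n)$, split each $Z_k(x)$ into $Z_k(x)\,\mathbbm{1}(Z_k(x) \leq L)$ and $Z_k(x)\,\mathbbm{1}(Z_k(x) > L)$, apply Bernstein's inequality to the bounded parts (whose sum has range $L$ and variance $O(V)$), and bound the discarded parts by a union bound together with the geometric tail $\p(Z_k(x) > L) \leq q_k^{\,L} \leq e^{-c\,t\,L}$, so that the probability that some $Z_k(x)$ exceeds $L$ is at most $\b\,e^{-c\,t\,L}$ up to a polynomial factor.

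It then remains to choose $L$. With $s = \tfrac{\epsilon}{2}\b$ and $V \asymp \b$, the Bernstein exponent is of order $s^2/(V + Ls) \asymp \b/L$, while the union-bound exponent is of order $L$; equalizing them forces $L \asymp \sqrt{\b} = n^{1/(2(1+r))}$ and produces the rate in~\eqref{large deviations}, the truncation exponent being the parameter $\delta \in (0,\tfrac{1}{2r+2})$ and the resulting constant $c_{t,\delta}$. The lower tail is entirely analogous (and easier, since truncation only decreases it). \textbf{The main obstacle} is precisely this balancing in the smooth case: the truncated variance must stay controlled (forcing $L$ small) while the geometric-tail union bound must remain summable (forcing $L$ large), and it is the sub-exponential rather than bounded nature of the geometric multiplicities that caps the exponent at $n^{1/(2(1+r))}$ for this elementary argument. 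This loss is harmless: in~\eqref{LD calculation} the left-hand probability is divided by $\p\bigl(\sum_{i\in U} i\,Z_i = n\bigr)$, which decays only polynomially by a local central limit theorem, so any stretched-exponential rate $\exp(-c_{t,\delta}\,n^{\gamma})$ with $\gamma > 0$ suffices and the precise exponent is immaterial for the applications.
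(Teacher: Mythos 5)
The paper does not prove Theorem~\ref{large deviation} at all: it is imported, with citation, from Dembo--Vershik--Zeitouni~\cite{LD} and Yakubovich~\cite{Yakubovich}, where concentration of the independent process is obtained from genuine large-deviation machinery (exponential tilting and a full LDP with explicit rate function), as the paper itself discusses in \S\ref{sect:LD}. Your truncation-plus-Bernstein argument is therefore a genuinely different and more elementary route, and it is sound: the reduction to $W = S_x(\a t)$ with $n/\a = \b$, the variance estimate $V \asymp \b$ (using that $q_k = x^k \leq e^{-c\,t}$ stays bounded away from $1$ on the range of summation), the bounded (restricted) case giving rate of order $\b = n^{1/(1+r)}$ directly, and the balancing $L \asymp \sqrt{\b} = n^{1/(2(1+r))}$ in the unrestricted case all check out. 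Your diagnosis also explains, correctly, why the exponent in \eqref{large deviations} is $\tfrac{1}{2(1+r)}$ rather than $\tfrac{1}{1+r}$: an elementary bound for sub-exponential (geometric) summands loses a square root, and this loss is indeed harmless in the paper's only use of the theorem, since in \eqref{LD calculation} the bound is divided by the merely polynomial quantity of Theorem~\ref{local bound}, so any stretched exponential suffices. What the cited references buy with the heavier machinery is a matching lower bound and a rate function (which the paper needs only for Theorem~\ref{our LDP}, and there only in the restricted case --- consistent with your observation that the restricted case admits the faster speed $n^{1/(1+r)}$); what your argument buys is a short, self-contained proof of exactly the inequality the paper uses.

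Two details you gloss over should be written out, though both are routine. First, truncation shifts the mean: before comparing the truncated sum $W'$ with $\e W$ you need $\sum_k \e\bigl[Z_k \mathbbm{1}(Z_k > L)\bigr] \lesssim \b\,(L+1)\,e^{-c\,t\,L}$, which with $L \asymp \sqrt{\b}$ is exponentially small and hence negligible against $s = \tfrac{\epsilon}{2}\b$. Second, the union bound runs over \emph{infinitely} many indices $u_j \geq \a t$, so one must actually sum the geometric tails: $\sum_{j:\, u_j \geq \a t} x^{L u_j} \asymp \b \int_{(t/B)^{1/r}}^{\infty} e^{-c\,L\,B\,y^{r}}\, dy \leq C_t\, \b\, e^{-c\,t\,L}$, where the convergence of the integral is exactly where the smoothness hypothesis $u_k \sim B\,k^{r}$ enters. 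With these supplied, the polynomial prefactors are absorbed into $c_{t,\delta}$ for $n$ large, and your $\delta$ (the truncation exponent) sits in $\bigl(0, \tfrac{1}{2r+2}\bigr)$ as the statement requires.
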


\smallskip

\begin{theorem}[see, e.g., {\cite[Corollary~11]{Yakubovich}}; cf.~\S\ref{history}]\label{local bound}
Suppose $U$ is \smooth with parameters $r \geq 1$ and $B>0$, and $x = \exp\left(-d(r,B)/ n^{r/(1+r)}\right)$; or, $U$ is \rsmooth with parameters $r \geq 1$ and $B>0$ for some $a<\infty$, and $x = \exp\left(-d(r,B,a)/ n^{r/(1+r)}\right)$.
Then, there exists an $n_0 \geq 1$ such that
\begin{equation}\label{LCLT bound}
\p\left( \sum_{i\in U} i Z_i = n\right)  \, \geq\,  n^{-\gamma},  \qquad \mbox{ for all \ \. $n \geq n_0,$}
\end{equation}
where $\gamma > 0$ is a constant that does not depend on $n$.
\end{theorem}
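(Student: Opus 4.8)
The estimate~\eqref{LCLT bound} is a \emph{local central limit theorem} for the integer--valued total size $W_n := \sum_{i \in U} i\, Z_i$ (with $Z_i = Z_i(x(n))$) evaluated at its own mean, and I would prove it by the saddle point / circle method. The key structural point is that the tilt $x = \exp(-c/\a)$ places the saddle at~$n$: one has the identity $\int_0^\infty \E[\Shat_x(t)]\,dt = \mu_n/n$ with $\mu_n := \E[W_n]$, so the normalization $\int_0^\infty \Phi = 1$ underlying the hypothesis forces $\mu_n \sim n$, which is exactly the relation defining $c = d(r,B)$ (resp.\ $d(r,B,a)$). First I would record the first two moments: by the Riemann--sum approximation used to verify the stability condition~\eqref{v nice}, with $y_k \sim k/\b$ and $\Delta y_k \sim 1/\b$, one gets $\mu_n \sim n$ together with
\[
\sigma_n^2 \, := \, \var\bigl(W_n\bigr) \, = \, \sum_{k} u_k^2\, \var\bigl(Z_{u_k}\bigr) \, \sim \, C\, n^{(2r+1)/(1+r)},
\]
for an explicit $C > 0$; the only feature that matters downstream is that $\sigma_n^2$ is a fixed positive power of~$n$.

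Next I would invert the characteristic function. With $b = \gcd(U)$ and $n \to \infty$ through multiples of~$b$ as in the convention of \S\ref{sect notation}, the variable $W_n$ is supported on $b\,\zz_{\ge 0}$ and
\[
\p\bigl(W_n = n\bigr) \, = \, \frac{b}{2\pi}\int_{-\pi/b}^{\pi/b} \varphi_n(\theta)\,e^{-i n\theta}\,d\theta, \qquad \varphi_n(\theta) := \E\bigl[e^{i\theta W_n}\bigr] = \prod_{k}\E\bigl[e^{i\theta\, u_k Z_{u_k}}\bigr].
\]
On the \emph{major arc} $|\theta| \le \sigma_n^{-1}\log\sigma_n$ I would expand $\log\varphi_n(\theta) = i\mu_n\theta - \tfrac12\sigma_n^2\theta^2 + O\bigl(\rho_n|\theta|^3\bigr)$, with $\rho_n$ the third absolute moment; one checks $\rho_n|\theta|^3 = o(1)$ and $(n-\mu_n)\theta = o(1)$ on this range, after (if necessary) adjusting the tilt so that $n - \mu_n = o(\sigma_n)$, which reduces the integral to a Gaussian and yields the main term $\sim b/(\sqrt{2\pi}\,\sigma_n)$.

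The principal obstacle is the complementary \emph{minor--arc} bound on $\sigma_n^{-1}\log\sigma_n \le |\theta| \le \pi/b$, where one must show $|\varphi_n(\theta)| = \prod_k \bigl|\E[e^{i\theta u_k Z_{u_k}}]\bigr|$ is super--polynomially smaller than $\sigma_n^{-1}$. Each factor has modulus at most~$1$, so the real content is to exhibit a positive proportion of factors --- those with $u_k$ of order $\a$, i.e.\ $k \asymp \b$ --- whose modulus stays bounded away from~$1$, yielding $|\varphi_n(\theta)| \le \exp\bigl(-c''\sigma_n^2\theta^2\bigr)$ for some $c'' > 0$. This is exactly where the arithmetic spreading of $U$ (the reason for the $\gcd$ convention) enters: one must preclude $\theta u_k$ from clustering near multiples of $2\pi$ for too many indices~$k$, which is the technical heart of the Roth--Szekeres / Yakubovich circle--method analysis. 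In the truncated case $a < \infty$ there is the additional step of checking that the characteristic function of a truncated geometric variable also decays off the major arc. Granting that the minor arc is negligible, $\p(W_n = n) \sim b/(\sqrt{2\pi}\,\sigma_n)$, and since $\sigma_n^2$ is polynomial this exceeds $n^{-\gamma}$ for every $\gamma > (2r+1)/\bigl(2(1+r)\bigr)$ and all large~$n$, which is~\eqref{LCLT bound}.

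Finally, in the two classical cases the minor--arc analysis can be bypassed using results already in hand. For $a = \infty$ and $a = 2$ one has the exact identities
\[
\p\bigl(W_n = n\bigr) = x^n\,p_U(n)\prod_{k}\bigl(1 - x^{u_k}\bigr) \qquad\text{and}\qquad \p\bigl(W_n = n\bigr) = x^n\,p_U^d(n)\prod_{k}\bigl(1 + x^{u_k}\bigr)^{-1},
\]
respectively. Substituting the asymptotics of Theorem~\ref{Ingham:theorem} and Theorem~\ref{RothSzekeres}, the factors of exponential order $\exp\bigl(\Theta(n^{1/(1+r)})\bigr)$ cancel by the defining relation for~$c$, leaving only a constant times a fixed negative power of~$n$; this recovers~\eqref{LCLT bound} and pins down the admissible exponent~$\gamma$.
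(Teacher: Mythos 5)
Your proposal is essentially the paper's route: the paper does not prove Theorem~\ref{local bound} internally but cites it from Yakubovich's Corollary~11, and the discussion in \S\ref{history} and \S\ref{credit} states explicitly that the bound is equivalent to the classical asymptotic enumeration formulas ``via the inequality in Equation~\eqref{LD calculation}'' --- which is exactly your closing paragraph. Your two exact identities
\[
\p\bigl(W_n = n\bigr) \, = \, x^n\ts p_U(n)\prod_{k}\bigl(1 - x^{u_k}\bigr), \qquad
\p\bigl(W_n = n\bigr) \, = \, x^n\ts p_U^d(n)\prod_{k}\bigl(1 + x^{u_k}\bigr)^{-1},
\]
are correct, the exponential factors do cancel at the saddle by the defining relation for $d(r,B)$ (resp.\ $d(r,B,2)$), and the resulting $\p(W_n=n)\asymp \sigma_n^{-1}$ with $\sigma_n^2 \asymp n^{(2r+1)/(1+r)}$ gives the right admissible exponent $\gamma$. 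So for $a=\infty$ and $a=2$ your argument is complete using only Theorems~\ref{Ingham:theorem} and~\ref{RothSzekeres}, which is precisely the derivation the paper has in mind.

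Three caveats. First, the theorem allows \emph{any} finite $a \geq 2$, and this generality is actually used (in \S\ref{od general}, with $a = m^r$); your identity shortcut does not cover $2 < a < \infty$, since Theorem~\ref{RothSzekeres} counts only distinct parts and no enumeration theorem for general bounded multiplicities is stated in the paper. For those $a$ your proof reverts to the circle-method sketch, whose minor-arc estimate you rightly call the technical heart but do not supply; the citation to Yakubovich covers all $a$ uniformly because his result is proved for general multiplicative families, including $f(x) = 1 + x + \cdots + x^{a-1}$. (Alternatively one can reduce to the unrestricted case via the factorization $\prod_k (1-x^{a u_k})/(1-x^{u_k})$, but that again needs an enumeration input beyond what is stated.) Second, the displayed minor-arc bound $|\varphi_n(\theta)| \leq \exp(-c''\sigma_n^2\theta^2)$ cannot hold uniformly up to $|\theta| = \pi/b$; on the far arcs the true decay comes from the arithmetic condition~\eqref{U2} of Roth--Szekeres and is merely super-polynomial, of the form $\exp(-\omega(\log n))$ --- which is all you need for a polynomial lower bound, but the quadratic form should not be asserted globally. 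Third, the tilt $x$ is fixed by the hypothesis and cannot be ``adjusted''; what saves the major-arc step is that for polynomial $u_k$ the Riemann-sum error gives $|n - \mu_n| = O\bigl(n^{r/(1+r)}\log n\bigr) = o(\sigma_n)$, since $r/(1+r) < (2r+1)/(2(1+r))$, so the Gaussian factor $e^{-(n-\mu_n)^2/(2\sigma_n^2)}$ is bounded below by a constant; this should be checked rather than sidestepped.
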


\smallskip

\begin{lemma}
Under the assumptions of Theorem~\ref{distribution:sufficient:bijection},
for every $\epsilon>0$, there exists \[\delta~:=~\delta(\epsilon, t)~\in~\left(0,\frac{1}{2r+2}\right)\]
 such that for all $n$ large enough, we have
\begin{equation}
\label{large deviations 2}
\p\left( \big|\Tv \Shat_y(t) - \Phi_K(t)\big| > \epsilon \right) \leq \exp\left(- d_{t,\epsilon}\, n^{\frac{1}{2(1+r)}}\right),
\end{equation}
where $d_{t,\epsilon}\in (0,\infty)$ is a constant that does not depend on $n$.
\end{lemma}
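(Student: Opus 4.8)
The plan is to mirror the proof of Theorem~\ref{large deviation}, exploiting the fact that $\Tv$ is a Markov operator, hence a linear contraction on $L^1_+(\R_+)$. The crucial structural observation is that, since the independence diagram function $\Shat_y$ is assembled from the \emph{independent} variables $\{Z_k\}$ and $\Tv$ acts linearly, the transformed quantity $\Tv\Shat_y(t)$ is itself a weighted sum $\sum_k w_k\, Z_k$ of those same independent variables, with deterministic coefficients $w_k$ dictated by the \stable\ condition (the \MMbns\ case uses \eqref{v nice2}, and the \MPbns\ case is identical after replacing the diagram function by its conjugate). By the hypothesis of Theorem~\ref{distribution:sufficient:bijection} we already know $\E\bigl[\Tv\Shat_y(t)\bigr]\to\Phi_K(t)$, so the entire task reduces to proving exponential concentration of this sum about its mean.

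First I would import the two ingredients attached to the underlying ensemble: the pointwise large deviation bound for $\Shat_y$ from Theorem~\ref{large deviation}, and the boundedness of the kernel $K$ in the definition of \stable\ coefficients. Because $K$ is bounded and piecewise continuous, the (rescaled) coefficients $w_k$ are uniformly controlled and decay fast enough that the tail of $\sum_k w_k Z_k$ over large $k$ is negligible. This permits the same truncation as in Theorem~\ref{large deviation}: split $\Tv\Shat_y(t)$ into a head of order $n^{1-2\delta}$ terms and a tail, estimating the tail crudely and reserving the exponential-moment work for the head.

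Next I would run a Chernoff estimate on the truncated weighted sum of independent $Z_k$'s. Each $Z_k$ has the same log-moment generating function that drives the estimate in Theorem~\ref{large deviation}, and the uniform bound on the coefficients $w_k$ ensures that the optimized Chernoff exponent retains the same order, namely the speed $n^{1/(2(1+r))}$ and the admissible range $\delta\in(0,\tfrac{1}{2r+2})$. This yields \eqref{large deviations 2} with some constant $d_{t,\epsilon}\in(0,\infty)$. Conceptually, this is exactly the assertion that the contraction $\Tv$ preserves the large deviation principle: a small deviation of $\Tv\Shat_y(t)$ cannot occur unless $\Shat_y$ deviates somewhere, and the contraction principle transfers the upper-bound half of the principle through the continuous linear map $\Tv$ without changing the speed.

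The main obstacle is precisely that $\Tv$ mixes the values of $\Shat_y$ at infinitely many points, so the single-point estimate of Theorem~\ref{large deviation} does not transfer verbatim, and a naive union bound over infinitely many coordinates would destroy the rate. The direct sum-of-independents viewpoint is what sidesteps this: one never needs a functional (uniform-over-compacts) large deviation principle for $\Shat_y$, only a single exponential-moment computation for $\Tv\Shat_y(t)$ itself. The delicate point within that computation is showing the infinite tail contributes at most $\exp\bigl(-d_{t,\epsilon}\, n^{1/(2(1+r))}\bigr)$; here the boundedness of $K$ together with the decay encoded in the \stable\ condition is exactly what makes the tail summable and the exponential moment finite, so that the rate is inherited unchanged from Theorem~\ref{large deviation}.
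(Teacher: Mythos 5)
Your proposal reaches the stated bound by a genuinely different route from the paper. The paper's entire proof is a one-line appeal to the contraction principle for continuous maps (\cite[Th.~4.2.1]{LDBook}): since $\Tv$ is a Markov operator, hence a continuous linear contraction on $L^1_+(\R_+)$, the large-deviation upper bound is simply pushed through it. Your direct argument --- write $\Tv\Shat_y(t)=\frac{\b}{n}\sum_k v(\lceil \b t\rceil,u_k)\,Z_{u_k}$ as a weighted sum of the \emph{same} independent variables and rerun the truncation-plus-Chernoff scheme behind Theorem~\ref{large deviation} --- is more self-contained, and it honestly confronts the point you flag yourself: Theorem~\ref{large deviation} is a single-point estimate, while $\Tv\Shat_y(t)$ depends on $\Shat_y$ at infinitely many points, so the cited contraction principle really wants a functional LDP in a suitable topology (which the paper proves, in Theorem~\ref{our LDP}, only for bounded multiplicities). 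In that sense your route fills in a step the paper's citation leaves implicit, at the cost of redoing the exponential-moment computation.

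One step in your sketch needs more care, however. You assert that boundedness of $K$ makes the rescaled coefficients $w_k=\frac{\b}{n}v(\lceil\b t\rceil,u_k)$ ``uniformly controlled.'' The \stable\ condition \eqref{v nice} bounds the \emph{product} $v(\lceil\b t\rceil,u_k)\,\phi(u_k;r,B)\lesssim \b$, i.e., coefficient times mean, not the coefficient itself. Since $\phi$ decays like $e^{-c\,B\,y_k^r}$, the stability hypothesis alone permits $v(\lceil\b t\rceil,u_k)$ to grow exponentially in $y_k^r$; the Chernoff parameter $\theta$ must then shrink to keep $\theta\,w_k$ inside the domain of the moment generating function of $Z_{u_k}$ (which has width of order $u_k/\a$), and this would destroy the claimed speed $n^{1/(2(1+r))}$. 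So the tail estimate does not follow from boundedness of $K$ alone: you must either verify that $v$ grows at most polynomially in $k$ (true in every example in the paper --- binomial coefficients in Section~\ref{sect r}, powers of $m^r$ in Section~\ref{sect odd}) or impose that as an explicit hypothesis before the exponent of Theorem~\ref{large deviation} is, as you put it, inherited unchanged.
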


\begin{proof}
Equation~\eqref{large deviations 2} follows by the contraction principle for continuous bijections, see~\cite[Th.~4.2.1]{LDBook}.
\end{proof}

\subsection{Large deviation principle for the dependent process}\label{sect:LD}
In \cite[\S6.2]{LD}, a large deviation principle (LDP) is presented for a general class of measures over a process of \emph{independent coordinates}, which is stronger than Theorem~\ref{large deviation}.
In particular, the authors of~\cite{LD} let $\{c_k\}_{k=1}^\infty$ denote a sequence of non-negative integers, and say that $\{c_k\}$ is of type $(q,b)\in (0,\infty)\times (0,\infty)$ whenever
\[
\lim_{\epsilon\to 0} \. \lim_{L\to\infty} \. \epsilon^{-1} L^{-q} \. \sum_{k=L}^{(1+\epsilon)L} \. c_k \, = \, b\ts.
\]
In this more general context, $c_k$ denotes the number of different kinds of parts of size $k$; for example, we can give each part of size 1 any of $c_1$ colors, each part of size~2 can be any of $c_2$ colors, etc.
In this paper, we consider only the case $c_k \in \{0,1\}$ for all $k$.  In particular, for a sequence $u_k \sim B\, k^r$, this corresponds to
\[ c_k = \begin{cases} 1 & k\in U \\ 0 & \text{otherwise} \end{cases}\, , \qquad \text{$c_k$ is of type $\left( \frac{1}{r}, \frac{1}{r\, B^{1/r}}\right)$}. \]
Then, as in \cite{LD}, we let $m_{q,b}$ denote the positive, $\sigma-$finite measure on $[0,\infty)$ which has density $dm_{q,b}/dt = b\, t^{q-1}$.  Then the corresponding limit shapes given in \cite{LD}, namely, $\Psi_{q,b}(t)$ and $\Psi^s_{q,b}(t)$, correspond to our $\Phi(t; r,B)$ and $\Psi(t;r,B)$.

In our setting, the LDP is an asymptotic expression, both upper and lower bounds, governing the exponential rate at which a sequence of measures, say $\Pr_n$, assigns probability to a given measurable event as $n \to \infty$.
For example, a typical event is of the form $\mathcal{X}_\epsilon := \{|X-\mu| \geq \epsilon\}$, and we say that measure $\Pr_n$ satisfies the LDP with rate $\a$ and good rate function $I(\mathcal{X}_\epsilon)$ if it has the form
\[ \lim_{n\to\infty} \frac{1}{\a} \log \Pr_n(|X_n-\mu| \geq \epsilon) = -I(\mathcal{X}_\epsilon).\]
This is equivalent to
\[ \Pr_n(|X_n-\mu| \geq \epsilon) \sim \exp\left( -\a I(\mathcal{X}_\epsilon)\right), \]
and is stronger than the inequality in Theorem~\ref{large deviation}.
There are further details which pertain to the topology of convergence, a weakening of the limit as a $\liminf$ and $\limsup$,  and the precise form of the rate function $I$.
These details are relevant in order to distinguish between limit shapes of Andrews partitions with unrestricted multiplicities of allowable part sizes, where the expected number of parts grows like $O(\a \log(n))$, and Andrews partitions with bounded multiplicities of allowable part sizes, where the expected number of parts grows like $O(\a)$.
In the former case, we only obtain pointwise convergence.  In the latter case, the LDP is obtained in the topology of uniform convergence.
We are only able to prove a LDP when the allowable part sizes have bounded multiplicity, see Theorem~\ref{our LDP} below.

Let $J\subset[0,\infty)$ denote an interval, and let $D(J)$ be the space of all functions $f : J\to \R$ that are left-continuous and have right limits.
Let $\mathcal{AC}_{\infty}$ denote the subset of $D([0,\infty))$ of non-increasing absolutely continuous functions $f$ which satisfy $\lim_{t\to\infty} f(t) = 0$, and let $\mathcal{AC}_{\infty}^{[-1,0]} \subset \mathcal{AC}_{\infty}$ denote the set of functions with derivatives which are Lebesgue-a.e. in the interval $[-1,0]$.
We write the Lebesgue decomposition of $f$ as $f(t) = f_{ac}(t) + f_s(t)$, where $f_{ac}$ denotes the absolutely continuous part of $f$, and $f_s$ denotes the singular part.
Let $g'$ denote the derivative of a function $g$ with respect to $t$.

We write $d$ instead of $d(r,B,a)$ just for equation~\eqref{two param restricted LD} below.
Define
\begin{equation}\label{two param restricted LD} \widehat{I}_{r,B,a}(\nu, f) := d\left(1 - \int_0^\infty t(-f'(t))dt\right) + \int_0^\infty \widehat{H}_a\left(-\frac{df}{dm_{q,b}}\,,\, -\frac{d\Phi(r,B,a)}{dm_{q,b}}\right)dm_{q,b}\, ,\end{equation}
for all \[ f\in \mathcal{AC}_\infty  \quad \text{and} \quad  \int_0^\infty (-t)\, df(t) \leq \nu, \]
and $\widehat{I}_{r,B,a}(\nu, f) = \infty$, otherwise.
Here $\widehat{H}_a(f,g) = f \log(f/g) - f + g$ denotes the relative entropy between functions $f$~and~$g$.

\begin{theorem}\label{our LDP}
Suppose the set of partitions $\P_U$ is \rsmooth with parameters $r\geq 1$ and $B>0$ and $a<\infty$.  Then the diagram functions $\Dhat_\lambda$ satisfy the large deviation principle in $D[0,\infty)$ (equipped with the topology of uniform convergence), with speed $n^{1/(1+r)}$ and good rate function $\widehat{I}_{r,B,a}(1,f)$, given by equation~\eqref{two param restricted LD}.
\end{theorem}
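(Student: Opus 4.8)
The plan is to deduce the large deviation principle for the conditioned diagram functions $\Dhat_\lambda$ from one for the \emph{independent} process $\Shat_x$, and then to absorb the conditioning event $\{\sum_{i\in U} i\, Z_i = n\}$ at the given speed. Because $a<\infty$, each coordinate $Z_k(x)$ is supported on $\{0,1,\ldots,a-1\}$, so $\Shat_x$ is a scaled sum of \emph{uniformly bounded} independent random variables with $c_k\in\{0,1\}$ of type $(q,b)$ for $q=1/r$ and $b = 1/(r\,B^{1/r})$. First I would invoke the independent-coordinate framework of~\cite[\S6.2]{LD}: finite-dimensional projections of $\Shat_x$ obey Cram\'er's theorem, and the Dawson--G\"artner projective limit gives an LDP at speed $n^{1/(1+r)}$ in the product (pointwise) topology. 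Computing the per-coordinate Legendre transform for the truncated geometric law of $Z_k$ and passing to the continuum limit against $m_{q,b}$ identifies the rate as the integral term $\int_0^\infty \widehat{H}_a\bigl(-df/dm_{q,b},\,-d\Phi(r,B,a)/dm_{q,b}\bigr)\,dm_{q,b}$ of~\eqref{two param restricted LD}, where $\Phi(r,B,a)$ is the equilibrium profile supplied by Theorem~\ref{special distinct}.

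The second step upgrades this to the topology of \emph{uniform} convergence on $D[0,\infty)$. Here the hypothesis $a<\infty$ is decisive: the multiplicities being bounded by $a-1$ forces $\Dhat_\lambda$ (and $\Shat_x$) to have increments of size at most $(a-1)\a/n$ over each scaling interval, which yields the modulus-of-continuity control needed for exponential tightness in the uniform topology. Combined with the pointwise LDP from the first step, exponential tightness promotes the statement to the uniform topology and simultaneously makes the rate function good (compact level sets). This is exactly the point at which the unbounded-multiplicity case breaks down, since there the number of parts is of order $\a\log n$ and the equicontinuity fails.

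The third step is the transfer across the conditioning, the ``overpowering the conditioning'' argument of~\cite[Section~10]{IPARCS}. Writing $\Pr_n(\,\cdot\,) = \p(\,\cdot\,\mid \sum_{i\in U} i\, Z_i=n)$ as in~\eqref{LD calculation} and using the local bound of Theorem~\ref{local bound}, one has $\p(\sum_{i\in U} i\, Z_i=n)\geq n^{-\gamma}=\exp\bigl(o(n^{1/(1+r)})\bigr)$, so the normalizing denominator is invisible at speed $n^{1/(1+r)}$ and the conditioned upper bound inherits the independent rate on area-$n$ profiles; the matching lower bound follows from the same local estimate applied inside the target event. The conditioning fixes the scaled area, which is what restricts the rate function to the slice $\int_0^\infty (-t)\,df(t)\leq \nu$ with $\nu=1$ and produces the linear term $d\bigl(1-\int_0^\infty t(-f'(t))\,dt\bigr)$ of $\widehat{I}_{r,B,a}(1,f)$; the tilt $x=\exp(-d/\a)$ is precisely the one centering the expected area at $n$, so the equilibrium profile is the unique zero of the rate function. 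Assembling the profile cost of the first two steps with this area penalty gives $\widehat{I}_{r,B,a}(1,f)$ as the good rate function for $\Dhat_\lambda$.

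The main obstacle I expect is the second step together with the correct bookkeeping of escaping mass: proving exponential tightness and matching upper and lower bounds in the \emph{uniform} rather than the pointwise topology, and verifying that the singular part $f_s$ in the Lebesgue decomposition $f=f_{ac}+f_s$ contributes no cost beyond the linear area term, so that $\widehat{I}_{r,B,a}$ genuinely depends on $f$ only through $f_{ac}'$ and the total area. A secondary delicate point is checking that the per-coordinate Legendre transform collapses to the stated relative entropy $\widehat{H}_a$ uniformly enough in the part size to be integrated against $m_{q,b}$, so that the continuum rate function matches~\eqref{two param restricted LD} exactly.
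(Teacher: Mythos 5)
Your steps 1--2 (an LDP for the independent process $\Shat_x$ at speed $n^{1/(1+r)}$, upgraded to the uniform topology using the boundedness of multiplicities) essentially reproduce what the paper imports from \cite[\S 6.2]{LD}, and your identification of the entropy integrand is consistent with equation~\eqref{two param restricted LD}. The genuine gap is in your step 3. The inequality~\eqref{LD calculation} together with Theorem~\ref{local bound}, which gives $\p\bigl(\sum_{i\in U} i\, Z_i = n\bigr) \geq n^{-\gamma}$ and hence a denominator that is subexponential at speed $n^{1/(1+r)}$, yields only the LDP \emph{upper} bound for the conditioned measure. For the lower bound you must bound $\p\bigl(\Shat_x \in G,\ \sum_{i\in U} i\, Z_i = n\bigr)$ from below, where $G$ is a uniform neighborhood of an \emph{arbitrary} profile $f$ with $\widehat{I}_{r,B,a}(1,f)<\infty$; that is, you need the exact-area event to retain polynomial probability conditionally on the shape being near a non-equilibrium profile. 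Theorem~\ref{local bound} is a local-limit-type bound at the equilibrium tilt $x=\exp(-d(r,B,a)/n^{r/(1+r)})$ and says nothing about the area distribution once the shape is constrained; your phrase ``the same local estimate applied inside the target event'' names exactly the estimate that is unavailable, and nothing in your outline produces it.

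The paper closes this hole combinatorially rather than probabilistically: it constructs an explicit area-filling transformation $F_{r,B,a,n}$ sending any $\lambda \in \P_U$ with $N(\lambda)<n$ to a partition of size exactly $n$ (with a fallback case when the deficit is not representable from $U\setminus\lambda$ with multiplicities $<a$), and proves three quantitative properties: the diagram function moves by at most $c_3\,(n-N(\lambda))^{1/(1+r)}$ in sup norm, so profiles near $f$ stay near $f$ after the fill; preimages satisfy $\bigl|F_{r,B,a,n}^{-1}(\lambda_0)\bigr| \leq \sum_{k=1}^n p_U(n-k)$; and the resulting change of measure costs at most $n^{\gamma_i}\exp\bigl(O(\delta)\, n^{1/(1+r)}\bigr)$, see equation~\eqref{ineq}, split over the two regimes $1-\delta < N(\lambda)/n < 1-\delta/n^{1-\eta}$ and $1-\delta/n^{1-\eta} < N(\lambda)/n < 1$. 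This converts the independent-process lower bound on the event $\bigl\{\Dhat_\lambda \in B_{\phi,\delta},\ 1-\delta < N(\lambda)/n < 1\bigr\}$ into the needed lower bound on $\bigl\{N(\lambda)=n,\ \Dhat_\lambda \in B_{\phi,\delta_r}\bigr\}$, with an error $c\,\delta + \delta^{1/(r+1)}$ vanishing as $\delta\to 0$; it is the analogue of filling the area deficit with parts of size $1$ in \cite[\S 5]{LD}, a device unavailable once parts are restricted to $U$. Note also that $a<\infty$ enters the paper's argument chiefly through this construction --- the fill must use $O\bigl((n-N(\lambda))^{1/(1+r)}\bigr)$ parts and the typical partition has $O\bigl(n^{1/(1+r)}\bigr)$ parts --- and not only through the equicontinuity you invoke; without such a device your outline establishes at best the convergence statements of Section~\ref{sect prob}, not Theorem~\ref{our LDP}.
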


For partitions without any restrictions on allowable part sizes, there is an area transformation that allows one to conclude that the LDP for the independent process also applies to the dependent process.  When restrictions like $u_k \sim B\, k^r$ are imposed, this area transformation is no longer defined.  We now supply an appropriate area transformation and prove the above claim, \emph{which only covers the case when the multiplicity of each part size is bounded}.  Our treatment closely mimics \cite[\S5]{LD}.

\begin{proof}[Proof of Theorem~\ref{our LDP}]
Let $N(\lambda)$ denote the size of the partition $\lambda$.
We define the transformation $F_{r,B,a,n}(\lambda)$ by
\begin{enumerate}
\item If $N(\lambda) = n$, then $F_{r,B,a,n}(\lambda) = \lambda$.
\item If $0\leq N(\lambda)=k < n$, then
	\begin{enumerate}
	\item if there exists a set of partitions of size $k$ consisting of parts from $U \setminus \lambda$, with each part size having multiplicity strictly less than $a$, for $a=2,3,\ldots,$ then $F_{r,B,a,n}(\lambda)$ adds the parts of the partition which has the least number of parts to $\lambda$; in the case of a tie, we choose the partition which is lower in lexicographic order;
	\item If there does not exist a set of partitions of size~$k$ consisting of parts from $U \setminus \lambda$, then $F_{r,B,a,n}(\lambda)$ is the partition of $n$ in $\P_U$ with the least number of parts.
	\end{enumerate}
\end{enumerate}

We now note the following properties of the transformation $\lambda \longmapsto  F_{r,B,a,n}(\lambda)$:
\begin{enumerate}
	\item For some fixed $c_3$ that does not depend on $n$, and for $n$ large enough, we have
	\[0 \leq \varphi_{F_{r,B,a,n}(\lambda)}(i) - \varphi_\lambda(i) \leq c_3 ( n-N(\lambda))^{1/(1+r)}\]
	for all $\lambda\in \P_U$.
	\item For any $\lambda$ with $N(\lambda) = n$, we have
		\[ \left|F_{r,B,a,n}^{-1}(\lambda)\right| \leq \sum_{k=1}^n p_U(n-k); \]
	\item Let $\eta \in (0,1)$, which we will choose later.   For any $\lambda_0 \in \P_U$ with $N(\lambda_0) = n$, and for $n$ sufficiently large, we have
	\begin{align}
	\nonumber \Pr\big( \lambda \in &\, F_{r,B,a,n}^{-1}(\lambda_0),  \ 1-\delta < n^{-1}N(\lambda) < 1-\delta / n^{1-\eta}\big) \\
	\nonumber	& \leq\ \sum_{k>(1-\delta)n}^{k<(n-\delta\, n^{\eta})} p_U(n-k)\, x^{k-n}\,\Pr(\lambda = \lambda_0) \\
	\nonumber	& \leq\ n\, p_U(\delta n)\ts \exp\left(c\ts \delta\ts n^{1/(r+1)}\right)\ts \Pr(\lambda = \lambda_0) \\
	\label{ineq}	& \leq\ n^{\gamma_1} \exp\left( (\delta n)^{1/(r+1)}\right) \exp\left(c\, \delta\, n^{1/(r+1)}\right)\, \Pr(\lambda = \lambda_0),
	\end{align}
\noindent where $\gamma_1$ is some real--valued constant which does not depend on $n$.  In addition,
\begin{align}
	\nonumber \Pr\big( \lambda \in &\,  F_{r,B,a,n}^{-1}(\lambda_0),  \ 1-\delta/n^{1-\eta} < n^{-1}N(\lambda) < 1\big) \\
	\nonumber	& \leq\ \sum_{k>(n-\delta\, n^{\eta})}^{n}\, p_U(n-k) x^{k-n}\, \Pr(\lambda = \lambda_0) \\
	\nonumber	& \leq\ n\, p_U(\delta n^\eta)\ts \exp\left(c\, \delta\, n^{\eta-r/(r+1)}\right)\ts \Pr(\lambda = \lambda_0) \\
	\nonumber	& \leq\ n^{\gamma_2} \exp\left( \left(\delta n^\eta\right)^{1/(r+1)}\right)) \exp\left(c\, \delta\, n^{\eta-r/(r+1)}\right) \Pr(\lambda = \lambda_0),
	\end{align}	
	\ignore{\begin{align}
	\nonumber \Pr( \lambda \in F_{r,B,a,n}^{-1}(\lambda_0), & \ 1-\delta/n^{1-\eta} < n^{-1}N(\lambda) < 1)
		 \leq \sum_{k>(n-\delta\, n^{\eta})}^{k<n} p_U(n-k) x^{k-n}\Pr(\lambda = \lambda_0) \\
	\nonumber	& \leq n\, p_U(\delta n^{\eta}) \exp(c\, \delta\, n^{\eta-r/(r+1)}) \Pr(\lambda = \lambda_0) \\
	\nonumber	& \leq n^\gamma \exp( (\delta n^{\eta})^{r/(r+1)}) \exp(c\, \delta\, n^{1/(r+1)}) \Pr(\lambda = \lambda_0),
	\end{align}}
where $\gamma_2$ is some real--valued constant which does not depend on $n$.
\end{enumerate}

\nin
Next, we define the sets
\[\begin{array}{ll}
B_{\phi,\delta} & = \bigl\{f\in D[0,\infty) : \|f-\phi\|_\infty < \delta \bigr\}, \\ \\
\widehat{B}_{\phi, \delta,\eta}^1 & = \bigl\{(\nu, f) : \| f - \phi\|<\delta, (1-\delta) < \nu < (1-\delta/n^{1-\eta})\bigr\}, \\ \\
\widehat{B}_{\phi, \delta,\eta}^2 & = \bigl\{(\nu, f) : \| f - \phi\|<\delta, (1-\delta/n^{1-\eta}) < \nu < 1\bigr\},\\ \\
\widehat{B}_{\phi, \delta} & = \widehat{B}_{\phi, \delta,\eta}^1 \cup \widehat{B}_{\phi, \delta,\eta}^2 = \bigl\{(\nu, f) : \| f - \phi\|<\delta, (1-\delta) < \nu < 1\bigr\}.
\end{array}
\]
Define $\delta_r := 2\, c_3\, \delta^{1/(1+r)}$.  Then $n$ can be taken sufficiently large, so that we have
\[ \Pr\left( \left( N(\lambda)/n, \Dhat_\lambda\right) \in \widehat{B}_{\phi, \delta} \right)  \leq  \Pr\left( F_n(\lambda) \in B_{\phi, \delta_r}\right).  \]

Thus, for such sufficiently large $n$, we have
\begin{align*}
 \Pr\bigl( (n^{-1}N(\lambda), \Dhat_n) & \in \widehat{B}_{\phi, \delta,\eta}^1\bigr) \, = \,
 \sum_{\lambda_0 \in B_{\phi,\delta_r}, N(\lambda_0) = n} \Pr\left( F_{r,B,a,n}(\lambda) \ts = \ts
 \lambda_0, \bigl(n^{-1}N(\lambda),\Dhat_\lambda\bigr) \in \widehat{B}_{\phi,\delta,\eta}^1\right) \\
   & \hspace{-1cm} \leq \, \sum_{\lambda_0 \in B_{\phi,\delta_r}, N(\lambda_0) = n} \Pr\left( \lambda \in F_{r,B,a,n}^{-1}(\lambda_0), n^{-1}N(\lambda) \in (1-\delta, 1-\delta/n^\eta)\right) \\
   & \hspace{-1cm} \leq \, n^{\gamma_1} \. \exp\left[ (\delta n)^{1/(r+1)}\right] \.
   \exp\bigl(c\, \delta\, n^{1/(r+1)}\bigr) \. \Pr(n^{-1}N(\lambda)=1, \Dhat_\lambda \in B_{\phi,\delta_r}),
 \end{align*}
and
\begin{align*}
 \Pr\bigl( (n^{-1}N(\lambda), \Dhat_n) & \in \widehat{B}_{\phi, \delta,\eta}^2\bigr) \, = \,\sum_{\lambda_0 \in B_{\phi,\delta_r}, N(\lambda_0) = n} \.
 \Pr\left( F_{r,B,a,n}(\lambda) \, = \,\lambda_0, \bigl(n^{-1}N(\lambda),\Dhat_\lambda\bigr) \in \widehat{B}_{\phi,\delta,\eta}^2\right) \\
   & \hspace{-.75cm} \leq \sum_{\lambda_0 \in B_{\phi,\delta_r}, N(\lambda_0) = n} \Pr\left( \lambda \in F_{r,B,a,n}^{-1}(\lambda_0), n^{-1}N(\lambda) \in (1-\delta/n^\eta,1)\right) \\
   & \hspace{-.75cm} \leq \, n^{\gamma_2} \. \exp\left[ (\delta n^{\eta})^{1/(r+1)}\right] \. \exp(c\, \delta\, n^{\eta - r/(r+1)}) \.
   \Pr(n^{-1}N(\lambda)=1, \Dhat_\lambda \in B_{\phi,\delta_r}).
 \end{align*}
\ignore{\begin{align*}
 \Pr\left( \big( \frac{N(\lambda)}{n}, \Dhat_n\right) & \in \widehat{B}_{\phi, \delta,\eta}^2\big)  \leq \Pr\left( N(\lambda) \in (n-\delta n^\eta, n]\ \right) \\
   & \leq \Pr\left( \frac{N(\lambda)-n}{n^{1/(2(1+r))}} \in (-\delta n^{\eta-1/(2(1+r))},0) \right), \\
   & \lesssim \Pr\left( \mathcal{N}(0,1) \in (-\delta n^{\eta-1/(2(1+r))},0)\right) \\
   & \lesssim \frac{1}{\sqrt{2\pi}} \delta\, n^{\eta-1/(2(1+r))}. \\
 \end{align*}}

\nin
Thus, for any $\eta \in (0,1)$, we have
\[ \liminf_{n\to\infty} \frac{1}{n^{1/(1+r)}} \. \log \Pr\left(N(\lambda)/n = 1, \Dhat_\lambda \in B_{\phi, \delta_r}\right) \,
\geq  \, -\inf_{(\nu, \psi)\in \widehat{B}_{\phi,\delta}} \widehat{I}_{r,B,a}(\nu, \psi)  - c\, \delta -  \delta^{1/(r+1)},
\]
where $ \widehat{I}_{r,B,a}(\nu, f)$ is given by equation~\eqref{two param restricted LD}.

The rest of the proof is straightforward; the only ingredient previously missing was the area transformation $F_{r,B,a,n}$ and the inequality given by equation~\eqref{ineq}, which holds under the assumption that $U$ is \rsmooth with parameters $r \geq1$, $B>0$, and any \ts $2 \leq a<\infty$.
\end{proof}
Finally, we note that our transformation $F_{r,B,a,n}$ does not apply for integer partitions into unrestricted part sizes, as our proof relies on the total number of parts in a typical partition to be $O(n^{1/(1+r)})$ as $n \to \infty$.

\bigskip\section{Proofs of transfer theorems}
\label{sect main}

\subsection{Notation}
For $r \geq 1, B>0$ and $a \geq 2$, let $Z_i,$ $i\ge 1$, $c = d(r,B,a)$, and $x=\xofn$ be defined as in \S \ref{convergence lemma}.
Define $E_a(x)~:=~\e[Z_i(x)]$, which has explicit form given by
\begin{equation*}
\large E_a(x) = \frac{x+2x^2+3x^3+\ldots (a-1)\,x^{a-1}}{1+x+x^2+x^3+\ldots+x^{a-1}} \ \ \  a \geq 2.
\end{equation*}
Note that $\phi(y; r, B, a) = E_a\left(e^{-c\, B\, y^r}\right)$.
Similarly, when multiplicities are unrestricted, we define $E(x) := \frac{x}{1-x}.$

We are now ready to state and prove our general theorem regarding limit shapes.
We assume for the rest of the section that the set $U$ is such that $\P_U$ is \smooth or \rsmooth with parameters $r\geq 1$, $B>0$, $a \geq 2$.

\subsection{Proof of theorems~\ref{special unrestricted} and~\ref{special distinct}}
\label{proof special distinct}
Let $y_k$ be such that as $n\to \infty$, we have $B\, y_k^r = u_k / \a$, $\Delta y_k \sim (1/B\a)^{1/r}$.  Then as $n\to\infty$, we have
\begin{align*}
\mathbb{E}\left[\Shat_x(t)\right] & = \frac{n^{r/(1+r)}}{n} \sum_{k\in U: k\geq t \a} \e [Z_k]  \\
& \sim \,
\begin{cases}
\sum_{y_k \geq (t/B)^{1/r}} E\left(e^{-c\, B\, y_k^r}\right) \Delta y_k \to \Phi(t; r, B), & \text{for Theorem~\ref{special unrestricted}}, \\
\sum_{y_k \geq (t/B)^{1/r}} E_a\left(e^{-c\, B\, y_k^r}\right) \Delta y_k \to \Phi(t; r, B,a), & \text{for Theorem~\ref{special distinct}}.
\end{cases}
\end{align*}
By Theorem~\ref{distribution is sufficient}, this calculation is sufficient to establish the limit shape.

\subsection{Proof of theorems~\ref{MT unrestricted} and~\ref{thm dist}}
\label{proof special distinct transform}
We continue to use the notation from \S \ref{proof special distinct}.
In the case of an \MPbns, we have
\[T \widehat{S}_x(t)  = \frac{\b}{n} \sum_{k \geq 1} v({ \b\ts t, u_k})\, Z_{u_k}.\]
Taking expectation, and applying equation~\eqref{v nice} and equation~\eqref{v nicea}, as $n\to \infty$ we have
\begin{align*}\label{B inv}
\mathbb{E}\left[T\Shat_x(t)\right] & = \frac{\b}{n} \sum_{y_k \geq 0} v({\b t,\a B\, y_k^r})\, \mathbb{E}\left[Z_{B\th y_k^r \a}\right] \\
& \longrightarrow   \begin{cases}
	\int_0^\infty K\left(t,y, E\left(e^{-c\, B\, y^r}\right) \right)\, dy, & \text{for Theorem~\ref{MT unrestricted},} \\
 	\int_0^\infty K\left(t,y, E_a\left(e^{-c\, B\, y^r}\right) \right)\, dy, & \text{for Theorem~\ref{thm dist}.}
	\end{cases}
\end{align*}
In the case of an \MMbns, similarly by equation~\eqref{v nice2} and equation~\eqref{v nice2a}, as $n\to \infty$ we have
\begin{align*}
\mathbb{E}\left[T \widehat{S}_x(t)\right]  & = \frac{\b}{n} \sum_{i \geq t \b} \sum_{k \geq 1} v({ \b t, u_k})\, \e Z_{u_k} \\
 & \longrightarrow \begin{cases}
 	 \int_0^\infty K\left(t,y, E\left(e^{-c\, B\, y^r}\right)\right)\, dy, & \text{for Theorem~\ref{MT unrestricted},}  \\
 	 \int_0^\infty K\left(t,y, E_a\left(e^{-c\, B\, y^r}\right)\right)\, dy, & \text{for Theorem~\ref{thm dist}.}
	\end{cases}	
\end{align*}
By Theorem~\ref{distribution:sufficient:bijection}, the proof is complete.

\bigskip \section{Equivalence of ensembles}
\label{sect remarks}

\subsection{ } 
\label{history}
\ignore{Intuitively, a limit shape is a curve representing a law of large numbers of a joint distribution of component sizes in a random combinatorial structure.
Our definition of a limit shape is made precise in Section~\ref{diagram functions}, and is a limit \emph{in probability} as $n\to \infty$, of the part sizes of a uniformly random integer partition of size~$n$.
It turns out that the weaker notion of a limit shape, a limit \emph{in expectation} of independent random part sizes which generate a random integer partition of \emph{random size}, i.e., approximately of size~$n$, agrees with this stronger definition in the context of integer partitions.
Thus, there is rigorous reasoning behind the intuition, which is made precise in Section~\ref{sect prob}.

We start with two pivotal results in the field of limit shapes of integer partitions: the classical limit shape, and the limit shape of partitions into distinct parts. }
The probabilistic model utilized for integer partitions goes back to Khinchin~\cite{Khinchin}.
The explicit form of the limit shape curve $\Phi(t)$ for unrestricted integer partitions was derived via heuristic arguments by Temperley~\cite{Temperley}.
He applied the method of steepest descents to the generating function for $p(n)$, the number of partitions of size~$n$, in order to describe the equilibrium state of crystalline structures.
Several decades later, Kerov and Vershik~\cite{Kerov} found and proved the limit shape for integer partitions under the \emph{Plancherel measure}, which weights each partition of size~$n$ by the squared dimension $(f^\la)^2$ of the corresponding irreducible representation of~$S_n$.  Its formula (rotated by $45^\circ$)
is given by
\[ \Omega(t) = \begin{cases}
\frac{2}{\pi} \left(t\, \arcsin \frac{t}{2} + \sqrt{4-t^2}\right), & |t| \leq 2 \\
|t| & |t| \geq 2.
\end{cases}
\]
We refer to~\cite{R4} for a thorough and well written proof of this result and its applications.

Let us mention that the final section of~\cite{Kerov} states without proof that the approach used also gives the limit shape of integer partitions under the uniform measure. A followup paper by Vershik~\cite{Vershik} states the strong notion of a limit shape for uniformly random integer partitions, but leaves out the essential details which imply the strong version from the weak one, sometimes referred to as ``equivalence of ensembles" in the statistical mechanics literature.

In parallel to the probabilistic notions of a limit shape, there was an early interest in understanding the joint distribution of part sizes, which appears to begin with the work of Erd\H{o}s and Lehmer on the largest part size~\cite{ErdosLehner}, later extended by Erd\H{o}s and Szalay~\cite{ErdosSzalay} to the largest $t$ part sizes, where $t>0$ is fixed.
A significant improvement occurred with the work of Fristedt~\cite{Fristedt}, who extended many but not all of the results to the largest $o(n^{1/4})$ part sizes.

Pittel fully resolved the strong notion of a limit shape in~\cite{PittelShape}.
The proof starts by confirming a conjecture of Arratia and Tavare~\cite{IPARCS} governing the total variation distance between the joint distribution of component sizes and the joint distribution of appropriately chosen independent random variables, precisely those defined in Section~\ref{sect prob}, and essentially extending Fristedt's results to the largest $o(n^{1/2})$ part sizes\footnote{Note that Fristedt used the \emph{Prokhorov} metric whereas Pittel used the total variation distance metric.}.
The \emph{total variation distance} between two distributions, say $\mathcal{L}(X)$ and $\mathcal{L}(Y)$, is defined as
\[
d_{TV}\bigl(\mathcal{L}(X), \mathcal{L}(Y)\bigr) \, = \, \sup_{A \subset \R} \, \bigl| \Pr(X \in A) - \Pr(Y \in A)\big|\ts,
\]
where $A$ is a Borel subset of~$\R$.
There are two initial, complementary results, governing the smallest part sizes and the largest part sizes.
\begin{theorem}{\rm(\cite[Th.~1]{PittelShape})}
 In the definitions from Section~\ref{sect prob}, let $x = e^{-c/\sqrt{n}}$, with $c = \pi/\sqrt{6}$,
 and suppose $Z_i \equiv Z_i(x)$ is a geometric random variable with parameter $1-x^i$, for $i\ge 1$,
 where all $Z_i$ are mutually independent.
Let $C_i(n)$ denote the number of parts of size~$i$ in a random integer partition of size~$n$.
Then:
\begin{align*}
& d_{TV}\bigl( \mathcal{L}(C_1(n), C_2(n), \ldots, C_{k_n}(n)), \mathcal{L}(Z_1, Z_2, \ldots, Z_{k_n})\bigr)  \to 0, \qquad \frac{k_n}{\sqrt{n}} \to 0, \\
& d_{TV}\bigl( \mathcal{L}(C_{k_n}(n), C_{k_n+1}(n), \ldots, C_{n}(n)), \mathcal{L}(Z_{k_n} Z_{k_n+1}, \ldots, Z_{n})\bigr) \to 0, \qquad \frac{k_n}{\sqrt{n}} \to \infty,\\  & \hskip6.cm \mbox{ and } \ \ k_n \. < \. \frac{\sqrt{3/2}}{\pi} \. \sqrt{n} \.\log(n)\..
\end{align*}
\end{theorem}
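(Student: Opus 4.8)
The plan is to exploit the exact conditioning identity of equation~\eqref{equal in distribution}, which realizes the count vector $(C_1(n),\ldots,C_n(n))$ as the independent vector $(Z_1,\ldots,Z_n)$ conditioned on $\{T_n=n\}$, where $T_n:=\sum_{i=1}^n i\,Z_i$, and then to measure how little this conditioning disturbs the marginal law of a block of coordinates. Fix the relevant index block $J$ (namely $J=\{1,\ldots,k_n\}$ for the first assertion and $J=\{k_n,\ldots,n\}$ for the second), put $W:=(Z_i)_{i\in J}$, $S:=\sum_{i\in J} i\,Z_i$, and let $R:=T_n-S$ be the sum over the complementary indices. Marginalizing the conditioned full vector shows that the law appearing in each assertion is exactly $\mathcal{L}(W\mid T_n=n)$, and by independence of $W$ and $R$,
\[
\Pr\bigl(W=w\mid T_n=n\bigr)\;=\;\Pr(W=w)\,\frac{\Pr(R=n-\sigma(w))}{\Pr(T_n=n)},\qquad \sigma(w):=\sum_{i\in J} i\,w_i .
\]
Consequently the quantity to be bounded is
\[
\dtv\bigl(\mathcal{L}(W),\mathcal{L}(W\mid T_n=n)\bigr)\;=\;\tfrac12\,\E\left[\,\left|\,1-\frac{\Pr(R=n-S)}{\Pr(T_n=n)}\,\right|\,\right],
\]
and it suffices to prove that the density ratio converges to $1$ in $L^1$.

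For the first assertion, $k_n/\sqrt n\to 0$ and $J$ collects the small parts. Using $\E[Z_i]\sim \sqrt n/(c\,i)$ and $\var(Z_i)\sim n/(c^2 i^2)$ for $i\le k_n$, summation gives $\E[S]=O(\sqrt n\,k_n)$ and, decisively, $\var(S)=O(n\,k_n)$, so $S$ deviates from its mean on scale $\sqrt{n\,k_n}$. The complementary sum $R$ has variance of order $\var(T_n)\asymp n^{3/2}$, so a local central limit theorem for $R$ yields $\Pr(R=m)\asymp n^{-3/4}$ with a Gaussian profile of width $\asymp n^{3/4}$. Shifting the argument of $\Pr(R=\cdot)$ by $S-\E S$ then perturbs it by a relative factor $1+o(1)$ because
\[
\frac{|S-\E S|}{n^{3/4}}\;=\;O\!\left(\frac{\sqrt{n\,k_n}}{n^{3/4}}\right)\;=\;O\!\left(\frac{\sqrt{k_n}}{n^{1/4}}\right)\;\longrightarrow\;0,
\]
which is precisely the content of the hypothesis $k_n=o(\sqrt n)$. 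Together with $\Pr(R=n-\E S)/\Pr(T_n=n)\to 1$ from the same local limit theorem, and a truncation argument to pass from convergence in probability of the ratio to the $L^1$ bound, this gives $\dtv\to 0$.

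The second assertion, with $k_n/\sqrt n\to\infty$ and $k_n<\tfrac{\sqrt{3/2}}{\pi}\sqrt n\,\log n$, runs through the same mechanism with the roles reversed: now $J=\{k_n,\ldots,n\}$ indexes the large parts and $R=\sum_{i<k_n} i\,Z_i$ is the small-part sum, for which the local central limit theorem is cleanest. The upper cutoff is exactly the typical location of the largest part: since $\Pr\bigl(\max\{i:Z_i>0\}<k_n\bigr)\approx\exp\!\bigl(-(\sqrt n/c)\,e^{-c\,k_n/\sqrt n}\bigr)$, the threshold $\tfrac{\sqrt{3/2}}{\pi}\sqrt n\,\log n=\tfrac{1}{2c}\sqrt n\,\log n$ (using $c=\pi/\sqrt6$) marks where this largest part sits. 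For $k_n$ below it one still has $\E[S]=O(\sqrt n\,\log n)=o(n)$, so $n-S$ stays well inside the bulk of $R$, and the standard deviation of $S$ remains $o(n^{3/4})$; the density-ratio argument above then applies verbatim and yields $\dtv\to 0$.

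The main obstacle, and the technical heart of both parts, is the uniform local central limit theorem for $R$ and $T_n$. These are sums of independent but non-identically distributed, unbounded, integer-valued summands $i\,Z_i$ whose variances grow like $n/i^2$ for small $i$ and whose laws are geometric with slowly varying parameters $1-x^i$. One must show
\[
\Pr(R=m)\;=\;(1+o(1))\,(2\pi\,\var R)^{-1/2}\exp\!\left(-\frac{(m-\E R)^2}{2\,\var R}\right)
\]
uniformly over the window of $m$ that actually occurs, which requires a characteristic-function analysis: a saddle-point expansion of $\prod_i \E[e^{\mathrm{i}\theta\,i\,Z_i}]$ near $\theta=0$ together with an estimate bounding the product away from the origin. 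The lower estimate $\Pr(T_n=n)\ge n^{-\gamma}$ of Theorem~\ref{local bound} already supplies one ingredient; the matching uniform upper control, and the comparison of the two local probabilities, is where the arithmetic of the geometric laws and the calibration $c=\pi/\sqrt6$ (which makes $\E[T_n]=n$) enter essentially.
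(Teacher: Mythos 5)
This theorem is stated in the paper purely as a citation to Pittel (\cite[Th.~1]{PittelShape}); the paper contains no internal proof, so the comparison is with the argument of the cited source. Your reconstruction is exactly the Fristedt--Pittel strategy: the conditioning identity~\eqref{equal in distribution}, the density-ratio formula $\dtv = \tfrac12\,\E\bigl|1 - \Pr(R=n-S)/\Pr(T_n=n)\bigr|$ (as in Arratia--Tavar\'e), Chebyshev control of the block sum against the $n^{3/4}$ width of the bulk, and a uniform local CLT for the weighted geometric sums. Your quantitative checks are right: $\var S = O(nk_n)$ so $\sqrt{nk_n}=o(n^{3/4})$ iff $k_n=o(\sqrt n)$; in the tail case $\var S = o(n^{3/2})$ for any $k_n/\sqrt n\to\infty$; and the cutoff $\tfrac{\sqrt{3/2}}{\pi}\sqrt n\log n = \tfrac{1}{2c}\sqrt n\log n$ is indeed the Erd\H{o}s--Lehner location of the largest part. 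Two small remarks: the passage from convergence in probability of the ratio to $L^1$ needs no truncation, since the ratio $V_n$ is nonnegative with $\E V_n = 1$ exactly (because $\Pr(T_n=n)=\E[\Pr(R=n-S)]$), so $\E|V_n-1| = 2\,\E(1-V_n)_+ \to 0$ by bounded convergence; and the uniform local CLT that you correctly flag as the technical heart is left as a sketch --- it is precisely the content supplied by Pittel's (and Fristedt's) characteristic-function analysis, so your proposal is a faithful outline of the cited proof rather than a complete self-contained one.
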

Thus, one may treat both small and large part sizes as approximately independent, which was utilized in~\cite{Pittel}, for example, to resolve several conjectures for integer partitions involving statistics localized on the largest part sizes.
Pittel also showed in~\cite[Th.~1]{PittelShape}, that if we include in the joint distribution of part sizes those parts which are at most $t \sqrt{n}$, for some $t>0$, then this joint distribution converges to a non-trivial limit which is the total variation distance between two appropriately defined normal distributions.
However, total variation distance is a considerably strong metric, since one can take any measurable function of the random variables and the resulting total variation distance is guaranteed not to increase, whereas a limit shape is one particular statistic.
Thus, by a more detailed analysis of the component sizes, specifically for the limit shape statistic, Pittel was able to show that the limit shape of random integer partitions is indeed close in probability to the limit shape $\Phi$ indicated by the weaker notions of convergence, thus formally establishing the equivalence of ensembles.

The reason why the weaker notion of the limit shape, i.e., convergence in expectation of random-size partitions with independent part sizes, coincides with the strong notion of the limit shape, i.e., convergence in probability of integer partitions of a fixed size, is due to the exponential concentration of the limit shape statistic.
This has traditionally been handled either via a martingale technique, or via more direct methods like Markov's inequality, and demonstrates an unsurprising exponential concentration around the expected value which could be considered implicit in the work of Vershik~\cite{Vershik}.
As noted in \S \ref{sect:LD}, an LDP is not just an upper bound on the rate of exponential concentration, as is often all that is needed to prove the strong notion of a limit shape from the weak notion, it is also an asymptotic expression for the rate of concentration.

While Pittel's work proved the limit shape of unrestricted partitions, the large deviation results by Dembo, Vershik, and Zeituni in~\cite{LD} provided the limit shape of integer partitions into distinct parts, and made their connection with the LDP explicit.
In addition, \cite[Section~6.2]{LD} states the form of the limit shape for partitions with restrictions of the form $u_k = B k^r,$ and provides the LDP for the \emph{independent} process in both cases of unrestricted multiplicities and distinct part sizes.
Combining the LDP with classical asymptotic enumeration formulas, e.g., those in for example~\cite{HR, Wright, Ingham}, it is straightforward to derive the equivalence of ensembles under these restrictions via the inequality in Equation~\eqref{LD calculation}.
We have provided the LDP for the \emph{dependent} process with bounded multiplicities, i.e., the process governing random partitions of fixed size~$n$, in \S \ref{sect:LD}.

\subsection{ }
\label{credit}
It is difficult to credit limit shape results like Theorem~\ref{special unrestricted} precisely, owing to the connection between asymptotic enumeration and concentration, and the precise conditions which are assumed and utilized.
The weak form of the limit shape in Theorem~\ref{special unrestricted} is easily calculated via the statistical models in Khinchin~\cite{Khinchin}, and more explicitly by Kerov and Vershik~\cite{Kerov}; the same ones utilized by Fristedt~\cite{Fristedt} and Pittel~\cite{PittelShape}.
The limit shape then follows by concentration and a local central limit theorem, or equivalently, asymptotic enumeration, via the inequality in Equation~\eqref{LD calculation}; see also~\cite{Goh, rth}.

Ingham~\cite[Th.~2]{Ingham} provided quite general asymptotic enumeration formulas for partitions satisfying the following conditions:
Let $N(u)$ denote the number of elements in $U$ less than or equal to $u$.  Suppose there are constants $L>0$, $\theta >0$, and function $R(u)$ such that
\[ N(u) = L u^\theta + R(u), \]
and for some $b>0$ and $c>0$ we also have
\[ \int_0^u \. \frac{R(v)}{v}\, dv \. = \. b \log u + c + o(1). \]
Define
\[ \alpha \. = \. \frac{\beta}{1+\beta}\qquad M \. = \. \bigl[L \ts\beta\ts\Gamma(\beta+1)\ts\zeta(\beta+1)\bigr]^{1/\beta}. \]
Then, \emph{as long as $p_U(n)$ is monotonically increasing}, we have
\begin{equation}\label{eq:Ingham}
p_U(n) \. \sim\.  \left(\frac{1-\alpha}{2\pi}\right)^{1/2}\ts e^cM^{-(b-\frac{1}{2})\alpha}
\ts u^{(b-\frac{1}{2})(1-\alpha)-\frac{1}{2}}\ts e^{\alpha^{-1}(Mu)^\alpha}.
\end{equation}
Similarly, under the slightly weaker assumption that
\[
\int_0^u R(v)\,dv \. = \. b\, u + o(u),
\]
and again assuming \emph{$\ts p_U^d(n)$ \ts is monotonically increasing}, with \ts
$M^\ast := (1-2^{-\beta})^{1/\beta}$, we have
\[
p_U^d(n) \. \sim \. \left(\frac{1-\alpha}{2\pi}\right)^{1/2} \ts 2^b \ts (M^\ast)^{\alpha/2}
\ts u^{\frac{1}{2}\alpha - 1}\ts e^{\alpha^{-1}(M^\ast u)^\alpha}\ts.
\]

Erd\H{o}s and Bateman in~\cite[Th.~6]{ErdosBateman} simplified the
monotonically increasing assumption for $p_U(n)$ to be more simply
that the greatest common divisor of $U$ is~1.
It is unknown whether or not a corresponding condition exists for
partitions into distinct parts, and it would be interesting
to find such necessary and sufficient conditions.

Roth and Szekeres provide a partial answer, at least one detailed enough for our purposes.
They demand the elements $u_k$ satisfy more generally
\begin{equation}
\tag{U1$'$}\label{U1:prime} \lim_{k\to\infty} \frac{\log u_k}{\log k} = r \in [1,\infty).
\end{equation}
In addition, they also assume the following condition, which is sufficient for the asymptotic enumeration results to hold:
\begin{align}
\tag{U2}\label{U2} & \text{$J_k = \inf_\alpha \left\{ \frac{1}{\log k} \sum_{i=1}^k [u_i \alpha]^2 \right\} \to \infty \ \ {\rm as} \ \ k\to \infty,$}
\end{align}
where $[x]$ denotes the closest integer to $x$ and the infinum is taken over $\alpha \in \left(\frac{1}{2u_k}, \frac{1}{2}\right]$.
They also show that when $r$ is small, Condition~$\eqref{U2}$ is not necessary.
\begin{prop}[Roth and Szekeres~\cite{RothSzekeres}]
\[\text{\eqref{U1:prime}} \text{\rm\ and $r<\frac{3}2$ } \implies \text{\eqref{U2}}.\]
\end{prop}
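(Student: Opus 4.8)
The plan is to read the bracket $[\,\cdot\,]$ as the signed distance to the nearest integer, so that the quantity governing \eqref{U2} is $\inf_\alpha \sum_{i=1}^k \|u_i\alpha\|^2$ over $\alpha\in(1/(2u_k),\tfrac12]$, and the goal is to show this infimum grows faster than $\log k$. The first step is to record the quantitative content of \eqref{U1:prime}: for every $\epsilon>0$ and all large $k$ we have $u_k = k^{\,r+o(1)}$, and since $r<\tfrac32$ we may fix $\eta>0$ with $u_k\le k^{3/2-\eta}$ eventually. Equivalently, the admissible part sizes are \emph{dense} on the scale $\sqrt{u_k}$, in the sense that $k\ge u_k^{\,2/3+\delta}$ for some $\delta>0$; this density of the sequence, together with its rate of growth, is the only feature of $U$ the argument will use, so that the bound must hold uniformly even for irregularly distributed sequences.

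The engine is a Fourier reduction followed by a Diophantine dichotomy. Expanding $\|x\|^2 = \tfrac{1}{12}+\sum_{m\ge1}c_m\cos(2\pi m x)$ with $|c_m|\ll m^{-2}$, one gets $\sum_{i\le k}\|u_i\alpha\|^2 = \tfrac{k}{12}+\sum_{m\ge1}c_m\,\mathrm{Re}\,S(m\alpha)$, where $S(\beta)=\sum_{i\le k}e^{2\pi i u_i\beta}$; the tail in $m$ is negligible because $|c_m|\ll m^{-2}$, so it suffices to show that $\sum_{i\le k}\|u_i\alpha\|^2$ stays near $\tfrac{k}{12}$, i.e. that the Weyl sums $S(m\alpha)$ exhibit cancellation for the small values of $m$ that matter. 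To control these uniformly I would invoke Dirichlet's theorem to write each $\alpha$ as $a/q+\beta$ with $(a,q)=1$, $1\le q\le Q$ and $|\beta|\le 1/(qQ)$, calibrating $Q\asymp u_k^{1/2}$ (this is where the exponent $\tfrac32$ enters), and then feed the resulting sums into the large sieve: summing $|S(\beta)|^2$ over a $\delta$-spaced set of $\beta$ gives a bound of shape $(u_k+\delta^{-1})\,k$, and balancing $\delta^{-1}$ against $u_k$ is exactly the calculation that singles out the threshold $r=\tfrac32$. When the approximating denominator $q$ is large the phases are well spread and this yields $\sum_{i\le k}\|u_i\alpha\|^2\gg k$, dwarfing $\log k$; when $q$ is small one replaces $\|u_i\alpha\|$ by $\|u_i a/q\|$, absorbing the error $\sum_{i\le k}u_i|\beta|\le (qQ)^{-1}\sum_{i\le k}u_i = O\!\left(k\,u_k/(qQ)\right)$, which the bound $u_k\le k^{3/2-\eta}$ together with the choice of $Q$ renders negligible.

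The main obstacle is the small-$q$ regime, that is, $\alpha$ lying very close to a rational with small denominator, which is precisely the transition zone where resonances $S(m\alpha)\approx k$ threaten to cancel the main term $\tfrac{k}{12}$. This is the point at which the threshold is sharp: for $r\ge\tfrac32$ the approximation and second-moment error terms can overwhelm the main term and the conclusion genuinely fails, whereas for $r<\tfrac32$ the density $k\ge u_k^{2/3+\delta}$ forces enough of the $u_i$ into residues with $\|\,\cdot/q\,\|\ge\tfrac14$ to keep the sum well above $\log k$. I expect the delicate bookkeeping to be propagating this residue-distribution lower bound through the approximation error without losing the factor that beats $\log k$, and in verifying that no admissible $\alpha$ escapes the dichotomy; by contrast, the Fourier reduction and the large-$q$ estimate are routine. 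Once these bounds are assembled one concludes $\inf_\alpha\sum_{i\le k}\|u_i\alpha\|^2/\log k\to\infty$, which is \eqref{U2}.
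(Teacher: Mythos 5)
The paper offers no proof of this Proposition at all: it is quoted, as a black box, from Roth--Szekeres, so your sketch stands or falls on its own merits. It falls, and it falls precisely at the spot you yourself flag as ``delicate bookkeeping,'' namely the small-$q$ regime. Your claim that the density bound $k\ge u_k^{2/3+\delta}$ ``forces enough of the $u_i$ into residues with $\|u_i a/q\|\ge\tfrac14$'' is false, and indeed your announced premise that density and growth are ``the only feature of $U$ the argument will use'' cannot be sustained, because growth alone does not imply \eqref{U2}. Pigeonhole constrains the distribution of the $u_i$ modulo $q$ only when $q$ exceeds roughly $u_k/k$ (a single residue class contains at most $u_k/q+1$ integers up to $u_k$); for smaller moduli nothing prevents total concentration. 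Concretely, take $u_1=1$ and $u_j=2j$ for $j\ge2$: this sequence satisfies \eqref{U1:prime} with $r=1<\tfrac32$ and has greatest common divisor $1$, yet at the admissible point $\alpha=\tfrac12$ one gets $\sum_{j\le k}\|u_j\alpha\|^2=\tfrac14$, so $J_k\to0$ and \eqref{U2} fails. Any correct proof must therefore consume an arithmetic non-concentration hypothesis at small moduli, which in Roth--Szekeres is supplied by their standing assumptions on admissible sequences (compare the condition, quoted in \S\ref{credit} of this very paper, that for every prime $p$ there is an $x$ with $p\nmid f(x)$); once the count $\#\{j\le k:\, q\nmid u_j\}$ is bounded below, the small-$q$ case closes elementarily via $\sum_j\|u_j\alpha\|^2\ge \#\{j:\,q\nmid u_j\}\cdot(2q)^{-2}$, with no Fourier analysis.

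There are also structural problems with the analytic engine itself. First, \eqref{U2} is an infimum over \emph{every} $\alpha\in(\tfrac{1}{2u_k},\tfrac12]$, whereas the large sieve bounds $\sum_j|S(\beta_j)|^2$ averaged over well-spaced frequencies; it cannot rule out $|S(m\alpha)|\approx k$ at the particular $\alpha$ achieving the infimum, and for an arbitrary sequence subject only to \eqref{U1:prime} such resonances genuinely occur. Second, the target ``show the sum stays near $\tfrac{k}{12}$'' is unattainable: if the $u_j$ are packed into residues $[0,\epsilon q]$ modulo $q$ with $\epsilon\asymp k/u_k$ (permitted by the growth condition once $q\gg u_k/k$), then at $\alpha=a/q$ the sum is genuinely of order $k(k/u_k)^2=k^3/u_k^2$, i.e.\ $o(k)$ --- and this is exactly where $r<\tfrac32$ enters: $k^3/u_k^2\ge k^{2\eta}\gg\log k$. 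A main-term-plus-error Fourier scheme is blind to a bound of this shape. Third, your calibration is off: to replace $\|u_j\alpha\|$ by $\|u_j a/q\|-\tfrac{1}{2q}$ you need the \emph{per-term} estimate $u_j|\beta|\le\tfrac{1}{2q}$ for every $j$, forcing $Q\ge 2u_k$ rather than $Q\asymp u_k^{1/2}$; the aggregate quantity $\sum_j u_j|\beta|$ you propose to absorb does not control the sum of squared distances. The workable skeleton (and, modulo the extra hypothesis, essentially the Roth--Szekeres one) is elementary: approximate $\alpha=a/q+\beta$ by Dirichlet or a convergent; when $q\ge C u_k/k$, the spacing bound --- at most $O\bigl((q\delta+1)(u_k/q+1)\bigr)$ integers $m\le u_k$ satisfy $\|m\alpha\|<\delta$ --- with $\delta\asymp k/u_k$ yields $\sum_j\|u_j\alpha\|^2\gg k^3/u_k^2$; when $q<Cu_k/k$, one needs the residue count, which is where the missing hypothesis lives. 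Your Fourier expansion of $\|x\|^2$ and your sharpness remark at $r=\tfrac32$ are both correct, but the large-sieve step and the ``$k/12$'' target would have to be discarded even after the hypothesis is repaired.
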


There are also several classes of sequences $U$ presented in the introduction in~\cite{RothSzekeres} which cover many cases of interest; namely,
\begin{quote}
\begin{itemize}
\item[(i)] $u_k = p_k$ where $p_k$ is the $k^{th}$ prime number.
\item[(ii)] $u_k = f(k)$, where $f(x)$ is a polynomial which takes only integral values for integral $x$ and has the property that corresponding to every prime $p$ there exists an integer $x$ such that $p \nmid f(x)$.
\item[(iii)] $u_k = f(p_k)$, where $f(x)$ is a polynomial which takes only integral values for integral $x$ and has the property that corresponding to every prime $p$ there exists an $x$ such that $p \nmid x f(x)$.
\end{itemize}
\end{quote}
In particular, they specialize their main theorem in the case when $u_k = a_r k^r + a_{r-1} k^{r-1} + \ldots + a_0$ and $u_k$ satisfies (ii), which prevents any gaps in $p_U^d(n)$; this is Theorem~\ref{RothSzekeres}.
We have dropped the final condition that for every prime $p$ there exists an integer $x$ such that $p \nmid f(x)$, requiring instead that should such a $p$ exist then we take $n$ to infinity through multiples of the gcd of $U$, as is required for the corresponding limit shape.

One could also argue that Theorem~\ref{special unrestricted} follows from Pittel's analysis~\cite{PittelShape}, in that it requires no new ideas other than generalizing the analysis appropriately.
However, there are certain estimates utilized which must be carefully adapted, and a priori require some kind of technical conditions similar to the ones initially imposed by Ingham and others.

One may also be tempted to attribute Theorem~\ref{special unrestricted} to Dembo, Vershik, and Zeituni~\cite{LD}.
In~\cite[Section 6.2]{LD}, they state explicitly an LDP for an independent process of random variables, which is sufficient to prove concentration, but as stated the sequence $U$ consists of multiples of perfect powers.
This is similar to the work in Goh and Hitczenko~\cite{Goh}, whose extra technical assumption is the existence of four $u_j$'s which are relatively prime.
Canfield, Corteel, and Wilf~\cite{rth} provide all of the ingredients for Theorem~\ref{special unrestricted}, even including the restriction that $u_k$ is the image of a polynomial, which is the special case we have adopted.

\subsection{}
We have decided to describe the growth conditions on sets $U$ in terms of the growth of~$u_k$, where $k\ge 1$.
Meinardus~\cite{Meinardus}, Granovski, Stark and Erlihson~\cite{Stark2008}, Hwang~\cite{Hwang},
and Bogachev~\cite{Bog}, for example, impose conditions directly on the corresponding generating functions.
Yakubovich~\cite{Yakubovich}, Goh and Hitczenko~\cite{Goh}, and Ingham~\cite{Ingham},
on the other hand, have described growth conditions on the inverse function \ts
$N(k) = \#\{j : u_j < k\}$.  Our choice is natural for this setting since our sets of
partitions and bijections have traditionally been written in the form of allowable part sizes.

\subsection{ }\label{Ingham Form}
The original form of the growth conditions in Ingham's \cite[Th.~2]{Ingham}
is particularly elegant, even though it is not ideal for our setting.
Rather than requiring a condition on the greatest common divisor of allowable part sizes,
the result is instead stated in terms of sums of partition numbers.  Formally,
let $p_U(n)$ denote the number of partitions of $n$ into parts from the set~$U$.
Ingham defines for real $u>0$
\[ 
P(u) \. = \. \sum_{k<u} p_U(k)\ts.
\]
Then, he proves an asymptotic result for $P(u)$ \emph{and}
\[ 
P_h(n) \. = \. \frac{P(n) - P(n-h)}{h}\., \qquad h>0\ts, 
\]
under the condition that $P_h(n)$ is increasing for every fixed $h>0$.
When \ts {\tt gcd}$(U) = a$, we can choose $h = a$.
Then, \emph{for every} $u>0$, the number $P_h(n)$ is precisely one of $p_U(\ell)$ for some
$\ell \in [u-h,u)$, and, since $(u-h) \sim u \sim \ell$, it thus captures
the asymptotic rate of growth of $p_U(n)$, without needing to specify,
as we have done, that $n\to \infty$ through multiples of~$a$.

\subsection{}
The conditions of Meinardus~\cite{Meinardus} and Yakubovich~\cite{Yakubovich}
are related to Condition~\eqref{U2}, and apply more generally to partitions
where each part size can have multiple versions (for example, three different
types of 1s).  These conditions are used by Hwang~\cite{Hwang}, for example,
where the distribution of the number of summands (taken without multiplicities)
follows a central and local limit theorem.
There is also a central limit theorem proved by Madritsch and Wagner in~\cite{Mad},
concerning partitions whose base-$b$ representation only contains digits from some
given set.  This type of condition is natural given the fact that the bits in a
geometric random variable are independent, a property which was also exploited in,
 e.g.,~\cite{PDC} for the purpose of random sampling of integer partitions.

\subsection{ }\label{fixed:summands}
The asymptotic number of partitions with part sizes restricted to be less than some $t\sqrt{n}$, was studied by Romik~\cite{Romik}.
The limit shape of the conjugate set of partitions, i.e., the set of partitions with a fixed number of summands $t \sqrt{n}$, was studied by Yakubovich and Vershik~\cite{VershikYakubovich}.
This is another example, see \S\ref{Romik:example}, where a bijection involving conjugation is used to obtain the limit shape of a non-multiplicative set of restrictions.
A more detailed saddle point analysis counting the number of partitions of $n$ into exactly $m$ summands under further restrictions like those in~\S\ref{further:example:generalization} are contained in~\cite{Haselgrove}.

\subsection{ } \label{Romik:mistake}
Another proof of Theorem~\ref{theorem diff d k} appears in the last section of the unpublished
preprint~\cite{RomikUnpublished}.  This was used to obtain~\cite[Th.~1]{RomikUnpublished}.
Unfortunately, there appears to be a mistake, as the formula given for the limit shape does
not have unit area.

\subsection{ }
In this paper we consider only $\Pr_n = 1/|\A_n|$, the uniform distribution;
 see, however, \cite{Kerov} for limit shapes under Plancherel measure, and
 e.g.~\cite{JackMeasures} for other types of measures on partitions.


\bigskip

\section{Final remarks and open problems}
\label{final:remarks}

\subsection{ }\label{natural:generalization}
The next natural generalization of our theorems would be the one considered by Canfield and Wilf~\cite{CanfieldWilf}: partitions of size~$n$ with part sizes in set $U$ and with multiplicities in set $M$.  In our setting $M = \{0,1,2,\ldots\}$, and in \S\ref{Romik:example} we have $M = \{0,2,3,\ldots\}$.
It would be interesting to investigate conditions on $M$ and $U$ such that the limit shape exists.

\subsection{} \label{final:remarks-open}
Our approach does not extend to compute the limit shape for classes of partitions for which
no bijection to Andrews class partitions is known.  One notable example is the partitions
into powers of~$2$, which are equinumerous with certain \emph{Cayley compositions} (see~\cite{deBruijnPP,KP2}
and~\cite[\href{http://oeis.org/A000123}{A000123}]{OEIS}).  In this case the usual notion
of a limit shape does not exist and an alternative notion is required.

\subsection{}
Fristedt's method~\cite{Fristedt} in Section~\ref{sect prob} defines a random sampling algorithm for Andrews class partitions of a fixed size~$n$.
Namely, we sample independent geometric random variables $Z_1, Z_2, \ldots, Z_n$, where $Z_i$ has parameter $1-x^i$, $0<x<1$, $i \geq 1$, and check whether $\sum_{i=1}^n i Z_i = n$, repeating the sampling of $Z_1, Z_2, \ldots, Z_n$ until the condition is satisfied.
The expected number of times that we resample is given by $O(1/\sigma_n)$, where $\sigma_n := \sqrt{\mbox{Var}(\sum_{i=1}^ni\,Z_i)}.$

An application of probabilistic divide-and-conquer (PDC), introduced in~\cite{PDC}, see also~\cite{PDCDSH}, uses von Neumann's rejection sampling~\cite{Rejection} to obtain a speedup.
The algorithm is to sample repeatedly from $Z_2, Z_3, \ldots, Z_n$ and $U$, uniform on $(0,1)$, until
$$U \. \leq \. \frac{ \Pr(Z_1 = n - \sum_{i=2}^n i Z_i) }{ \Pr(Z_1 = 0)}\..
$$
The expected number of times that we resample this procedure is $O(n^{1/4})$, see~\cite[Table~1]{PDC}, a considerable speedup.

More generally, consider the set of partitions with parts in the set $U$, where $u_k \sim B k^r$.  The PDC deterministic second half algorithm to generate a random integer partition of size~$n$ with parts in the set $U$ is to sample repeatedly rom $Z_{u_2}, Z_{u_3}, \ldots$ and $U$, uniform on $(0,1)$, until
$$U \. \leq \. \frac{ \Pr(Z_{u_1} = n - \sum_{i=2}^n i Z_{u_i}) }{ \Pr(Z_{u_1} = 0)}\..
$$
The speedup in this case is given by
\[ \text{speedup} = \left(\max_k P(Z_{1} = k)\right)^{-1} \, = \,  P(Z_{u_1} = 0)^{-1} \, \sim \, \frac{1}{1-x^{u_1}} \, \sim \,  \frac{c}{u_1} \a \, = \, O\bigl(n^{r/(1+r)}\bigr)\ts.
\]
Hence, the expected number of rejections before a single accepted sample via the PDC deterministic second half algorithm is asymptotically $O\bigl(n^{(2+r)/(2+2r)}\bigr)$.

\subsection{}
There are also bijections defined as linear transformations by Corteel and Savage~\cite{CS}, and Corteel, Savage, and Wilf~\cite{CSW}, which correspond to partitions which satisfy various inequality conditions on the part sizes, in a sense generalizing the set $\mathcal{C}^r$ of partitions with nonnegative $r^{th}$ differences in Section~\ref{sect r}.
A further generalization was demonstrated by Pak~\cite{PakGeo}; most of those examples involve sets $U$ where $u_k$ grows exponentially in $k$, and so our theorems do not apply.

Of particular note is that our linear transformations are only assumed to act as Markov operators on the diagram function, so it is not necessary that the coefficients of the matrix transformation $\bv$ each be nonnegative; matrix $\bv$ simply needs to map positive coordinates to positive coordinates.

\subsection{}
For self-conjugate partitions,
it would be interesting to define a coupling over the set of random variables
governing the part sizes, although this has many apparent difficulties.
We hope to return to this problem in the future.

\subsection{ }
One should be careful when defining maps between partitions.
Consider the sets of partitions $\A = \{k^k 1^{k^2}\}$, $\B = \{k^{2k}\}$, $k\geq 1$.
\begin{prop}
For each $n \geq 1$, we have
\begin{equation}
|\A_n| = |\B_n| =  \begin{cases} 1 & n = 2k^2, \ k \geq 1\., \\
 0 & \text{otherwise\..} \end{cases}
 \end{equation}
\end{prop}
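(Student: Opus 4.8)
The plan is to compute the size of each partition in the two families directly and then observe that both families are indexed injectively by their size. First I would note that the partition $k^k 1^{k^2} \in \mathcal{A}$ consists of $k$ parts equal to $k$ together with $k^2$ parts equal to $1$, so its size is
\[
 k \cdot k + k^2 \cdot 1 \. = \. 2k^2.
\]
Similarly, the partition $k^{2k} \in \mathcal{B}$ consists of $2k$ parts equal to $k$, so its size is $2k \cdot k = 2k^2$ as well. Thus every element of either family has size $2k^2$ for the corresponding index $k \geq 1$.

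Next I would observe that the map $k \mapsto 2k^2$ is strictly increasing on the positive integers, and hence injective. Consequently distinct values of $k$ produce partitions of distinct sizes within each family. This means that for a fixed $n$, the set $\mathcal{A}_n = \mathcal{A} \cap \mathcal{P}_n$ is nonempty if and only if $n = 2k^2$ for some (necessarily unique) $k \geq 1$, in which case it contains the single partition $k^k 1^{k^2}$; otherwise $\mathcal{A}_n = \varnothing$. The identical argument applies verbatim to $\mathcal{B}_n$, with the unique partition being $k^{2k}$ when $n = 2k^2$. Combining these two observations yields
\[
 |\mathcal{A}_n| \. = \. |\mathcal{B}_n| \. = \. \begin{cases} 1, & n = 2k^2, \ k \geq 1, \\ 0, & \text{otherwise,} \end{cases}
\]
as claimed.

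There is no genuine obstacle here: the entire content of the statement is the size computation $k \cdot k + k^2 = 2k^2 = 2k \cdot k$ together with the injectivity of $k \mapsto 2k^2$. The only point requiring any care is the correct reading of the exponential notation, namely that $k^k$ denotes the part $k$ occurring with multiplicity $k$ (rather than the integer $k$ raised to the power $k$), and likewise for $1^{k^2}$ and $k^{2k}$. This is exactly the caution flagged in the preceding remark, and once the notation is interpreted this way the result is immediate.
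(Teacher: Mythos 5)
Your proof is correct and coincides with the paper's (implicit) argument: the paper states this proposition without proof precisely because it reduces to the size computation $k\cdot k + k^2\cdot 1 = 2k^2 = 2k\cdot k$ together with the injectivity of $k \mapsto 2k^2$ on positive integers, which is exactly what you verify. Your remark about reading $k^k 1^{k^2}$ and $k^{2k}$ in multiplicity notation is also the correct and only point of care.
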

We showed earlier in Remark~\ref{vanishing:mass} that the limit shape of $\A$ under scaling function $\sqrt{n}$ does not exist, since the scaled diagram function converges to $a(x) = \frac{1}{\sqrt{2}}$ for $0 \leq x \leq \frac{1}{\sqrt{2}}$, which does not have unit area.
The limit shape of $\B$ exists and is equal to $b(x) = \frac{1}{2}$ for $0\leq x \leq 2$.  Consider the (non-bijective) transformation $\pi: 1 \to k$, which sends partitions in $\A$ to $\B$.
Even though there is a transformation, and each set of partitions tends to some limiting curve, there is no connection implied about the limit curves since the transformation is not even a bijection.

\vskip.5cm
\subsection*{Acknowledgements}
The authors are grateful to Ken Alexander, Richard Arratia, Quentin Berger,
Amir Dembo, Alejandro Morales, Greta Panova, Dan Romik, Bruce Rothschild,
Richard Stanley and Damir Yeliussizov for helpful discussions.
The second author was partially supported by the NSF.


\begin{thebibliography}{abcde}

\bibitem[AK]{AK}
N.~Alon and M.~Krivelevich, Extremal and Probabilistic Combinatorics,
in \emph{Princeton Companion to Mathematics} (W.~T.~Gowers et al., editors),
Princeton Univ.~Press, 2008, 562--575.


\bibitem[And1]{Andrews}
G.~E.~Andrews, \emph{The Theory of Partitions}, Cambridge Univ.\ Press, 1984.

\bibitem[And2]{AndrewsII}
G.~E.~Andrews, MacMahon's partition analysis.~II.\ Fundamental theorems,
\emph{Ann. Comb.}~\textbf{4} (2000), 327--338.

\bibitem[AD]{PDC}
R.~Arratia and S.~DeSalvo,
Probabilistic divide-and-conquer: a new exact simulation method,
with integer partitions as an example,
\emph{Combin.\ Probab.\ Comput.}~\textbf{25} (2016), 324--351.

\bibitem[AT]{IPARCS}
R.~Arratia and S.~Tavar\'{e},
Independent process approximations for random combinatorial structures,
\emph{Adv.\ Math.}~\textbf{104} (1994), 90--154.


\bibitem[AH]{Auluck}
F.~C.~Auluck and C.~B.~Haselgrove,
On Ingham's Tauberian theorem for partitions,
\emph{Proc.\ Cambridge Philos.\ Soc.}~\textbf{48} (1952), 566--570.

\bibitem[BE]{ErdosBateman}
P.~T.~Bateman and P.~Erd\H{o}s, Monotonicity of partition functions,
\emph{Mathematika}~\textbf{3} (1956),1--14.

\bibitem[BNV]{BNV}
S.~R.~Blackburn, P.~M.~Neumann and G.~Venkataraman,
\emph{Enumeration of finite groups},
Cambridge Univ.~Press, Cambridge, 2007.

\bibitem[Bob]{Bobrowski}
A.~Bobrowski, \emph{Functional analysis for probability and stochastic processes},
Cambridge Univ.\ Press, 2005.

\bibitem[Bog]{Bog}
L.~V.~Bogachev,
Unified derivation of the limit shape for multiplicative ensembles of
random integer partitions with equiweighted parts,
\emph{Random Structures Algorithms}~\textbf{47} (2015), 227--266.

\bibitem[CCH]{rth}
E.~R.~Canfield, S.~Corteel and P.~Hitczenko,
Random partitions with non negative rth differences,
\emph{Lecture Notes in Computer Science}~\textbf{2286} (2002), 131--140.


\bibitem[CW]{CanfieldWilf}
E.~R.~Canfield and H.~S.~Wilf, On the growth of restricted integer partition functions,
in \emph{Partitions, $q$-series, and modular forms}, Springer, New York, 2012, 39--46.

\bibitem[CEP]{CEP}
H.~Cohn, N.~Elkies and J.~Propp,
Local statistics for random domino tilings of the Aztec diamond,
\emph{Duke Math.\ J.}~\textbf{85} (1996), 117--166.

\bibitem[CP]{CP}
F.~Colomo and A.~G.~Pronko, The limit shape of large alternating sign matrices,
\emph{SIAM J.\ Discrete Math.}~\textbf{24} (2010), 1558--1571.

\bibitem[CS]{CS}
S.~Corteel and C.~D.~Savage, Partitions and compositions defined by inequalities,
\emph{Ramanujan J.}~\textbf{8} (2004), 357--381.

\bibitem[CSW]{CSW}
S.~Corteel, C.~D.~Savage and H.~S.~Wilf,
A note on partitions and compositions defined by inequalities,
\emph{Integers}~\textbf{5} (2005), A24, 11~pp.

\bibitem[deB]{deBruijnPP}
N.~G.~de~Bruijn, On Mahler's partition problem,
\emph{Indag. Math.}~\textbf{10} (1948), 210--220.

\bibitem[DVZ]{LD}
A.~Dembo, A.~Vershik and O.~Zeitouni,
Large deviations for integer partitions,
\emph{Markov Process.\ Related Fields}~\textbf{6} (2000), 147--179.

\bibitem[DZ]{LDBook}
A.~Dembo and O.~Zeitouni, \emph{Large deviations techniques and applications},
Springer, New York, 1998.


\bibitem[DeS]{PDCDSH}
S.~DeSalvo, Probabilistic divide-and-conquer: deterministic second half,
\. {\tt arXiv:1411.6698}.


\bibitem[EL]{ErdosLehner}
P.~Erd\H{o}s and J.~Lehner,
The distribution of the number of summands in the partitions of a positive
integer, \emph{Duke Math.~J.}~\textbf{8} (1941), 335--345.

\bibitem[ES]{ErdosSzalay}
P.~Erd\H{o}s and M.~Szalay, On the statistical theory of partitions,
\emph{Colloq.\ Math.\ Soc.\ J\'{a}nos Bolyai}~\textbf{34} (1984), 397--450.

\bibitem[FVY]{Freiman}
G.~A.~Freiman, A.~M.~Vershik and Yu.~V.~Yakubovich,
A Local Limit Theorem for Random Strict Partitions
\emph{Theory Probab.\ Appl.}~\textbf{44} (2000), 453--468.

\bibitem[Fri]{Fristedt}
B.~Fristedt, The structure of random partitions of large integers,
\emph{Trans.~AMS}~\textbf{337} (1993), 703--735.

\bibitem[FG]{JackMeasures}
J.~Fulman and L.~Goldstein, Zero biasing and Jack measures,
\emph{Combin.\ Probab.\ Comput.}~\textbf{20} (2001), 753--762.

\bibitem[GM]{GarsiaMilneRR}
A.~M.~Garsia and S.~C.~Milne, A Rogers-Ramanujan bijection,
\emph{J.\ Combin.\ Theory, Ser.~A}~\textbf{31} (1981), 289--339.

\bibitem[GH]{Goh}
W.~M.~Y.~Goh and P.~Hitczenko,
Random partitions with restricted part sizes,
\emph{Random Structures Algorithms}~\textbf{32} (2008), 440--462.

\bibitem[GSE]{Stark2008}
B.~L.~Granovsky, D.~Stark and M.~Erlihson,
Meinardus' theorem on weighted partitions: Extensions
and a probabilistic proof,
\emph{Adv.\ Appl.\ Math.}~\textbf{41} (2008),
307--328.

\bibitem[HR]{HR}
G.~H.~Hardy and S.~Ramanujan,
Asymptotic formulas in combinatory analysis,
\emph{Proc.~LMS}~\textbf{2} (1918), 75--115.

\bibitem[HT]{Haselgrove}
C.~B.~Haselgrove and H.~N.~V.~Temperley,
Asymptotic formulae in the theory of partitions,
\emph{Proc.\ Cambridge Philos.\ Soc.}~\textbf{5} (1954), 225--241.

\bibitem[Hwa]{Hwang}
H.~K.~Hwang,
Limit theorems for the number of summands in integer partitions,
\emph{J.\ Combin.\ Theory, Ser.~A}~\textbf{96} (2001), 89--126.

\bibitem[Ing]{Ingham}
A.~E.~Ingham, A Tauberian theorem for partitions,
\emph{Ann. of Math.}~\textbf{42} (1941), 1075--1090.

\bibitem[JLR]{JLR}
S.~Janson, T.~{\L}uczak and A.~Rucinski,
\emph{Random graphs}, Wiley, New York, 2000.

\bibitem[KOS]{KOS}
R.~Kenyon, A.~Okounkov and S.~Sheffield,
Dimers and amoebae,
\emph{Ann. of Math.}~\textbf{163} (2006), 1019--1056.

\bibitem[Khi]{Khinchin}
A.~I.~Khinchin,
\emph{Mathematical Foundations of Statistical Mechanics},
Dover, New York, 1949.

\bibitem[KP1]{Konvalinka}
M.~Konvalinka and I.~Pak, Geometry and complexity of O'Hara's algorithm,
\emph{Adv.\ Appl.\ Math.}~\textbf{42} (2009), 157--175.

\bibitem[KP2]{KP2}
M.~Konvalinka and I.~Pak,
Cayley compositions, partitions, polytopes, and geometric bijections,
\emph{J.\ Combin.\ Theory, Ser.~A}~\textbf{123} (2014), 86--91.

\bibitem[MW]{Mad}
M.~Madritsch and S.~Wagner,
A central limit theorem for integer partitions,
\emph{Monatsh. Math.}~\textbf{161} (2010), 85--114.


\bibitem[Mei]{Meinardus}
G.~Meinardus, Asymptotische Aussagen \"{u}ber Partitionen,
\emph{Math.~Z.}~\textbf{59} (1954), 388--398.

\bibitem[O'H]{OHara}
K.~M.~O'Hara, Bijections for partition identities,
\emph{J.\ Combin.\ Theory, Ser.~A}~\textbf{49} (1988), 13--25.


\bibitem[Pak1]{PakGeo}
I.~Pak, Partition identities and geometric bijections,
\emph{Proc.~AMS}~\textbf{132} (2004), 3457--3462.

\bibitem[Pak2]{PakNature}
I.~Pak, The nature of partition bijections~II.\
Asymptotic stability, preprint (2004), 32~pp.;
available at \.
\href{http://www.math.ucla.edu/~pak/papers/stab5.pdf}{\tt math.ucla.edu/\/\~{}pak/papers/stab5.pdf}



\bibitem[Pak3]{Pak}
I.~Pak, Partition bijections, a survey,
\emph{Ramanujan J.}~\textbf{12} (2006), 5--75.

\bibitem[Pit1]{PittelShape}
B.~Pittel, On a likely shape of the random Ferrers diagram,
\emph{Adv.\ Appl.\ Math.}~\textbf{18} (1997), 432--488.

\bibitem[Pit2]{Pittel}
B.~Pittel, Confirming two conjectures about the integer partitions,
\emph{J.\ Combin.\ Theory, Ser.~A}~\textbf{88} (1999), 123--135.

\bibitem[Rom1]{RomikUnpublished}
D.~Romik, Identities arising from limit shapes of constrained random partitions;
preprint (2003), 15~pp.; available at \.
\href{https://www.math.ucdavis.edu/~romik/data/uploads/papers/shape.pdf}{\tt
math.ucdavis.edu/\/\~{}romik/data/uploads/papers/shape.pdf}


\bibitem[Rom2]{Romik}
D.~Romik, Partitions of {$n$} into {$t\sqrt n$} parts,
\emph{European J.\ Combin.}~\textbf{26} (2005), 1--17.

\bibitem[Rom3]{R3}
D.~Romik,  Arctic circles, domino tilings and square Young tableaux,
\emph{Ann.\ Probab.}~\textbf{40} (2012), 611--647.

\bibitem[Rom4]{R4}
D.~Romik,
\emph{The surprising mathematics of longest increasing subsequences},
Cambridge Univ.~Press, New York, 2015.

\bibitem[RS]{RothSzekeres}
K.~F.~Roth and G. Szekeres,
Some asymptotic formulae in the theory of partitions,
\emph{Quart.\ J.\ Math.}~\textbf{5} (1954) 241--259.

\bibitem[OEIS]{OEIS}
N.~J.~A.~Sloane,
The {O}nline {E}ncyclopedia of {I}nteger {S}equences,
\href{http://oeis.org}{\tt oeis.org}.


\bibitem[Sta]{Stanton}
D.~Stanton, $q$-analogues of Euler's odd = distinct theorem,
\emph{Ramanujan J.}~\textbf{19} (2009), 107--113.

\bibitem[Tem]{Temperley}
H.~N.~V.~Temperley,
Statistical mechanics and the partition of numbers.~II.\ The form of crystal surfaces,
\emph{Proc.\ Cambridge Philos.\ Soc.}~\textbf{48} (1952), 683--697.

\bibitem[Ver]{Vershik}
A.~Vershik, Statistical mechanics of combinatoria partitions, and their limit shapes,
\emph{Funct.\ Anal.\ Appl.}~\textbf{30} (1996), 90--105.

\bibitem[VK]{Kerov}
A.~M.~Vershik and S.~V.~Kerov,
Asymptotic behavior of the Plancherel measure of the symmetric group
and the limit form of Young tableaux,
\emph{Dokl.\ Akad.\ Nauk SSSR}~\textbf{233} (1977), 1024--1027.


\bibitem[VY]{VershikYakubovich}
A.~Vershik and Yu.~Yakubovich, The limit shape and
uctuations of random partitions of naturals with fixed
number of summands, \emph{Mosc.\ Math.~J.}~\textbf{1} (2001), 457--468.

\bibitem[vN]{Rejection}
J.~von Neumann, Various techniques used in connection with random digits,
\emph{National Bureau of Standards Applied Mathematics Series}~\textbf{12}
(1951), 36--38.

\bibitem[Wri]{Wright}
E.~M.~Wright, Asymptotic partition formulae.~III.\ Partitions into $k$-th powers,
\emph{Acta Math.}~\textbf{63} (1934), 143--191.

\bibitem[Yak]{Yakubovich}
Yu.~Yakubovich, Ergodicity of multiplicative statistics,
\emph{J.\ Combin.\ Theory, Ser.~A}~\textbf{119} (2012), 1250--1279.

\bibitem[Yee]{Yee}
Ae~Ja~Yee,
Combinatorial proofs of Ramanujan's $_1\psi_1$ summation and the $q$-Gauss summation,
\emph{J.~Combin.\ Theory, Ser.~A}~\textbf{105} (2004), 63--77.

\end{thebibliography}


 \newpage


\end{document}